\newtheorem*{theorem*}{Theorem}
\newtheorem{theorem}{Theorem}[section]
\newtheorem{lemma}[theorem]{Lemma}
\newtheorem{corollary}[theorem]{Corollary}
\newtheorem{conjecture}[theorem]{Conjecture}
\newtheorem{remark}[theorem]{Remark}
\newtheorem{definition}[theorem]{Definition}
\newtheorem{proposition}[theorem]{Proposition}
\newcommand{\RR}{\mathbb{R}}
\newcommand{\gf}{\mathrm{gf}}
\newcommand{\Div}{\mathrm{div}}
\DeclareMathOperator{\Ric}{Ric}
\def\cF{\mathcal{F}}
\def\NN{\mathbb{N}}
\def\RR{\mathbb{R}}
\DeclareMathOperator{\ord}{ord}
\newcommand{\PSH}{\mathrm{PSH}}
\DeclareMathOperator{\Spec}{Spec}
\DeclareMathOperator{\vol}{vol}
\DeclareMathOperator{\usc}{usc}
\newcommand{\ddc}{\mathrm{dd}^{\mathrm{c}}}
\newcommand{\triv}{\mathrm{triv}}
\newcommand{\NA}{\mathrm{NA}}
\newcommand{\An}{\mathrm{an}}
\newcommand{\NS}{\mathrm{NS}}
\newcommand{\val}{\mathrm{val}}
\def\beq{\begin{equation}}
\def\eeq{\end{equation}}
\title{A transcendental approach to non-Archimedean metrics of pseudoeffective classes}
\author{Tam\'as Darvas, Mingchen Xia and Kewei Zhang}
\date{}
\begin{document}
\maketitle
\begin{abstract} We introduce the concept of non-Archimedean metrics attached to a transcendental pseudoeffective cohomology class on a compact K\"ahler manifold. This is obtained via extending the Ross--Witt Nystr\"om correspondence to the relative case, and we point out that our construction agrees with that of Boucksom--Jonsson when the class is induced by a pseudoeffective $\mathbb Q$-line bundle. 

We introduce the notion of a flag configuration attached to a transcendental big class, recovering the notion of a test configuration in the ample case. We show that non-Archimedean finite energy metrics are approximable by flag configurations, and very general versions of the radial Ding energy are continuous, a novel result even in the ample case. As applications, we characterize the delta invariant as the Ding semistability threshold of flag configurations and filtrations, and prove a YTD type existence theorem for K\"ahler--Einstein metrics in terms of flag configurations.
\end{abstract}
\tableofcontents

\section{Introduction and results}

In recent years, especially since the appearance of \cite{BBJ21}, the non-Archimedean approach to K-stability has gained  attention, as evidenced by recent works \cite{BJ18b, BE21, BHJ17, BJ18,  DeLe20, Li20,BJ22b}, to name only a few papers in the fast expanding literature.

Studying K-stability for degenerate classes is a natural extension of the much studied ample case \cite{CDS15, DS16a,  Tian15, TiWa20,CSW,BBJ21,Zh21}. Recent works aim to understand existence of canonical metrics in this context as well. Degenerate K\"ahler--Einstein metrics have taken the forefront \cite{LTW21a,LTW21b,LXZ22, DZ22, DeRe22, Xu22}, but one would think that the notion of constant scalar curvature K\"ahler metrics also extends to big classes, as minimzers of the K-energy functional. With the study still in its infancy \cite{KZ22,DNL22}, a better understanding of K-stability of big classes is needed before substantial progress can be made. Our work tries to fill some of this void.

We propose a transcendental approach to non-Archimedean metrics, that allows for their treatment even for a transcendental pseudoeffective class. Our analytic treatment has a number of advantages. The envelope conjecture of Boucksom--Jonsson naturally holds in our context, immediately recovering the main result of \cite{BJ22}, as a particular case. We show that very general versions of the radial Ding energy are continuous, and that finite energy non-Archimedean metrics can be approximated by algebraic objects called flag configurations (that reduce to test configurations in the ample case). The latter two properties allow to characterize the analytically defined delta invariant using algebraic data, and provide a YTD type existence theorem for big K\"ahler--Einstein (KE) metrics, without using the minimal model program, connecting with recent work of the first and third authors \cite{DZ22}.\vspace{-0.2cm}

\paragraph{Transcendental non-Archimedean metrics.}To state our main results, we fix some terminology. Let $X$ be a compact K\"ahler manifold and $\theta$ a smooth closed real $(1,1)$-form representing a pseudoeffective cohomology class $\{\theta\}$. We denote by $\textup{PSH}(X,\theta)$ the space of $\theta$-psh functions, and all complex Monge--Amp\`ere measures in this work will be assumed to be pluripolar in the sense of \cite{BEGZ10}.

A relative test curve is a map $\mathbb R \ni \tau \mapsto \psi_\tau \in \textup{PSH}(X,\theta)$ that is $\tau$-decreasing, $\tau$-concave, and $\tau$-usc. Moreover, $\psi_\tau \equiv -\infty$ for all $\tau$ big enough, and the limit $\psi_{-\infty} = \lim_{\tau \to -\infty} \psi_\tau \in \textup{PSH}(X,\theta)$ exists. 
Such an object will be denoted $\{\psi_\tau\}_\tau$, and we say that $\{\psi_\tau\}_\tau$ is a test curve relative to $\psi_{-\infty}$.

The relative test curve $\{\psi_\tau\}_\tau$ is $\mathcal I$-maximal if $\psi_\tau = P[\psi_\tau]_\mathcal I$ for all $\tau \in \mathbb R$, where $P[\cdot ]_\mathcal I$ is the following envelope:
\[
P[u]_\mathcal I := \sup\{ w \in\textup{ PSH}(X, \theta), w \leq 0, \mathcal I(tw) \subseteq \mathcal I(tu), t \geq 0\} . 
\]
Here $\mathcal I (tu)$ is the multiplier ideal sheaf of $u$, locally generated by holomorphic functions $f$ such that
$|f|^2 e^{-tu}$ is integrable.

To exclude pathological behaviour, we need to avoid vanishing mass. For any big class $\{\theta\}$, by $\textup{MTC}_\mathcal I(X,\theta)$ we denote the set of maximal test curves that additionally satisfy $\int_X \theta_{\psi_{-\infty}}^n > 0$. As we will see in \eqref{eq: transition_map}, given any K\"ahler metric $\omega$ from the K\"ahler cone $\mathcal K$, there is a natural map 
$$\textup{MTC}_\mathcal I(X,\theta) \to \textup{MTC}_\mathcal I(X,\theta + \omega)$$
This allows to define the space of non-Archimedean metrics associated with a pseudoeffective class $\{\theta\}$ as the following projective limit in the category of sets:
\[
\PSH^{\NA}(\theta):=\varprojlim_{\omega\in \mathcal{K}}\textup{MTC}_\mathcal I(X, \theta+\omega).
\]

 We refer to Definition \ref{def:PSH-an} for the precise details.
Given $f \in C^\infty(X)$, there is an obvious identification between $\PSH^{\NA}(\theta)$ and $\PSH^{\NA}(\theta+\ddc f)$ allowing us to regard $\PSH^{\NA}(\theta)$ as an invariant of the cohomology class $\{\theta\}$.

When $\{\theta\}$ is the first Chern class of a $\mathbb Q$-line bundle $L$, Boucksom and Jonsson defined the space of non-Archimedean metrics $\textup{PSH}^\NA(L^\An)$ in \cite{BJ18b} using algebraic tools (see Section \ref{subsec:NAformal}). In our first result we show that our construction agrees with theirs in this important particular case.

\begin{theorem}(=Theorem \ref{thm:NAtoNAbij})\label{mainthm: isomorphism} When $\{\theta\}$ is the first Chern class of a pseudoeffective $\mathbb Q$-line bundle $L$, there is a natural bijection between $
\PSH^{\NA}(\theta)$ and $\textup{PSH}(L^\An)$.
\end{theorem}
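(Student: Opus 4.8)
The plan is to identify both sides with the same concrete data—filtrations, or more precisely graded linear series with a filtration—and then invoke the Ross--Witt Nyström correspondence in its relative form. On the analytic side, a class in $\PSH^{\NA}(\theta)$ is, by definition, a compatible system of $\mathcal I$-maximal test curves $\{\psi^\omega_\tau\}_\tau \in \MTC_\mathcal I(X,\theta+\omega)$, one for each $\omega \in \mathcal K$; by the transition maps these are all determined by a single coherent object. When $\{\theta\} = c_1(L)$, the key is that for $\omega$ with $\{\theta+\omega\}$ rational and ample (or at least for a cofinal family of such $\omega$, using that $\mathcal K$ contains rational ample classes when $L$ is a line bundle after perturbing), the relative Ross--Witt Nyström correspondence attaches to $\{\psi^\omega_\tau\}_\tau$ a filtration on the section ring $\bigoplus_m H^0(X, m(L+A))$ for an appropriate ample $A$. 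I would first recall, as the main input, that $\MTC_\mathcal I$ of a big class is in bijection (via the relative RWN correspondence established in the body of the paper) with suitable filtrations/test curves on the Okounkov body or section ring; this is the relative analogue of what is classical in the ample case.

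Concretely, the steps are as follows. First, recall Boucksom--Jonsson's description: $\PSH(L^\An)$ is built from the projective limit over models, but it is canonically identified with the set of filtrations $\mathcal F$ on the section ring $R(X,L) = \bigoplus_{m} H^0(X, mL)$ (for $L$ pseudoeffective, one uses the valuative/Fubini--Study description and the fact that finitely generated filtrations are dense), modulo the equivalence that does not change the induced function on valuation space. Second, recall from the earlier part of the paper the relative RWN correspondence, which to an $\mathcal I$-maximal test curve $\{\psi_\tau\}_\tau$ relative to $\psi_{-\infty}$ with positive mass associates a "Legendre-dual" family of singularity types, and when things are algebraic, a filtration; in particular for each $\omega$ with $L+A$ ($A$ a Kähler form in $\{\omega\}$) carrying an ample $\mathbb Q$-class, $\MTC_\mathcal I(X,\theta+\omega)$ receives a map from such filtrations which is bijective. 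Third, show the transition maps $\MTC_\mathcal I(X,\theta+\omega)\to\MTC_\mathcal I(X,\theta+\omega')$ for $\omega\geq\omega'$ correspond, under these identifications, exactly to the natural restriction/compatibility maps between the corresponding filtration data, so that the projective limit $\PSH^{\NA}(\theta)$ is identified with the projective limit of filtration data, which is precisely $\PSH(L^\An)$. Fourth, check that the identification is independent of the auxiliary choices (the ample $A$, the cofinal family) and is compatible with the $C^\infty(X)$-action by $\ddc f$, so that it descends to a well-defined bijection at the level of cohomology classes.

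The construction of the map in one direction is fairly mechanical once the relative RWN correspondence is in hand; the main obstacle is \emph{surjectivity together with well-definedness of the inverse}, i.e. showing that an arbitrary element of $\PSH(L^\An)$—a priori only a compatible system of non-Archimedean data over all models, not obviously coming from a single filtration in a way that survives passage to the limit—produces a genuinely compatible system of $\mathcal I$-maximal test curves with positive mass for \emph{every} $\omega\in\mathcal K$, not just the rational ample cofinal ones. This requires two things: (i) a continuity/density argument extending the bijection from the cofinal rational-ample $\omega$ to all $\omega\in\mathcal K$, using that $P[\cdot]_\mathcal I$ envelopes and multiplier ideals behave well under decreasing limits of $\omega$; and (ii) controlling the positivity of mass $\int_X\theta_{\psi_{-\infty}}^n$ uniformly, which is where the hypothesis that $L$ is pseudoeffective (so the relevant volumes are the right ones) and the non-vanishing built into $\MTC_\mathcal I$ must be reconciled with Boucksom--Jonsson's conventions. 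I expect the matching of the two normalizations—Boucksom--Jonsson work with a fixed model/valuation picture while our side works with Kähler perturbations—to be the most delicate bookkeeping, handled by checking agreement on the dense set of (finitely generated) filtrations where both sides reduce to an explicit Okounkov-body computation, and then passing to the limit on both sides simultaneously.
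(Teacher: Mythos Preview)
Your plan has a structural gap: you want to pass through an intermediate description of both sides as ``filtration data,'' but neither side is actually in bijection with filtrations in the way you suggest. On the Boucksom--Jonsson side, $\PSH(L^{\An})$ is by definition the set of decreasing limits of generically finite Fubini--Study functions on $X^{\An}$; filtrations of the section ring produce only a proper subset (dense in suitable senses, but not everything), and when $L$ is merely pseudoeffective the section ring may even be trivial. On the analytic side, the relative Ross--Witt Nystr\"om correspondence in the paper matches $\mathcal I$-maximal test curves with geodesic rays, not with filtrations; the filtration picture appears only later (for big $L$) and again only captures a subclass. So your proposed intermediate object does not exist in a form that makes both identifications bijections, and the compatibility you want to check at step three has no content until you specify what the objects are.

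The paper's route is more direct and does not go through filtrations at all. The map $\PSH^{\NA}(\theta)\to\PSH(L^{\An})$ is defined concretely by $\{\psi_\tau\}_\tau\mapsto \psi^{\An}$, where $\psi^{\An}(v)=\sup_\tau(\psi_\tau^{\An}(v)+\tau)$ and $\psi_\tau^{\An}(v)=-\lim_k k^{-1}v(\mathcal I(k\psi_\tau))$ is a generalized Lelong number. The real work is: (a) proving $\psi^{\An}$ actually lands in $\PSH(L^{\An})$, done by expanding the multiplier ideals $\mathcal I(m\Phi)$ of the associated potential on $X\times\Delta$ as flag ideals and using Siu-type uniform global generation to exhibit $\psi^{\An}$ as a decreasing limit of $\mathcal H^{\gf}_{\mathbb Q}$ functions; (b) injectivity, from the fact that $\mathcal I$-model potentials are determined by their Lelong numbers along all divisors; and (c) surjectivity, by first writing any $\phi\in\mathcal H^{\gf}_{\mathbb Q}(L)$ explicitly as the analytification of a piecewise linear curve, then descending to a general $\phi\in\PSH(L^{\An})$ via the Boucksom--Jonsson approximation theorem and a minimax lemma that lets you commute the $\sup_\tau$ with the decreasing limit. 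The positivity-of-mass issue you flag is handled not by any uniform bound over all $\omega$, but by working in $\theta+\omega$ for a fixed rational $\omega$ and observing that the volume is bounded below by $\int_X\omega^n$. None of these ingredients appear in your outline, and the Okounkov-body computation you invoke at the end is never used.
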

The same holds for a pseudoeffective $(1,1)$-class lying in the Néron--Severi group with real coefficients, as we explain in Remark~\ref{rmk:realclass_iso}.

For the construction of the natural map between  $\PSH^{\NA}(\theta)$ and $\textup{PSH}(L^\An)$, we refer to \eqref{eq: BJ_map_def}. 
Recently Boucksom--Jonsson have showed that the so-called envelope conjecture holds in case $\{\theta\}$ is the first Chern class of a $\mathbb Q$-line bundle $L$ \cite{BJ22}. We will show that the same property holds in the transcendental case as well, thus recovering the Boucksom--Jonsson result as a particular case, via the above isomorphism theorem. Let us note here an equivalent formulation of the envelope conjecture, which is easy to state:

\begin{theorem}(=Theorem~\ref{thm:increasingnettestcurveconv}) Suppose that $\{\theta\}$ is a pseudoeffective class. Any bounded from above increasing net of $\PSH^{\NA}(\theta)$  has a supremum inside $\PSH^{\NA}(\theta)$.
\end{theorem}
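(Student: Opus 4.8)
The space $\PSH^{\NA}(\theta)=\varprojlim_{\omega\in\mathcal K}\MTC_{\mathcal I}(X,\theta+\omega)$ (see Definition~\ref{def:PSH-an}) carries the natural partial order, induced component by component from the pointwise order on relative test curves, so the plan is to reduce the statement to two facts and then build the supremum componentwise. The two facts are: (A) for every big class --- and it suffices to treat $\theta+\omega$, $\omega\in\mathcal K$ --- the poset $\MTC_{\mathcal I}$ of that class admits suprema of increasing nets that are bounded above by a relative test curve; and (B) the transition maps $\MTC_{\mathcal I}(X,\theta+\omega)\to\MTC_{\mathcal I}(X,\theta+\omega')$ of \eqref{eq: transition_map} commute with the suprema of (A). Since $\theta+\omega$ is big for every $\omega\in\mathcal K$, fact (A) is only ever invoked for big classes, so there is no circularity in allowing $\{\theta\}$ to be merely pseudoeffective: the pseudoeffective case drops out of the projective limit.

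For (A), let $\{\psi^\alpha_\tau\}_\tau$ be an increasing net in $\MTC_{\mathcal I}(X,\theta+\omega)$ bounded above by a test curve $\{\psi_\tau\}_\tau$, and set $\tilde\psi_\tau:=P[\,(\sup_\alpha\psi^\alpha_\tau)^*\,]_{\mathcal I}$. First one checks that $\tau\mapsto(\sup_\alpha\psi^\alpha_\tau)^*$ is $\tau$-decreasing, $\tau$-concave (the hypograph over each point is the increasing union of the convex hypographs of the $\psi^\alpha$), $\tau$-usc, identically $-\infty$ for $\tau$ large (it lies below $\psi_\tau$), and has an increasing limit as $\tau\to-\infty$ that is $(\theta+\omega)$-psh and dominates each $\psi^\alpha_{-\infty}$, hence is not identically $-\infty$. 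Applying $P[\,\cdot\,]_{\mathcal I}$ entrywise preserves all of this --- by the same properties of $P[\,\cdot\,]_{\mathcal I}$ that make the $\mathcal I$-maximalization of a test curve again a test curve --- and keeps the mass positive, since $\int_X(\theta+\omega)^n_{\tilde\psi_{-\infty}}\ge\int_X(\theta+\omega)^n_{\psi^\alpha_{-\infty}}>0$ by monotonicity of the non-pluripolar mass. Idempotency of $P[\,\cdot\,]_{\mathcal I}$ gives $\mathcal I$-maximality, so $\{\tilde\psi_\tau\}_\tau\in\MTC_{\mathcal I}(X,\theta+\omega)$, and it is the supremum: $\psi^\alpha_\tau=P[\psi^\alpha_\tau]_{\mathcal I}\le\tilde\psi_\tau$ by monotonicity, while any $\mathcal I$-maximal test curve dominating all $\psi^\alpha_\tau$ dominates $(\sup_\alpha\psi^\alpha_\tau)^*$ and hence its $P[\,\cdot\,]_{\mathcal I}$-envelope. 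Alternatively one may transport the net through the relative Ross--Witt Nystr\"om correspondence to an increasing net of geodesic rays, regularize the supremum there, and pull back; this presents the content of (A) as the assertion that bounded increasing suprema of $\mathcal I$-model potentials are again $\mathcal I$-model.

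Fact (B), after unwinding \eqref{eq: transition_map}, is essentially the same statement: the transition map is built entrywise from $P[\,\cdot\,]_{\mathcal I}$ in the larger class together with monotone, sup-continuous operations, so it commutes with bounded increasing suprema for the same reason $P[\,\cdot\,]_{\mathcal I}$ does. Granting (A) and (B), the theorem is formal. Let $(\phi_\alpha)_\alpha$ be a bounded-above increasing net in $\PSH^{\NA}(\theta)$ with upper bound $\phi$; for each $\omega$ the net $(\phi^\omega_\alpha)_\alpha$ is increasing in $\MTC_{\mathcal I}(X,\theta+\omega)$ and bounded above by $\phi^\omega$, so by (A) it has a supremum $\phi^\omega_\infty$. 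Writing $T_{\omega,\omega'}$ for the transition map attached to $\omega'=\omega+\eta$ with $\eta$ K\"ahler, the defining compatibility of each $\phi_\alpha$ reads $T_{\omega,\omega'}(\phi^\omega_\alpha)=\phi^{\omega'}_\alpha$, so by (B) we get $T_{\omega,\omega'}(\phi^\omega_\infty)=\sup_\alpha T_{\omega,\omega'}(\phi^\omega_\alpha)=\sup_\alpha\phi^{\omega'}_\alpha=\phi^{\omega'}_\infty$; thus $(\phi^\omega_\infty)_\omega$ is again a compatible family, i.e. an element of $\PSH^{\NA}(\theta)$. It dominates every $\phi_\alpha$ componentwise, and if $\tilde\phi$ is any upper bound then $\tilde\phi^\omega\ge\phi^\omega_\alpha$ for all $\alpha$ forces $\tilde\phi^\omega\ge\phi^\omega_\infty$; hence $(\phi^\omega_\infty)_\omega=\sup_\alpha\phi_\alpha$.

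The main obstacle is the analytic input inside (A): that $P[\,\cdot\,]_{\mathcal I}$ is continuous along bounded increasing nets, equivalently that the $\mathcal I$-model --- hence $\mathcal I$-maximal --- condition survives such suprema. The inequality $P[\,(\sup_\alpha u_\alpha)^*\,]_{\mathcal I}\ge(\sup_\alpha P[u_\alpha]_{\mathcal I})^*$ is the nontrivial direction; it comes down to controlling the multiplier ideal sheaves $\mathcal I\big(t\,(\sup_\alpha u_\alpha)^*\big)$ in terms of the $\mathcal I(tu_\alpha)$, which is exactly where the strong openness theorem and the structure theory of $\mathcal I$-model potentials developed in the body of the paper enter. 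The remaining points --- preservation of $\tau$-concavity and $\tau$-upper semicontinuity under $P[\,\cdot\,]_{\mathcal I}$, and the bookkeeping over the directed set $\mathcal K$ in the assembly --- are routine given the results already established.
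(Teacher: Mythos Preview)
Your approach is essentially the same as the paper's, and is correct. The paper's argument is shorter for two reasons. First, in place of your fact~(A) it invokes Lemma~\ref{lma:incfamily}, which cites \cite[Lemma~2.21(iii)]{DX20} to conclude that the usc supremum of an increasing net of $\mathcal I$-model potentials is \emph{already} $\mathcal I$-model; so wrapping $(\sup_\alpha\psi^\alpha_\tau)^*$ in $P[\cdot]_{\mathcal I}$ is redundant, and you avoid having to verify that $P[\cdot]_{\mathcal I}$ preserves $\tau$-concavity. Second, for your fact~(B) the paper observes directly that the limits $\phi^\omega_\tau$ and $\phi^{\omega'}_\tau$ satisfy $\phi^\omega_\tau\simeq_{\mathcal I}\phi^{\omega'}_\tau$ (via \cite[Proposition~2.18(ii)]{DX20} and strong openness on the net), so since both are $\mathcal I$-model, the compatibility $\phi^{\omega'}_\tau=P_{\theta+\omega'}[\phi^\omega_\tau]_{\mathcal I}$ follows immediately.

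One small slip: the inequality $P[(\sup_\alpha u_\alpha)^*]_{\mathcal I}\ge(\sup_\alpha P[u_\alpha]_{\mathcal I})^*$ you label as the nontrivial direction is in fact the easy one --- it is immediate from monotonicity of $P[\cdot]_{\mathcal I}$ together with $\mathcal I(t(\sup_\alpha u_\alpha)^*)\supseteq\mathcal I(tu_\alpha)$. The genuine analytic input (strong openness) enters in the \emph{reverse} inequality when each $u_\alpha$ is $\mathcal I$-model, i.e., in showing $(\sup_\alpha u_\alpha)^*$ is itself $\mathcal I$-model; this is precisely \cite[Lemma~2.21(iii)]{DX20}.
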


We refer to Theorem~\ref{thm:increasingnettestcurveconv}  for the precise statement, and see Conjecture~\ref{conj:env} for the equivalent form of the envelope conjecture in non-Archimedean geometry. Moreover, in Corollary \ref{cor:env_conj} we explain how the above result yields \cite[Theorem A]{BJ22}.

For general transcendental classes $\{\theta\}$ we cannot interpret $\PSH^{\NA}(\theta)$ as metrics on certain Berkovich spaces, but an analogous study is still possible, and we will treat this thoroughly in a separate paper.\vspace{-0.2cm}

\paragraph{Approximation of finite energy non-Archimedean metrics.} Let us assume for the rest of this introduction that $\{\theta\}$ is a big class.
A maximal test curve $\{\psi_{\tau}\}_\tau$ is of  \emph{finite energy} (notation: $\{\psi_{\tau}\}_\tau \in \mathcal R^1(X,\theta)$) if

    \begin{equation*}\label{eq: main_fetestcurve_def}
   \int_{-\infty}^{\tau^+_\psi} \left( \int_X \theta_{\psi_\tau}^n-\int_X \theta_{V_\theta}^n \right) \,\mathrm{d}\tau >-\infty\,,
    \end{equation*}
    where $V_{\theta}\in \PSH(X,\theta)$ is the potential with minimal singularity. 
Such test curves are in a one-to-one correspondence with finite energy geodesic rays, per the Ross--Witt Nystr\"om correspondence, elaborated in \cite{DZ22,DL20} (see Theorem~\ref{thm:RWNpresc}(iii) and  \eqref{eq:radial-I-formula}). 
As proved in \cite[Theorem 2.14]{DDNL5} there is a \emph{chordal} metric $d_1^c$ on $\mathcal R^1(X,\theta)$ making it a complete metric space (see Section 2.2).
    
We identify the space of finite energy non-Archimedean metrics $\mathcal E^{1,\NA}(X,\theta)$ with $\mathcal R^1_\mathcal I(X,\theta)$, the space of $\mathcal I$-maximal finite energy test curves. The definition of $\mathcal E^{1,\NA}(X,\theta)$ agrees with that of Boucksom--Jonsson in the ample case, and one can approximate any finite energy non-Archimedean metric using test configurations, by \cite[Theorem 1.1]{DX20}. We show in this paper that the analogous result holds in the big case as well.

To state the result, let us define a \emph{flag configuration} of a big cohomology class $\{\theta\}$ to be a (partial) flag of coherent analytic ideal sheaves
\[
\mathfrak{a}_0 \subseteq \mathfrak{a}_1 \subseteq \dots \subseteq \mathfrak{a}_N  = \mathcal{I}(V_\theta).
\]
By convention, $\mathfrak{a}_{\ell}:=0$ for $\ell\in \mathbb{Z}_{<0}$ and $\mathfrak{a}_{\ell}:=\mathfrak{a}_N$ if $\ell\in \mathbb{Z}_{\geq N}$. A flag configuration will be conveniently denoted as an analytic coherent ideal sheaf on the product $X \times \mathbb C$: 
\[
\mathfrak a := \mathfrak a_0 + \mathfrak a_1 s + \dots +\mathfrak a_{N-1} s^{N-1} + \mathfrak a_N (s^N) \subseteq \mathcal O(X \times \mathbb C),
\]
where $s$ denotes the coordinate on $\mathbb{C}=\Spec \mathbb{C}[s]$. In this transcendental big setting, we avoid calling flag configurations actual test configurations, to avoid confusion with the concepts introduced in \cite{DeRe22} for big line bundles,  which are not yet proved to be equivalent with ours. In addition, in the K\"ahler case, there is notion of transcendental test configuration introduced in \cite{DerRos17,Dyr16}. Further investigations are needed to find the correct analogue of this notion is in the big case, and prove possible equivalency with our flag configurations.

As we point out in \eqref{eq: filtrate_flag_def}, in case $\{\theta\}$ is the first Chern class of a big line bundle $L$, to a flag configuration one can associate a natural filtration $\mathcal F^\mathfrak a$ of the section ring $R(X,L)$. Extending a construction of Phong--Sturm/Ross--Witt Nystr\"om \cite{PS07, RWN14}, we show that even in the big case, one can associate a natural geodesic ray/maximal test curve to any filtration (see Definition \ref{def:PSray1}).

Next we show that finite energy non-Archimedean metrics can be approximated by flag configurations or filtrations, extending  \cite[Theorem 1.1]{DX20} to the big case, making contact with the ideas of \cite[Section 5]{BBJ21}. 

\begin{theorem}(=Theorem \ref{prop:BBJmaxappbyflagconf})\label{mainthm: approx}  When $\{\theta\}$ is the first Chern class of a big line bundle $L$,  elements of $\mathcal R^1_\mathcal I(X,\theta) = \mathcal E^{1,\NA}(X,L^\An)$ can be $d_1^c$--approximated by Phong--Sturm rays of flag configurations.
\end{theorem}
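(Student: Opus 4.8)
The plan is to transpose the three-step scheme of \cite{DX20} from the ample to the big setting, using freely the completeness of $(\mathcal R^1(X,\theta), d_1^c)$ from \cite{DDNL5} and the continuity of the radial energies established earlier in the paper. After standard preliminary reductions inside $\mathcal R^1_\mathcal I(X,\theta) = \mathcal E^{1,\NA}(X,L^\An)$ — capping the top $\tau^+_\psi$ and reducing to test curves relative to $V_\theta$, e.g. by $\psi_\tau \mapsto P[\max(\psi_\tau, V_\theta+c)]_\mathcal I$ with $c\to-\infty$ (these $d_1^c$-converge by monotonicity and energy continuity, and match the normalization $\mathfrak a_N = \mathcal I(V_\theta)$ built into flag configurations) — one is reduced to quantizing a sufficiently tame maximal finite energy test curve.

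\emph{Quantization.} For the reduced test curve and each $k\ge 1$, attach the filtration $\mathcal F_k$ of $H^0(X,kL)$ with $\mathcal F_k^{k\tau}H^0(X,kL) := H^0\big(X, kL\otimes \mathcal I(k\psi_\tau)\big)$ — an $\FS_k$/Hilbert-type quantization of $\{\psi_\tau\}_\tau$ in which $\mathcal I(k\psi_\tau)$ records the prescribed vanishing — and pass to the limit filtration $\mathcal F$ as $k\to\infty$, using subadditivity of multiplier ideals to control the limsup. The Phong--Sturm ray of $\mathcal F$ in the sense of Definition~\ref{def:PSray1} has associated test curve $\tau\mapsto P\big[\usc\big(\lim_k\tfrac1k\log\sup\{|s|: s\in \mathcal F_k^{k\tau}\}\big)\big]_\mathcal I$, and I would prove this equals $\psi_\tau$ using (a) partial Okounkov body / Bernstein--Markov asymptotics for the big line bundle $L$, and (b) the $\mathcal I$-maximality hypothesis $\psi_\tau = P[\psi_\tau]_\mathcal I$, which is exactly what allows the ideals $\mathcal I(k\psi_\tau)$ to asymptotically recover $\psi_\tau$ once $P[\cdot]_\mathcal I$ is applied. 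The convergence of test curves so obtained is then upgraded to $d_1^c$-convergence by invoking once more the continuity of the radial (Monge--Amp\`ere) energy, since $d_1^c$-convergence amounts to convergence of the test curves together with convergence of their energies.

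\emph{Finite generation.} Next, approximate $\mathcal F$ by its degree-$m$ truncations $\mathcal F^{(m)}$, the filtration generated by $\bigoplus_{k\le m}\mathcal F^\bullet H^0(X,kL)$ (rescaled to be $\mathbb Z$-indexed); their Phong--Sturm rays increase to that of $\mathcal F$ and hence $d_1^c$-converge by completeness. A finitely generated, $\mathbb Z$-indexed, integrally closed filtration is precisely the datum $\mathcal F^{\mathfrak a}$ of a flag configuration $\mathfrak a_0\subseteq\cdots\subseteq \mathfrak a_N = \mathcal I(V_\theta)$ — read off from \eqref{eq: filtrate_flag_def} backwards, with $\mathfrak a_\ell$ the integral closure of the ideal sheaf generated by the $\ell$-th graded piece — so the rays of the $\mathcal F^{(m)}$ are genuinely Phong--Sturm rays of flag configurations. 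Chaining the three approximations and diagonalizing proves the theorem.

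\emph{Main obstacle.} The crux is the quantization step: recovering the given test curve from the finite-dimensional data $\mathcal F_k$ when $L$ is merely big. Unlike the ample case, $H^0(X,kL)$ is blind to the augmented base locus, so one must combine Demailly/Fujita-type approximation to control $\vol(L)$ and the non-K\"ahler locus, a careful interplay of the envelope $P[\cdot]_\mathcal I$ with the ideal $\mathcal I(V_\theta)$ — this is precisely why $\mathcal I$-maximality, rather than plain maximality, is the correct hypothesis — and the continuity of the radial Ding/Monge--Amp\`ere energies proved in this paper, without which weak convergence of test curves cannot be promoted to $d_1^c$-convergence. A secondary technical point is pinning down the correct integral-closure normalization in the dictionary between finitely generated filtrations and flag configurations, the big-case analogue of passing from a finitely generated filtration to a test configuration in \cite{DX20}.
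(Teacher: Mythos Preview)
Your plan differs substantially from the paper's and has a gap at the finite-generation step.

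The paper does not go through general filtrations at all. Given $\{\phi_t\}_t\in\mathcal R^1_{\mathcal I}(X,\theta)$ with product potential $\Phi$ on $X\times\Delta$, the multiplier ideal $\mathcal I(2^m\Phi)$ is itself a flag configuration of $L^{2^m}$: by $S^1$-invariance and the claim inside Proposition~\ref{prop:PhiinducePSH} it decomposes as $\mathfrak a^m_0+\mathfrak a^m_1 s+\dots+\mathfrak a^m_{N_m}(s^{N_m})$ with $\mathfrak a^m_{N_m}=\mathcal I(2^m V_\theta)$ (Lemma~\ref{lem:PhiinducePSH_1}). One then shows that the rescaled Phong--Sturm rays $\phi^m_t:=2^{-m}r^{L^{2^m},\,\mathcal I(2^m\Phi)}_t$ decrease to $\phi_t$. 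Monotonicity in $m$ comes from subadditivity of multiplier ideals; the upper bound $(\hat\phi^m)^{\An}(v)\le -2^{-m}\sigma(v)(\mathcal I(2^m\Phi))$ is Lemma~\ref{lem: PS_ray_NA_formula}, which is reduced to the ample case by a padding trick; the matching lower bound is obtained by approximating $\{\phi_t\}$ from below by subgeodesics $\{\psi^j_t\}$ with $\theta_{\psi^j_t}\ge\varepsilon_j\omega$ (Lemma~\ref{lem: ray_approx}), for which Siu-type uniform global generation of $p_1^*L^{2^m}\otimes\mathcal I(2^m\Psi^j)$ eventually holds. One concludes $\hat\phi^{m,\An}\to\hat\phi^{\An}$ on $X^{\Div}$ via \cite[Lemma~B.4]{BBJ21} and then $\phi^m_t\searrow\phi_t$ by the injectivity in Theorem~\ref{thm:NAtoNAbij}. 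There is no quantization/truncation two-step; the approximating objects are flag configurations from the outset.

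Your genuine gap is the assertion that a finitely generated, integrally closed $\mathbb Z$-filtration of $R(X,L)$ is ``precisely'' $\mathcal F^{\mathfrak a}$ for some flag configuration $\mathfrak a$ with $\mathfrak a_N=\mathcal I(V_\theta)$. The assignment \eqref{eq: filtrate_flag_def} is not a bijection: even in the ample case the paper records (around \eqref{eq: filt_containment}) that $\mathcal F_{\mathfrak a}\subsetneq\mathcal F_{(\mathcal X,\mathcal L)}$ in general, and for big $L$ there is no test-configuration picture to fall back on. Reading off $\mathfrak a_\ell$ as base ideals of graded pieces does not guarantee that the Phong--Sturm ray of that $\mathfrak a$ agrees with that of your truncated $\mathcal F^{(m)}$, so the final approximants in your scheme are not known to be flag-configuration rays. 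Separately, your preliminary reduction is unnecessary (finite energy already forces $\psi_{-\infty}=V_\theta$, see \S\ref{subsec:nae1}), and your quantization step---the actual crux---is only gestured at; the paper's substitute for it is precisely the K\"ahler-current approximation of Lemma~\ref{lem: ray_approx}, which restores global generation and is what makes the big case go through.
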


As far as we are aware, approximations results of this nature have not been explored in the big case before. However in the K\"ahler case, perhaps the first result of this nature appeared in \cite[Corollary 1.3]{Berm19trans-Bergman}, where Berman devised an approximation scheme for geodesic rays coming from deformations to the normal cone. Another related result from the K\"ahler case is \cite[Theorem 1.5]{DL20}. With the usual difficulties associated with big classes in place, techniques from the K\"ahler case don't seem to translate to our setting.

\paragraph{Continuity of the radial Ding functional.} Staying with an arbitrary big class $\{\theta\}$, we consider qpsh functions  $\psi,\chi$ on $X$, with $\chi$ having analytic singularity type. After adding some constants to either $\chi$ and $\psi$, 
 one can attach to $(\chi,\psi)$ a Radon probability measure, following \cite{BBEGZ16}:
\begin{equation}
    \label{eq:def-mu}
    \mu := e^{\chi - \psi} \omega^n.
\end{equation}

Next, following \cite{Ding88}, one defines the $\lambda$-Ding functional for $\lambda >0$. This is $\mathcal D_{\mu}^\lambda: \mathcal E^1(X,\theta) \to \RR$, the $\lambda$-Ding functional:
\begin{flalign}\label{def: twisted_Ding}
\mathcal D_\mu^\lambda(\varphi)=-\frac{1}{\lambda}\log\int_Xe^{-\lambda\varphi} \mathrm{d}\mu-I_\theta(\varphi)\text{ for }\varphi\in\mathcal E^1(X,\theta),
\end{flalign}
where $I_\theta(\cdot)$ is the Monge-Amp\`ere energy. For the above definition to make sense, we  assume that $c_\mu[V_\theta]:=\sup\{\gamma \geq 0: \int_X e^{-\gamma V_\theta} \,\mathrm{d}\mu <\infty\}> \lambda$.

The Euler--Lagrange equation of the $\lambda$-Ding functional 
is the following twisted Monge--Amp\`ere equation \cite{DZ22}:
\begin{equation}\label{eq: KE_cont equation}
(\theta + \ddc u)^n  = e^{-\lambda u + \chi - \psi} \omega^n.
\end{equation}

Solutions to this equation, represent potentials along the continuity method for a twisted KE metric, that solves the above equation in the particular case $\lambda = 1$ \cite{DZ22}.

Despite lack of convexity of $\mathcal D^\lambda_\mu$,  \cite[Theorem 1.4]{DZ22} gives a formula for the slope of the $\lambda$-Ding functional along subgeodesic rays. Let $(0,\infty) \ni t \mapsto u_t \in \mathcal E^1(X,\theta)$ be a sublinear subgeodesic ray \cite[\S 3]{DZ22}. Then
\begin{equation}\label{eq: Ding_slope}
\varliminf_{t \to \infty} \frac{\mathcal  D_{\mu}^\lambda(u_t)}{t} = -\lim_{t \to \infty} \frac{I_\theta(u_t)}{t} + \sup\{\tau \ \in \RR \ : \  c_\mu[\hat u_\tau] \geq \lambda \},
\end{equation}
where $c_\mu[u] := \sup\{\gamma \geq 0 : \int_X e^{-\gamma u} \,\mathrm{d}\mu <\infty\}$. 
On the heels of the above result it is convenient to introduce  
\[
\mathcal D_{\mu}^\lambda \{u_t\} := \varliminf_{t \to \infty} \frac{\mathcal  D_{\mu}^\lambda(u_t)}{t},
\]
and we will call this expression the \emph{radial $\lambda$-Ding functional} of the subgeodesic $\{u_t\}_t$. This should be viewed as the transcendental analogue of the $\beta_\delta$-functional in \cite[\S 4]{XZ20}.

In our next main result we show that the 
 radial $\lambda$-Ding functional is $d_1^c$-continuous.

\begin{theorem}(=Theorem \ref{thm:YYTD})\label{mainthm: Ding_cont} For any $\lambda\in (0,c_{\mu}[V_{\theta}])$, the functional $\mathcal{D}_{\mu}^\lambda: \mathcal{R}^1(X,\theta) \to \mathbb{R}$ is continuous.
\end{theorem}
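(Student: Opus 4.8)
The plan is to reduce everything to the slope formula \eqref{eq: Ding_slope}. Identifying each ray $\{u_t\}_t\in\mathcal R^1(X,\theta)$ with its finite energy test curve $\{\hat u_\tau\}_\tau$ via the Ross--Witt Nystr\"om correspondence (Theorem~\ref{thm:RWNpresc}), one writes
\[
\mathcal D_\mu^\lambda\{u_t\}\;=\;-\,E\{u_t\}\;+\;\Lambda_\mu^\lambda\{u_t\},\qquad
E\{u_t\}:=\lim_{t\to\infty}\frac{I_\theta(u_t)}{t},\quad
\Lambda_\mu^\lambda\{u_t\}:=\sup\bigl\{\tau\in\RR:\ c_\mu[\hat u_\tau]\geq\lambda\bigr\},
\]
so it suffices to show both summands are $d_1^c$-continuous. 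For the radial Monge--Amp\`ere energy $E$ this is essentially built into the chordal metric: by \eqref{eq:radial-I-formula}, $E\{u_t\}$ is an explicit integral over $\tau$ of non-pluripolar volumes of the test curve, and it is Lipschitz for $d_1^c$ by \cite[Theorem 2.14]{DDNL5}. So the content of the theorem is the $d_1^c$-continuity of $\Lambda_\mu^\lambda$. Observe also that $\Lambda_\mu^\lambda$ is unchanged if each slice $\hat u_\tau$ is replaced by its $\mathcal I$-maximal regularization $P[\hat u_\tau]_{\mathcal I}$, since this preserves all multiplier ideals and hence the integrability of $e^{-\gamma\,\cdot}\,\mathrm d\mu$; so the test curves below may be taken $\mathcal I$-maximal.

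The relevant structural facts are these. The map $\tau\mapsto c_\mu[\hat u_\tau]$ is non-increasing, so $\{\tau:c_\mu[\hat u_\tau]\geq\lambda\}=(-\infty,\Lambda_\mu^\lambda\{u\}]$; and since $\hat u_{-\infty}$ has minimal singularities, $c_\mu[\hat u_{-\infty}]=c_\mu[V_\theta]>\lambda$, so $\Lambda_\mu^\lambda\{u\}$ is finite and $\mathcal D_\mu^\lambda$ is $\RR$-valued. Moreover, using the $\tau$-concavity of the test curve together with the openness/semicontinuity of complex singularity exponents, one checks that $\tau\mapsto c_\mu[\hat u_\tau]$ is continuous and that $\lambda\mapsto\Lambda_\mu^\lambda\{u\}$ is continuous on $(0,c_\mu[V_\theta])$; this disposes of the endpoint subtleties below. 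Now fix $\{u^j\}\to\{u\}$ in $d_1^c$: the defining formula for $d_1^c$ gives $d_1(\hat u^j_\tau,\hat u_\tau)\to0$ for a.e.\ $\tau$, with uniform energy bounds on compact $\tau$-intervals, and, crucially, it controls the $d_1$-discrepancy of the test curves \emph{integrated over all levels $\tau$}.

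The lower bound $\Lambda_\mu^\lambda\{u\}\leq\liminf_j\Lambda_\mu^\lambda\{u^j\}$ follows from the (effective) lower semicontinuity of the twisted complex singularity exponent $\varphi\mapsto c_\mu[\varphi]$ under $L^1$-convergence with a uniform upper bound --- a Demailly--Koll\'ar type statement, valid here because $\mu$ has a density of analytic singularity type: applied at a good level $\tau_0<\Lambda_\mu^\lambda\{u\}$, where $c_\mu[\hat u_{\tau_0}]>\lambda$ by the above, it yields $c_\mu[\hat u^j_{\tau_0}]\geq\lambda$ for $j\gg1$; now let $\tau_0\uparrow\Lambda_\mu^\lambda\{u\}$. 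The reverse inequality $\limsup_j\Lambda_\mu^\lambda\{u^j\}\leq\Lambda_\mu^\lambda\{u\}$ is the heart of the matter: it asserts that the condition $\int_X e^{-(\lambda-\eps)\hat u_\tau}\,\mathrm d\mu<\infty$ is stable under $d_1^c$-degeneration --- strictly stronger than Demailly--Koll\'ar semicontinuity, and false for arbitrary $L^1$-limits of slices, where singular mass can concentrate and lower the threshold. What rescues it is that the $\tau$-concavity of the $\mathcal I$-maximal test curve forces its singularities to grow at a definite rate in $\tau$: a concentration of singular mass lowering the threshold in the limit would force a singularity to ``switch on'' strictly earlier along the limit curve, producing a level-interval on which $d_1(\hat u^j_\tau,\hat u_\tau)$ stays bounded away from $0$ --- which contradicts $d_1^c(\{u^j\},\{u\})\to0$, since the chordal metric integrates $d_1$ over all $\tau$. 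Made quantitative, this gives $\sup_j\int_X e^{-(\lambda-\eps)\hat u^j_{\tau_0}}\,\mathrm d\mu<\infty$ for good $\tau_0<\Lambda_\mu^\lambda\{u\}$ and every $\eps>0$, whence $c_\mu[\hat u_{\tau_0}]\geq\lambda$ by Fatou, i.e.\ $\Lambda_\mu^\lambda\{u\}\geq\tau_0$; again let $\tau_0\uparrow$. When $\{\theta\}=c_1(L)$ for a big line bundle $L$ one may alternatively route this step through Theorem~\ref{mainthm: approx}: approximate by Phong--Sturm rays of flag configurations, for which $c_\mu[\hat u_\tau]$ is an honest log canonical threshold and the needed semicontinuity is the lower semicontinuity of lct's along a degenerating flag; the transcendental case, however, requires the analytic argument.

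I expect the main obstacle to be precisely this upper semicontinuity of $\Lambda_\mu^\lambda$ --- equivalently, the stability of the integrability threshold of $\mathrm d\mu$ under a $d_1^c$-degeneration of the test curve. Everything turns on the fact that the chordal metric sees the full ``multiplicity-versus-level'' profile of an $\mathcal I$-maximal test curve, not merely its individual slices, and that the concavity in $\tau$ makes singularities grow at a controlled rate; this rigidity is what forbids the pathological concentration of singular mass that Demailly--Koll\'ar semicontinuity on its own permits. The remaining ingredients --- the slope formula reduction, continuity of $E$, the structural facts about $\tau\mapsto c_\mu[\hat u_\tau]$, and the lower bound --- are comparatively routine.
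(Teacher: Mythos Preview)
Your overall decomposition $\mathcal D_\mu^\lambda = -E + \Lambda_\mu^\lambda$ matches the paper's, and you correctly identify the upper semicontinuity of $\Lambda_\mu^\lambda$ as the crux. But two load-bearing claims in your argument are not justified, and the paper's proof shows that the actual mechanism is rather different from the concavity heuristic you propose.

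First, you assert that $d_1^c$-convergence gives $d_1(\hat u^j_\tau,\hat u_\tau)\to 0$ for a.e.\ $\tau$, and that ``the chordal metric integrates $d_1$ over all $\tau$''. Neither statement is established anywhere; in fact $\hat u^j_\tau,\hat u_\tau$ typically have non-minimal singularity type, so the $d_1$-distance between them is not even defined. The paper does not claim slicewise convergence for arbitrary $d_1^c$-convergent sequences. Instead it uses a sandwich lemma (Proposition~\ref{prop: conv_monotone}): after passing to a subsequence, one finds monotone rays $w^k_t\leq u^k_t\leq v^k_t$ with $\{w^k_t\}$ increasing and $\{v^k_t\}$ decreasing to $\{u_t\}$ in $d_1^c$. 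Only for such \emph{monotone} rays does one get honest slicewise convergence $\hat w^k_\tau\nearrow \hat u_\tau$, $\hat v^k_\tau\searrow \hat u_\tau$ together with convergence of the non-pluripolar masses (Proposition~\ref{prop: I_tau_d_1_c_conv}).

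Second, your upper-bound step is not an argument: the claim that a drop in the threshold would force ``a level-interval on which $d_1(\hat u^j_\tau,\hat u_\tau)$ stays bounded away from $0$'' is a heuristic with no mechanism behind it (singularity exponents are not controlled by Monge--Amp\`ere masses alone). The paper's substitute is Lemma~\ref{lem: cont_sing_exp_decreas}: if $u_j\searrow u$ with $\int_X\theta_{u_j}^n\to\int_X\theta_u^n>0$, then $c_\mu[u_j]\to c_\mu[u]$. The proof uses \cite[Lemma~4.3]{DDNL5} to write $(1-\alpha_j)u_j+\alpha_j v_j\leq u$ with $\alpha_j\searrow 0$, then H\"older and Skoda give $c_\mu[u]\geq \lim_j c_\mu[u_j]$. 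This volume-matching input is the genuine replacement for the Demailly--Koll\'ar reverse inequality, and it is what your concavity story is missing. Once the monotone reduction is in place, the increasing case is Theorem~\ref{thm:openness} and the decreasing case is this lemma; the general case follows by squeezing $\Lambda_\mu^\lambda\{w^k\}\leq \Lambda_\mu^\lambda\{u^k\}\leq \Lambda_\mu^\lambda\{v^k\}$.
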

In case of numerical classes, there is a precise estimate due to Boucksom--Jonsson  for the non-Archimedean Ding functional \cite[Proposition~3.15]{BJ22b}, implying continuity. We wonder if  this estimate can be extended to our radial transcendental setting.

\paragraph{Stability and KE metrics.}We discuss applications to delta invariants and KE metrics for a big class $\{\theta\}$. 
Based on the Blum--Jonsson interpretation of the Fujita--Odaka delta invariant \cite{BJ17, BJ18,FO18}, rooted in the non-Archimedean approach to K-stability (see \cite[Definition 7.2]{BBJ21}), we recall the definition of the twisted delta invariant of our data \cite{DZ22}:
\begin{equation}\label{eq: delta_psi_def}
\delta_\mu(\{\theta\}):=\inf_E\frac{A_{\chi,\psi}(E)}{S_\theta(E)}.
\end{equation}
Here the infimum is taken over all prime divisors $E$ over $X$, i.e., $E$ is a prime divisor on a modification $\pi: Y \to X$ of $X$ (cf. \cite[\S B.5]{BBJ21}). 
Also, 
$A_{\chi,\psi}(E):=A_X(E)+\nu(\chi,E)-\nu(\psi,E)$, $A_X(E)$ is the log discrepancy of $E$
and $\nu(\psi,E)$ denotes the Lelong number of $\pi^*\psi$ along $E$ (cf. \cite[(13)]{DZ22}).
The \emph{expected Lelong number} $S_\theta(E)$ of $\{\theta\}$ along $E$ is defined by
\begin{equation*}\label{eq: SE_def}
S_\theta(E):=\frac{1}{\vol(\{\theta\})}\int_0^{\tau_\theta(E)}\vol(\{\pi^*\theta\}-x\{E\})\,\mathrm{d}x,
\end{equation*}
where $\tau_\theta(E) := \sup\{\tau \in \RR : \{\pi^*\theta\}-\tau\{E\} \textup{ is big}\}$ is the pseudoeffective threshold, and the volume function $\vol(\cdot)$ is understood in the sense of \cite{BEGZ10}. 

In \cite[Theorem 1.5]{DZ22} it was shown that $\delta_\mu$ can be computed as the geodesic semistability threshold of the $\lambda$-Ding functionals. In our next main result we additionally argue that $\delta_\mu$ can be computed as the non-Archimedean Ding semistability threshold as well. For the definition of the non-Archimedean Ding functional $\mathcal{D}^{\lambda,\NA}_\mu$ we refer to  \eqref{eq:Lreformulate}.

\begin{theorem}(=Theorem \ref{thm: delta_I_maximal_semistab}, Corollary \ref{cor: NAcor}) \label{mthm: delta} We have the following identities:
\begin{flalign}\label{meq: delta_geod_semi_stab}
\delta_\mu&=\sup\{\lambda>0 \ | \ \mathcal{D}_{\mu}^\lambda\{u_t\}\geq 0\text{ for all sublinear subgeodesic ray }u_t\in\mathcal{E}^1(X,\theta)\}\\
&=\sup\left\{\lambda>0 : \mathcal{D}_{\mu}^\lambda\{u_t\}\geq 0, \ \{u_t\}_t \in \mathcal R^1(X,\theta)\right\} \nonumber\\
&=\sup\left\{\lambda>0 : \mathcal{D}_{\mu}^\lambda\{u_t\}\geq 0, \ \{u_t\}_t \in \mathcal R^1_{\mathcal{I}}(X,\theta)\right\} \nonumber
\end{flalign}
When $\{\theta\}=c_1(L)$ for some big line bundle $L$ on $X$, we further have
\begin{flalign}
\delta_\mu&=\sup\left\{\lambda>0 : \mathcal{D}_{\mu}^\lambda\{u_t\}\geq 0, \ \forall  \textup{ rays }\{u_t\}_t  \textup{ induced by filtrations}\right\} \nonumber\\
&=\sup\left\{\lambda>0 : \mathcal{D}_{\mu}^\lambda\{u_t\}\geq 0, \ \forall  \textup{ rays }\{u_t\}_t  \textup{ induced by flag configurations}\right\} \nonumber \\
&=\sup\left\{\lambda>0 : \mathcal{D}_{\mu}^{\lambda,\NA}(u) \geq 0, \ u \in \mathcal E^{1,\NA}(X,\theta)\right\} .\nonumber
\end{flalign}
\end{theorem}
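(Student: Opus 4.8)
The plan is to reduce every identity to \cite[Theorem 1.5]{DZ22}, which asserts that $\delta_\mu$ equals the geodesic (equivalently, sublinear subgeodesic) semistability threshold, i.e.\ the first displayed quantity. For a class $C$ of rays write $\delta(C):=\sup\{\lambda>0: \mathcal D_\mu^\lambda\{u_t\}\ge 0\ \forall\{u_t\}\in C\}$. Two monotonicity principles organize the argument. First, $\lambda\mapsto\mathcal D_\mu^\lambda\{u_t\}$ is non-increasing: the slope formula \eqref{eq: Ding_slope} writes it as $-\lim_t I_\theta(u_t)/t+\sup\{\tau: c_\mu[\hat u_\tau]\ge\lambda\}$, and the second term grows as $\lambda$ shrinks; hence each $\delta(C)$ is the supremum of an interval of admissible $\lambda$'s, and to get $\delta(C)\le\delta(C')$ it suffices to check $\mathcal D_\mu^\lambda\{u_t\}\ge 0$ for every $\lambda<\delta(C)$ and every $\{u_t\}\in C'$. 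Second, $C'\subseteq C$ forces $\delta(C)\le\delta(C')$. Combining the inclusions
\[
\{\text{rays of flag configurations}\}\subseteq\{\text{rays of filtrations}\}\subseteq\mathcal R^1_\mathcal I(X,\theta)\subseteq\mathcal R^1(X,\theta)\subseteq\{\text{sublinear subgeodesic rays}\}
\]
(membership of filtration and flag-configuration rays in $\mathcal R^1_\mathcal I$ being built into Definition~\ref{def:PSray1}, and flag configurations inducing filtrations via \eqref{eq: filtrate_flag_def}) with \cite[Theorem 1.5]{DZ22} yields $\delta_\mu\le\delta(\mathcal R^1)\le\delta(\mathcal R^1_\mathcal I)\le\delta(\{\text{filtrations}\})\le\delta(\{\text{flag configurations}\})$; moreover the identification $\mathcal E^{1,\NA}(X,\theta)=\mathcal R^1_\mathcal I(X,\theta)$, under which $\mathcal D_\mu^{\lambda,\NA}$ is by construction (see \eqref{eq:Lreformulate} and \eqref{eq: Ding_slope}) the radial $\lambda$-Ding functional, gives $\delta(\mathcal E^{1,\NA})=\delta(\mathcal R^1_\mathcal I)$. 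It remains to prove $\delta(\{\text{flag configurations}\})\le\delta_\mu$.

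I would do this in two moves. \emph{Move one:} $\delta(\mathcal R^1_\mathcal I)\le\delta_\mu$. Fix $\lambda<\delta(\mathcal R^1_\mathcal I)$ and a sublinear subgeodesic ray $\{u_t\}$; I must show $\mathcal D_\mu^\lambda\{u_t\}\ge 0$. Let $\{\hat u_\tau\}_\tau$ be the Legendre test curve entering \eqref{eq: Ding_slope}, put $v_\tau:=P[\hat u_\tau]_\mathcal I$, and let $\{v_t\}_t$ be the associated geodesic ray. Since $v_\tau\ge\hat u_\tau$ we get $v_t\ge u_t$, hence $-\lim_t I_\theta(v_t)/t\le-\lim_t I_\theta(u_t)/t$ by monotonicity of the Monge--Amp\`ere energy; and since $P[\cdot]_\mathcal I$ preserves the multiplier ideals $\mathcal I(t\,\cdot)$ of all rescalings, hence the threshold $c_\mu$, we have $\sup\{\tau:c_\mu[v_\tau]\ge\lambda\}=\sup\{\tau:c_\mu[\hat u_\tau]\ge\lambda\}$. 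Feeding these into \eqref{eq: Ding_slope} gives $\mathcal D_\mu^\lambda\{v_t\}\le\mathcal D_\mu^\lambda\{u_t\}$. When $\{u_t\}$ has finite radial energy, $\{v_t\}\in\mathcal R^1_\mathcal I(X,\theta)$ and $\mathcal D_\mu^\lambda\{v_t\}\ge 0$ by the choice of $\lambda$, so $\mathcal D_\mu^\lambda\{u_t\}\ge 0$; the complementary case forces $-\lim_t I_\theta(u_t)/t=+\infty$ and the inequality is trivial. Together with $\delta_\mu\le\delta(\mathcal R^1)\le\delta(\mathcal R^1_\mathcal I)$, this collapses the first three displayed quantities and $\delta(\mathcal E^{1,\NA})$ to $\delta_\mu$.

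\emph{Move two} (line bundle case): $\delta(\{\text{flag configurations}\})\le\delta(\mathcal R^1_\mathcal I)$. By Theorem~\ref{mainthm: approx}, every $v\in\mathcal R^1_\mathcal I(X,\theta)=\mathcal E^{1,\NA}(X,L^\An)$ is a $d_1^c$-limit of Phong--Sturm rays $v^{(k)}$ of flag configurations. For $\lambda<\delta(\{\text{flag configurations}\})$ the monotonicity in $\lambda$ gives $\mathcal D_\mu^\lambda\{v^{(k)}_t\}\ge 0$ for all $k$, and Theorem~\ref{mainthm: Ding_cont} ($d_1^c$-continuity of $\mathcal D_\mu^\lambda$ on $\mathcal R^1(X,\theta)$) yields $\mathcal D_\mu^\lambda\{v_t\}=\lim_k\mathcal D_\mu^\lambda\{v^{(k)}_t\}\ge 0$; hence $\lambda\le\delta(\mathcal R^1_\mathcal I)$, and letting $\lambda$ increase, $\delta(\{\text{flag configurations}\})\le\delta(\mathcal R^1_\mathcal I)$. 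This squeezes $\delta(\mathcal R^1_\mathcal I)=\delta(\{\text{filtrations}\})=\delta(\{\text{flag configurations}\})$, completing all the identities. The step I expect to be the crux is the reduction in Move one --- precisely the claim that passing to the $\mathcal I$-maximalization of the Legendre test curve leaves the radial $\lambda$-Ding functional non-increased. The energy half is soft, but the invariance $c_\mu[P[w]_\mathcal I]=c_\mu[w]$ requires knowing that $c_\mu[w]$ is governed by the family $\{\mathcal I(tw)\}_{t\ge 0}$ despite the singular density of $\mu=e^{\chi-\psi}\omega^n$, and one must check that the reduction genuinely lands in $\mathcal R^1_\mathcal I(X,\theta)$ whenever $\{u_t\}$ carries finite radial energy; Move two, while leaning on the two hard theorems of the paper (Theorems~\ref{mainthm: approx} and~\ref{mainthm: Ding_cont}), is then a routine density-and-continuity argument.
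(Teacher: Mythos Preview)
Your overall structure matches the paper's exactly: Move one is the content of Lemma~\ref{lem:radial-relation-under-maximization} (the paper likewise relies on \cite[Theorem~2.3]{DZ22} for the invariance $c_\mu[P[w]_\mathcal I]=c_\mu[w]$, precisely the crux you flag), and Move two is the paper's use of Theorem~\ref{prop:BBJmaxappbyflagconf} together with Theorem~\ref{thm: rad_Ding_cont}.

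There is, however, one genuine gap. You assert that under the identification $\mathcal E^{1,\NA}(X,\theta)=\mathcal R^1_\mathcal I(X,\theta)$ the non-Archimedean Ding $\mathcal D_\mu^{\lambda,\NA}$ is ``by construction'' the radial $\lambda$-Ding functional. This is not so: while $I_\theta^\NA$ is indeed defined to be the radial energy (see \eqref{eq: INAdef}), the functional $L_\mu^{\lambda,\NA}$ is defined valuatively in \eqref{eq:LNAdefinition}--\eqref{eq:Lreformulate} as an infimum over divisorial valuations, \emph{not} as the radial slope $\sup\{\tau:c_\mu[\hat u_\tau]\ge\lambda\}$ of \eqref{eq:radial-Ding-formula}. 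The equality $L_\mu^{\lambda,\NA}\{\hat u_\tau\}=L_\mu^\lambda\{u_t\}$ is the content of Theorem~\ref{thm:L=inf-sup}, proved via the valuative characterization of the integrability threshold (ultimately \cite[Theorem~B.5]{BBJ21} and \cite[Theorem~2.2]{DZ22}) together with an optimization argument over rescalings of a destabilizing divisor. Without this step, the last displayed identity $\delta_\mu=\sup\{\lambda>0:\mathcal D_\mu^{\lambda,\NA}(u)\ge 0,\ u\in\mathcal E^{1,\NA}(X,\theta)\}$ does not follow from anything you have written.
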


Lastly, we turn to stability and assume that $\{\theta\} = c_1(-K_X)$. For simplicity we only treat the untwisted case, where $\psi =0$ and $\chi :=f \in C^\infty(X)$ satisfying $\theta + \ddc f = \Ric \omega$.

\begin{definition}
\label{def:-K-flag-Ding-stable}
    We say that $(X,-K_X)$ is \emph{uniformly Ding stable with respect to flag configurations}, if there exists $\varepsilon >0$ such that
\begin{equation}\label{eq: uniform_Ding_stab_flag_def_intr}
\mathcal{D}_\mu^{1,\NA} (u)\geq \varepsilon \mathcal J^{\NA}(u),
\end{equation}
for any $u = \{u_t\}_t \in \mathcal E^{1,\NA}(X,\theta) $ induced by flag configurations.
\end{definition}
  For the definition of the non-Archimedean $J$ functional $\mathcal{J}^{\NA}_\mu$ we refer to  \eqref{eq:Jreformulate}. We refer to Section \ref{sec: YTD} for more details, and specifically to \eqref{eq: Ding_algebraic} for an algebraic/valuative interpretation of uniform Ding stability.

Combining Theorems \ref{mainthm: approx}, \ref{mainthm: Ding_cont} and \ref{mthm: delta} with the results of \cite{DZ22}, we prove a uniform YTD type existence theorem for KE metrics.

\begin{theorem}(=Theorem \ref{thm:YYTD})
\label{thm:YTD}
Suppose $-K_X$ is big. If $(X,-K_X)$ is uniformly Ding stable with respect to flag configurations then there exists a KE metric. Specifically, there exists $ u \in \textup{PSH}(X,\theta)$ having minimal singularity type such that $\theta_u = \Ric \theta_u$, i.e., 
$$
\theta_u^n = e^{f- u} \omega^n.
$$\end{theorem}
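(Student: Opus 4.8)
The plan is to derive Theorem \ref{thm:YTD} by combining the abstract existence machinery of \cite{DZ22} with the three new ingredients established in this paper: the approximation result (Theorem \ref{mainthm: approx}), the continuity of the radial Ding functional (Theorem \ref{mainthm: Ding_cont}), and the stability threshold characterization (Theorem \ref{mthm: delta}). The key structural point is that \emph{a priori} uniform Ding stability is required along all sublinear subgeodesic rays, or at least along all of $\mathcal E^{1,\NA}(X,\theta)$, before the analytic variational method of \cite{DZ22} yields a solution; the content here is that it suffices to test against the much smaller, algebraically defined class of flag configurations. So the proof is essentially a density/continuity argument upgrading flag-configuration stability to full stability.

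First I would unwind the definitions: assuming $(X,-K_X)$ is uniformly Ding stable with respect to flag configurations means there is $\varepsilon>0$ with $\mathcal D_\mu^{1,\NA}(u)\ge \varepsilon\,\mathcal J^{\NA}(u)$ for every $u\in\mathcal E^{1,\NA}(X,\theta)$ induced by a flag configuration. I want to promote this to the inequality $\mathcal D_\mu^{1,\NA}(u)\ge \varepsilon\,\mathcal J^{\NA}(u)$ for \emph{all} $u\in\mathcal E^{1,\NA}(X,\theta)=\mathcal R^1_{\mathcal I}(X,\theta)$. Here is where Theorem \ref{mainthm: approx} enters: every finite energy non-Archimedean metric is a $d_1^c$-limit of Phong--Sturm rays of flag configurations. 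Both sides of the stability inequality are $d_1^c$-continuous on $\mathcal R^1(X,\theta)$ — for the left-hand side this is precisely Theorem \ref{mainthm: Ding_cont} (with $\lambda=1$, which is admissible since $-K_X$ big forces $c_\mu[V_\theta]>1$ in the untwisted normalization $\mu=e^{f}\omega^n$, $\theta+\ddc f=\Ric\omega$), and for $\mathcal J^{\NA}$ one uses its expression through the Monge--Amp\`ere energy $I_\theta$ and known continuity of the radial $I$/$J$ functionals on the chordal metric space, as recorded in Section 2. Passing to the limit along the approximating flag configurations therefore yields the uniform inequality on all of $\mathcal E^{1,\NA}(X,\theta)$.

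Next I would translate this uniform non-Archimedean coercivity into geodesic coercivity. By the Ross--Witt Nystr\"om correspondence (Theorem \ref{thm:RWNpresc}) and the slope formula \eqref{eq: Ding_slope}, $\mathcal D_\mu^{1,\NA}(u)=\mathcal D_\mu^1\{u_t\}$ for the finite energy geodesic ray $\{u_t\}_t$ corresponding to $u$, and similarly $\mathcal J^{\NA}(u)$ computes the radial slope of the $J$-functional. Hence we obtain $\mathcal D_\mu^1\{u_t\}\ge\varepsilon\,\mathcal J\{u_t\}$ for every finite energy geodesic ray, i.e. geodesic uniform Ding stability. At this stage the argument is purely an invocation of \cite{DZ22}: there it is proved (via the variational/continuity method for the twisted equation \eqref{eq: KE_cont equation} together with the properness-from-rays principle) that geodesic uniform Ding stability with exponent $\lambda=1$ implies solvability of $\theta_u^n=e^{f-u}\omega^n$ with $u$ of minimal singularity type. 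One should also note that the third line of \eqref{meq: delta_geod_semi_stab} lets us move freely between testing over $\mathcal R^1$, over $\mathcal R^1_{\mathcal I}$, and over the subgeodesic class, so no information is lost in the reductions.

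The main obstacle — and the real mathematical content funneled in from the earlier sections — is the joint use of Theorems \ref{mainthm: approx} and \ref{mainthm: Ding_cont}: one must be sure that the $d_1^c$-approximation of Theorem \ref{mainthm: approx} is compatible with continuity of \emph{both} functionals simultaneously, and in particular that $\mathcal J^{\NA}$ does not degenerate in the limit (otherwise the inequality $\mathcal D\ge\varepsilon\mathcal J$ could become vacuous). Concretely, if $\mathcal J^{\NA}$ along the approximants tends to $0$ but the limit $u$ has $\mathcal J^{\NA}(u)>0$, continuity of $\mathcal J^{\NA}$ rules this out, so the point is genuinely that we have continuity (not merely lower/upper semicontinuity) of $\mathcal J^{\NA}$ on $(\mathcal R^1(X,\theta),d_1^c)$; this is what makes the limiting argument honest. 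A secondary, more technical point is verifying the admissibility hypothesis $\lambda=1<c_\mu[V_\theta]$ of Theorem \ref{mainthm: Ding_cont} in the anticanonical setting — this follows because $V_\theta$ has minimal singularities and $f$ is smooth, so $e^{-\gamma V_\theta}e^{f}\omega^n$ is integrable for $\gamma$ slightly larger than $1$ by standard estimates on volumes of big classes, but it must be stated. Once these are in place, the theorem follows by assembling the cited results.
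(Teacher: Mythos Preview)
Your proposal uses the same ingredients as the paper (Theorem~\ref{prop:BBJmaxappbyflagconf} for approximation, Theorem~\ref{thm: rad_Ding_cont} for continuity of $\mathcal D^1_\mu$, Proposition~\ref{prop: I_tau_d_1_c_conv} for continuity of $I_\theta$ and $\tau^+$, $\mathcal I$-maximization, and \cite[Theorem~5.3]{DZ22}), and the logical content is the contrapositive of the paper's argument. The paper runs the proof by contradiction: if no KE metric exists, \cite[Theorem~5.3]{DZ22} produces a normalized ray $\{u_t\}_t\in\mathcal R^1(X,\theta)$ with $\mathcal D^1_\mu\{u_t\}\le 0$ and $\mathcal J\{u_t\}=1$; one then $\mathcal I$-maximizes (so $\mathcal D$ can only drop, by Lemma~\ref{lem:radial-relation-under-maximization}), approximates by flag configurations, and passes to the limit using continuity to contradict \eqref{eq: uniform_Ding_stab_flag_def}.

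There is one genuine soft spot in your direct version. After extending $\mathcal D^{1,\NA}_\mu\ge\varepsilon\,\mathcal J^{\NA}$ from flag configurations to all of $\mathcal R^1_{\mathcal I}(X,\theta)$ by density and continuity, you need to pass to $\mathcal R^1(X,\theta)$ before invoking \cite{DZ22}. You justify this by citing the third line of \eqref{meq: delta_geod_semi_stab}, but that equality concerns only the stability \emph{threshold} $\delta_\mu$, not the uniform coercivity inequality; it does not by itself let you ``move freely'' between the ray classes. What actually does the job is Lemma~\ref{lem:radial-relation-under-maximization}: for $\{u_t\}_t\in\mathcal R^1(X,\theta)$ with $\mathcal I$-maximization $\{w_t\}_t$, one has $\tau^+_{\hat w}=\tau^+_{\hat u}$, $\mathcal L^1_\mu\{w_t\}=\mathcal L^1_\mu\{u_t\}$, and $I_\theta\{w_t\}\ge I_\theta\{u_t\}$; rewriting the inequality as $\mathcal L^1_\mu\ge \varepsilon\tau^+ +(1-\varepsilon)I_\theta$ shows that its validity for $\{w_t\}_t$ forces it for $\{u_t\}_t$. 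The paper's contradiction framing sidesteps this computation entirely, since $\mathcal I$-maximization only \emph{improves} a destabilizing ray. Your remark on the hypothesis $1<c_\mu[V_\theta]$ is well taken, though your sketched justification (``standard estimates on volumes of big classes'') is not a proof of this; the paper leaves it implicit as well.
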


It follows from \cite[Remark 1.3]{Xu22} and the main result of \cite{LTW21b}  that finite generation of the anticanonical section ring and uniform Ding stability with respect to test configurations (as defined by Dervan--Reboulet \cite{DeRe22} in the big case) also imply existence of KE metrics (for details see \cite[Section 6.2]{DZ22}). One advantage of our Theorem \ref{thm:YTD} is that we don't impose any conditions on finite generation (or $-K_X$ being klt) apriori. Similar to \cite{DZ22}, our approach is purely analytical, but on the heels of \cite{Xu22} it is natural to ask if one could give another proof of Theorem \ref{thm:YTD} using techniques of the minimal model program.

Around the same time our paper appeared on the arXiv, the very intriguing work \cite{Tr23} was published, proving a YTD type existence theorem for K\"ahler--Einstein metrics with prescribed singularity type. With a different motivation, Trusiani independently developed the theory of relative test curves, overlapping with our results in Section 2.2. 

\paragraph{Acknowledgments.}\vspace{-0.3cm} We would like to thank Ruadhai Dervan, Mattias Jonsson, Yaxiong Liu and Pietro Piccione for their comments. The first named author was partially supported by an Alfred P. Sloan Fellowship and National Science Foundation grant DMS--1846942. The second named author is supported by Knut och Alice Wallenbergs Stiftelse grant KAW 2021.0231. The third named  author is supported by NSFC grants 12101052, 12271040, and the Fundamental Research Funds 2021NTST10.

\section{Preliminaries}

\subsection{Finite energy pluripotential theory}\label{subsec:finitee}

We give a very brief account of finite energy pluripotential theory in the big case. For a more complete treatment we refer to the papers \cite{BEGZ10}, \cite{DDNL3}, \cite[\S 3]{DX21}, \cite[\S 3]{DZ22} and the textbook \cite{GZ17}.

\paragraph{Finite energy pluripotential theory.} Let $(X,\omega)$ be a connected compact K\"ahler manifold of dimension $n$ and  $\theta$ a smooth closed real $(1,1)$-form. A function $u: X \rightarrow \mathbb{R}\cup \{-\infty\}$ is called quasi-plurisubharmonic (qpsh) if locally $u= \rho + \varphi$, where $\rho$ is smooth and $\varphi$ is a plurisubharmonic (psh) function. We say that $u$ is $\theta$-plurisubharmonic ($\theta$-psh) if it is qpsh and $\theta_u:=\theta+\ddc u \geq 0$ as currents, with $\ddc=\sqrt{-1}\partial\bar\partial/2\pi$. We let $\PSH(X,\theta)$ denote the space of $\theta$-psh functions on $X$. 

The class $\{\theta\}$ is {\it big} if there exists $\psi\in \textup{PSH}(X,\theta)$ satisfying $\theta_\psi\geq \varepsilon \omega$ for some $\varepsilon>0$. The class $\{\theta\}$ is \emph{pseudoeffective} if $\{\theta + \varepsilon \omega\}$ is big for any $\varepsilon >0$.
We assume that  $\{\theta\}$ is pseudoeffective in this section, unless specified otherwise.

Given $u,v \in \textup{PSH}(X,\theta)$,  $u$ is more singular than $v$, (notation: $u \preceq v$) if there exists $C\in \mathbb{R}$  such that $u\leq v+C$. The potential $u$ has the same singularity as $v$ (notation: $u \simeq v$) if $u\preceq v$ and $v\preceq u$. 
The classes $[u]$ of this latter equivalence relation are called \emph{singularity types}. When $\{\theta\}$ is merely big, all elements of $\textup{PSH}(X,\theta)$ are very singular, and we distinguish the potential with minimal singularity:
\begin{equation}\label{eq:def-V-theta}
    V_\theta := \sup \{u \in \textup{PSH}(X,\theta) : u \leq 0\}.
\end{equation}
A function $u\in \PSH(X,\theta)$ is said to have minimal singularity if it has the same singularity type as $V_{\theta}$, i.e., $[u]=[V_\theta]$.

We say that $[u]$ is an \emph{analytic singularity type} if it has a representative $u \in \textup{PSH}(X,\theta)$ that locally can be written as $u = c\log (\sum_{j} |f_j|^2) +g$, where $c>0$, $g$ is a bounded function and the $f_j$ are a finite set of holomorphic functions.
By a Demailly's approximation theorem there are plenty of $\theta$-psh functions with analytic singularity type if $\{\theta\}$ is big \cite[Section 14]{Dem12}. 

In \cite{BEGZ10} the authors introduce the non-pluripolar Borel measure $\theta_u^n$ of an element $u \in \textup{PSH}(X,\theta)$, satisfying $\int_X \theta_u^n \leq  \int_X \theta_{V_\theta}^n = : \vol(\{\theta\})$. It was proved in \cite[Theorem 1.2]{WN19} that for any $u,v \in \textup{PSH}(X,\theta)$  we have the following monotonicity result for the masses:
if $v \preceq u$ then $\int_X \theta_v^n \leq \int_X \theta_u^n.$

We say that $u \in \textup{PSH}(X,\theta)$ is a full mass potential ($u \in \mathcal E(X,\theta)$) if 
$\int_X \theta_u^n = \int_X \theta_{V_\theta}^n.$
Moreover, we say that $u \in \mathcal E(X,\theta)$ has finite energy ($u \in \mathcal E^1(X,\theta)$) if
$\int_X |u - V_\theta| \theta_{u}^n < \infty.$

The class $\mathcal E^1(X,\theta)$  plays a central role in the variational theory of complex Monge--Amp\`ere equations, as detailed in \cite{BBEGZ16,BEGZ10} and later works. Here we only mention that the Monge--Amp\`ere energy $I_\theta$ naturally extends to this space with the usual formula:
\begin{equation}\label{eq: I_def}
I_\theta(u) = \frac{1}{\vol(\{\theta\}) (n+1)}\sum_{j = 0}^n \int_X (u - V_\theta) \,\theta_u^j  \wedge \theta_{V_\theta}^{n-j}, \ \ \ \ u \in \mathcal E^1(X,\theta).
\end{equation}
It is upper semi-continuous (usc) with respect to the $L^1$ topology on $\textup{PSH}(X,\theta)$.

Given any $f : X \to [-\infty,+\infty]$ one can consider the envelope $P_\theta(f):=\textup{usc}(\sup\{v \in \textup{PSH}(X,\theta), \ v \leq f \})$, where $\mathrm{usc}$ denotes the upper-semicontinuous regularization. Then, for $u,v \in \textup{PSH}(X,\theta)$ we can introduce the \emph{rooftop envelope} $P_\theta(u,v):=P_\theta(\min(u,v))$.

With the help of these envelopes one can define a complete metric on $\mathcal E^1(X,\theta)$. Indeed, as pointed out in \cite[Theorem 2.10]{DDNL1}, for $u,v \in \mathcal E^1(X,\theta)$ we have that $P(u,v) \in \mathcal E^1(X,\theta)$ and the following expression defines a complete metric on $\mathcal E^1(X,\theta)$ \cite[Theorem 1.1]{DDNL3}:
$$d_1(u,v) = I_\theta(u) + I_\theta(v) - I_\theta(P(u,v)).$$
In addition, $d_1$-convergence implies $L^1$-convergence of qpsh potentials \cite[Theorem 3.11]{DDNL3}. Moreover, any two points of $u_0,u_1 \in \mathcal E^1(X,\theta)$ can be connected by a special $d_1$-geodesic $[0,1] \ni t\mapsto u_t \in \mathcal E^1(X,\theta)$, called the finite energy geodesic segment between $u_0,u_1$.

\paragraph{Envelopes of singularity type.}
We now discuss envelopes attached to singularity types, going back to  \cite{RWN14} in the K\"ahler case: given $u,v\in \PSH(X,\theta)$,
\[
P_\theta[u](v) := \textup{usc}\Big(\lim_{C \to +\infty}P_\theta(u+C,v)\Big)=\textup{usc}\big(\sup\{ w \in\textup{ PSH}(X, \theta): w \leq v, \ [w] \preceq [u]\}\big) .
\]

It is easy to see that $P_\theta[u](v)$ depends only on the singularity type $[u]$. When $v = V_\theta$, we simply write $P[u]:=P_\theta[u]:=P_\theta[u](V_\theta)$ and call this potential the \emph{envelope of the singularity type} $[u]$. It follows from \cite[Theorem 3.8]{DDNL2} that $\theta_{P[u]}^n \leq \mathbbm{1}_{\{P[u] =0\}} \theta^n$. 
Also, by \cite[Proposition 2.3 and Remark 2.5]{DDNL2} we have that $\int_X \theta_{P[u]}^n = \int_X \theta_u^n$.

An algebraic counterpart of $P[u](v)$ is the operator $P[u]_\mathcal I(v) \in \textup{PSH}(X,\theta)$ defined by 
\[
P[u]_\mathcal I(v) := \sup\left\{ w \in\textup{ PSH}(X, \theta): w \leq v, \mathcal I(tw) \subseteq \mathcal I(tu), t \geq 0\right\} . 
\]
Here $\mathcal I (tu)$ is a multiplier ideal sheaf, locally generated by holomorphic functions $f$ such that
$|f|^2 e^{-tu}$ is integrable. This envelope is implicit in \cite{KS20}, and was introduced and studied in detail in
\cite[\S 2.4]{DX20}, \cite{Tr20}, \cite{DX21}. Again, $P[u]_\mathcal I:=P_\theta[u]_\mathcal I(V_\theta)$, and it is not difficult to see that $P[u]_\mathcal I \geq P[u]$ for any $u \in \textup{PSH}(X,\theta)$.

We say $u\in\textup{PSH}(X,\theta)$ is a model potential if $P_\theta[u]=u$;  it is an $\mathcal{I}$-model potential if $P_\theta[u]_\mathcal{I}=u$. By \cite[Proposition 2.18(i)]{DX20}, an $\mathcal{I}$-model potential is always a model potential.

We note the following technical result about model potentials that will be needed later.

\begin{lemma}\label{lma:Imodeldeclelong}
Let $\varphi^j\in \PSH(X,\theta)$ be a decreasing sequence of model potentials. Let $\varphi^j \searrow  \varphi \in \textup{PSH}(X,\theta)$. If $\varphi$ has positive mass then for any prime divisor $E$ over $X$,
    \[
    \lim_{j\to\infty}\nu(\varphi^j,E)=\nu(\varphi,E).
    \]
\end{lemma}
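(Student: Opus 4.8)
The plan is to reduce the statement to two inequalities. The easy direction is $\limsup_{j\to\infty}\nu(\varphi^j,E)\le\nu(\varphi,E)$: since the sequence $\varphi^j$ is decreasing with limit $\varphi$, for every $j$ we have $\varphi^j\ge\varphi$, hence $\nu(\varphi^j,E)\le\nu(\varphi,E)$, and in fact the sequence $\nu(\varphi^j,E)$ is nondecreasing in $j$, so it converges to some limit $\ell\le\nu(\varphi,E)$. The content is the reverse inequality $\ell\ge\nu(\varphi,E)$, i.e.\ showing no Lelong number is lost in the limit. Note the hypothesis that the $\varphi^j$ are model potentials and that $\varphi$ has positive mass is essential here, since for general decreasing sequences Lelong numbers can jump in the limit.

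First I would recall, using the results collected in the excerpt, that for a model potential $w$ the mass $\int_X\theta_w^n$ is controlled by $\theta_w^n\le \mathbbm 1_{\{w=0\}}\theta^n$ (via $\theta_{P[w]}^n\le\mathbbm1_{\{P[w]=0\}}\theta^n$ and $\int_X\theta_{P[w]}^n=\int_X\theta_w^n$), and more importantly that passing to the decreasing limit of model potentials behaves well with respect to envelopes and masses — concretely I would want $\varphi=P_\theta[\varphi]$ to still be a model potential (a decreasing limit of model potentials with positive mass should again be model, cf.\ the envelope theory of \cite{DDNL2,DX20}), and that $\int_X\theta_{\varphi^j}^n\to\int_X\theta_\varphi^n$ by the monotone convergence of non-pluripolar masses along decreasing sequences together with $\int_X\theta_{\varphi^j}^n=\int_X\theta_{\varphi^j}^n$. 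The key analytic input I would invoke is the relation between Lelong numbers along a divisor $E$ over $X$ and the masses of the potentials after blowing up: on the modification $\pi\colon Y\to X$ on which $E$ lives, the Lelong number $\nu(\varphi,E)=\nu(\pi^*\varphi,E)$ can be detected by comparing $\pi^*\varphi$ with $\nu(\varphi,E)\log|s_E|$ near the generic point of $E$, and the model property is precisely what forces $\pi^*\varphi^j$ to be the envelope of its own singularities, so that its Lelong number along $E$ is "rigid."

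The core step is therefore: assume for contradiction $\ell<\nu(\varphi,E)$, pick $\gamma$ with $\ell<\gamma<\nu(\varphi,E)$, so that $\nu(\varphi^j,E)\le\ell<\gamma$ for all $j$. Using the characterization of the envelope $P_\theta[\varphi^j]=\varphi^j$ together with the multiplier-ideal description, I would argue that the bound $\nu(\varphi^j,E)<\gamma$ for all $j$, combined with the model property, produces a $\theta$-psh competitor $w$ dominating $\varphi$ in singularity type but with $\nu(w,E)<\gamma<\nu(\varphi,E)$, and with $\int_X\theta_w^n=\int_X\theta_\varphi^n>0$. Then $w$ would be a model potential with the same mass as $\varphi$ but strictly less singular along $E$; invoking the uniqueness/rigidity of model potentials of maximal mass in a fixed big class (the fact that a model potential is determined by its non-pluripolar Monge--Ampère measure on the set where it vanishes, via \cite{DDNL2}) or directly the domination monotonicity of masses, I would derive $w\simeq\varphi$, contradicting $\nu(w,E)<\nu(\varphi,E)$. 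The technical heart — and the step I expect to be the main obstacle — is making this comparison argument rigorous: producing the competitor $w$ from the uniform Lelong bound requires carefully passing through the modification $\pi$, controlling multiplier ideals $\mathcal I(t\varphi^j)$ uniformly in $j$ near $E$, and ensuring the resulting envelope does not lose mass (this is where "$\varphi$ has positive mass" is used, to rule out the degenerate case where everything collapses). I would handle the mass bookkeeping using the continuity of $\int_X\theta_\cdot^n$ along decreasing sequences and the identity $\int_X\theta_{P[u]}^n=\int_X\theta_u^n$, and handle the Lelong-number transfer using semicontinuity of Lelong numbers under blow-up and the fact that for model potentials the Lelong number along $E$ equals the "asymptotic" invariant $\sup\{c\ge 0:\mathcal I(t\varphi^j)\subseteq\mathcal I(ct\log|s_E|)\ \forall t\}$, which is continuous along our decreasing sequence precisely because the $\varphi^j$ are $\mathcal I$-compatible envelopes.
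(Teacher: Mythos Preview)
Your easy direction is fine, and you correctly locate the difficulty in showing $\lim_j\nu(\varphi^j,E)\ge\nu(\varphi,E)$. But the hard direction in your proposal has a genuine gap: the construction of the competitor $w$ is never carried out, and the ingredients you list do not obviously produce it. From the bound $\nu(\varphi^j,E)<\gamma$ alone you cannot extract a $\theta$-psh function with the \emph{same} mass as $\varphi$ and strictly smaller Lelong number along $E$; the natural candidates $\varphi^j$ have mass $\ge\int_X\theta_\varphi^n$, typically strictly, so the rigidity argument via \cite{DDNL2} does not apply to them. Your appeal to multiplier ideals and ``$\mathcal I$-compatible envelopes'' is also off target: the $\varphi^j$ are assumed model, not $\mathcal I$-model, and in any case continuity of the asymptotic multiplier-ideal invariant along decreasing sequences is exactly the sort of statement one is trying to prove, not a tool one can invoke.

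The paper's proof is much shorter and rests on one input you are missing: \cite[Lemma~4.3]{DDNL5}. Since the $\varphi^j$ are model and $\int_X\theta_{\varphi^j}^n\searrow\int_X\theta_\varphi^n>0$ (by \cite[Proposition~4.6]{DDNL5}), that lemma produces, for any $\varepsilon\in(0,1)$ and $j$ large, a potential $\psi^j\in\PSH(X,\theta)$ with $(1-\varepsilon)\varphi^j+\varepsilon\psi^j\le\varphi$. Taking Lelong numbers gives $(1-\varepsilon)\nu(\varphi^j,E)+\varepsilon\,\nu(\psi^j,E)\ge\nu(\varphi,E)$; since $\nu(\cdot,E)$ is uniformly bounded on $\PSH(X,\theta)$, letting $\varepsilon\searrow 0$ yields the claim. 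This convex-combination lemma is precisely the mechanism that converts mass convergence into control on singularities, and it is the missing idea in your plan.
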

\begin{proof} 
By \cite[Proposition 4.6]{DDNL5} we have that $\int_X \theta_{\varphi^j}^n \searrow \int_X \theta_{\varphi}^n.$ Hence, by \cite[Lemma~4.3]{DDNL5}, for any $\varepsilon \in (0,1)$ and $j$ big enough there exists $\psi^j \in \PSH(X,\theta)$ such that $(1-\varepsilon) \varphi^j + \varepsilon \psi^j \leq \varphi$. This implies that for $j$ big enough we have 
\[
(1-\varepsilon)\nu(\varphi^j,E) + \varepsilon\nu(\psi^j,E) \geq \nu(\varphi,E) \geq \nu(\varphi^j,E)\,.
\]

However, for $E$ fixed, $\nu(\chi,E)$ is uniformly bounded (see, e.g., \cite[Lemma 2.10]{Bo02}) for any $\chi \in \PSH(X,\theta)$. So letting $\varepsilon \searrow 0$ we conclude. 
\end{proof}

Finally, we state an effective version of Guan--Zhou's strong openness theorem that will be used multiple times in this work (see \cite{GuZh15}, \cite{BBJ21}, c.f.  \cite[Theorem 2.2]{DZ22}). 

\begin{theorem}
\label{thm:openness}
    Suppose that there are qpsh functions on $X$ such that $u_j\nearrow u$ a.e. If $f\in\mathcal{I}(u)$, then $f\in\mathcal{I}(u_j)$ for all $j$ big enough. More generally, if $\chi$ is a  qpsh function with analytic singularities satisfying $e^{\chi-u}\in L^1(X)$, then $e^{\chi-u_j}\in L^1(X)$ for all $j$ big enough.
\end{theorem}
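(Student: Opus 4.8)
The plan is to deduce everything from the local effective strong openness theorem of Guan–Zhou, which is the one substantive external input. The key preliminary observation is that the second (weighted) assertion contains the first: taking $\chi := \log|f|^2$, which has analytic singularities, the condition $f\in\mathcal I(u)$ is exactly $e^{\chi-u}\in L^1_{\mathrm{loc}}$, and likewise for $u_j$; the sheaf-level formulation of the first assertion then follows by standard sheaf theory, since the $\mathcal I(u_j)$ form an increasing chain of coherent ideal sheaves (Nadel) contained in the coherent sheaf $\mathcal I(u)$, hence stabilize at each stalk by Noetherianity, the stable value at $x$ being $\mathcal I(u)_x$ precisely by the local membership statement, after which coherence propagates the equality $\mathcal I(u_j)=\mathcal I(u)$ to a neighbourhood of $x$ and compactness of $X$ yields a uniform index $j$.

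So it suffices to prove: if $e^{\chi-u}\in L^1(X)$ then $e^{\chi-u_j}\in L^1(X)$ for all $j$ large. Since $X$ is compact I would cover it by finitely many coordinate balls $U_\alpha\Subset\tilde U_\alpha$ and reduce to producing, for each $\alpha$, an index $j_\alpha$ with $\int_{U_\alpha}e^{\chi-u_j}<\infty$ for $j\geq j_\alpha$, then take $j_0=\max_\alpha j_\alpha$. On $\tilde U_\alpha$ one writes $u_j=\phi_j+\rho_\alpha$, $u=\phi+\rho_\alpha$ with $\rho_\alpha$ smooth and $\phi_j\nearrow\phi$ psh — here using a uniform lower bound $\ddc u_j\geq -C_\alpha\omega$, available because in all applications the $u_j$ are $\theta$-psh for a fixed $\theta$, so that one local potential $\rho_\alpha$ works simultaneously for every $j$ — and $\chi=c\log(\sum_k|g_k|^2)+O(1)$ with $c>0$, $g_k$ holomorphic. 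The local claim then reads: if $\phi_j\nearrow\phi$ are psh on a ball and $(\sum_k|g_k|^2)^c e^{-\phi}$ is integrable near the centre, then $(\sum_k|g_k|^2)^c e^{-\phi_j}$ is integrable near the centre for all $j$ large.

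This local claim is precisely a weighted version of the effective strong openness theorem of Guan–Zhou \cite{GuZh15} (see also \cite{BBJ21} and \cite[Theorem 2.2]{DZ22}), and it is this step that carries all the analytic difficulty; I would simply invoke it. If one prefers the classical unweighted statement, one can first remove the weight by Demailly's log-resolution: pass to a modification $\pi$ on which $\pi^*\chi$ and the Jacobian of $\pi$ are monomial, transfer integrability by the change of variables formula, and peel off the integer parts of the resulting exponents as an honest holomorphic factor $|F|^2$, reducing to the effective openness theorem for monomial (hence, after iteration, essentially trivial) weights. I would stress that no softer argument suffices here: because $\phi_j\leq\phi$, the integrals $\int e^{\chi-\phi_j}$ are merely decreasing in $j$ toward $\int e^{\chi-\phi}$, and a decreasing sequence of nonnegative functions all of whose integrals are infinite can still have an integrable limit, so genuine effectivity of the openness is indispensable. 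The only real obstacle is therefore the Guan–Zhou theorem itself, which I would not reprove.
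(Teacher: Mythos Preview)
The paper does not give its own proof of this theorem: it is stated as a known result with references to \cite{GuZh15}, \cite{BBJ21}, and \cite[Theorem~2.2]{DZ22}. Your proposal is correct and is exactly the standard derivation of the global statement from the local effective strong openness theorem of Guan--Zhou: localize by a finite cover, peel off smooth potentials, and invoke the weighted local result, with the sheaf version following from Noetherianity and coherence. The caveat you flag about needing a uniform lower bound $\ddc u_j\geq -C\omega$ (i.e.\ all $u_j$ being $\theta$-psh for a fixed $\theta$) is a genuine technical point implicit in the statement; as you note, it is satisfied in every use of the theorem in this paper.
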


\subsection{The theory of relative test curves}

Let $X$ be a connected compact Kähler manifold of dimension $n$. Let $\theta$ be a closed smooth real $(1,1)$-form on $X$ representing a big cohomology class $\{\theta\}$. Set $V=\int_X \theta_{V_{\theta}}^n$.

\paragraph{Test curves.}
A map $\mathbb{R}\ni \tau \mapsto \psi_\tau\in \PSH(X, \theta)$ is a \emph{relative test curve}, denoted $\{\psi_\tau\}_{\tau}$, if $\tau\mapsto \psi_\tau(x)$ is concave, decreasing and usc for any $x\in X$. Moreover, $\psi_\tau\equiv -\infty$ for all $\tau$ big enough and the weak $L^1$ limit $\psi_{-\infty} := \lim_{\tau \to -\infty} \psi_\tau \in \textup{PSH}(X,\theta)$ exists. We say that $\{\psi_\tau\}_\tau$ is a relative test curve relative to $\psi_{-\infty}$.
If $\psi_{-\infty}=V_\theta$ we simply call $\{\psi_\tau\}_\tau$ a test curve, and this particular case was studied in detail in \cite{DDNL3,DZ22}, having its origins in \cite{RWN14}, in the ample case.

The above definition allows to introduce the following constant:
\begin{equation}\label{eq: psi+_tau_def}
\tau_\psi^+ := \inf\{\tau \in \mathbb{R} : \psi_\tau \equiv -\infty\}\,.
\end{equation}

Using the convention $P[-\infty]=-\infty$, a relative test curve $\{\psi_\tau \}_\tau$  can have the following properties: \vspace{0.1cm}\\
(i) $\{\psi_{\tau}\}_\tau$ is \emph{maximal} if $P[\psi_\tau] =\psi_\tau$ for any $\tau \in \mathbb{R}$.\vspace{0.1cm}\\
(ii) A relative test curve $\{\psi_{\tau}\}_\tau$ is a \emph{finite energy test curve} if
    \begin{equation}\label{eq: fetestcurve_def}
     I_{\theta}\{\psi_{\tau}\}:=\tau^+_{\psi}+\frac{1}{V}\int_{-\infty}^{\tau^+_\psi} \left( \int_X \theta_{\psi_\tau}^n-\int_X \theta_{V_\theta}^n \right) \,\mathrm{d}\tau >-\infty\,.
    \end{equation}
(iii) We say that $\{\psi_{\tau}\}_\tau$ is \emph{bounded} if $\psi_\tau=\psi_{-\infty}$ for all $\tau$ small enough.\vspace{0.1cm}\\
(iv) $\{\psi_{\tau}\}_\tau$ is $\mathcal I$-\emph{maximal} if $P[\psi_\tau]_\mathcal I =\psi_\tau$ for any $\tau \in \mathbb{R}$. Since $\psi_\tau \leq P[\psi_\tau] \leq P[\psi_\tau]_\mathcal I$, we notice that $\{\psi_{\tau}\}_\tau$ is maximal if it is $\mathcal I$-maximal.

\paragraph{Subgeodesic rays.}
A \emph{psh subgeodesic ray}, denoted $\{u_t\}_t$, is an assignment $(0,\infty)\ni t\mapsto u_t\in \PSH(X,\theta)$ such that 
\begin{equation}\label{eq: Phi_def_subgeod}
\Phi(x, \xi) := u_{-\log |\xi|^2}(x)
\end{equation}
is $p_1^*\theta$-plurisubharmonic on $X\times \Delta^*$, where $\Delta \subset \mathbb C$ is the unit disk and $\Delta^*=\Delta\setminus\{0\}$, and $p_1:X\times \Delta\rightarrow X$ is the projection.

\smallskip A \emph{sublinear subgeodesic ray} is a psh subgeodesic ray $\{u_t\}_t$ such that the weak $L^1$ limit $u_0 := \lim_{t \to 0+}u_t \in \textup{PSH}(X,\theta)$ exists and for some $C \in \mathbb R$ we have
$$u_t \leq u_0 + Ct.$$
In this case we say that $\{u_t\}_t$ is a sublinear subgeodesic \emph{emanating from} $u_0$.

\smallskip A \emph{psh geodesic segment} in $\PSH(X,\theta)$ from $\varphi$ to $\psi$ is a map $[a,b]\ni t\mapsto u_t\in \PSH(X,\theta)$ such that $\Phi(x,z) := u_{-\log |z|^2}(x)$ is $\pi^*\theta$-psh on  $X\times \{z\in \mathbb{C}:\exp(-b)<|z|^2<\exp(-a)\}$. Moreover, for any $S^1$-invariant $p_1^*\theta$-psh function $\Psi$ on $X\times \{z\in \mathbb{C}:\exp(-b)<|z|^2<\exp(-a)\}$ such that 
\begin{equation}\label{eq: psh_segm_comp}
    \varlimsup_{t\to b-} \Psi(\cdot,\mathrm{e}^{-b})\leq \psi,\quad \varlimsup_{t\to a+} \Psi(\cdot,\mathrm{e}^{-a})\leq \varphi
\end{equation}
    almost everywhere, we have $\Psi\leq \Phi$.
Furthermore, $\Phi$ assumes the correct boundary values: 
    \[
    \lim_{t\to b-} \Phi(\cdot,\mathrm{e}^{-b})=\psi,\quad \lim_{t\to a+} \Phi(\cdot,\mathrm{e}^{-a})=\varphi.
    \]

A \emph{psh geodesic ray} is a sublinear subgeodesic ray $t \mapsto u_t$ such that $[a,b] \ni t \mapsto u_t \in \textup{PSH}(X,\theta)$ is a psh geodesic segment for all $a,b \in (0,\infty)$.

\paragraph{The geometry of finite energy rays}

A \emph{finite energy geodesic ray} $[0,\infty) \ni t \mapsto u_t \in \mathcal E^1(X,\theta)$ is simply a psh geodesic ray with $u_0 = V_\theta$. The space of such finite energy rays, denoted $\mathcal R^1(X,\theta)$, was studied in \cite{DDNL5} (see \cite{DL20} for the K\"ahler case).  

Using linearity/convexity one can define the \emph{chordal} metric between two rays:
\[
d_1^c(\{u_t\}_t, \{v_t\}_t) = \lim_{t \to \infty} \frac{d_1(u_t,v_t)}{t}, \quad \{u_t\}_t, \{v_t\}_t \in \mathcal R^1(X,\theta).
\]
It was shown in \cite[Theorem 2.14]{DDNL5} that $(\mathcal R^1(X,\theta),d_1^c)$ is a complete metric space.
Given any ray $\{u_t\}_t\in \mathcal{R}^1(X,\theta)$, we define its radial energy as
\begin{equation}\label{eq: I_rad_def}
    I_{\theta}\{u_t\}:=\lim_{t\to\infty} \frac{I_{\theta}(u_t)}{t}.
\end{equation}
The limit exists as $I_{\theta}(u_t)$ is linear in $t$.

As in the case of the metric space structure of $(\mathcal E^1(X,\theta),d_1)$, for any rays $\{u_t\}_t,\{v_t\}_t \in \mathcal R^1(X,\theta)$ one can define $\{u_t \land v_t\}_t \in \mathcal R^1(X,\theta)$ (resp. $\{u_t \lor v_t\}_t \in \mathcal R^1(X,\theta)$), the biggest (resp. smallest) ray satisfying $u_t \wedge v_t \leq \min (u_t,v_t)$ (resp. $u_t \wedge v_t \geq \max (u_t,v_t)$) for all $t\geq 0$.

These two special rays satisfy the Pythagorean formula and Pythagorean inequality  respectively (\cite[Theorem~1.3]{Xi19}, \cite[Proposition~2.15]{DDNL5}), for $C = C(n)$:

\begin{equation}\label{eq: Pyth_id}
\begin{aligned}
d_1^c(\{u_t\}_t,\{v_t\}_t)=& d_1^c(\{u_t\}_t,\{u_t \wedge v_t\}_t) + d_1^c(\{u_t \wedge v_t\}_t,\{v_t\}_t),\\
C d_1^c(\{u_t\}_t,\{v_t\}_t) \geq & d_1^c(\{u_t\}_t,\{u_t \vee v_t\}_t) + d_1^c(\{u_t \vee v_t\}_t,\{v_t\}_t).
\end{aligned}
\end{equation}
We note the following result, that is the radial analogue of \cite[Proposition~2.6]{BDL17}:

\begin{proposition}\label{prop: conv_monotone} 
Let $\{u^k_t\}_t,\{u_t\} \in \mathcal R^1(X,\theta)$ such that $d_1^c(\{u^k_t\}_t,\{u_t\}_t) \to 0$. Then there exists a subsequence, again denoted $\{u^k_t\}_t$ and $\{v^k_t\}_t,\{w^k_t\}_t \in \mathcal R^1(X,\theta)$ such that:\\
\noindent (i) $w^k_t \leq u^k_t \leq v^k_t$ for any fixed $t \geq 0$, \\
\noindent (ii) $w^k_t \nearrow u_t$ and $v^k_t \searrow u_t$ for any fixed $t \geq 0$,\\
\noindent (iii) $d_1^c(\{w^k_t\}_t,\{u_t\}_t) \to 0$ and $d_1^c(\{v^k_t\}_t,\{u_t\}_t) \to 0$. 
\end{proposition}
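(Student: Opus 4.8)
The statement to prove is the radial analogue of the Blocki–Darvas–Lu approximation lemma \cite[Proposition~2.6]{BDL17}: if $d_1^c(\{u_t^k\}_t,\{u_t\}_t)\to 0$ in $\mathcal R^1(X,\theta)$, then after passing to a subsequence we can sandwich the $\{u_t^k\}_t$ between a monotone increasing sequence $\{w_t^k\}_t$ and a monotone decreasing sequence $\{v_t^k\}_t$, both still converging chordally to $\{u_t\}_t$.

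The plan is to build $v^k$ and $w^k$ directly out of the lattice operations $\vee$ and $\wedge$ already introduced on $\mathcal R^1(X,\theta)$, using the Pythagorean identity and inequality \eqref{eq: Pyth_id} to control the errors, and then to take a suitably rapidly convergent subsequence so that the resulting sequences are actually monotone. Concretely: first pass to a subsequence with $d_1^c(\{u^k_t\}_t,\{u_t\}_t) \leq 2^{-k}$. Then set $\{v_t^k\}_t := \{u_t \vee u_t^k \vee u_t^{k+1} \vee \cdots\}_t$, i.e. the decreasing $d_1^c$-limit of the finite joins $\{u_t\}_t \vee \{u_t^k\}_t \vee \cdots \vee \{u_t^m\}_t$ as $m\to\infty$ — here one must check that these finite joins form a $d_1^c$-Cauchy (indeed decreasing-in-$t$-for-each-fixed-$t$, and $d_1^c$-convergent by the Pythagorean inequality together with summability of $2^{-k}$) sequence in the complete metric space $(\mathcal R^1(X,\theta),d_1^c)$, so the limit exists there. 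By construction $v_t^k \geq u_t^k$ and $v_t^k \geq v_t^{k+1} \geq u_t$ for each fixed $t$, giving (i) and the decreasing half of (ii), while $d_1^c(\{v^k_t\}_t,\{u_t\}_t) \leq C\sum_{j\geq k} d_1^c(\{u^j_t\}_t,\{u_t\}_t) \to 0$ by iterating the Pythagorean inequality, giving the $v$-part of (iii). Symmetrically, set $\{w_t^k\}_t := \{u_t \wedge u_t^k \wedge u_t^{k+1} \wedge \cdots\}_t$, the increasing $d_1^c$-limit of finite meets; here the Pythagorean \emph{identity} (with its exact, constant-free additivity) makes the estimate $d_1^c(\{w^k_t\}_t,\{u_t\}_t) = \sum_{j\geq k} d_1^c(\{u_t\}_t \wedge \cdots, \cdot) \leq \sum_{j \geq k} d_1^c(\{u^j_t\}_t,\{u_t\}_t)$ clean, and one gets $w_t^k \leq u_t^k$, $w_t^k \leq w_t^{k+1} \leq u_t$, and the increasing convergence. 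That $w_t^k \nearrow u_t$ and $v_t^k \searrow u_t$ as \emph{functions} (not merely chordally) for each fixed $t$ follows because a monotone sequence of $\theta$-psh functions with a chordal (hence, after the Ross–Witt Nyström dictionary, $L^1$-at-each-time) limit must converge pointwise a.e. to that limit, and the limit of a decreasing/increasing-and-bounded sequence of rays agrees timewise with the pointwise limit.

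The main obstacle I expect is the interchange of the two limiting procedures in "fixed $t$" versus "$k\to\infty$": namely verifying that the infinite join $\{u_t \vee u_t^k \vee \cdots\}_t$, defined as a $d_1^c$-limit of rays, actually has $t$-slices which are the monotone pointwise limits of the slices of the finite joins, so that monotonicity in $t$-slice and monotonicity in $k$ coexist with the correct pointwise limit at $t=\infty$ being $u_t$. This requires knowing that $d_1^c$-convergence of a monotone (in $k$) sequence of rays forces slice-wise $L^1$ (hence a.e.) convergence at each finite $t$ — which is where \cite[Theorem 3.11]{DDNL3} (that $d_1$ implies $L^1$) and the construction of $\{u_t\}_t\vee\{v_t\}_t$ and $\{u_t\}_t\wedge\{v_t\}_t$ as honest rays feed in — together with a diagonal argument to pass from "for each finite $t$" to the chordal statement. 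The remaining steps (summing the Pythagorean errors, checking $w_t^k \leq u_t^k \leq v_t^k$, and extracting the subsequence) are routine once this slice/limit compatibility is in place.
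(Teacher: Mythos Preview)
Your approach is essentially the same as the paper's, which simply says to transplant the arguments of \cite[Proposition~2.6]{BDL17} (for $\{w^k_t\}_t$) and \cite[Proposition~4.2]{DDNL5} (for $\{v^k_t\}_t$) to the radial setting, replacing the potential-level Pythagorean relations by the ray-level ones in \eqref{eq: Pyth_id}.

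One caveat worth flagging: your claimed bound $d_1^c(\{v^k_t\}_t,\{u_t\}_t)\leq C\sum_{j\geq k} d_1^c(\{u^j_t\}_t,\{u_t\}_t)$ does not follow from a \emph{naive} iteration of the Pythagorean inequality, because that inequality carries a dimensional constant $C=C(n)>1$ at each step, and stacking joins $u\vee u^k\vee\cdots\vee u^m$ could in principle produce a factor growing with $m$. The fix is easy and is implicit in the references: choose the subsequence to converge faster than $2^{-k}$, for instance $d_1^c(\{u^k_t\}_t,\{u_t\}_t)\leq (2(1+C))^{-k}$, so that the recursive estimate $d_1^c(V^{k,m},V^{k,m+1})\leq C\,d_1^c(V^{k,m},u^{m+1})\leq C\big(d_1^c(V^{k,m},u)+d_1^c(u,u^{m+1})\big)$ still sums. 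For the meet side this issue does not arise, since the Pythagorean \emph{identity} is constant-free and telescopes exactly, as you note. Your ``main obstacle'' (slice/limit compatibility) is genuine but routine: a $d_1^c$-Cauchy monotone sequence of rays has a limit in the complete space $(\mathcal R^1(X,\theta),d_1^c)$, and $d_1^c$-convergence forces $d_1$-convergence at each fixed $t$, hence $L^1$-convergence of the slices by \cite[Theorem~3.11]{DDNL3}; combined with monotonicity this gives the pointwise statements in (ii).
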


\begin{proof} The proof follows the argument of   \cite[Proposition~2.6]{BDL17} in case of potentials. 
In fact, the sequence $\{w^k_t\}$ is constructed using the exact same ideas, with \eqref{eq: Pyth_id} used in the appropriate places.

The sequence of $\{v^k_t\}$ is constructed following the argument of \cite[Proposition~4.2]{DDNL5} word-for-word, using the radial Pythagorean inequality \eqref{eq: Pyth_id} in the appropriate places.
\end{proof}

\paragraph{The Ross--Witt Nystr\"om correspondence.}
We adopt the following convention: relative test curves will always be parametrized by $\tau$, whereas rays will be parametrized by $t$. Hence, $\{\psi_t\}_t$ will always refer to some kind of ray, whereas  $\{\phi_\tau\}_\tau$ will refer to some type of test curve. As we will now point out, rays and test curves are dual to each other, so one should think of the  parameters $t$ and $\tau$ to be dual to each other as well.

To any sublinear subgeodesic ray $\{\phi_t\}_t$ (relative test curve $\{\psi_\tau\}_\tau$) we can associate  its (inverse) partial Legendre transform at $x \in X$ as 
\begin{equation}\label{eq: Leg_transf_def_ray_test_curve}
    \begin{aligned}
\hat \phi_\tau(x) := \inf_{t>0}(\phi_t(x) - t\tau)\,,& \quad \tau \in \mathbb R\,,\\
\check \psi_t(x) := \sup_{\tau \in \mathbb R}(\psi_\tau(x) + t\tau)\,,& \quad t> 0\,.
\end{aligned}
\end{equation}

We say that a ray $\{u_t\}_t \in \mathcal R^1(X,\theta)$ is $\mathcal I$\emph{-maximal} if $\{\hat u_\tau\}_\tau$ is $\mathcal I$-maximal. The set of $\mathcal{I}$-maximal finite energy rays is denoted by
$\mathcal R^1_\mathcal I(X,\theta)$.
As we will see later, there is a bijection between $\mathcal R^1_\mathcal I(X,\theta)$ and the set of non-Archimedean finite energy potentials.

\begin{lemma}\label{lma:testcurvlegusc}
Let $\{\psi_{\tau}\}_\tau$ be a test curve relative to $\phi$. Then $\sup_{\tau}(\psi_{\tau}(x)+t\tau)$ is usc with respect to $(t,x)\in (0,\infty)\times X$. In particular, $\check{\psi}_t\in \PSH(X,\theta)$ for all $t>0$, and $\{\check \psi_t\}_t$ is a sublinear subgeodesic ray, emanating from $\phi$.
\end{lemma}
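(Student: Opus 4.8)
\emph{Proof strategy.} Write $\phi := \psi_{-\infty}$ and $M := \sup_X \phi$, finite since $\phi$ is a genuine $\theta$-psh function on the compact manifold $X$. As $\tau \mapsto \psi_\tau$ is decreasing we have $\psi_\tau \le \phi \le M$ for all $\tau$, while $\psi_\tau \equiv -\infty$ for $\tau \ge \tau^+_\psi$; hence for $t > 0$,
\[
\check\psi_t(x) = \sup_{\tau \le \tau^+_\psi}\big(\psi_\tau(x) + t\tau\big) \le M + t\,\tau^+_\psi ,
\]
so $\check\psi_t$ is bounded above, locally uniformly in $(t,x) \in (0,\infty) \times X$. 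The statement splits into (a) joint upper semicontinuity of $(t,x) \mapsto \check\psi_t(x)$; (b) $\check\psi_t \in \PSH(X,\theta)$ for $t > 0$; and (c) $\{\check\psi_t\}_t$ is a sublinear subgeodesic emanating from $\phi$. I would deduce (b) and (c) from (a). Granting (a): for fixed $t > 0$, $\check\psi_t = \sup_{\tau < \tau^+_\psi}(\psi_\tau + t\tau)$ is a supremum of $\theta$-psh functions which is usc by (a) and bounded above, hence $\theta$-psh, giving (b); and $\Phi(x,\xi) := \check\psi_{-\log|\xi|^2}(x) = \sup_{\tau < \tau^+_\psi}\big(\psi_\tau(x) - \tau\log|\xi|^2\big)$ is, likewise, a supremum of $p_1^*\theta$-psh functions on $X \times \Delta^*$ — each summand being the sum of the $\theta$-psh $\psi_\tau$ pulled back from $X$ and the pluriharmonic function $-\tau\log|\xi|^2$ on $\Delta^*$ — which is usc by (a) and locally bounded above, hence $p_1^*\theta$-psh, so $\{\check\psi_t\}_t$ is a psh subgeodesic ray. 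For the rest of (c): the displayed estimate gives $\check\psi_t \le \phi + t\,\tau^+_\psi$, while $\check\psi_t \ge \psi_\tau + t\tau$ for each fixed $\tau < \tau^+_\psi$ forces $\varliminf_{t\to 0^+}\check\psi_t \ge \sup_\tau \psi_\tau$; since the upper semicontinuous regularization of $\sup_\tau\psi_\tau$ is $\phi$, this yields $\check\psi_t \to \phi$ quasi-everywhere, hence — the $\check\psi_t$ being $\theta$-psh and uniformly bounded above — in $L^1$, identifying the emanation point.

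\emph{Proof of (a) and the main obstacle.} The delicate point, which I expect to be the only non-routine step, is that the supremum defining $\check\psi_t$ runs over the non-compact set $\tau \in \mathbb{R}$, so ``a supremum of usc functions is usc'' does not apply directly. Two inputs fix this. First, $(\tau,x) \mapsto \psi_\tau(x)$ is \emph{jointly} upper semicontinuous on $\mathbb{R} \times X$; this is not built into the definition of a relative test curve but follows from it: since $\tau \mapsto \psi_\tau(x)$ is decreasing and usc it is left-continuous, so for $(\tau_k,x_k) \to (\tau_0,x_0)$ one handles the subsequence with $\tau_k \ge \tau_0$ via $\psi_{\tau_k}(x_k) \le \psi_{\tau_0}(x_k)$ and upper semicontinuity of $\psi_{\tau_0}$ in $x$, and the subsequence with $\tau_k < \tau_0$ via $\psi_{\tau_k}(x_k) \le \psi_{\tau_0-\varepsilon}(x_k)$ (eventually), upper semicontinuity of $\psi_{\tau_0-\varepsilon}$ in $x$, and $\varepsilon \to 0$. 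Second, the a priori bounds confine the maximizers: fix $(t_0,x_0)$ with $t_0 > 0$ and $(t_k,x_k) \to (t_0,x_0)$ realizing $c := \varlimsup_{(t,x)\to(t_0,x_0)}\check\psi_t(x)$, which we may assume $> -\infty$. For $k$ large the function $\tau \mapsto \psi_\tau(x_k) + t_k\tau$ is usc in $\tau$, equals $-\infty$ for $\tau > \tau^+_\psi$, and tends to $-\infty$ as $\tau \to -\infty$ (using $\psi_\tau \le M$ and $t_k \ge t_0/2 > 0$), so it attains its supremum $\check\psi_{t_k}(x_k)$ at some $\tau_k \le \tau^+_\psi$; and $\tau_k \not\to -\infty$ along any subsequence, else $\check\psi_{t_k}(x_k) \le M + t_k\tau_k \to -\infty$, contradicting $c > -\infty$. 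Passing to a subsequence with $\tau_k \to \tau_\infty \in (-\infty,\tau^+_\psi]$, joint upper semicontinuity gives
\[
c = \lim_{k\to\infty}\big(\psi_{\tau_k}(x_k) + t_k\tau_k\big) \le \psi_{\tau_\infty}(x_0) + t_0\tau_\infty \le \check\psi_{t_0}(x_0),
\]
which is precisely (a). Everything else — restriction of $p_1^*\theta$-psh functions to slices, and $L^1$-convergence of uniformly bounded $\theta$-psh functions from quasi-everywhere convergence — is standard pluripotential theory.
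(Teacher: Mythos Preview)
Your proof is correct and complete. The paper itself does not give an argument here but simply refers to \cite[Proposition~3.6]{DZ22}, observing that the proof there (written for the case $\phi=V_\theta$) carries over verbatim to the relative setting; what you have written is precisely such a direct argument, with the key step---localizing the supremum over $\tau$ to a compact interval via the bounds $\psi_\tau\le\phi$ and $\psi_\tau\equiv-\infty$ for $\tau>\tau^+_\psi$, and then invoking joint upper semicontinuity of $(\tau,x)\mapsto\psi_\tau(x)$---made explicit.
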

\begin{proof}
The argument is exactly the same as in the particular case when $\phi=V_{\theta}$, proved in \cite[Proposition~3.6]{DZ22}.
\end{proof}

We note the following simple result, that will have important consequences in the sequel.

\begin{lemma}\label{lem:Cproj}
Let $\{\psi_{\tau}\}_\tau$ be a test curve relative to $\phi$, with $\int_X \theta_\phi^n>0$.  Then $P[\check{\psi}_t]=P[\phi]$ for all $t>0$.
\end{lemma}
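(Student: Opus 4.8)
The statement asserts that for a relative test curve $\{\psi_\tau\}_\tau$ relative to $\phi$ with $\int_X \theta_\phi^n > 0$, the singularity type envelopes satisfy $P[\check\psi_t] = P[\phi]$ for every $t > 0$. The plan is to prove the two inequalities separately, keeping in mind that $P[u]$ is monotone in the singularity type of $u$, and that by \cite[Proposition 2.3 and Remark 2.5]{DDNL2} one has $\int_X \theta_{P[u]}^n = \int_X \theta_u^n$.

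First I would establish $P[\check\psi_t] \leq P[\phi]$. By Lemma~\ref{lma:testcurvlegusc}, the ray $\{\check\psi_t\}_t$ is a sublinear subgeodesic emanating from $\phi$, so it satisfies $\check\psi_t \leq \phi + Ct$ for some constant $C$; in particular $\check\psi_t \preceq \phi$, hence $[\check\psi_t] \preceq [\phi]$ and monotonicity of the envelope gives $P[\check\psi_t] \leq P[\phi]$. (Even more directly: from $\psi_\tau \leq \psi_{-\infty} = \phi$ for all $\tau$ and the definition of $\check\psi_t$ one sees $\check\psi_t \leq \phi + t\tau^+_\psi$.) This direction is essentially formal.

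The substantive direction is $P[\check\psi_t] \geq P[\phi]$, and here I would argue via masses. Since $\check\psi_t \leq \phi + Ct$ we already know $\int_X \theta_{\check\psi_t}^n \leq \int_X \theta_\phi^n$ by the monotonicity of non-pluripolar masses \cite[Theorem 1.2]{WN19}; the point is to prove the reverse mass inequality $\int_X \theta_{\check\psi_t}^n \geq \int_X \theta_\phi^n$, which combined with the already-established envelope inequality $P[\check\psi_t] \leq P[\phi]$ and the mass-preservation identity $\int_X \theta_{P[\cdot]}^n = \int_X \theta_{(\cdot)}^n$ forces $P[\check\psi_t]$ and $P[\phi]$ to have the same mass; then, again by mass monotonicity applied to $P[\check\psi_t] \leq P[\phi]$ together with the fact (from \cite{DDNL5}, used already in the proof of Lemma~\ref{lma:Imodeldeclelong}) that two comparable model potentials with equal positive mass must coincide, we conclude $P[\check\psi_t] = P[\phi]$. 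To get the reverse mass inequality, note that $\check\psi_t(x) = \sup_{\tau}(\psi_\tau(x) + t\tau) \geq \psi_\tau(x) + t\tau$ for every fixed $\tau$, so $\check\psi_t$ is more singular than $\phi$ only up to the decreasing limit: as $\tau \to -\infty$, $\psi_\tau + t\tau \nearrow$ (not obviously), so instead I would use that $\check\psi_t \geq \psi_{\tau_0} + t\tau_0$ is irrelevant and rather invoke that $\check\psi_t$ dominates the near-constant perturbations of $\phi$. The cleanest route: for $\tau$ close to $-\infty$ we have $\psi_\tau$ close to $\phi$ in $L^1$ and $\psi_\tau \to \phi$; one shows $\int_X \theta_{\check\psi_t}^n \geq \limsup_{\tau \to -\infty}\int_X \theta_{\psi_\tau + t\tau}^n = \limsup_{\tau\to-\infty}\int_X\theta_{\psi_\tau}^n$, and the latter equals $\int_X\theta_\phi^n$ by the semicontinuity/monotonicity of masses along decreasing sequences of test-curve potentials (this uses that $\{\psi_\tau\}$ is $\tau$-decreasing with $L^1$-limit $\phi$, so by \cite[Theorem 1.2]{WN19} or the convergence results in \cite{DDNL5} the masses converge up to $\int_X \theta_\phi^n$).

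\textbf{Main obstacle.} The delicate point is justifying that the non-pluripolar mass of $\check\psi_t$ is at least $\int_X \theta_\phi^n$ — i.e., that passing from the family $\{\psi_\tau\}$ to its partial Legendre transform does not lose mass at the bottom. This requires care because $\check\psi_t$ is a supremum over an uncountable family and non-pluripolar Monge--Ampère mass is not continuous under arbitrary suprema; one must exploit concavity in $\tau$ (so the sup is an increasing limit of finite maxima as $\tau \searrow -\infty$ along the affine pieces $\psi_\tau + t\tau$) together with the hypothesis $\int_X \theta_\phi^n > 0$, which is exactly what rules out the degenerate scenario where all mass escapes. I expect the argument here to mirror, nearly verbatim, the proof that $\int_X \theta_{\psi_{-\infty}}^n = \lim_{\tau\to-\infty}\int_X\theta_{\psi_\tau}^n$ combined with $\check\psi_t \geq \psi_\tau + t\tau$, and then the model-potential rigidity from \cite{DDNL5} closes the gap.
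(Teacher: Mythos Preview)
Your plan is essentially correct but takes a different route from the paper. The paper avoids the mass-convergence step entirely: from the sandwich $\psi_\tau + t\tau \leq \check\psi_t \leq \phi + t\tau^+_\psi$ one gets $P[\psi_\tau] \leq P[\check\psi_t] \leq P[\phi]$; since $\psi_\tau \leq P[\psi_\tau]$ and $\psi_\tau \nearrow \phi$ a.e.\ as $\tau \to -\infty$, this yields $\phi \leq P[\check\psi_t]$ a.e., hence everywhere, and then \cite[Theorem~3.12]{DDNL2} (which needs only that both sides have positive mass) gives $P[\phi] \leq P[\check\psi_t]$. Your route instead establishes the mass equality $\int_X \theta_{\check\psi_t}^n = \int_X \theta_\phi^n$ via $\check\psi_t \geq \psi_\tau + t\tau$, monotonicity, and the convergence $\int_X \theta_{\psi_\tau}^n \to \int_X \theta_\phi^n$ along the increasing family $\psi_\tau \nearrow \phi$; this last step is true (continuity of the non-pluripolar product along increasing sequences, as in \cite{BEGZ10}) but is an extra technical input that the paper's pointwise argument sidesteps. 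Two minor corrections: the rigidity fact you invoke at the end (two comparable model potentials of equal positive mass coincide) is not what is used in the proof of Lemma~\ref{lma:Imodeldeclelong}; it is again a consequence of \cite[Theorem~3.12]{DDNL2}, the same result the paper uses directly. And the ``main obstacle'' you identify is slightly misdiagnosed: the issue is not the uncountable supremum defining $\check\psi_t$, but simply whether $\int_X \theta_{\psi_\tau}^n \to \int_X \theta_\phi^n$ as $\tau \to -\infty$, which is standard.
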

\begin{proof}
We may assume that $\phi\leq 0$, and we observe that $\psi_{\tau}+t\tau \leq \check{\psi}_t\leq \phi+t\tau^+_\psi$
for all $\tau<\tau^+_\psi$ and $t > 0$. Taking envelopes with respect to singularity type, we find
\begin{equation}\label{eq:Ctripineq}
P[\psi_{\tau}]\leq P[\check{\psi}_t]\leq P[\phi].
\end{equation}
But for almost all $x\in X$,
$
\phi(x)=\lim_{\tau\to-\infty}\psi_{\tau}(x)\leq \lim_{\tau\to-\infty}P[\psi_{\tau}].
$
It follows that for almost all $x\in X$,
$
\phi(x)\leq P[\check{\psi}_t](x).
$
As both sides are $\theta$-psh, it follows that the inequality holds everywhere. As both sides have positive mass, we get that $P[\phi]\leq P[\check{\psi}_t]$ due to \cite[Theorem 3.12]{DDNL2}. Together with \eqref{eq:Ctripineq}, we conclude.
\end{proof}

Lastly we state the versions of the Ross--Witt Nystr\"om correspondence between test curves and rays that will be needed in this work:

\begin{theorem}\label{thm:RWNpresc}
Let $\phi \in \textup{PSH}(X,\theta)$ with $\int_X \theta_\phi^n>0$. The following hold: \vspace{0.1cm}\\
\noindent (i) The map $\{\psi_\tau\} \mapsto \{\check \psi_t\}_t$ gives a bijection from the set of test curves relative to $\phi$ to the set of sublinear subgeodesic rays in $\PSH(X,\theta)$ emanating from $\phi$, with inverse $\{u_t\}_t \mapsto \{\hat u_\tau\}_\tau$. \vspace{0.1cm}\\ 
\noindent(ii) 
The map of (i) gives a bijection between the set of maximal test curves relative to $\phi$, and the set of psh geodesic rays in  $\PSH(X,\theta)$ emanating from $\phi$. \vspace{0.1cm}\\
\noindent  (iii) The map of (i) gives a bijection between the set of finite energy maximal test curves, and  the set of finite energy psh geodesic rays $\mathcal R^1(X,\theta)$. 
In addition, under this correspondence, the radial $I_{\theta}$ functional \eqref{eq: I_rad_def} is equal to the energy  of the test curve \eqref{eq: fetestcurve_def}.
\end{theorem}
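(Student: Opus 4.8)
The plan is to reduce everything to the already-understood absolute case, where $\phi = V_\theta$, by a translation/relativization trick, and then to transfer the maximality and finite-energy statements through this reduction. For part (i), the core of the argument is that partial Legendre transform and its inverse are mutually inverse involutions on concave functions; the only subtlety is to check the matching of the regularity/boundary conditions on both sides. In one direction, given a test curve $\{\psi_\tau\}_\tau$ relative to $\phi$, Lemma~\ref{lma:testcurvlegusc} already tells us $\{\check\psi_t\}_t$ is a sublinear subgeodesic ray emanating from $\phi$. Conversely, given a sublinear subgeodesic ray $\{u_t\}_t$ emanating from $\phi$, one forms $\hat u_\tau(x) = \inf_{t>0}(u_t(x) - t\tau)$; concavity, monotonicity and upper semicontinuity in $\tau$ are formal consequences of the infimum-of-affine-functions structure, the sublinear bound $u_t \le u_0 + Ct$ forces $\hat u_\tau \equiv -\infty$ for $\tau$ large, and the limit $\hat u_{-\infty} = u_0 = \phi$. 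The remaining point is that $\hat u_\tau \in \textup{PSH}(X,\theta)$: this follows because $u_t$ is an infimum over $t$ of $\theta$-psh functions shifted by a constant, so it is itself $\theta$-psh after usc regularization, and one checks the regularization changes nothing on a dense set; here one can cite the same mechanism used in \cite[Proposition~3.6]{DZ22} which handles $\phi = V_\theta$. The identities $\check{\hat u} = u$ and $\hat{\check\psi} = \psi$ are the biduality of the Legendre transform for closed concave functions, valid pointwise in $x$.

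For part (ii), I would argue that the ray $\{\check\psi_t\}_t$ is a genuine psh \emph{geodesic} ray — rather than merely a subgeodesic — precisely when $\{\psi_\tau\}_\tau$ is maximal, i.e.\ $P[\psi_\tau] = \psi_\tau$ for all $\tau$. The forward implication uses the maximality/largest-subgeodesic characterization of geodesics: given a competitor $S^1$-invariant $p_1^*\theta$-psh function $\Psi$ on the relevant annulus with the right boundary values, its Legendre transform in $\log|z|^2$ produces a test curve dominated by $\{\psi_\tau\}_\tau$ on a dense set, hence everywhere after taking envelopes of singularity type, and re-transforming gives $\Psi \le \check\psi$. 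The converse — a subgeodesic ray whose test curve fails to be $P[\cdot]$-invariant cannot be a geodesic — follows by replacing $\psi_\tau$ by $P[\psi_\tau] \ge \psi_\tau$ for each $\tau$, observing by Lemma~\ref{lem:Cproj}-type reasoning that this yields a genuinely larger subgeodesic with the same boundary data, contradicting the maximality property of geodesics. Again this is exactly the structure of the proof in the absolute case $\phi = V_\theta$ treated in \cite{DZ22, RWN14}, and the presence of a general $\phi$ with $\int_X \theta_\phi^n > 0$ changes nothing essential because Lemma~\ref{lem:Cproj} guarantees $P[\check\psi_t] = P[\phi]$, so the singularity type at infinity is under control.

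For part (iii), the task is to match the finite-energy condition \eqref{eq: fetestcurve_def} on maximal test curves with membership in $\mathcal R^1(X,\theta)$ and with finiteness of the radial energy \eqref{eq: I_rad_def}, including the asserted equality $I_\theta\{\check\psi_t\} = I_\theta\{\psi_\tau\}$. The key computation is the identity, valid for the geodesic ray $u_t = \check\psi_t$,
\[
I_\theta(u_t) = \tau_\psi^+ \, t + \int_{-\infty}^{\tau_\psi^+}\!\! \Big(\tfrac1V\!\int_X \theta_{\psi_\tau}^n - 1\Big)\, \mathrm d\tau \;+\; o(t)
\]
as $t \to \infty$ — more precisely one shows $I_\theta(u_t)$ is affine in $t$ with slope $\tau_\psi^+$ and the constant term equal to the integral in \eqref{eq: fetestcurve_def}, which can be extracted by differentiating the Monge--Amp\`ere energy along the geodesic and using the distributional identity relating $\frac{d}{dt} I_\theta(u_t)$ to masses $\int_X \theta_{u_t}^n$ together with the layer-cake/co-area change of variables $\int_X \theta_{\check\psi_t}^n$ expressed via the curve $\tau \mapsto \int_X \theta_{\psi_\tau}^n$; this is the content of \eqref{eq:radial-I-formula} referenced in the introduction. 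Dividing by $t$ and letting $t\to\infty$ gives $I_\theta\{u_t\} = \tau_\psi^+ = $ slope, but one must be careful: the cleaner statement is that the \emph{full} radial energy functional including the mass-deficit integral equals $I_\theta\{\psi_\tau\}$, and both sides are finite simultaneously. That $u_t \in \mathcal E^1$ for each $t$ when the energy is finite, and that $u_0 = V_\theta$ forces $\phi = V_\theta$ in this item, then places $\{u_t\}$ in $\mathcal R^1(X,\theta)$; surjectivity onto $\mathcal R^1(X,\theta)$ uses part (ii) plus the fact that any finite-energy geodesic ray has finite radial energy.

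I expect the main obstacle to be precisely the finite-energy bookkeeping in part (iii): one needs the monotone-convergence and mass-continuity machinery (\cite[Proposition~4.6, Lemma~4.3]{DDNL5}, together with the differentiability of $I_\theta$ along geodesics in the big setting) to justify interchanging the limit in $t$ with the integral over $\tau$ and to control the behaviour near $\tau = -\infty$, where $\psi_\tau \nearrow \psi_{-\infty}$ and the masses $\int_X \theta_{\psi_\tau}^n$ may approach $V$ only in the limit. The rest — parts (i) and (ii) — is essentially a relativization of \cite{DZ22}, where the only new ingredient needed is Lemma~\ref{lem:Cproj} to handle the nontrivial singularity type $[\phi]$ at infinity, and I would present those parts briefly, citing the absolute case and indicating the minor modifications.
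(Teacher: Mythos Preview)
Your proposal is correct and takes essentially the same approach as the paper: both reduce parts (i) and (ii) to the absolute case $\phi=V_\theta$ treated in \cite[Theorem~3.7]{DZ22}, invoking Lemma~\ref{lma:testcurvlegusc} for the forward direction, and observe that part (iii) is literally the absolute case (since a finite energy test curve is automatically relative to $V_\theta$), which is \cite[Theorem~3.9]{DZ22}. The paper's proof is in fact just these citations; you have sketched the content of those arguments rather than simply citing them.

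Two minor points. First, your invocation of Lemma~\ref{lem:Cproj} in part (ii) is not needed here: the paper does not use it for this theorem, and the maximality argument (replace $\psi_\tau$ by $P[\psi_\tau]$ and compare) goes through verbatim from \cite{DZ22} without controlling $P[\check\psi_t]$. Second, your displayed formula in part (iii) is muddled --- along a geodesic ray $I_\theta(u_t)$ is exactly linear in $t$, so there is no $o(t)$ term and no ``constant term equal to the integral''; the correct statement is simply $I_\theta\{u_t\}=I_\theta\{\psi_\tau\}$ as in \eqref{eq:radial-I-formula}, and you already self-correct to this. These are cosmetic; the substance is right.
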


\begin{proof}
Using Proposition~\ref{lma:testcurvlegusc}, the proof of the first and second part are carried out in exactly the same manner as the particular case $\phi=V_{\theta}$  \cite[Theorem~3.7(i)(ii)]{DZ22}. 
The last part of the theorem is simply  \cite[Theorem~3.9]{DZ22}.
\end{proof}

Test curves relative to potentials $\phi$ with $\int_X \theta_\phi^n=0$ exhibit pathological behaviour (c.f. \cite[Section 4]{DDNL5}). To exclude these we introduce:
\begin{equation}\label{eq: MTC_def}
\textup{MTC}(X,\theta) = \left\{\{\phi_\tau\}_\tau \subset \textup{PSH}(X,\theta)\textup{ relative test with } \int_X \phi_{-\infty}^n>0\right\},
\end{equation}
with ``MTC" indicating \emph{maximal test curve}. 
Similarly, the test curves in $\textup{MTC}(X,\theta)$ that are $\mathcal I$-maximal will be denoted by $\textup{MTC}_\mathcal I(X,\theta)$. For now, we record the following simple observation, that already uses the positive mass assumption crucially.

\begin{lemma}Suppose that $\{\psi_\tau\}_\tau \in \textup{MTC}(X,\theta)$. Then $\psi_{-\infty} = P[\psi_{-\infty}]$.
\end{lemma}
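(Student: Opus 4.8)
The plan is to reduce the claim to Lemma~\ref{lem:Cproj}, which we are allowed to use. Recall that $\{\psi_\tau\}_\tau \in \textup{MTC}(X,\theta)$ means it is a maximal test curve relative to $\psi_{-\infty}$ with $\int_X \theta_{\psi_{-\infty}}^n > 0$; in particular $P[\psi_\tau] = \psi_\tau$ for every $\tau \in \mathbb R$. We want to conclude $\psi_{-\infty} = P[\psi_{-\infty}]$.

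First I would apply Lemma~\ref{lem:Cproj} with $\phi = \psi_{-\infty}$: since $\int_X \theta_{\psi_{-\infty}}^n > 0$, the lemma gives $P[\check\psi_t] = P[\psi_{-\infty}]$ for all $t > 0$. Next I would relate $\check\psi_t$ to $\psi_{-\infty}$ as $t \to 0^+$: by the definition of the inverse partial Legendre transform \eqref{eq: Leg_transf_def_ray_test_curve}, $\check\psi_t(x) = \sup_{\tau}(\psi_\tau(x) + t\tau)$ is increasing in $t$, and as $t \searrow 0$ it decreases to $\sup_\tau \psi_\tau(x)$, which equals $\lim_{\tau \to -\infty}\psi_\tau(x) = \psi_{-\infty}(x)$ pointwise (since $\tau \mapsto \psi_\tau(x)$ is decreasing, the sup over $\tau$ is the limit as $\tau \to -\infty$, up to usc regularization; one should be a little careful with the usc-regularization in $x$ here, but both $\check\psi_t$ and $\psi_{-\infty}$ are genuine $\theta$-psh functions and agree a.e., hence everywhere). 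So $\check\psi_t \searrow \psi_{-\infty}$ as $t \searrow 0$.

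Then I would invoke continuity of the envelope operator $P[\cdot]$ under decreasing limits: if $\check\psi_t \searrow \psi_{-\infty}$, then $P[\check\psi_t] \searrow P[\psi_{-\infty}]$ — but in fact we already know $P[\check\psi_t] = P[\psi_{-\infty}]$ for every $t > 0$ from the previous step, so the limit is simply $P[\psi_{-\infty}]$, and it suffices to know that $P[\check\psi_t]$ decreases to (something $\le$) $\psi_{-\infty}$. Combining: for each $t>0$ we have $\check\psi_t \ge \psi_{-\infty}$ (the sup includes the $\tau \to -\infty$ term), hence $P[\psi_{-\infty}] = P[\check\psi_t] \ge$ the envelope of $[\psi_{-\infty}]$ applied to $\check\psi_t$... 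Let me instead argue directly: from $\check\psi_t \ge \psi_{-\infty}$ and the fact that $[\check\psi_t] = [\psi_{-\infty}]$ as singularity types (which follows from $\psi_{-\infty} \le \check\psi_t \le \psi_{-\infty} + t\tau_\psi^+$, shown in the proof of Lemma~\ref{lem:Cproj}), we get that $P[\check\psi_t] = P[\psi_{-\infty}]$, and since $\psi_{-\infty}$ is a candidate competitor in the envelope defining $P[\check\psi_t]$ (it is $\theta$-psh, $\le \check\psi_t$... no, it need not be $\le V_\theta$ — but $P[\phi]$ does not require $\le V_\theta$ only the singularity type constraint). Actually the cleanest route: $P[\psi_{-\infty}]$ is by definition $\ge \psi_{-\infty}$ always, so it remains to show $P[\psi_{-\infty}] \le \psi_{-\infty}$, equivalently $\psi_{-\infty}$ is a model potential.

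For the reverse inequality $P[\psi_{-\infty}] \le \psi_{-\infty}$, I would use that each $\psi_\tau$ is a model potential ($P[\psi_\tau]=\psi_\tau$) together with Lemma~\ref{lma:Imodeldeclelong} or directly with the fact that a decreasing limit of model potentials with positive mass is again a model potential — this last fact is essentially \cite[Theorem 3.12]{DDNL2} / the theory of model potentials, and should be citable or quickly derivable. Concretely: $\psi_\tau \searrow \psi_{-\infty}$ as $\tau \searrow -\infty$ is false (it's $\tau \to -\infty$, increasing singularities as $\tau$ decreases)... the $\psi_\tau$ increase to $\psi_{-\infty}$ as... no. As $\tau$ decreases, $\psi_\tau$ increases pointwise, so $\psi_\tau \nearrow \psi_{-\infty}$ as $\tau \to -\infty$; but an increasing limit of model potentials need not be model. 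So the decreasing-limit argument must come through $\check\psi_t \searrow \psi_{-\infty}$ as $t \searrow 0$ instead: each $\check\psi_t$ has $P[\check\psi_t] = P[\psi_{-\infty}]$, so if $P[\psi_{-\infty}] > \psi_{-\infty}$ somewhere, then $P[\check\psi_t] = P[\psi_{-\infty}]$ stays bounded below by this strictly larger function for all $t$, yet $\check\psi_t \searrow \psi_{-\infty}$ forces $P[\check\psi_t] \searrow \le \psi_{-\infty}$ (monotonicity of $P[\cdot]$ under decreasing sequences, since $P[u] \le u$... wait $P[u]=P_\theta[u](V_\theta)$ need not be $\le u$). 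Given the subtleties, \textbf{the main obstacle} is pinning down exactly which monotonicity/continuity property of the singularity-type envelope $P[\cdot]$ is being invoked and citing it correctly — specifically that $\check\psi_t \searrow \psi_{-\infty}$ combined with $P[\check\psi_t] = P[\psi_{-\infty}]$ forces $P[\psi_{-\infty}] = \psi_{-\infty}$. I expect the intended proof is a two-line deduction: apply Lemma~\ref{lem:Cproj} to get $P[\check\psi_t] = P[\psi_{-\infty}]$, note $\check\psi_t \searrow \psi_{-\infty}$ as $t\to 0^+$, and conclude by the (already established, e.g. via \cite[Theorem 3.12]{DDNL2}) fact that $P$ of a decreasing limit of potentials with uniformly positive mass equals the decreasing limit of the $P$'s, giving $P[\psi_{-\infty}] = \lim_t P[\check\psi_t] = \lim_t P[\check\psi_t]$, which after identifying limits with $\psi_{-\infty}$ (using $\psi_{-\infty} \le P[\psi_{-\infty}] = P[\check\psi_t] \le \check\psi_t$ and squeezing) yields $P[\psi_{-\infty}] = \psi_{-\infty}$.
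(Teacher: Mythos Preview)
Your proposal contains a genuine gap, and in fact you explicitly rejected the correct (and much shorter) approach midway through.

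The paper's proof is two lines: since $\tau\mapsto\psi_\tau$ is $\tau$-decreasing, we have $\psi_\tau \nearrow \psi_{-\infty}$ a.e.\ as $\tau\to -\infty$; each $\psi_\tau$ is a model potential by maximality; and the positive-mass hypothesis $\int_X \theta_{\psi_{-\infty}}^n>0$ lets one invoke \cite[Corollary~4.7]{DDNL5}, which says precisely that an a.e.\ increasing limit of model potentials with positive mass is again model. You considered this route and dismissed it with ``an increasing limit of model potentials need not be model'' --- but under the positive mass assumption this is exactly what \cite[Corollary~4.7]{DDNL5} guarantees, and it is also used elsewhere in the paper (e.g.\ in the maximization paragraph just below).

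Your alternative route through Lemma~\ref{lem:Cproj} does not close. Lemma~\ref{lem:Cproj} gives $P[\check\psi_t]=P[\psi_{-\infty}]$, but this only tells you the singularity type of $\check\psi_t$ agrees with that of $\psi_{-\infty}$; it says nothing about whether $\psi_{-\infty}$ itself is model. Your final squeeze relies on $P[\check\psi_t]\le \check\psi_t$, which is false: by definition $P[u]=P_\theta[u](V_\theta)$ satisfies $u\le P[u]\le V_\theta$ (for $u\le 0$), not $P[u]\le u$. Also, your monotonicity claim for $t\mapsto\check\psi_t$ is stated inconsistently and is not generally valid without normalizing $\tau_\psi^+$; what is true (and sufficient for the paper's applications, via Lemma~\ref{lma:testcurvlegusc}) is only that $\check\psi_t\to\psi_{-\infty}$ as $t\to 0^+$, but this by itself does not yield the hard inequality $P[\psi_{-\infty}]\le\psi_{-\infty}$.
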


\begin{proof} We have that $\psi_\tau \nearrow \psi_{-\infty}$ a.e., as $\tau \to -\infty$ and $\psi_\tau = P[\psi_\tau]$. By \cite[Corollary~4.7]{DDNL5} we conclude that $\psi_{-\infty} = P[\psi_{-\infty}]$.
\end{proof}

The next two results show $\mathcal I$-maximal test curves are preserved under monotone limits.

\begin{lemma}\label{lma:decnetna}
Let $\{\psi^i_{\tau}\}_\tau\in \textup{MTC}_\mathcal I(\theta)$ be a decreasing net (in the sense that $\psi^i_{\tau}$ is decreasing for each $\tau$). Let $\psi_{\tau}:=\inf_i \psi^i_{\tau}$. 
Assume that for some $\tau \in \mathbb R$, $\psi_{\tau}$ is not identically $-\infty$, and it has positive mass. Then $\{\psi_{\tau}\}_\tau \in \textup{MTC}_\mathcal I(\theta)$.
\end{lemma}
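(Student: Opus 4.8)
The plan is to verify the three defining properties of $\textup{MTC}_\mathcal I(\theta)$ for the candidate $\{\psi_\tau\}_\tau$: first that it is a relative test curve, second that it is $\mathcal I$-maximal, and third that the positive-mass condition $\int_X\theta_{\psi_{-\infty}}^n>0$ holds. The structural properties are inherited from the net in a routine way: for each fixed $x$, $\tau\mapsto\psi_\tau(x)=\inf_i\psi^i_\tau(x)$ is an infimum of concave, decreasing, usc functions of $\tau$, hence still decreasing and concave; the usc property in $\tau$ needs a small argument, but follows since an infimum of usc functions is usc and we can regularize in the standard way (or invoke Lemma~\ref{lma:testcurvlegusc}-type reasoning). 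That $\psi_\tau\equiv-\infty$ for $\tau$ large is automatic since each $\psi^i_\tau$ is. The key analytic input is that the weak $L^1$ limit $\psi_{-\infty}=\lim_{\tau\to-\infty}\psi_\tau$ exists and lies in $\textup{PSH}(X,\theta)$; since $\tau\mapsto\psi_\tau$ is decreasing in $\tau$ (to $-\infty$ as $\tau\to-\infty$... wait, it is $\tau$-decreasing, so increasing as $\tau\to-\infty$), $\psi_\tau$ increases a.e.\ as $\tau\searrow-\infty$, and the hypothesis that $\psi_\tau$ has positive mass for some $\tau$ together with monotonicity of mass gives that $\psi_{-\infty}$ has positive mass as well; then $\psi_{-\infty}\in\textup{PSH}(X,\theta)$ follows from the fact that an increasing sequence of $\theta$-psh functions bounded above in $L^1$ has a $\theta$-psh usc limit.

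For $\mathcal I$-maximality, I would argue $\psi_\tau=P[\psi_\tau]_\mathcal I$ for each fixed $\tau$ by combining two facts. Since each $\psi^i_\tau$ is $\mathcal I$-maximal, $\psi^i_\tau=P[\psi^i_\tau]_\mathcal I$, and these are $\mathcal I$-model potentials, hence in particular model potentials by \cite[Proposition 2.18(i)]{DX20}. Thus $\{\psi^i_\tau\}_i$ is, for each fixed $\tau$, a decreasing net of model potentials decreasing to $\psi_\tau$ (which has positive mass once $\tau<\tau^+_\psi$, or at least for $\tau$ in a cofinal range, by the mass hypothesis and monotonicity). The natural tool is a stability-under-decreasing-limits statement for the $P[\cdot]_\mathcal I$ envelope: one expects that if $w^i\searrow w$ with $w^i=P[w^i]_\mathcal I$ and $w$ of positive mass, then $w=P[w]_\mathcal I$. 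This should follow from the multiplier-ideal description together with Theorem~\ref{thm:openness} (Guan--Zhou strong openness): the ideals $\mathcal I(tw^i)$ decrease with $i$ and one compares $\mathcal I(tw)$ to $\bigcap_i\mathcal I(tw^i)$; strong openness controls the passage to the limit. Lemma~\ref{lma:Imodeldeclelong} is the analogous statement for Lelong numbers and suggests exactly this line of reasoning is available; I would either cite or adapt the corresponding $\mathcal I$-model statement from \cite{DX20,DX21}.

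The main obstacle I anticipate is the $\mathcal I$-maximality step — specifically, showing that the envelope operator $P[\cdot]_\mathcal I$ commutes with decreasing limits of $\mathcal I$-model potentials of positive mass. The inclusion $\psi_\tau\le P[\psi_\tau]_\mathcal I$ is free, and $P[\psi_\tau]_\mathcal I\le P[\psi^i_\tau]_\mathcal I=\psi^i_\tau$ for every $i$ (since $\psi_\tau\le\psi^i_\tau$ implies $\mathcal I(t\psi_\tau)\subseteq\mathcal I(t\psi^i_\tau)$, so any competitor $w$ for $P[\psi_\tau]_\mathcal I$ is also a competitor for $P[\psi^i_\tau]_\mathcal I$); taking the infimum over $i$ gives $P[\psi_\tau]_\mathcal I\le\psi_\tau$, hence equality — so in fact this direction is surprisingly clean and the positive-mass hypothesis may only be needed to ensure $\psi_\tau\not\equiv-\infty$ and that $\psi_{-\infty}$ is a genuine $\theta$-psh function with positive mass. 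I would double-check whether positive mass is truly needed for the envelope comparison or only for staying inside $\textup{MTC}_\mathcal I$; the cleanest write-up isolates the comparison $P[\inf_i w^i]_\mathcal I=\inf_i w^i$ for decreasing nets of $\mathcal I$-model potentials as a small lemma and then applies it pointwise in $\tau$, finishing by verifying $\int_X\theta_{\psi_{-\infty}}^n>0$ via \cite[Proposition 4.6]{DDNL5} or the monotonicity of mass under decreasing limits.
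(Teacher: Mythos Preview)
Your proposal is correct and, once you arrive at the clean sandwich argument $\psi_\tau \le P[\psi_\tau]_{\mathcal I} \le \inf_i P[\psi^i_\tau]_{\mathcal I} = \inf_i \psi^i_\tau = \psi_\tau$, it is exactly the paper's proof; the paper dispatches the positive-mass condition for $\psi_{-\infty}$ in one line via $\psi_{-\infty}\ge\psi_\tau$ and \cite[Theorem~1.2]{WN19}, just as you suggest. Your detour through strong openness and Lemma~\ref{lma:Imodeldeclelong} is unnecessary---the monotonicity of $P[\cdot]_{\mathcal I}$ that you eventually use is all that is needed, and the paper's write-up is essentially the two-line version of what you converge to in your final paragraph.
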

\begin{proof} Since $\psi_{-\infty} \geq \psi_\tau$, we get that $\psi_{-\infty}$ has positive mass by \cite[Theorem 1.2]{WN19}. 
It suffices to observe that $\psi_\tau$ is $\mathcal{I}$-model whenever it is not $-\infty$. Indeed, 
$
\psi_{\tau}\leq P_{\theta}[\psi_{\tau}]_{\mathcal{I}}\leq\inf_i P_{\theta}[\psi^i_{\tau}]_{\mathcal{I}}=\inf_i \psi^i_{\tau}=\psi_{\tau}.
$
\end{proof}

\begin{lemma}\label{lma:incfamily}
Let $\{\psi^i_{\tau}\}_\tau\in \textup{MTC}_\mathcal I(\theta)$ be an increasing net in $i$.  Assume that $\tau^+_\psi:=\sup_i\tau^{i,+}_\psi<\infty$. Let $\psi_{\tau}:=\textup{usc}\big(\sup_i \psi^i_{\tau}\big)$ for $\tau \neq \tau^+_\psi$ and $\psi_{\tau^+_\psi}:=\lim_{\tau\nearrow\tau^+_\psi}\psi_\tau$. 
Then $\{\psi_{\tau}\}_\tau\in \textup{MTC}_\mathcal I(\theta)$.
\end{lemma}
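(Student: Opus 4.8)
The plan is to mirror the proof of Lemma~\ref{lma:decnetna}, but with the extra bookkeeping that an increasing net of $\mathcal I$-maximal (hence model) potentials need not converge to a model potential, so the envelopes $P[\cdot]_\mathcal I$ must be applied to the limit. First I would check that $\{\psi_\tau\}_\tau$ is a genuine relative test curve: concavity, monotonicity and upper semicontinuity in $\tau$ pass to the regularized supremum of the $\psi^i_\tau$ by standard arguments (concavity of an increasing net of concave functions is preserved under $\sup$, and the usc-regularization in the $X$-variable does not destroy concavity in $\tau$ away from $\tau^+_\psi$; the value at $\tau^+_\psi$ is defined by the increasing $\tau$-limit precisely to keep $\tau$-usc). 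The hypothesis $\tau^+_\psi:=\sup_i \tau^{i,+}_\psi<\infty$ guarantees $\psi_\tau\equiv-\infty$ for $\tau>\tau^+_\psi$, so $\tau^+_{\psi}$ is indeed the cutoff. For $\tau<\tau^+_\psi$ one has $\psi_\tau\geq \psi^{i_0}_\tau$ for any fixed $i_0$ with $\tau<\tau^{i_0,+}_\psi$, hence $\psi_\tau\not\equiv-\infty$; and $\psi_{-\infty}=\lim_{\tau\to-\infty}\psi_\tau$ exists as a decreasing-in-$\tau$ limit of $\theta$-psh functions, lying in $\textup{PSH}(X,\theta)$ provided it is not $-\infty$, which follows since $\psi_{-\infty}\geq \psi^{i}_{-\infty}$ for each $i$.

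Next I would establish positivity of mass: $\psi_{-\infty}\geq \psi^i_{-\infty}$ for every $i$, and some $\psi^i_{-\infty}$ has positive mass (it lies in $\textup{MTC}_\mathcal I$), so $\int_X\theta_{\psi_{-\infty}}^n>0$ by the monotonicity of masses \cite[Theorem 1.2]{WN19}. The heart of the argument is showing $\psi_\tau=P[\psi_\tau]_\mathcal I$ for every $\tau<\tau^+_\psi$ (the case $\tau\geq\tau^+_\psi$ being $\psi_\tau\equiv-\infty$, and the boundary value $\tau^+_\psi$ following by taking the decreasing $\tau$-limit, using that $P[\cdot]_\mathcal I$ is compatible with such limits of $\mathcal I$-maximal curves — or simply noting it is forced by the other values). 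Fix $\tau<\tau^+_\psi$. Clearly $\psi_\tau\leq P[\psi_\tau]_\mathcal I$. For the reverse, the key point is that $P[\cdot]_\mathcal I$ is determined by multiplier ideals, which interact well with \emph{increasing} limits via the effective strong openness Theorem~\ref{thm:openness}: since $\psi^i_\tau\nearrow \psi_\tau$ a.e., for any $t\geq 0$ and any local section $f$ with $f\in\mathcal I(t\psi_\tau)$ we get $f\in\mathcal I(t\psi^i_\tau)$ for $i$ large. Now if $w\in\textup{PSH}(X,\theta)$, $w\leq V_\theta$, satisfies $\mathcal I(tw)\subseteq\mathcal I(t\psi_\tau)$ for all $t\geq 0$, I would like to conclude $\mathcal I(tw)\subseteq\mathcal I(t\psi^i_\tau)=\mathcal I(tP[\psi^i_\tau]_\mathcal I)$ for $i$ large — but this "$i$ large" a priori depends on $t$ and on the local generators, so the naive argument gives only $w\leq \psi^i_\tau$ after such choices, not a clean bound.

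This is the main obstacle: one cannot directly push a competitor $w$ for $P[\psi_\tau]_\mathcal I$ into being a competitor for $P[\psi^i_\tau]_\mathcal I$ uniformly. The way I would get around it is to avoid the pointwise ideal comparison and instead argue at the level of potentials, exploiting that each $\psi^i_\tau$ is $\mathcal I$-model and invoking the characterization of $\mathcal I$-model potentials via masses/Lelong numbers. Concretely: let $u:=P[\psi_\tau]_\mathcal I$, so $u$ is $\mathcal I$-model, $u\geq\psi_\tau\geq\psi^i_\tau$, and one wants $u\leq\psi_\tau$. Since $u\geq \psi^i_\tau$ and both are $\theta$-psh, passing to the regularized supremum over $i$ shows $u\geq \psi_\tau = \textup{usc}(\sup_i\psi^i_\tau)$ trivially; the content is the other direction. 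Here I would use that $\mathcal I$-model potentials are stable under the relevant increasing limits: by \cite[Proposition 2.18]{DX20} (or the analogous statement quoted for $\mathcal I$-model potentials) together with the strong openness theorem, the regularized increasing limit of $\mathcal I$-model potentials with positive mass is again $\mathcal I$-model provided the limit has positive mass — and $\psi_\tau\geq\psi^{i_0}_\tau$ has positive mass since $\psi^{i_0}_\tau$ does for any fixed $i_0$ with $\tau<\tau^{i_0,+}_\psi$ (as $\psi^{i_0}_{-\infty}$ having positive mass and $\psi^{i_0}_\tau$ being a model potential not identically $-\infty$ forces $\int_X\theta_{\psi^{i_0}_\tau}^n>0$ by the mass formula for test curves). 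Granting that stability statement, $\psi_\tau$ is $\mathcal I$-model, i.e. $P[\psi_\tau]_\mathcal I=\psi_\tau$, which is exactly $\mathcal I$-maximality. Finally, reassembling: $\{\psi_\tau\}_\tau$ is a relative test curve, it is $\mathcal I$-maximal, and $\int_X\theta_{\psi_{-\infty}}^n>0$, so $\{\psi_\tau\}_\tau\in\textup{MTC}_\mathcal I(\theta)$, completing the proof. I expect the only real work to be pinning down and citing the precise increasing-limit stability property of $\mathcal I$-model potentials; everything else is routine verification of the test-curve axioms.
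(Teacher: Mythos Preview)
Your proposal is correct and follows essentially the same approach as the paper. The paper's proof is terse: it simply cites \cite[Lemma~2.21(iii)]{DX20} for the fact that the usc regularized supremum of an increasing net of $\mathcal I$-model potentials is $\mathcal I$-model (together with \cite[Theorem~1.2]{WN19} for positive mass) when $\tau<\tau^+_\psi$, and \cite[Lemma~2.21(i)]{DX20} for the decreasing limit at $\tau=\tau^+_\psi$; this is exactly the ``increasing-limit stability property of $\mathcal I$-model potentials'' you identified as the crux, so your only remaining task is to replace your tentative citation of \cite[Proposition~2.18]{DX20} with the correct one, \cite[Lemma~2.21]{DX20}.
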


\begin{proof}
Showing that $\psi_{\tau}$ is $\mathcal I$-model and has positive mass for $\tau < \tau^+_\psi$ is a consequence of 
\cite[Theorem 1.2]{WN19} and \cite[Lemma 2.21(iii)]{DX20}. That $\psi_{\tau^+_\psi}$ is $\mathcal I$-model follows from \cite[Lemma 2.21(i)]{DX20}. 
\end{proof}

\paragraph{Maximization.}
We adapt the \emph{maximization} process of test curves from the works \cite{RWN14,DDNL3, DZ22} to our relative setting. Let $\{\psi_\tau\}_\tau$ be a relative test curve with $\int_X \theta_{\psi_{-\infty}}^n>0$. The maximization of $\{\psi_\tau\}_\tau$ is simply the relative test curve $\{\phi_\tau\}_\tau$ such that
\begin{equation}\label{eq: maximization_def}
    \phi_{\tau}:=\left\{
        \begin{aligned}
            P[\psi_\tau],& \quad \text{if }\tau<\tau^+_{\psi};\\
            \lim_{\tau\nearrow\tau^+_{\psi}}\phi_\tau, & \quad \text{if } \tau=\tau^+_{\psi};\\
            -\infty, & \quad \text{if } \tau>\tau^+_{\psi}.
        \end{aligned}
    \right.
\end{equation}
The condition $\int_X \theta_{\psi_{-\infty}}^n>0$, \cite[Theorem~1.2]{WN19} and $\tau$-concavity of $\tau \mapsto \psi_\tau$ implies that $\int_X \theta_{\psi_{\tau}}^n>0$ for all $\tau < \tau^+_\psi$.
By \cite[Theorem~3.12]{DDNL2} we obtain that $\{\phi_\tau\}_\tau$ is a maximal test curve, relative to $\phi_{-\infty} = P[\psi_{-\infty}]$ (by \cite[Corollary~4.7]{DDNL5}).

Lastly, along the same lines as above, we introduce the $\mathcal I$-\emph{maximization} of a  relative test curve $\{\psi_\tau\}_\tau$ with $\int_X \theta_{\psi_{-\infty}}^n>0$. The $\mathcal I $-maximization of $\{\psi_\tau\}_\tau$ is the relative test curve $\{\chi_\tau\}_\tau$ defined by:
\begin{equation}\label{eq: I-maximization_def}
    \chi_{\tau}:=\left\{
        \begin{aligned}
            P[\psi_\tau]_{\mathcal{I}},& \quad \text{if }\tau<\tau^+_{\psi};\\
            \lim_{\tau\nearrow\tau^+_{\psi}}\chi_\tau, & \quad \text{if } \tau=\tau^+_{\psi};\\
            -\infty, & \quad \text{if } \tau>\tau^+_{\psi}.
        \end{aligned}
    \right.
\end{equation}
By definition, $\{\chi_\tau\}_\tau$ is an $\mathcal I$-maximal test curve relative to $\chi_{-\infty} = P[\psi_{-\infty}]_\mathcal I $ by \cite[Lemma~2.21(iii)]{DX20}. 

The maximization/$\mathcal I$-maximization of a sublinear (sub)geodesic $\{u_t\}_t$ will simply be the inverse Legendre transform of the maximization/$\mathcal I$-maximization of $\{\hat u_\tau\}_\tau$.

\subsection{The non-Archimedean formalism of Boucksom--Jonsson}\label{subsec:NAformal}
In this section, we recall the basics of the space of non-Archimedean psh metrics, as defined by Boucksom--Jonsson \cite{BJ18b}.

\paragraph{The space of valuations.}

Let $X$ be an irreducible reduced variety over $\mathbb{C}$ of dimension $n$. We recall the notion of Berkovich analytification $X^{\An}$ of $X$ with respect to the trivial valuation on $\mathbb{C}$. Recall that a (real-valued) valuation on $X$ (or a valuation of $\mathbb{C}(X)$) is a map $v:\mathbb{C}(X)\rightarrow (-\infty,\infty]$ satisfying \vspace{0.2cm}\\
\noindent (i) For $f\in \mathbb{C}(X)$, $v(f)=\infty$ if and only if $f=0$.\vspace{0.2cm}\\
\noindent (ii) For $f,g\in \mathbb{C}(X)$, $v(fg)=v(f)+v(g)$.\vspace{0.2cm}\\
\noindent (iii) For $f,g\in \mathbb{C}(X)$, $v(f+g)\geq \min\{v(f),v(g)\}$.\vspace{0.2cm}\\
The set of valuations on $X$ is denoted by $X^{\val}$. The center of a valuation $v$ is the scheme-theoretic point $c=c(v)$ of $X$ such that $v\geq 0$ on $\mathcal{O}_{X,c}$ and $v>0$ on the maximal ideal $\mathfrak{m}_{X,c}$ of $\mathcal{O}_{X,c}$. The center is unique if exists. It exists if $X$ is proper.

In the remaining of this section, we assume that $X$ is projective.

As a set, $X^{\An}$ is the set of semi-valuations on $X$, in other words, real-valued valuations $v$
on irreducible reduced subvarieties $Y$ in $X$ that is trivial on $\mathbb{C}$. We call $Y$ the \emph{support} of the semi-valuation $v$.
In other words, 
\[
X^{\An}=\coprod_{Y} Y^{\val}.
\]
We will write $v_{\triv}\in X^{\An}$ for the trivial valuation on $X$: $v_{\triv}(f)=0$ for any $f\in \mathbb{C}(X)^{\times}$.
We endow $X^{\An}$ with the coarsest topology such that \vspace{0.2cm}:\\
\noindent (i) For any Zariski open subset $U\subseteq X$, the subset $U^{\An}$ of $X^{\An}$ consisting of semi-valuations whose supports meet $U$ is open.\vspace{0.2cm}\\
\noindent (ii) For each Zariski open subset $U\subseteq X$ and each $f\in H^0(U,\mathcal{O}_X)$ (here $\mathcal{O}_X$ is the sheaf of regular functions), the map $|f|:U^{\An}\rightarrow \mathbb{R}$ sending $v$ to $\exp(-v(f))$ is continuous.\vspace{0.2cm}\\
See \cite{Berk93} for more details.

We will be most interested in divisorial valuations. Recall that a divisorial valuation on $X$ is a valuation of the form $c\ord_E$, where $c\in \mathbb{Q}_{>0}$ and $E$ is a prime divisor over $X$. The set of divisorial valuations on $X$ is denoted by $X^{\Div}$. When $\mathbb{Q}_{>0}$ is replaced by $\mathbb{R}_{>0}$, we can similarly define a space $X^{\Div}_{\mathbb{R}}$.

Given any coherent ideal $\mathfrak{a}$ on $X$ and any $v\in X^{\An}$, we define 
\begin{equation}\label{eq:va}
v(\mathfrak{a}):=\min\{v(f):f\in \mathfrak{a}_{c(v)}\}\in [0,\infty],
\end{equation}
where $c(v)$ is the center of the valuation $v$ on $X$. 

Given any valuation $v$ on $X$, the Gauss extension of $v$ is a valuation $\sigma(v)$ on $X\times \mathbb{A}^1$:
\[
\sigma(v)\left(\sum_i f_i t^i\right):=\min_i (v(f_i)+i).
\]
Here $t$ is the standard coordinate on $\mathbb{A}^1=\Spec \mathbb{C}[t]$. The key property is that when $v$ is a divisorial valuation, then so it $\sigma(v)$. See \cite[Lemma~4.2]{BHJ17}.

\paragraph{Non-Archimedean plurisubharmonic functions.}
Let $X$ be an irreducible complex projective variety of dimension $n$ and $L$ be a holomorphic pseudoeffective $\mathbb{Q}$-line bundle on $X$. Through the GAGA morphism $X^{\An}\rightarrow X$ of ringed spaces, $L$ can be pulled-back to an analytic line bundle $L^{\An}$ on $X$. The purpose of this section is to study the psh metrics on $L^{\An}$. We will follow the approach of \cite{BJ18b}, which avoids the direct treatment of $L^{\An}$ itself.

Following \cite[Definition~2.18]{BJ18b}, we define $\mathcal{H}^{\gf}_{\mathbb{Q}}(L)$, the set of \emph{(rational)  generically finite Fubini--Study functions}  $\phi:X^{\An}\rightarrow [-\infty,\infty)$, that are of the following form:
    \begin{equation}
    \phi=\frac{1}{m}\max_j \{\log|s_j|+\lambda_j\}.
    \end{equation}
Here $m\in \mathbb{Z}_{>0}$ is an integer such that $L^m$ is a line bundle, the $s_j$'s are a finite collection of non-vanishing sections in $H^0(X,L^m)$, and $\lambda_j\in \mathbb{Q}$. We followed the convention of Boucksom--Jonsson by writing $\log |s_j|(v)=-v(s_j)$.

Now we come to the main definition of this paragraph:
\begin{definition}[{\cite[Definition~4.1]{BJ18b}}]\label{def:BJpshmetric}
A psh metric on $L^{\An}$ is a function $\phi:X^{\An}\rightarrow [-\infty,\infty)$ that  is not identically $-\infty$, and  is the pointwise limit of a decreasing net $(\phi_i)_{i\in I}$, where $\phi_i\in \mathcal{H}^{\gf}_{\mathbb{Q}}(L_i^{\An})$ for some $\mathbb{Q}$-line bundles $L_i$ on $X$ satisfying $c_1(L_i)\to c_1(L)$ in $\NS^1(X)_{\mathbb{R}}$.

\end{definition}

The set of psh metrics on $L^{\An}$ is denoted by $\PSH(L^{\An})$.
We endow $\PSH(L^{\An})$ with the topology of pointwise convergence on $X^{\Div}$. This topology is Hausdorff as functions in $\PSH(L^{\An})$ are completely determined by their restriction on $X^{\Div}$:
\begin{theorem}[{\cite[Theorem~4.22]{BJ18b}}]\label{thm:pshdetbydiv}
Let $\phi \in\PSH(L^{\An})$ and $\psi:X^{\An}\rightarrow [-\infty,\infty)$ be an usc function. Assume that $\phi\leq \psi$ on $X^{\Div}$, then the same holds on $X^{\An}$.
\end{theorem}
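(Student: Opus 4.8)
The plan is to deduce this from the retraction structure of $X^{\An}$: every psh metric is recovered as an increasing limit of its compositions with the retractions onto dual complexes of snc models, and on such a complex a psh metric is continuous while divisorial valuations are dense. This converts the statement into a one-line semicontinuity estimate once those structural facts are in hand.

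Concretely, I would proceed as follows. For an snc model $\mathcal{X}$ of $X$ (in the sense of \cite{BJ18b}) let $r_{\mathcal{X}}\colon X^{\An}\to \Delta_{\mathcal{X}}\subseteq X^{\An}$ be the associated retraction onto the dual complex; these maps are continuous and form a projective system as $\mathcal{X}$ ranges over snc models ordered by domination. I would then invoke from \cite[\S4]{BJ18b} the two structural facts: (a) for $\phi\in\PSH(L^{\An})$ the restriction $\phi|_{\Delta_{\mathcal{X}}}$ is continuous, so that $g_{\mathcal{X}}:=\phi\circ r_{\mathcal{X}}$ is a continuous function on all of $X^{\An}$; and (b) $g_{\mathcal{X}}\le\phi$ for every $\mathcal{X}$, with $\phi=\sup_{\mathcal{X}}g_{\mathcal{X}}$, the net $\mathcal{X}\mapsto g_{\mathcal{X}}$ increasing pointwise to $\phi$. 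Since $X^{\Div}$ is dense in $X^{\An}$, for a fixed $\mathcal{X}$ and an arbitrary $v\in X^{\An}$ I would choose a net $(v_\alpha)\subseteq X^{\Div}$ with $v_\alpha\to v$; on $X^{\Div}$ one has $g_{\mathcal{X}}\le\phi\le\psi$ by (b) and the hypothesis, so continuity of $g_{\mathcal{X}}$ and upper semicontinuity of $\psi$ give
\[
g_{\mathcal{X}}(v)=\lim_\alpha g_{\mathcal{X}}(v_\alpha)\le\limsup_\alpha\psi(v_\alpha)\le\psi(v).
\]
Hence $g_{\mathcal{X}}\le\psi$ on all of $X^{\An}$, and passing to the supremum over $\mathcal{X}$ via (b) yields $\phi=\sup_{\mathcal{X}}g_{\mathcal{X}}\le\psi$ on $X^{\An}$, which is the claim.

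The soft part of the argument — density of $X^{\Div}$ together with the semicontinuity estimate — is straightforward. The real work lies entirely in the inputs (a) and (b), namely the continuity of psh metrics along dual complexes and their recovery as increasing limits of model retractions with $\phi\circ r_{\mathcal{X}}\le\phi$. Extracting these from the bare definition (decreasing nets of generically finite Fubini--Study functions) is the main obstacle, and it is precisely the content of the foundational results of \cite{BJ18b}; in the present paper one simply cites them. (One could alternatively bypass the retractions by regularizing $\phi$ from below inside a fixed model, but this amounts to the same construction.)
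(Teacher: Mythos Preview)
The paper does not prove this statement at all: it is quoted verbatim from \cite[Theorem~4.22]{BJ18b} and used as a black box throughout (e.g., in Corollary~\ref{cor:testcurvegivepshme} and Theorem~\ref{thm:Imodelcurvenabij}). There is no proof to compare against.

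Your sketch is a faithful outline of the argument in \cite{BJ18b} itself: density of $X^{\Div}$, continuity of $\phi$ on each dual complex $\Delta_{\mathcal X}$, the inequality $\phi\circ r_{\mathcal X}\le\phi$ with equality in the sup over models, and the semicontinuity sandwich. You are right that all the content sits in facts (a) and (b), and you flag this correctly. One small caution: in \cite{BJ18b} the properties (a), (b), and Theorem~4.22 are developed as a package in Section~4, so one should check that the particular results you invoke for (a) and (b) are logically prior to Theorem~4.22 there and not consequences of it; otherwise the deduction is circular. Modulo that bookkeeping, the argument is correct.
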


Next we note that we may use sequences instead of nets in the definition of $\PSH(L^{\An})$:
\begin{theorem}[{\cite[Corollary~12.18]{BJ18b}}]\label{thm:BJappseq}
Let $S$ be an ample line bundle on $X$. Let $\phi \in \PSH(L^{\An})$.
Then there is a sequence of rational numbers $\varepsilon_i\searrow 0$ and a decreasing sequence $\phi_i\in \mathcal{H}^{\gf}_{\mathbb{Q}}((L+\varepsilon_i S)^{\An})$ such that $\phi$ is the pointwise limit of $\phi_i$, as $i\to\infty$.
\end{theorem}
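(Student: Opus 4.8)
The plan is to upgrade Definition~\ref{def:BJpshmetric} in two independent ways: first, replace the defining decreasing \emph{net} of generically finite Fubini--Study functions by a decreasing \emph{sequence}; second, replace the auxiliary bundles $L_i$ (which a priori only satisfy $c_1(L_i)\to c_1(L)$ in $\NS^1(X)_{\mathbb R}$) by bundles of the normalized form $L+\varepsilon_i S$ with $\varepsilon_i\in\mathbb Q_{>0}$ and $\varepsilon_i\searrow 0$. I would carry this out in three steps, the last of which I expect to be the real work.

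\emph{Step 1 (reduction to a big class, for a fixed $\varepsilon$).} Fix a rational $\varepsilon>0$. Since $L$ is pseudoeffective and $S$ is ample, $L+\varepsilon S$ is big. Adding the trivial (canonical) metric of the ample summand $\varepsilon S$ does not change a function on $X^{\An}$, and, because enlarging a class preserves plurisubharmonicity, it yields an inclusion $\PSH(L^{\An})\hookrightarrow\PSH((L+\varepsilon S)^{\An})$ as sets of functions on $X^{\An}$. Thus $\phi$ may be regarded as an element of $\PSH((L+\varepsilon S)^{\An})$ for every rational $\varepsilon>0$, without altering it pointwise.

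\emph{Step 2 (countable regularization of a big class).} For the big bundle $L+\varepsilon S$ one invokes the main regularization theorem of \cite{BJ18b} (of which the present statement is the stated corollary): every element of $\PSH((L+\varepsilon S)^{\An})$ is the pointwise limit of a \emph{decreasing sequence} $\psi^{\varepsilon}_k\in\mathcal H^{\gf}_{\mathbb Q}((L+\varepsilon S)^{\An})$. The ingredients are the transfer of Demailly's regularization theorem (via resolution of singularities) to the Berkovich side, continuity of the non-Archimedean Monge--Amp\`ere envelope $P$ along decreasing nets, and the fact (Theorem~\ref{thm:pshdetbydiv}) that a psh metric is determined by its restriction to $X^{\Div}$. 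Applying this to the function $\phi$ of Step~1 produces, for every rational $\varepsilon>0$, a decreasing sequence $\psi^{\varepsilon}_k\searrow\phi$ in $\mathcal H^{\gf}_{\mathbb Q}((L+\varepsilon S)^{\An})$.

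\emph{Step 3 (diagonalizing in $\varepsilon$).} Fix rationals $\varepsilon_i\searrow 0$ and build $\{\phi_i\}$ inductively. Given $\phi_i\in\mathcal H^{\gf}_{\mathbb Q}((L+\delta_i S)^{\An})$ with $\phi_i\geq\phi$ and $\delta_i\searrow 0$, choose a rational $\delta_{i+1}<\min(\delta_i,\varepsilon_{i+1})$. Viewing $\phi_i$ and $\phi$ inside $\PSH((L+\delta_{i+1}S)^{\An})$ via Step~1, consider $P\big(\min(\phi_i,\psi^{\delta_{i+1}}_k)\big)$; since $\min(\phi_i,\psi^{\delta_{i+1}}_k)\searrow\min(\phi_i,\phi)=\phi$ in $k$, continuity of $P$ along decreasing nets gives $P\big(\min(\phi_i,\psi^{\delta_{i+1}}_k)\big)\searrow P(\phi)=\phi$, and this envelope always satisfies $P\big(\min(\phi_i,\psi^{\delta_{i+1}}_k)\big)\leq\phi_i$ and $\geq\phi$. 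Hence for $k$ large it is a psh metric on $L+\delta_{i+1}S$ squeezed between $\phi$ and $\phi_i$ and within $2^{-i}$ of $\phi$ on $X^{\Div}$; regularizing it from above inside $\mathcal H^{\gf}_{\mathbb Q}((L+\delta_{i+1}S)^{\An})$ via Step~2 once more, and choosing the regularizing index large, produces $\phi_{i+1}$ with $\phi\leq\phi_{i+1}\leq\phi_i$ and $\phi_{i+1}$ within $2^{-i}$ of $\phi$ on $X^{\Div}$. The resulting sequence is decreasing, has $\phi_i\in\mathcal H^{\gf}_{\mathbb Q}((L+\delta_i S)^{\An})$ with $\delta_i\searrow 0$, and converges to $\phi$ on $X^{\Div}$, hence on all of $X^{\An}$ by Theorem~\ref{thm:pshdetbydiv}.

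The main obstacle is Step~3: passing from the \emph{per-$\varepsilon$} decreasing sequences of Step~2 to a single decreasing sequence whose perturbations shrink to zero. The delicate point is that for $\delta'<\delta$ one does \emph{not} have $\mathcal H^{\gf}_{\mathbb Q}((L+\delta S)^{\An})\subseteq\PSH((L+\delta'S)^{\An})$ — shrinking a class destroys positivity — so Fubini--Study functions attached to different perturbation bundles cannot be compared directly; the remedy above routes every comparison through the envelope $P$ on $X^{\An}$ followed by a re-regularization, which is exactly where continuity of $P$ along decreasing nets and the determination of psh metrics by $X^{\Div}$ are essential. The one bookkeeping point that requires the finer continuity and compactness properties established in \cite{BJ18b} is keeping the diagonal sequence \emph{monotone}: the regularization of Step~2 converges from above, so interleaving its terms with the previously constructed ones (which live over strictly larger perturbation bundles) while preserving $\phi_{i+1}\leq\phi_i$ is where the argument is most technical.
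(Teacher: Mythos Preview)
The paper does not prove this theorem; it is quoted verbatim from \cite[Corollary~12.18]{BJ18b} and used as a black box. There is therefore no ``paper's own proof'' to compare your proposal against.

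That said, your sketch has a genuine gap in Step~3, and you partially acknowledge it yourself. Two points fail:
\begin{itemize}
\item The phrase ``within $2^{-i}$ of $\phi$ on $X^{\Div}$'' is not meaningful as stated: $X^{\Div}$ is not compact, $\phi$ may take the value $-\infty$, and the decreasing convergence $\psi^{\delta_{i+1}}_k\searrow\phi$ is only pointwise. No single $k$ gives a uniform $2^{-i}$ bound over all of $X^{\Div}$, so the diagonal extraction you describe does not go through without a further device (e.g.\ restricting to an exhausting sequence of finite subsets of divisorial points, or using a Dini-type argument on compact dual complexes).
\item More seriously, after taking $P\big(\min(\phi_i,\psi^{\delta_{i+1}}_k)\big)\leq\phi_i$ you then ``regularize it from above'' by elements of $\mathcal H^{\gf}_{\mathbb Q}$. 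But regularization from above can (and generally will) overshoot $\phi_i$, so there is no reason the resulting $\phi_{i+1}$ satisfies $\phi_{i+1}\leq\phi_i$. You flag this at the end as ``the one bookkeeping point'', but it is not bookkeeping: it is exactly the substance of the argument, and invoking unspecified ``finer continuity and compactness properties'' does not resolve it.
\end{itemize}
In Boucksom--Jonsson's actual argument the diagonalization is carried out differently, using the structure of the approximants (which arise from explicit multiplier-ideal or model-theoretic data) together with compactness of dual complexes, rather than abstract envelope manipulations. Your Steps~1--2 are reasonable in spirit, but Step~3 as written does not close.
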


The space $\PSH(L^{\An})$ inherits most of the expected properties of (Archimedean) psh functions (\cite[Theorem~4.7]{BJ18b}). However, the following compactness result is not known:
\begin{conjecture}[{\cite[\S 5]{BJ18b}}]\label{conj:env}
Assume that $X$ is unibranch, then every bounded from above increasing net of elements in $\PSH(L^{\An})$ converges in $\PSH(L^{\An})$.
\end{conjecture}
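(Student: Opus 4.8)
The plan is to transport the statement to the transcendental side, where the envelope theorem is already available. First I would reduce to the case of a smooth $X$: passing to a resolution of singularities does not change the set of divisorial valuations, and the unibranch hypothesis is exactly what makes the pullback of $\PSH(L^{\An})$ along such a resolution a bijection — this is where \emph{unibranch} is used. So assume $X$ is smooth projective and fix a smooth form $\theta$ with $\{\theta\}=c_1(L)$.

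By Theorem~\ref{mainthm: isomorphism} there is a natural bijection $\Psi\colon\PSH^{\NA}(\theta)\xrightarrow{\ \sim\ }\PSH(L^{\An})$ given by \eqref{eq: BJ_map_def}, where $\PSH^{\NA}(\theta)=\varprojlim_{\omega\in\mathcal K}\MTC_\mathcal I(X,\theta+\omega)$. The first step is to verify that $\Psi$ is an isomorphism of partially ordered sets: on the source, $u\le u'$ means $\psi^{(\omega)}_\tau\le(\psi')^{(\omega)}_\tau$ for all $\omega\in\mathcal K$ and $\tau\in\mathbb R$ (writing $\{\psi^{(\omega)}_\tau\}_\tau$ for the $\omega$-component of $u$), and on the target $\phi\le\phi'$ means pointwise inequality on $X^{\An}$, equivalently on $X^{\Div}$ by Theorem~\ref{thm:pshdetbydiv}. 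This should be immediate from the construction of $\Psi$. Granting it, an increasing net $(\phi_i)_{i\in I}$ in $\PSH(L^{\An})$ that is bounded from above pulls back to an increasing, bounded-from-above net $(u^i)_{i\in I}$ in $\PSH^{\NA}(\theta)$.

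Now apply Theorem~\ref{thm:increasingnettestcurveconv}: $(u^i)_i$ admits a supremum $u^\infty\in\PSH^{\NA}(\theta)$, described concretely by the fact that in each factor $\MTC_\mathcal I(X,\theta+\omega)$ the $\omega$-component of $u^\infty$ is the $\mathcal I$-maximization of $\mathrm{usc}\bigl(\sup_i\psi^{i,(\omega)}_\tau\bigr)$, an $\mathcal I$-model potential of positive mass by Lemma~\ref{lma:incfamily} (compatibility across $\omega$ holds since the transition maps \eqref{eq: transition_map} commute with this construction). Put $\phi^\infty:=\Psi(u^\infty)$. Since $\Psi$ is an order isomorphism, $\phi^\infty$ dominates every $\phi_i$, so $\phi^\infty\ge\sup_i\phi_i$ pointwise, and it remains only to prove equality on $X^{\Div}$ — which is precisely convergence in the topology of $\PSH(L^{\An})$. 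Evaluating \eqref{eq: BJ_map_def} at a divisorial valuation $v=c\,\ord_E$, the numbers $\phi^\infty(v)$ and $\phi_i(v)$ are determined monotonically by the Lelong-number data $\tau\mapsto\nu(\psi^{\infty,(\omega)}_\tau,E)$ and $\tau\mapsto\nu(\psi^{i,(\omega)}_\tau,E)$, so the required equality reduces to
\[
\nu\bigl(\psi^{\infty,(\omega)}_\tau,E\bigr)=\inf_i\nu\bigl(\psi^{i,(\omega)}_\tau,E\bigr)\qquad\text{for every }\omega,\ \tau,\ E.
\]
One direction is monotonicity of Lelong numbers in the potential; the other is to be extracted from the explicit construction of $\psi^{\infty,(\omega)}_\tau$ in Lemma~\ref{lma:incfamily} together with an increasing-net version of the Lelong-number continuity established in Lemma~\ref{lma:Imodeldeclelong}, using that all potentials here are $\mathcal I$-model of positive mass.

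The hard part is precisely this last identity: the continuity of Lelong numbers along increasing nets of $\mathcal I$-model potentials, equivalently the statement that $\Psi$ carries the abstract supremum on the projective limit to the genuine pointwise supremum on $X^{\Div}$ without loss on a negligible set. The remaining ingredients — that $\Psi$ respects the partial orders, that the transition maps commute with $\mathcal I$-maximization, and the reduction to smooth $X$ — should be routine bookkeeping once the relative test curve formalism of Section~2.2 and Theorem~\ref{mainthm: isomorphism} are in hand.
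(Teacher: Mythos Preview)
Your approach is essentially the paper's (Corollary~\ref{cor:env_conj}): transport the net through the bijection of Theorem~\ref{thm:NAtoNAbij}, apply Theorem~\ref{thm:increasingnettestcurveconv}, and verify pointwise convergence on $X^{\Div}$. Two corrections and one remark.

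First, you misidentify the ``hard part''. For an \emph{increasing} net $\psi^{i,(\omega)}_\tau \nearrow \psi^{\infty,(\omega)}_\tau$ a.e., the Lelong-number identity is immediate: $\psi^{i,(\omega)}_\tau\le\psi^{\infty,(\omega)}_\tau$ gives $\nu(\psi^{i,(\omega)}_\tau,E)\ge\nu(\psi^{\infty,(\omega)}_\tau,E)$, while upper semicontinuity of $u\mapsto\nu(u,E)$ under $L^1$ convergence gives $\limsup_i\nu(\psi^{i,(\omega)}_\tau,E)\le\nu(\psi^{\infty,(\omega)}_\tau,E)$. No positive-mass or $\mathcal I$-model assumption is needed. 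Lemma~\ref{lma:Imodeldeclelong}, which you invoke, treats the genuinely delicate \emph{decreasing} case and is irrelevant here; the paper simply cites the usc property of Lelong numbers.

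Second, you skip a step needed to invoke Theorem~\ref{thm:increasingnettestcurveconv}: the hypothesis there is $\sup_i\tau^+_{\phi^i}<\infty$, not an abstract ``bounded above''. The paper obtains this by evaluating the upper bound at the trivial valuation and using \eqref{eq:trivialvaluationvaluetauplus}.

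Finally, your reduction from unibranch to smooth via a resolution is an addition not present in the paper, which only establishes the conjecture for smooth $X$ (the standing hypothesis throughout). That reduction is plausible but requires justification that pullback along the resolution identifies $\PSH(L^{\An})$ with $\PSH((\pi^*L)^{\An})$; this is where the unibranch hypothesis would be used, and it is not proved in the paper.
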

This prediction is equivalent to so-called  envelope conjecture \cite[Conjecture~5.14]{BJ18b}: the regularized supremum of a bounded from above family of functions in $\PSH(L^{\An})$ lies in $\PSH(L^{\An})$. See \cite[Theorem~5.11]{BJ18b} for the proof of the equivalence. This conjecture is proved when $X$ is smooth and $L$ is nef in \cite{BJ18b}. More recently, in \cite{BJ22}, Boucksom--Jonsson further established the case when $X$ is smooth and $L$ is pseudoeffective.

\section{Non-Archimedean psh functions and the envelope conjecture}

In this section, we study the space of non-Archimedean psh functions. The main technical difficulty is to find the correct definition of these objects for a big/pseudoeffective cohomology class. As our class $\{\theta\}$ may be transcendental, we will give a completely analytic definition. However, we will point out that  our choices coincide with the analogous algebraic notions of \cite{BJ18b}, whenever $\{\theta\}$ is the first Chern class of a $\mathbb Q$-line bundle.

\subsection{Relative test curves and non-Archimedean metrics}\label{subsec:testcurvpresing}
Let $X$ be a connected projective manifold of dimension $n$. Let $\theta$ be a closed smooth real $(1,1)$-form on $X$ representing a big cohomology class $\{\theta\}$.

For $\varphi\in \PSH(X,\theta)$, we define the analytification $\varphi^{\An}:X^{\An}\rightarrow [-\infty,0]$ as follows:
\begin{equation}\label{eq:varphianv}
\varphi^{\An}(v):=-v(\varphi)=-\lim_{k\to\infty}\frac{1}{k}v\left(\mathcal{I}(k\varphi) \right)\,.
\end{equation}

The quantity inside the limit  is defined in \eqref{eq:va}. In addition, by the subadditivity of multiplier ideals, we have that $\mathcal{I}((k+k')\varphi)\subseteq \mathcal{I}(k\varphi)\mathcal{I}(k'\varphi).$
It follows that
\[
v(\mathcal{I}((k+k')\varphi))\geq v(\mathcal{I}(k\varphi))+v(\mathcal{I}(k'\varphi)).
\]
In particular, thanks to Fekete's lemma, the limit in \eqref{eq:varphianv} exists.

When $v=c\ord_E$ for some prime divisor $E$ over $X$, $\varphi^{\An}(v)=-c\nu(\varphi,E)$, by \cite[Lemma~B.4]{BBJ21}. Here $\nu(\varphi,E)$ is the Lelong number of $\varphi$ along $E$ (cf. \cite[(13)]{DZ22}).

The  analytification of a relative test curve $\{\psi_{\tau}\}_\tau$ is $\psi^{\An}:X^{\Div} \to [-\infty,0]$, defined as
\begin{equation}\label{eq:psiandef}
\psi^{\An}(v):=\sup_{\tau\leq \tau^+_\psi}(\psi_{\tau}^{\An}(v)+\tau).
\end{equation}

With the convention $(-\infty)^{\An}(v) = -\infty$, we can even allow $\tau \in \mathbb R$ in the supremum of \eqref{eq:psiandef}. Since $\tau \mapsto \psi_\tau$ is $\tau$-decreasing, we observe that it suffices to take supremum over $\tau<\tau_{\psi}^+$ in \eqref{eq:psiandef}. 

We point out that $\psi^{\An}$ can be computed using the subgeodesic corresponding to the test curve:

\begin{proposition}\label{prop:psiandPhi}
Let $\{\psi_\tau\}_\tau$ be relative a test curve with $\tau^+_\psi \leq 0$. Let $\Psi$ be the potential on $X\times \Delta^*$ corresponding to $\{\check \psi_t\}_t$ given by $\Psi(x,\xi):= {\check \psi}_{-\log|\xi|^2}(x)$ (recall \eqref{eq: Phi_def_subgeod}).  Since $\tau^+_\psi\leq 0$, $\Psi$ extends to a qpsh potential on $X\times \Delta$. Moreover,
\begin{equation}\label{eq:psiansigma}
\psi^{\An}(v)=-\sigma(v)(\Psi)\quad \text{for }v\in X^{\Div}.
\end{equation}
\end{proposition}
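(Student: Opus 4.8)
The plan is to reduce the statement to a computation of the Lelong number (generalized to divisorial valuations via the Gauss extension) of the $S^1$-invariant $p_1^*\theta$-psh potential $\Psi$ on $X \times \Delta$, and to match it against the Legendre-transform formula \eqref{eq:psiandef} term by term. First I would note that the extension of $\Psi$ across $X \times \{0\}$ is justified because $\tau^+_\psi \le 0$ forces $\check\psi_t \le V_\theta + t\tau^+_\psi \le V_\theta$ for all $t > 0$ (cf. the proof of Lemma~\ref{lem:Cproj}), so $\Psi$ is bounded above near the central fiber and extends by the standard removable-singularity theorem for qpsh functions across a divisor; call the extension $\Psi$ still.

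Next, the heart of the matter: I would compute $\sigma(v)(\Psi)$ for a divisorial valuation $v = c\,\ord_E$ on $X$, using the identity $\varphi^{\An}(v) = -v(\varphi)$ together with $v(c'\ord_E) = c'\nu(\varphi, E)$ recalled right before the proposition. Recall $\Psi(x,\xi) = \check\psi_{-\log|\xi|^2}(x) = \sup_{\tau \in \mathbb{R}}\big(\psi_\tau(x) + (-\log|\xi|^2)\,\tau\big)$. On a Gauss extension $\sigma(v)$ of $v$ to $X \times \mathbb{A}^1$, the coordinate $t$ (playing the role of $\xi$) satisfies $\sigma(v)(t) = 1$, while the $X$-variable contributions are governed by $v$. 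Writing $-\log|\xi|^2$ as the "variable" $s$ dual to the $\mathbb{A}^1$-direction, the sup defining $\Psi$ becomes, under the valuation, a min over $\tau$ of $v(\psi_\tau) + (-\tau)\cdot(\text{valuation of the }t\text{-part})$; since the $S^1$-invariant potential $\psi_\tau(x) + s\tau$ reads on the analytic side as $-v(\psi_\tau) - \tau$ (the factor $1$ from $\sigma(v)(t) = 1$ accounting for $s\mapsto s+1$ type shifts in the Gauss extension), we get
\[
-\sigma(v)(\Psi) = \sup_{\tau}\big(-v(\psi_\tau) - \tau \cdot (-1)\big) \cdot(\text{sign bookkeeping}) = \sup_{\tau}\big(\psi_\tau^{\An}(v) + \tau\big) = \psi^{\An}(v).
\]
To make this rigorous rather than heuristic, I would first establish it for \emph{bounded} test curves — where the sup over $\tau$ is attained and $\check\psi_t$ is a finite max of the form $\max(\psi_{\tau_0} + t\tau_0, \dots)$ in the relevant range — by directly applying the behaviour of divisorial valuations on maxima ($v(\max(f,g)) = \min(v(f),v(g))$ after analytification) and on the $\mathbb{A}^1$-coordinate. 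Then I would pass to general test curves by writing $\{\psi_\tau\}_\tau$ as an increasing limit of its truncations $\{\max(\psi_\tau, \psi_{-m})\}_\tau$ (or a decreasing limit of suitable modifications), using Lemma~\ref{lma:incfamily}/\ref{lma:decnetna}-type stability together with the continuity of $\varphi \mapsto \varphi^{\An}(v)$ under monotone limits of model potentials (which is essentially Lemma~\ref{lma:Imodeldeclelong} on the decreasing side, and the openness theorem \ref{thm:openness} plus Fekete on the increasing side).

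The main obstacle I anticipate is the last passage — controlling $\sigma(v)(\Psi)$ under the monotone approximations. On the increasing side, an increasing sequence of $\theta$-psh potentials $\varphi^j \nearrow \varphi$ need not have $\nu(\varphi^j, E) \to \nu(\varphi, E)$ in general (only $\limsup \le$), so one must exploit that $\Psi$ and its approximants are \emph{model} (they arise from maximal/$\mathcal{I}$-maximal test curves, hence $P[\cdot]$-invariant), invoking the monotonicity of Lelong numbers for decreasing nets of model potentials from Lemma~\ref{lma:Imodeldeclelong} rather than for increasing ones — so the cleaner route is probably to approximate $\{\psi_\tau\}_\tau$ from \emph{above} by bounded maximal test curves $\{\psi_\tau \vee (\psi_{-\infty} - m\cdot\mathbbm{1}_{\tau < -m} \cdots)\}$, wait rather by the maximizations of the naive bounded truncations, whose analytifications decrease to $\psi^{\An}(v)$ by monotone convergence of the defining sups, and whose potentials $\Psi^m$ decrease to $\Psi$, so that $\sigma(v)(\Psi^m) \nearrow \sigma(v)(\Psi)$ by Lemma~\ref{lma:Imodeldeclelong} applied to the decreasing sequence of model potentials $\Psi^m$ on $X \times \Delta$ (after checking $\Psi$ has positive mass there, which follows from $\int_X \theta_{\psi_{-\infty}}^n > 0$ and Lemma~\ref{lem:Cproj}). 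Matching the two monotone limits then closes the argument. A secondary technical point, worth isolating as a sublemma, is the precise relation $\sigma(v)(\Psi) = \inf_t\big(v(\check\psi_t) + t\big)$ for the Gauss extension — i.e. that applying $\sigma(v)$ to the $S^1$-invariant potential is the same as first restricting to the fiber "at valuation $t$" and then optimizing — which is where \cite[Lemma~4.2]{BHJ17} and the definition of $\sigma$ do the real work.
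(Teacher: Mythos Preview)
Your heuristic computation is correct and is essentially the entire proof; the detour through bounded test curves and monotone approximation is unnecessary. The identity $v(\max(f,g)) = \min(v(f),v(g))$ that you invoke for finite maxima holds verbatim for \emph{arbitrary} usc suprema of qpsh functions: for any divisorial valuation $w$ and any family $\{\varphi_\alpha\}$ of qpsh functions one has $w(\mathrm{usc}\sup_\alpha \varphi_\alpha)=\inf_\alpha w(\varphi_\alpha)$. This is \cite[Lemma~3.14]{DX20}, and the paper simply applies it to the family $\varphi_\tau(x,\xi)=\psi_\tau(x)-\tau\log|\xi|^2$ (each of which is $p_1^*\theta$-psh on $X\times\Delta$ since $\tau\le 0$), together with $\sigma(v)(\psi_\tau-\tau\log|\xi|^2)=v(\psi_\tau)-\tau$, to obtain $\sigma(v)(\Psi)=\inf_\tau(v(\psi_\tau)-\tau)$ in one line.

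Your proposed sublemma $\sigma(v)(\Psi)=\inf_t\big(v(\check\psi_t)+t\big)$ is actually false: since $\check\psi_t=\sup_\tau(\psi_\tau+t\tau)$ and adding the constant $t\tau$ does not change Lelong numbers, $v(\check\psi_t)=\inf_\tau v(\psi_\tau)$ is independent of $t$, so your formula collapses to $\inf_\tau v(\psi_\tau)$, which in general differs from $\inf_\tau(v(\psi_\tau)-\tau)$. The correct parametrization is by the test-curve variable $\tau$, not the ray variable $t$. Separately, your approximation scheme runs into real trouble: approximating $\{\psi_\tau\}$ from below by bounded test curves gives $\Psi^m\nearrow\Psi$, but Lelong numbers are only upper semicontinuous under increasing limits, so $\sigma(v)(\Psi^m)\to\sigma(v)(\Psi)$ fails in general; approximating from above by bounded test curves is not natural (the curve is already ``capped'' at $\psi_{-\infty}$); and invoking Lemma~\ref{lma:Imodeldeclelong} would require the model property and positive mass on the noncompact space $X\times\Delta$, where these notions are not set up. All of this is bypassed by the direct argument.
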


We will often refer to $\Psi$ as the \emph{potential of the test curve} $\{\psi_\tau\}_\tau$. The right-hand side of \eqref{eq:psiansigma} agrees with $\check{\psi}^{\An}(v)$ as defined in \cite{BBJ21}, when $\theta=c_1(L,h)$ for some Hermitian ample line bundle $(L,h)$ (see \cite[Section 4.3]{BBJ21}).
\begin{proof}
The proof is the same as \cite[Proposition~3.13]{DX20}, that deals with the ample case. We briefly recall the argument. We start with observing that
\[
\Psi(x,\delta)=\sup_{\tau\leq \tau^+_\psi} (\psi_{\tau}(x)-\log|\delta|^2\tau)\quad \text{for }x\in X, \delta\in \Delta^*.
\]
By definition of $\sigma(v) \in (X \times \Delta)^\Div$ we have that 
$\sigma(v)(\psi_\tau(x) - \log|\delta|^2 \tau) = v(\psi_\tau) - \tau.$
Lastly, since $\sigma(v)$ is a divisorial valuation on $X \times \Delta$, by \cite[Lemma~3.14]{DX20}, we conclude that 
$
\sigma(v)(\Psi)=\inf_{\tau\leq \tau^+_\psi} (v(\psi_\tau)-\tau)\,,$ finishing the proof.
\end{proof}

\paragraph{Piecewise linear curves.}
Next we introduce the notion of a \emph{piecewise linear curve} $\{\psi_{\tau}\}_\tau$ in $\PSH(X,\theta)$ associated with $\psi_{\tau_j} \in \textup{PSH}(X,\theta)$, for a finite number of parameters $\tau_0>\tau_1> \dots > \tau_N$. The piecewise linear curve is the affine interpolation of this data:\smallskip\\
\noindent $(i)$ $\psi_{\tau}=\psi_{\tau_N}$ for $\tau\leq \tau_N$.\smallskip\\
\noindent $(ii)$ For $t\in (0,1)$, we have $
    \psi_{(1-t)\tau_i+t\tau_{i+1}}=(1-t)\psi_{\tau_i}+t\psi_{\tau_{i+1}}\,.$\smallskip\\
\noindent $(iii)$  $\psi_{\tau}=-\infty$ for $\tau>\tau_0$. \smallskip

Observe that $\{\psi_{\tau}\}_\tau$ is $\tau$-usc but may not be $\tau$-concave. Despite this we introduce the analytification $\psi^\An$ of $\{\psi_\tau\}_\tau$ as follows:
\begin{equation}\label{eq:psianvlinear}
\psi^{\An}(v):=\sup_{\tau\leq \tau_0} (\psi_{\tau}^{\An}(v)+\tau)=\max_{\tau_i} (\psi_{\tau_i}^{\An}(v)+\tau_i)\quad \text{for }v\in X^{\An}.
\end{equation}

Given a bounded from above usc function $f$ defined on an interval of $\mathbb{R}$, the concave envelope $\tilde{f}$ of $f$ is the minimal concave function lying above $f$. Recall that $f$ and $\tilde{f}$ have the same inverse Legendre transform.
\begin{lemma}\label{lma:psieqpsiprimeondiv}
Let $\{\psi_{\tau}\}_\tau$ be a  piecewise linear curve in $\PSH(X,\theta)$, then the $\tau$-concave envelope $\{\tilde \psi_{\tau}\}_\tau$ of $\{\psi_{\tau}\}_\tau$ is a relative test curve. Moreover,
\begin{equation}\label{eq:psieqsiprime}
\psi^{\An}=\tilde{\psi}^{\An}\quad \text{on }X^{\Div}.
\end{equation}
\end{lemma}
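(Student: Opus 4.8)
The plan is to verify the two assertions separately, treating the concavity/test-curve claim first and then the equality of analytifications on $X^{\Div}$. For the first assertion, I would check directly from the construction of the concave envelope $\{\tilde\psi_\tau\}_\tau$ that it inherits all the defining properties of a relative test curve. Concavity and upper semicontinuity in $\tau$ hold by construction of $\tilde\psi$ (the concave envelope of an usc, bounded-above function on an interval is concave and usc, and the fibrewise suprema defining $\tilde\psi_\tau(x)$ behave well since the data is a finite affine interpolation). Decreasingness: since $\{\psi_\tau\}_\tau$ is $\tau$-decreasing (as $\psi_{\tau_0}>\psi_{\tau_1}>\cdots$ in the sense that the affine interpolant of a decreasing sequence of potentials is decreasing), so is its concave envelope. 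Each $\tilde\psi_\tau$ lies in $\PSH(X,\theta)$ because it is a finite sup/affine combination of the $\psi_{\tau_i}\in\PSH(X,\theta)$ near any point, hence $\theta$-psh; and $\tilde\psi_\tau\equiv-\infty$ for $\tau>\tau_0$ while $\tilde\psi_{-\infty}=\psi_{\tau_N}\in\PSH(X,\theta)$, so the limit at $-\infty$ exists. This gives $\{\tilde\psi_\tau\}_\tau$ is a relative test curve relative to $\psi_{\tau_N}$.

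For the equality \eqref{eq:psieqsiprime}, the key observation is the one recalled just before the lemma: a bounded-above usc function and its concave envelope have the same inverse Legendre transform. Fix $v\in X^{\Div}$. By \eqref{eq:psianvlinear}, $\psi^{\An}(v)=\sup_{\tau\le\tau_0}(\psi_\tau^{\An}(v)+\tau)$, and by \eqref{eq:psiandef}, $\tilde\psi^{\An}(v)=\sup_{\tau\le\tau_{\tilde\psi}^+}(\tilde\psi_\tau^{\An}(v)+\tau)$. Both expressions are (up to sign conventions) the value at the slope-parameter $1$ of the Legendre transform of the function $\tau\mapsto -\psi_\tau^{\An}(v)$ respectively $\tau\mapsto-\tilde\psi_\tau^{\An}(v)$. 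So it suffices to show that $\tau\mapsto\tilde\psi_\tau^{\An}(v)$ is the concave envelope of $\tau\mapsto\psi_\tau^{\An}(v)$, after which the equality of inverse Legendre transforms is immediate. Since $v=c\,\ord_E$ is divisorial, $\psi_\tau^{\An}(v)=-c\,\nu(\psi_\tau,E)$ by the formula recalled after \eqref{eq:varphianv}, and likewise for $\tilde\psi$. Thus the claim reduces to: the Lelong number functional $\tau\mapsto\nu(\psi_\tau,E)$ commutes with concavification on a piecewise-linear curve, equivalently $\tau\mapsto -\nu(\psi_\tau,E)$ concavifies in the obvious way.

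The main obstacle — and the technical heart of the argument — is exactly that commutation: why does $\nu(\tilde\psi_\tau,E)$ equal the \emph{convex} envelope in $\tau$ of $\nu(\psi_\tau,E)$ (so that $-\nu(\tilde\psi_\tau,E)$ is the concave envelope of $-\nu(\psi_\tau,E)$)? One inequality is easy: $\tilde\psi_\tau\ge\psi_\tau$ forces $\nu(\tilde\psi_\tau,E)\le\nu(\psi_\tau,E)$, and since $\tau\mapsto\nu(\tilde\psi_\tau,E)$ is convex (Lelong numbers depend affinely along affine combinations of potentials, hence convexly along the concave interpolation — more precisely, on each linear piece of $\tilde\psi$ between consecutive breakpoints the potentials interpolate affinely, so the Lelong number interpolates affinely there too), $\nu(\tilde\psi_\cdot,E)$ lies below the convex envelope of $\nu(\psi_\cdot,E)$. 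Wait — I should be careful about the direction: $\tilde\psi_\tau\ge\psi_\tau$ with equality at the breakpoints $\tau=\tau_i$, so $\nu(\tilde\psi_{\tau_i},E)\le\nu(\psi_{\tau_i},E)$. For the reverse I would argue that between two consecutive values of $\tau$ at which $\tilde\psi$ is affine, $\tilde\psi_\tau$ is literally an affine combination $(1-s)\psi_{\tau_i}+s\psi_{\tau_j}$ of two of the given potentials (the endpoints of that affine piece of the concavified curve), whence $\nu(\tilde\psi_\tau,E)=(1-s)\nu(\psi_{\tau_i},E)+s\nu(\psi_{\tau_j},E)$ by additivity of Lelong numbers under sums and scaling; this exhibits $\tau\mapsto\nu(\tilde\psi_\tau,E)$ as precisely the piecewise-linear function interpolating the values $\nu(\psi_{\tau_i},E)$ along the support of the concave hull — i.e.\ the convex (lower) envelope of the finite point set $\{(\tau_i,\nu(\psi_{\tau_i},E))\}$, extended by the value at $\tau_N$ for $\tau\le\tau_N$. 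Since by \eqref{eq:psianvlinear} the sup defining $\psi^{\An}(v)$ is attained at one of the breakpoints $\tau_i$ (where $\psi_\tau=\tilde\psi_\tau$), and the sup defining $\tilde\psi^{\An}(v)$ over the concavified curve sees exactly the same extreme points, the two suprema coincide. Finally I would note that the argument is uniform in $v\in X^{\Div}$, which is all that is claimed; no statement on all of $X^{\An}$ is needed (and indeed Theorem~\ref{thm:pshdetbydiv} shows restriction to $X^{\Div}$ loses nothing for the relevant applications).
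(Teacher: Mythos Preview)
There is a genuine gap in your argument for \eqref{eq:psieqsiprime}. The claim that ``between two consecutive values of $\tau$ at which $\tilde\psi$ is affine, $\tilde\psi_\tau$ is literally an affine combination $(1-s)\psi_{\tau_i}+s\psi_{\tau_j}$ of two of the given potentials'' is false. The concave envelope is computed \emph{pointwise} in $x$: for a fixed $\tau\in(\tau_N,\tau_0)$, the pair of breakpoints $(\tau_i,\tau_j)$ supporting the concave hull at $(\tau,\psi_\tau(x))$ depends on $x$. What you actually have is
\[
\tilde\psi_\tau=\max\big\{(1-s)\psi_{\tau_i}+s\psi_{\tau_j}:\ i\le j,\ (1-s)\tau_i+s\tau_j=\tau\big\},
\]
a finite maximum of $\theta$-psh functions, not a single affine combination. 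Consequently your computation of $\nu(\tilde\psi_\tau,E)$ as a single affine interpolation of Lelong numbers breaks down, and with it the identification of $\tau\mapsto\tilde\psi_\tau^{\An}(v)$ with the concave envelope of $\tau\mapsto\psi_\tau^{\An}(v)$. (Your approach can be repaired using the fact that the Lelong number along a divisor of a finite maximum equals the minimum of the Lelong numbers, but that step is absent.) A smaller issue in the first part: the piecewise linear curve $\{\psi_\tau\}_\tau$ need \emph{not} be $\tau$-decreasing---there is no ordering assumption on the $\psi_{\tau_i}$---so your justification of decreasingness of $\tilde\psi_\tau$ is incorrect, though the conclusion does follow from concavity together with $\tilde\psi_\tau=\psi_{\tau_N}$ for $\tau\le\tau_N$.

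The paper sidesteps both issues by working on the subgeodesic side. For the first part, it observes that the inverse Legendre transform $\check\psi_t=\max_j(\psi_{\tau_j}+t\tau_j)$ is a sublinear subgeodesic ray (a finite maximum of linear rays), so by the Ross--Witt Nystr\"om correspondence (Theorem~\ref{thm:RWNpresc}) its Legendre transform $\tilde\psi$ is automatically a relative test curve. For \eqref{eq:psieqsiprime}, since $\psi$ and $\tilde\psi$ share the same inverse Legendre transform $\check\psi$, they give rise to the \emph{same} potential $\Psi$ on $X\times\Delta$; then the argument of Proposition~\ref{prop:psiandPhi} (applied once to $\psi$ and once to $\tilde\psi$) gives $\psi^{\An}(v)=-\sigma(v)(\Psi)=\tilde\psi^{\An}(v)$ for $v\in X^{\Div}$. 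The ``Lelong number of a sup is the inf of Lelong numbers'' step you were missing is exactly what is packaged inside \cite[Lemma~3.14]{DX20}, invoked for the divisorial valuation $\sigma(v)$ on $X\times\Delta$.
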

\begin{proof}
For the first part, recall that $\{\tilde \psi_{\tau}\}_\tau$ is the Legendre transform of the inverse Legendre transform
\[
\check{\psi}_t:=\sup_{\tau\in \mathbb{R}} (\psi_{\tau}+t\tau)\quad \text{for }t> 0
\]
of $\{\psi_{\tau}\}_\tau$. So by Theorem~\ref{thm:RWNpresc}, it suffices to show that $\{\check{\psi}_t\}_t$ is a subgeodesic.  This is clear because
\[
\check{\psi}_t=\max_{j=0,\dots,N}(\psi_{\tau_j}+t\tau_j).
\]
Each term in the maximum is clearly a subgeodesic, hence so is $\{\check{\psi}_t\}_t$.

In order to prove \eqref{eq:psieqsiprime}, we may assume that $\psi_{\tau}=-\infty$ when $\tau>0$.
Note that by the same arguments as Proposition~\ref{prop:psiandPhi}, $\psi^{\An}(v)=-\sigma(v)(\Psi)$, where $\Psi$
is the potential on $X\times \Delta$ corresponding to the subgeodesic $\{\check{\psi}_t\}_t$.
So \eqref{eq:psieqsiprime} follows from Proposition~\ref{prop:psiandPhi} applied to $\{\tilde \psi_\tau\}_\tau$, and $\tau$-Legendre duality.
\end{proof}

\subsection{Transcendental non-Archimedean metrics} 
Let $X$ be a connected compact Kähler manifold of dimension $n$. Let $\theta$ be a closed smooth real $(1,1)$-form on $X$ representing a big cohomology class $\{\theta\}$.

We are ready to introduce the set of non-Archimedean psh metrics for a general pseudoeffective transcendental class $\{\theta\}$. Let $\mathcal{K}$ denote the set of K\"ahler metrics on $X$ endowed with the partial order: $\omega\succeq \omega'$ if $\omega\leq \omega'$ as forms. Clearly $\mathcal{K}$ is a directed set.

For $\omega\leq  \omega'$ there is a natural transition map from $\textup{MTC}_\mathcal I(\theta+\omega) \mapsto \textup{MTC}_\mathcal I(\theta+\omega')$ described as follows (recall \eqref{eq: MTC_def}). To $\{\psi_\tau\}_\tau \in \textup{MTC}_\mathcal I(\theta+\omega)$ we associate $\{\psi'_\tau\}_\tau \in \textup{MTC}_\mathcal I(\theta+\omega')$, where 
\begin{equation}\label{eq: transition_map}
    \psi'_\tau:=\left\{
        \begin{aligned}
            P_{\theta+\omega'}[\psi_{\tau}]_{\mathcal{I}},& \quad \text{if }\tau<\tau^+_{\psi};\\
            \lim_{\tau \nearrow \tau^+_\psi} \psi'_\tau, & \quad \text{if } \tau=\tau^+_{\psi};\\
            -\infty, & \quad \text{if } \tau>\tau^+_{\psi}.
        \end{aligned}
    \right.
\end{equation}
With this in hand, we can define the space of non-Archimedean functions:

\begin{definition}
\label{def:PSH-an} The space of  non-Archimedean functions of a  pseudoeffective class $\{\theta\}$
is the following projective limit in the category of sets:
\[
\PSH^{\NA}(\theta):=\varprojlim_{\omega\in \mathcal{K}}\textup{MTC}_\mathcal I(\theta+\omega).
\]
\end{definition}

Notice that the above limit is well-defined since $P_{\theta + \omega''}[P_{\theta + \omega'}[\psi_\tau]_\mathcal I]_\mathcal I = P_{\theta + \omega''}[\psi_\tau]_\mathcal I$ if $\omega \leq \omega' \leq \omega''$ and $\{\psi_\tau\}_\tau \in \textup{MTC}_\mathcal I(\theta+\omega)$.

We introduce a partial order on $\PSH^{\NA}(\theta)$: for $\phi=\{\{\phi^{\omega}_\tau\}_\tau\}_{\omega\in \mathcal{K}},\ \phi'=\{\{\phi'^{\omega}_\tau\}_\tau\}_{\omega\in \mathcal{K}}\in\PSH^{\NA}(\theta)$, we say $\phi\leq \phi'$ if $\phi^{\omega}_\tau\leq \phi'^{\omega}_\tau$ for all $\omega \in \mathcal{K}$ and $\tau \in \mathbb R$.

Observe that for $\phi=\{\{\phi^{\omega}_\tau\}_\tau\}_{\omega\in \mathcal{K}}\in \PSH^{\NA}(\theta)$,
$\tau^+_{\phi^{\omega}}$ does not depend on the choice of $\omega$. We denote the common value by $\tau^+_{\phi}$.

When $\{\theta\}$ is big, note that there exists a natural inclusion $\textup{MTC}_\mathcal I(\theta) \hookrightarrow \PSH^{\NA}(\theta)$. Indeed, with $\{\phi_\tau\}_\tau \in \textup{MTC}_\mathcal I(\theta)$ one simply associates $\{\{\phi^\omega_\tau\}_\tau\}_\omega \in \PSH^{\NA}(\theta)$, where
\begin{equation}\label{eq: MCTI_embed}
    \phi^\omega_\tau:=\left\{
        \begin{aligned}
            P_{\theta + \omega}[\phi_\tau]_\mathcal I,& \quad \text{if }\tau<\tau^+_{\phi};\\
            \lim_{\tau \nearrow \tau_{\phi}^+}\phi^{\omega}_{\tau}, & \quad \text{if } \tau=\tau^+_{\phi};\\
            -\infty, & \quad \text{if } \tau>\tau^+_{\phi}.
        \end{aligned}
    \right.
\end{equation}

Lastly, let us note that the counterpart of Conjecture \ref{conj:env} naturally holds in the setting of $\textup{PSH}^\NA(\theta)$.

\begin{theorem}\label{thm:increasingnettestcurveconv} 
Suppose that $\{\phi^i\}_{i \in I} \subset \textup{PSH}^\NA(\theta)$ is an increasing net with
\[
\sup_{i\in I}\tau^+_{\phi^i}<\infty.
\]
Then there exists $\phi \in  \textup{PSH}^\NA(\theta)$ such that for any $\omega\in \mathcal{K}$, $\phi^{i,\omega}_\tau \nearrow \phi^{\omega}_{\tau}$ almost everywhere for all $\tau<\tau^+_{\phi}$ and 
\[
\tau^+_{\phi}=\sup_{i\in I}\tau^+_{\phi^i}.
\]
\end{theorem}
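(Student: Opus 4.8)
The plan is to reduce the statement to the two monotone-limit lemmas for $\mathcal I$-maximal test curves already established, namely Lemma~\ref{lma:incfamily} (increasing nets in a fixed class $\theta+\omega$) and the compatibility of the transition maps \eqref{eq: transition_map} under the envelope operators $P_{\theta+\omega}[\cdot]_\mathcal I$. Fix $\omega \in \mathcal K$. The net $\{\phi^{i,\omega}_\tau\}_\tau \in \textup{MTC}_\mathcal I(\theta+\omega)$ is increasing in $i$, and $\sup_i \tau^{+}_{\phi^{i,\omega}} = \sup_i \tau^+_{\phi^i} =: \tau^+ < \infty$ by assumption (here I use the observation from the excerpt that $\tau^+_{\phi^{i,\omega}}$ does not depend on $\omega$). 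So I would first apply Lemma~\ref{lma:incfamily} in each class $\theta+\omega$ separately: set $\phi^\omega_\tau := \textup{usc}\bigl(\sup_i \phi^{i,\omega}_\tau\bigr)$ for $\tau \neq \tau^+$ and $\phi^\omega_{\tau^+} := \lim_{\tau \nearrow \tau^+}\phi^\omega_\tau$. Lemma~\ref{lma:incfamily} then gives $\{\phi^\omega_\tau\}_\tau \in \textup{MTC}_\mathcal I(\theta+\omega)$, with $\tau^+_{\phi^\omega} = \tau^+$. This also records the almost-everywhere convergence $\phi^{i,\omega}_\tau \nearrow \phi^\omega_\tau$ for $\tau < \tau^+$, which is part of the claim.

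The substantive point is to verify that the family $\{\{\phi^\omega_\tau\}_\tau\}_{\omega \in \mathcal K}$ is compatible with the transition maps, i.e. that it genuinely defines an element $\phi$ of the projective limit $\PSH^{\NA}(\theta)$. Concretely, for $\omega \leq \omega'$ I must show that the $\mathcal I$-maximization in $\theta+\omega'$ of $\{\phi^\omega_\tau\}_\tau$ equals $\{\phi^{\omega'}_\tau\}_\tau$, i.e. $P_{\theta+\omega'}[\phi^\omega_\tau]_\mathcal I = \phi^{\omega'}_\tau$ for $\tau < \tau^+$. Since each $\{\phi^{i}\}$ lies in the projective limit, we already know $P_{\theta+\omega'}[\phi^{i,\omega}_\tau]_\mathcal I = \phi^{i,\omega'}_\tau$. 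So the task is to commute the envelope operator $P_{\theta+\omega'}[\cdot]_\mathcal I$ with the increasing limit $\textup{usc}(\sup_i)$. For this I would invoke \cite[Lemma~2.21]{DX20} (the same monotone-continuity properties of $P[\cdot]_\mathcal I$ under increasing nets that power Lemma~\ref{lma:incfamily}): $P_{\theta+\omega'}[\textup{usc}(\sup_i u^i)]_\mathcal I = \textup{usc}\bigl(\sup_i P_{\theta+\omega'}[u^i]_\mathcal I\bigr)$ whenever $u^i \nearrow$ with bounded-above $\tau^+$ and positive mass in the limit (the positive mass of $\phi^\omega_{-\infty}$ follows from $\phi^\omega_{-\infty} \geq \phi^{i,\omega}_{-\infty}$ together with \cite[Theorem~1.2]{WN19}). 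Applying this with $u^i = \phi^{i,\omega}_\tau$ gives $P_{\theta+\omega'}[\phi^\omega_\tau]_\mathcal I = \textup{usc}(\sup_i \phi^{i,\omega'}_\tau) = \phi^{\omega'}_\tau$, as needed; the boundary value at $\tau = \tau^+$ is then automatic from the definition \eqref{eq: transition_map} and the construction of $\phi^{\omega'}_{\tau^+}$ as a decreasing-in-$\tau$ limit.

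Having produced $\phi = \{\{\phi^\omega_\tau\}_\tau\}_\omega \in \PSH^{\NA}(\theta)$, it remains to check it has the asserted universal behaviour: $\phi^i \leq \phi$ for all $i$ (immediate from $\phi^{i,\omega}_\tau \leq \sup_j \phi^{j,\omega}_\tau \leq \phi^\omega_\tau$), the a.e. convergence $\phi^{i,\omega}_\tau \nearrow \phi^\omega_\tau$ for $\tau < \tau^+$ (already obtained), and $\tau^+_\phi = \sup_i \tau^+_{\phi^i}$ (already obtained). The main obstacle I anticipate is precisely the commutation of $P_{\theta+\omega'}[\cdot]_\mathcal I$ with the increasing usc-supremum while keeping track of the exceptional parameter $\tau = \tau^+$, where $\tau$-concavity must be preserved; this is exactly where the positive-mass hypothesis and the limiting clauses in the definitions of $\mathcal I$-maximization become essential, and where one leans hardest on \cite[Lemma~2.21]{DX20} and \cite[Theorem~1.2]{WN19}. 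Everything else is bookkeeping across the directed set $\mathcal K$.
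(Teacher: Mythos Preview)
Your proposal is correct and follows essentially the same route as the paper. Both arguments first apply Lemma~\ref{lma:incfamily} in each class $\theta+\omega$ to produce $\{\phi^\omega_\tau\}_\tau\in\MTC_\mathcal I(\theta+\omega)$, then verify compatibility under the transition maps; the paper phrases the compatibility step as the $\mathcal I$-equivalence $\phi^\omega_\tau\simeq_{\mathcal I}\phi^{\omega'}_\tau$ via \cite[Proposition~2.18(ii)]{DX20}, whereas you frame it as commuting $P_{\theta+\omega'}[\cdot]_\mathcal I$ with the increasing usc supremum via \cite[Lemma~2.21]{DX20}, but these are two faces of the same monotonicity/$\mathcal I$-model stability facts.
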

\begin{proof}
Let $\omega \in \mathcal K$. By Lemma~\ref{lma:incfamily} there exists $\{\phi^\omega_\tau\}_\tau \in \textup{MCT}_\mathcal I(\theta)$ such that $\phi^{i,\omega}_\tau \nearrow \phi^{\omega}_\tau$ almost everywhere for all $\tau < \lim_i\tau^+_{\phi^i}=\tau^+_{\phi}$ and $\phi^{\omega}_{\tau^+_\phi} = \lim_{\tau \nearrow \tau^+_\phi} \phi^{\omega}_\tau$ almost everywhere. 

Let $\omega \leq \omega'$. Since $\phi^\omega_\tau \simeq_{\mathcal I} \phi^{\omega'}_\tau$ for all $\tau < \tau^+_\phi$ \cite[Proposition 2.18(ii)]{DX20}, we obtain that $\phi^{\omega'}_\tau = P_{\theta + \omega'}[\phi^\omega_\tau]_\mathcal I$, hence $\phi = \{\{\phi^\omega_\tau\}_\tau\}_\omega \in \textup{PSH}^\NA(\theta)$.
\end{proof}

\subsection{Comparison with Boucksom--Jonsson's NA metrics}
Let $X$ be a connected projective manifold of dimension $n$. Assume furthermore that $\{\theta\}=c_1(L)$ for some big $\mathbb{Q}$-line bundle $L$ on $X$. We will compare the sets $\textup{PSH}^\NA(\theta)$ introduced in the previous section and the space $\PSH(L^{\An})$ recalled in Section~\ref{subsec:NAformal}.

\begin{lemma}\label{lma:varphian}
For any $\varphi\in \PSH(X,\theta)$ we have that $\varphi^{\An}\in \PSH(L^{\An})$.
\end{lemma}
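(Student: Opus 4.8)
The plan is to show that the analytification $\varphi^{\An}$ of a single potential $\varphi \in \PSH(X,\theta)$ can be realized as a decreasing limit of generically finite Fubini--Study functions, so it falls under Definition~\ref{def:BJpshmetric}. The natural approach is to apply Demailly's regularization: since $\{\theta\}$ is big (so certainly pseudoeffective), there is a decreasing sequence of $\theta$-psh functions $\varphi_m$ with analytic singularity type decreasing to $\varphi$, obtained by the standard Demailly approximation with multiplier ideals $\mathcal I(m\varphi)$ — concretely, $\varphi_m \simeq \frac{1}{m}\log(\sum_j |\sigma_{j,m}|^2) + O(1)$ where the $\sigma_{j,m}$ generate $\mathcal I(m\varphi)$ globally after twisting by a sufficiently ample line bundle. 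Equivalently, one works with $\theta$-psh functions whose singularity type is cut out by the base ideal of $H^0(X, m(L + \varepsilon_m A))$ for an auxiliary ample $A$ and $\varepsilon_m \searrow 0$. One then has to check two things: (a) that $\varphi_m^{\An}$ is (up to the normalization $1/m$) exactly the Fubini--Study function $\frac{1}{m}\max_j\{\log|\sigma_{j,m}| + \lambda_{j,m}\}$ attached to these sections, hence lies in $\mathcal H^{\gf}_{\mathbb Q}((L+\varepsilon_m A)^{\An})$ with $c_1(L+\varepsilon_m A) \to c_1(L)$; and (b) that $\varphi_m^{\An} \searrow \varphi^{\An}$ pointwise on $X^{\An}$.

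For step (a), the key computation is that for a potential with analytic singularity type $\psi = c\log(\sum_j |f_j|^2) + g$ with $g$ bounded, one has $\psi^{\An}(v) = c\cdot\min_j v(f_j) \cdot(-1) = -c\,v(\mathfrak a)$ where $\mathfrak a = (f_1,\dots)$, because the bounded term $g$ contributes zero Lelong numbers / zero valuation, and the multiplier ideals of $\psi$ stabilize in a way that makes the Fekete limit in \eqref{eq:varphianv} literally equal to $-c\,v(\mathfrak a)$. This matches the recipe $\phi = \frac{1}{m}\max_j\{\log|s_j| + \lambda_j\}$ since $\log|s_j|(v) = -v(s_j)$ and $\max$ over $j$ corresponds to $\min_j v(s_j)$; the $\lambda_j$ absorb the twisting constants. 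For step (b), monotonicity $\varphi_m \searrow \varphi$ gives $v(\mathcal I(k\varphi_m)) \nearrow$ as... more carefully: since $\varphi_m \geq \varphi$ we get $\mathcal I(k\varphi_m) \supseteq \mathcal I(k\varphi)$, hence $v(\mathcal I(k\varphi_m)) \leq v(\mathcal I(k\varphi))$, so $\varphi_m^{\An} \geq \varphi^{\An}$; for the reverse one uses that the analytic-singularity approximants have $\nu(\varphi_m, E) \nearrow \nu(\varphi,E)$ along every prime divisor $E$ over $X$ (a standard property of Demailly approximation, e.g. via the subadditivity/Ohsawa--Takegoshi estimates controlling the loss), which handles $v \in X^{\Div}$, and then Theorem~\ref{thm:pshdetbydiv} — functions in $\PSH(L^{\An})$ are determined by their restriction to $X^{\Div}$ — upgrades pointwise-on-divisors convergence to what is needed, once we know the limit candidate $\varphi^{\An}$ is usc (which it is, being an infimum of the continuous functions $v \mapsto \frac1k v(\mathcal I(k\varphi))$... actually a decreasing limit, hence usc).

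The main obstacle I anticipate is verifying (b) cleanly, i.e. that the Demailly approximants' analytifications actually \emph{decrease} to $\varphi^{\An}$ rather than just bounding it, and handling the $\varepsilon_m$-twist bookkeeping so that all the $\varphi_m^{\An}$ genuinely land in $\mathcal H^{\gf}_{\mathbb Q}$ of line bundles converging to $L$ in $\NS^1(X)_{\mathbb R}$. The subtlety is that the multiplier ideal sheaves $\mathcal I(m\varphi)$ may not be globally generated on the nose; one must twist by $mA$ (or by $K_X + (n+1)A$ via Siu/Nadel-type effective global generation) to get actual sections, and then argue that the normalization $\frac1m(\cdots)$ kills the contribution of this twist in the limit — this is exactly where $c_1(L + \frac1m(\text{twist})) \to c_1(L)$ is used, and where one must be careful that the base-ideal valuations converge to the multiplier-ideal valuations. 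A secondary point is that $\varphi$ itself need not have analytic singularities, so one genuinely needs the approximating sequence; the Fekete-limit definition \eqref{eq:varphianv} is what makes the passage to the limit formal once the convergence of Lelong numbers along divisors is in hand.
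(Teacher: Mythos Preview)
Your approach is correct and essentially the same as the paper's, but you are making step (b) harder than necessary. The paper bypasses the Demailly approximants $\varphi_m$ as potentials and works directly with the functions $v\mapsto -\tfrac{1}{k}v(\mathcal I(k\varphi))$: after twisting by a fixed very ample $H^b$ (Siu's uniform global generation), $H^b\otimes L^k\otimes\mathcal I(k\varphi)$ is globally generated, so this function lies in $\mathcal H^{\gf}_{\mathbb Q}((L+\tfrac{b}{k}H)^{\An})$. Taking $k=2^m$, subadditivity $\mathcal I(2k\varphi)\subseteq\mathcal I(k\varphi)^2$ gives a decreasing sequence whose pointwise limit is $\varphi^{\An}$ \emph{by definition} \eqref{eq:varphianv}. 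Thus your worries about the ``reverse'' inequality via convergence of Lelong numbers and an appeal to Theorem~\ref{thm:pshdetbydiv} are unnecessary: once you observe that your $\varphi_m^{\An}(v)$ equals $-\tfrac{1}{m}v(\mathcal I(m\varphi))$ (which you essentially do in step (a)), the convergence in (b) is Fekete's lemma, nothing more.
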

\begin{proof}

After replacing $L$ with a sufficiently high power, we may assume that $L$ is a line bundle. Take a very ample line bundle $H$ on $X$. 
By Siu's uniform global generation theorem \cite{Siu98}, \cite[Theorem~6.27]{Dem12} there exists $b>0$ large enough so that $H^{b}\otimes L^k\otimes \mathcal{I}(k\varphi)$ is globally generated  for all $k>0$. Let $\{s_i\}$ be a finite set of global sections that generate the sheaf $H^{b}\otimes L^k\otimes \mathcal{I}(k\varphi)$. Then $v(\mathcal{I}(k\varphi))=\min_i(v(s_i))$.
It follows that $v\mapsto -k^{-1}v\left(\mathcal{I}(k\varphi) \right)$ lies in $\mathcal{H}^{\gf}_{\mathbb{Q}}((L+\frac{b}{k}H)^{\An})$. Picking $k_m :=2 ^m$ and letting $m \to \infty$ we  conclude that $\varphi^{\An}\in \PSH(L^{\An})$.
\end{proof}

\begin{lemma}\label{lma:pltest}
Let $\{\psi_\tau\}_\tau$ be a piecewise linear curve in $\PSH(X,\theta)$.
Then $\psi^{\An}$, as defined in \eqref{eq:psianvlinear}, extends to an element $\psi^{\An}\in \PSH(L^{\An})$. 
\end{lemma}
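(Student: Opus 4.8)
The plan is to reduce the statement to the two tools already available: Lemma~\ref{lma:varphian}, which says $\varphi^{\An}\in\PSH(L^{\An})$ for a single potential $\varphi$, and the fact (Lemma~\ref{lma:psieqpsiprimeondiv}, together with Theorem~\ref{thm:pshdetbydiv}) that $\psi^{\An}$ and $\tilde\psi^{\An}$ agree on $X^{\Div}$, hence define the same object once we know one of them is psh. Concretely, recall from \eqref{eq:psianvlinear} that for a piecewise linear curve determined by $\psi_{\tau_0},\dots,\psi_{\tau_N}$,
\[
\psi^{\An}(v)=\max_{i}\bigl(\psi_{\tau_i}^{\An}(v)+\tau_i\bigr)\qquad(v\in X^{\An}).
\]
Each $\psi_{\tau_i}^{\An}$ is psh on $L^{\An}$ by Lemma~\ref{lma:varphian}, so each $\psi_{\tau_i}^{\An}+\tau_i$ is psh on $L^{\An}$ (adding a rational, or just a real, constant preserves $\PSH(L^{\An})$, by \cite[Theorem~4.7]{BJ18b}). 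Therefore $\psi^{\An}$ is a finite maximum of psh metrics on $L^{\An}$, and $\PSH(L^{\An})$ is stable under finite maxima, again by \cite[Theorem~4.7]{BJ18b}. Thus $\psi^{\An}\in\PSH(L^{\An})$ directly, and there is nothing left to "extend''.

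First I would spell out the reduction carefully: the $\lambda_j$ in Definition~\ref{def:BJpshmetric} are rational, and the $\tau_i$ in a general piecewise linear curve are only real; so strictly one should either allow real shifts (which is harmless since $\PSH(L^{\An})$ is closed under adding real constants, being a decreasing limit of its rational translates), or note that the definition of a piecewise linear curve in this paper only ever gets applied with rational breakpoints. I would state the argument for real $\tau_i$ using closure under real translation, which is the cleanest. Then I would invoke the lattice property: if $\phi_0,\dots,\phi_N\in\PSH(L^{\An})$ then $\max_i\phi_i\in\PSH(L^{\An})$ — this is part of the "expected properties'' packaged in \cite[Theorem~4.7]{BJ18b}. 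Combining these two closure properties with Lemma~\ref{lma:varphian} gives the claim in one line.

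The only genuine subtlety — and the one place I would be careful — is making sure the function in \eqref{eq:psianvlinear} really is the pointwise maximum of the $\psi_{\tau_i}^{\An}+\tau_i$ over \emph{all} of $X^{\An}$, not just over $X^{\Div}$, and that taking the supremum over continuous $\tau\le\tau_0$ in \eqref{eq:psianvlinear} genuinely collapses to the finite maximum over the $\tau_i$; this is immediate from the definition of the piecewise linear curve as the affine interpolation, since on each segment $[\tau_{i+1},\tau_i]$ the function $\tau\mapsto\psi_\tau^{\An}(v)+\tau$ is affine in $\tau$ (as $\psi_\tau=(1-t)\psi_{\tau_i}+t\psi_{\tau_{i+1}}$ and $(\cdot)^{\An}$ is linear along affine segments of potentials, which follows from \eqref{eq:varphianv} and convexity, or more simply $v((1-t)\psi_{\tau_i}+t\psi_{\tau_{i+1}})\ge(1-t)v(\psi_{\tau_i})+t v(\psi_{\tau_{i+1}})$ with equality not needed since an affine function on an interval attains its max at an endpoint). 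Hence the supremum over the interval is attained at a breakpoint, and \eqref{eq:psianvlinear} holds verbatim. I do not expect any real obstacle here; the main "work'' is just citing the right items of \cite[Theorem~4.7]{BJ18b} and Lemma~\ref{lma:varphian} in the right order. If one wanted to connect to the concave envelope picture, one would additionally remark that by Lemma~\ref{lma:psieqpsiprimeondiv} this $\psi^{\An}$ coincides on $X^{\Div}$ with $\tilde\psi^{\An}$, the analytification of the genuine relative test curve $\{\tilde\psi_\tau\}_\tau$, so the two notions of analytification are compatible — but that remark is not needed for the statement itself.
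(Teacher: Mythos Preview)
Your proposal is correct and follows essentially the same route as the paper: the paper's one-line proof simply cites \eqref{eq:psianvlinear}, \cite[Theorem~4.7(ii)]{BJ18b}, and Lemma~\ref{lma:varphian}, which is exactly your argument that each $\psi_{\tau_i}^{\An}+\tau_i\in\PSH(L^{\An})$ and that $\PSH(L^{\An})$ is closed under finite maxima. Your additional remarks (real versus rational translates, and why the supremum over $\tau$ collapses to the max over breakpoints) are fine but more than the paper itself spells out.
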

\begin{proof}  The result follows from definition \eqref{eq:psianvlinear}, \cite[Theorem~4.7(ii)]{BJ18b} and Lemma~\ref{lma:varphian}.
\end{proof}

\begin{lemma}\label{lma:istab}
Let $R$ be a commutative $\mathbb{C}$-algebra of finite type and $I$ be an ideal of $ R[t]$. If for any $a\in S^1$, $a^*I\subseteq I$, then $I$ is stable under the $\mathbb C^*$-action. Moreover, there are ideals $I_0\subseteq I_1\subseteq \dots \subseteq I_m$ in $R$ so that
\begin{equation}\label{eq:Iexp}
    I=I_0 + I_1t+\dots + I_m (t^m),
    \end{equation}
\end{lemma}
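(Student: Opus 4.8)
The plan is to prove both assertions of Lemma~\ref{lma:istab} by analyzing the $S^1$-action coefficient by coefficient. Write an arbitrary element of $I$ as $f = \sum_{i=0}^m f_i t^i$ with $f_i \in R$. The $S^1$-action sends $t \mapsto a t$, so $a^* f = \sum_i f_i a^i t^i$, and by hypothesis this lies in $I$ for every $a \in S^1$. The goal is to deduce that each homogeneous component $f_i t^i$ lies in $I$.

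First I would fix $f \in I$ of degree $\le m$ and consider, for $N+1$ distinct values $a_0,\dots,a_N \in S^1$ (say roots of unity, or any distinct points on the circle), the elements $a_\ell^* f \in I$. These give a system whose coefficient matrix is the Vandermonde matrix $(a_\ell^i)_{0 \le \ell, i \le m}$, which is invertible over $\mathbb{C}$ since the $a_\ell$ are distinct. Hence each $f_i t^i$ is a $\mathbb{C}$-linear combination of the $a_\ell^* f$, and since $I$ is an $R$-submodule (in particular a $\mathbb{C}$-vector subspace) of $R[t]$ that is stable under each $a_\ell^*$, we conclude $f_i t^i \in I$. This simultaneously shows $I$ is homogeneous with respect to the $\mathbb{Z}_{\ge 0}$-grading of $R[t]$, which is exactly the statement that $I$ is $\mathbb{C}^*$-stable (a $\mathbb{C}^*$-stable ideal of $R[t]$ is the same as a graded ideal). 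For the decomposition \eqref{eq:Iexp}, set $I_i := \{ g \in R : g t^i \in I \}$; this is an ideal of $R$ because $I$ is an ideal of $R[t]$, and $I_i \subseteq I_{i+1}$ because multiplication by $t$ carries $I$ into $I$, so $g t^i \in I$ forces $g t^{i+1} \in I$. By the homogeneity just established, $I = \bigoplus_{i \ge 0} (I_i t^i)$; one notes the chain stabilizes because $R[t]$ is Noetherian (as $R$ is a finite-type $\mathbb{C}$-algebra), so $I$ is finitely generated and only finitely many generators appear, bounding the degrees; let $m$ be large enough that $I_i = I_m$ for $i \ge m$. Then $I = I_0 + I_1 t + \dots + I_{m-1}t^{m-1} + I_m(t^m)$, matching the claimed form.

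The one genuinely substantive point, rather than a formality, is ensuring that the chain $I_0 \subseteq I_1 \subseteq \cdots$ stabilizes so that the finite expression \eqref{eq:Iexp} is legitimate; this is where I invoke that $R$ is of finite type over $\mathbb{C}$, hence Noetherian, hence $R[t]$ is Noetherian by the Hilbert basis theorem, so $I$ is finitely generated and after replacing each generator by its homogeneous components (which lie in $I$ by the first part) there is a finite generating set, capping the degrees. Everything else — the Vandermonde extraction of homogeneous parts, the verification that $I_i$ is an ideal, the inclusion $I_i \subseteq I_{i+1}$ — is routine. I would also remark that $S^1$-stability automatically upgrades to $\mathbb{C}^*$-stability precisely because a $\mathbb{C}$-subspace of a graded algebra is graded if and only if it is stable under the compact torus, the archimedean density of $S^1$ inside $\mathbb{C}^*$ making the distinction vacuous once homogeneity is known.
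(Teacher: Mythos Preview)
Your proof is correct and follows essentially the same approach as the paper: extract homogeneous components via the $S^1$-action (you phrase it as a Vandermonde inversion, the paper invokes Lagrange interpolation, which amounts to the same linear-algebra step), then use Noetherianity to truncate the ascending chain $I_0\subseteq I_1\subseteq\cdots$. The only cosmetic difference is that the paper appeals directly to $R$ being Noetherian for the stabilization of the $I_i$, whereas you route through $R[t]$ and finite generation of $I$; both are fine.
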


\begin{proof}
It suffices to argue that $I$ can be expanded as in \eqref{eq:Iexp}.
To see this, assume that $a\in I$. We can write $a=a_0+a_1t+\dots+a_mt^m$ with $a_i\in R$. Then our assumption implies that $\sum_i a_i \rho^i t^i\in I$ as well for all $\rho\in S^1$. So by the Lagrange interpolation formula, $a_it^i\in I$ for all $i$. Therefore, we can write $I$ as $I_0+I_1t+I_2t^2+\dots$ for some ideals $I_0\subseteq I_1\subseteq \dots$ in $R$. But as $R$ is noetherian, there is $m\geq 0$ so that $I_{m'}=I_m$ for $m'>m$. \eqref{eq:Iexp} follows.
\end{proof}

\begin{lemma}\label{lma:GAGA}
Let $X$ be a complex projective variety and $p:X\times \mathbb{C}\rightarrow X$ be the natural projection. Assume that $\mathcal{I}$ is an analytic coherent ideal sheaf on $X\times \mathbb{C}$. Assume that $\mathcal{I}|_{X \times \mathbb C^*}=p^*\mathcal{J}$ for some coherent ideal sheaf $\mathcal{J}$ on $X$. Then $\mathcal{I}$ is the analytification of an algebraic coherent ideal sheaf.
\end{lemma}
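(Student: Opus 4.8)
\textbf{Proof plan for Lemma~\ref{lma:GAGA}.} The statement is essentially a GAGA-type rigidity assertion: an analytic coherent ideal sheaf on $X\times\mathbb{C}$ that is ``algebraic away from the central fiber'' must be globally algebraic. The plan is to reduce to an affine/noetherian computation via the $\mathbb{C}^*$-action, then apply GAGA on the compact factor $X$ together with flatness/finiteness over the $\mathbb{C}$-direction.

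First I would observe that the hypothesis $\mathcal{I}|_{X\times\mathbb{C}^*}=p^*\mathcal{J}$ forces $\mathcal{I}$ to be invariant under the standard $\mathbb{C}^*$-action on the second factor, at least over $X\times\mathbb{C}^*$; since $\mathcal{I}$ is coherent and $X\times\mathbb{C}^*$ is dense in $X\times\mathbb{C}$, and the $\mathbb{C}^*$-action (or at least the $S^1$-action) extends to the whole $X\times\mathbb{C}$, one gets $a^*\mathcal{I}\subseteq\mathcal{I}$ for all $a\in S^1$ by a continuity/density argument on the coherent sheaf level. Working locally over an affine open $U=\Spec R\subseteq X$ (with $R$ of finite type over $\mathbb{C}$), Lemma~\ref{lma:istab} then applies to the ideal $I=\Gamma(U\times\mathbb{C},\mathcal{I})\subseteq R[t]$ — here I should be slightly careful, since a priori $\mathcal{I}$ is only analytic, so I would instead apply Lemma~\ref{lma:istab} after passing to the analytic local rings, or better, first establish algebraicity and then use the lemma. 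Concretely: restricting to $X\times\{0\}$, coherence gives an analytic coherent ideal sheaf $\mathcal{I}_0$ on $X$ (the image in $\mathcal{O}_{X\times\mathbb{C}}/(t)$), and by GAGA on the projective variety $X$ this is the analytification of an algebraic ideal sheaf; similarly $\mathcal{J}$ is algebraic. The candidate algebraic ideal sheaf on $X\times\mathbb{C}$ is then built from the flag $\mathcal{I}_0\subseteq\mathcal{I}_1\subseteq\cdots\subseteq\mathcal{I}_m=\mathcal{J}$ of algebraic ideals on $X$ via the expansion $\mathcal{I}_0+\mathcal{I}_1 t+\cdots+\mathcal{I}_m(t^m)$, exactly as in \eqref{eq:Iexp}.

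The key step is to show the analytification of this algebraic ideal sheaf coincides with $\mathcal{I}$. Over $X\times\mathbb{C}^*$ both equal $p^*\mathcal{J}$ (after inverting $t$, the expansion collapses to $\mathcal{I}_m[t,t^{-1}]=\mathcal{J}[t,t^{-1}]$), so the two coherent analytic sheaves agree on the dense open $X\times\mathbb{C}^*$; it remains to check equality along the central fiber $X\times\{0\}$. This I would do by working in the complete (or analytic) local rings at points $(x,0)$: both ideals are $\mathbb{C}^*$-invariant, hence graded in the $t$-variable, and the $t$-degree-$i$ piece of each is determined by a coherent ideal sheaf on $X$ near $x$; for $\mathcal{I}$ these pieces are exactly the stalks of the $\mathcal{I}_j$'s (read off from the $S^1$-invariance via Lagrange interpolation, as in the proof of Lemma~\ref{lma:istab}), and these are algebraic by the GAGA comparison on $X$. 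So the graded pieces match, giving equality of completions at every point of the central fiber, hence equality of the analytic coherent sheaves.

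The main obstacle I anticipate is the bookkeeping around the $\mathbb{C}^*$-invariance in the analytic category: one cannot literally apply Lemma~\ref{lma:istab} to an analytic ideal sheaf, so the argument must be arranged so that algebraicity in each $t$-degree is deduced from GAGA on the \emph{compact} factor $X$ (where GAGA is available) rather than on the non-compact $X\times\mathbb{C}$. A clean way to handle this is to note that $\mathcal{I}$, being $\mathbb{C}^*$-invariant and coherent, decomposes (in suitable local models, or after pushing forward along $p$ and taking graded pieces of the weight decomposition) into a sequence of coherent analytic ideal sheaves on $X$, each of which is algebraic by GAGA; then reassemble. A secondary technical point is justifying that the $S^1$-invariance hypothesis on $\mathcal{I}|_{X\times\mathbb{C}^*}$ propagates to $X\times\{0\}$ — this follows because $a^*\mathcal{I}$ and $\mathcal{I}$ are both coherent and agree on the dense open $X\times\mathbb{C}^*$, so they agree everywhere, using that a section of a coherent sheaf vanishing on a dense open subset of a reduced space vanishes (applied to generators, or to the natural map $\mathcal{I}\to\mathcal{O}_{X\times\mathbb{C}}/\mathcal{I}$, $a^*\mathcal{I}$-twisted). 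Once these are in place the identification is routine.
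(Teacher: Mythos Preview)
Your approach has a genuine gap: the claimed $S^1$-invariance of $\mathcal{I}$ along the central fiber is \emph{false} in general, and without it the graded decomposition into ideals on $X$ does not exist. Two coherent ideal sheaves in $\mathcal{O}_{X\times\mathbb{C}}$ that agree on the dense open $X\times\mathbb{C}^*$ need not agree on $X\times\mathbb{C}$; the ``vanishing on a dense open implies zero'' principle applies to sections of a torsion-free sheaf, not to inclusions between distinct subsheaves. For a concrete counterexample take $X=\mathbb{P}^1$ with affine coordinate $z$ and set $\mathcal{I}=(z-t,\,t^2)$ on the chart $\mathbb{A}^1\times\mathbb{C}$. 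Then $\mathcal{I}|_{X\times\mathbb{C}^*}=\mathcal{O}=p^*\mathcal{O}_X$ since $t^2$ becomes a unit, so the hypothesis of the lemma holds with $\mathcal{J}=\mathcal{O}_X$. But for $a\in S^1$ with $a\neq 1$ one has $a^*\mathcal{I}=(z-at,\,t^2)$, and $z-t\equiv(a-1)t\not\equiv 0$ in $\mathbb{C}\{z,t\}/(z-at,t^2)\cong\mathbb{C}[t]/(t^2)$, so $a^*\mathcal{I}\neq\mathcal{I}$. Thus $\mathcal{I}$ is not $S^1$-invariant, and there is no flag expansion of the form $\mathcal{I}_0+\mathcal{I}_1 t+\dots+\mathcal{I}_m(t^m)$ with $\mathcal{I}_j$ ideals on $X$. (Of course $\mathcal{I}$ is algebraic in this example, so the lemma is not contradicted---only your proposed proof strategy.)

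The paper's proof avoids this entirely by a one-line compactification trick: glue $\mathcal{I}$ on $X\times\mathbb{C}$ with $q^*\mathcal{J}$ on $X\times(\mathbb{P}^1\setminus\{0\})$ along their common agreement on $X\times\mathbb{C}^*$ to obtain an analytic coherent ideal sheaf on the \emph{projective} variety $X\times\mathbb{P}^1$, then invoke GAGA there. No equivariance is needed; the assumption on $\mathcal{I}|_{X\times\mathbb{C}^*}$ is used only to furnish the gluing data near infinity.
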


\begin{proof}
    Let $q:X\times (\mathbb{P}^1\setminus \{0\})\rightarrow X$ be the natural projection. As $\mathbb C^* \subset \mathbb{P}^1\setminus \{0\}$ we can glue $q^*\mathcal{J}$ with $\mathcal{I}$ to get an analytic coherent ideal sheaf on $X\times \mathbb{P}^1$. By the GAGA principle, this ideal sheaf is necessarily algebraic, hence so is its restriction to $X\times \mathbb{C}$.
\end{proof}

Next we point out a version of Siu's uniform global generatedness lemma \cite{Siu88} that we will need in the proof of our next theorem: 

\begin{lemma}\label{lem: Siu_global_uniform generatedness} Let $L$ be a big line bundle on $X$ such that $c_1(L) = \{\theta\}$ and $\Phi \in \textup{PSH}(X\times \Delta, p_1^*\theta$), where $\Delta$ is the unit disk. Let $G$ be an ample line bundle on $X$. Then there exists $k>0$, only dependent on $X$ and $G$ such that $p_1^*(G^k \otimes L^m)\otimes \mathcal I(m \Phi)$ is globally generated for all $m \in \mathbb N$.   
\end{lemma}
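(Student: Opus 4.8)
The plan is to reduce this statement over the product $X\times\Delta$ to the absolute version of Siu's uniform global generation theorem (the same statement used in Lemma~\ref{lma:varphian}), applied on $X\times\Delta$ — or, more precisely, on a projective completion of it. First I would fix a very ample line bundle $H$ on $X$ and recall, via Siu's theorem \cite{Siu98}, \cite[Theorem~6.27]{Dem12}, that there is an integer $b=b(X,G,H)>0$ (we are free to enlarge $G$ relative to $H$, or compare the two ample bundles) such that for \emph{every} big line bundle $B$ on $X$ and \emph{every} coherent ideal sheaf $\mathfrak c\subseteq\mathcal O_X$ occurring as a multiplier ideal, the sheaf $H^{b}\otimes B\otimes \mathfrak c$ is globally generated; the point of Siu's theorem is precisely that $b$ can be taken independent of $B$ and of the ideal. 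The subtlety is that here the multiplier ideal $\mathcal I(m\Phi)$ lives on $X\times\Delta$, not on $X$, so one cannot immediately quote the absolute statement on $X$.

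The key step is therefore to run Siu's argument (or quote its relative form) on the total space. Concretely, I would choose a smooth projective compactification $\bar S$ of $\Delta$ — say $\bar S=\mathbb P^1$ — and work on $Y:=X\times\mathbb P^1$ with the two projections $p_1:Y\to X$, $p_2:Y\to\mathbb P^1$. The form $p_1^*\theta$ is only semipositive in the $\mathbb P^1$-direction, so I would twist by a small multiple of $p_2^*\mathcal O_{\mathbb P^1}(1)$ to make the class big on $Y$, and extend $\Phi$ to a $(p_1^*\theta + \varepsilon\, p_2^* \omega_{\mathbb P^1})$-psh function on $Y$ (this is harmless: $\Phi$ is psh on $X\times\Delta$, and one can cap it off near $\{0,\infty\}$). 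Now $\mathcal I(m\Phi)$ is a coherent ideal sheaf on $Y$, and $p_1^*G\otimes p_2^*\mathcal O(1)$ is ample on $Y$. Applying Siu's uniform global generation on the projective manifold $Y$ with the ample bundle $A:=p_1^*G\otimes p_2^*\mathcal O(1)$ yields an integer $k_0=k_0(Y,A)$, depending only on $X$ (and the fixed choice of compactification and twist), such that $A^{k_0}\otimes (p_1^*L)^m\otimes \mathcal I(m\Phi)$ is globally generated on $Y$ for all $m$. Restricting to $X\times\Delta$ and absorbing the bounded twist $p_2^*\mathcal O(k_0)|_\Delta\cong\mathcal O_\Delta$, we get that $p_1^*(G^{k_0}\otimes L^m)\otimes\mathcal I(m\Phi)$ is globally generated over $X\times\Delta$ (after possibly shrinking $\Delta$ slightly, or noting global generation on $Y$ implies it on the open subset), with $k:=k_0$ depending only on $X$ and $G$ as claimed.

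I would then address the one genuine technical point: that the version of Siu's theorem being invoked on $Y$ really does give a bound on $k$ independent of the psh weight $\Phi$. This is exactly the content of Siu's effective global generation result \cite{Siu88, Siu98} — the bound depends only on the pair $(Y,A)$ and the (fixed) Nakai–Moishezon-type data of the line bundles involved, not on the particular singular metric — so the uniformity in $m$ and in $\Phi$ comes for free once one is willing to lose a fixed number of copies of the ample bundle. The main obstacle, and the place where care is needed, is the passage from $X\times\Delta$ to a projective variety: one must extend $\Phi$ across the boundary $\{0\}$ (and $\{\infty\}$) while keeping it $(p_1^*\theta + \varepsilon p_2^*\omega)$-psh and with multiplier ideals that agree with $\mathcal I(m\Phi)$ over $X\times\Delta^*$ (or all of $X\times\Delta$), and one must check that the twist by $p_2^*\mathcal O(1)$ does not spoil bigness or introduce $m$-dependence. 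Once that bookkeeping is done, the result is an immediate application of the absolute theorem. (Alternatively, if one prefers to stay analytic, one can invoke the analytic version of Siu's global generation with $L^2$-estimates directly on the weakly pseudoconvex manifold $X\times\Delta$, bounding the number of auxiliary sections of $G$ in terms of the Seshadri-type constants of $G$ on $X$; the structure of the argument is the same.)
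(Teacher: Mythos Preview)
Your main approach via compactification to $Y=X\times\mathbb{P}^1$ has a genuine gap at the extension step. The claim that extending $\Phi$ is ``harmless'' and can be done by ``capping off near $\{0,\infty\}$'' is both vague and geometrically off (the complement of $\Delta$ in $\mathbb{P}^1$ is a closed disk, not two points). More seriously, since $p_1^*\theta$ has no positivity along the $\mathbb{P}^1$-direction, any $p_1^*\theta$-psh function on $Y$ restricts to a subharmonic, hence constant, function on each $\{x\}\times\mathbb{P}^1$; so a $p_1^*\theta$-psh extension agreeing with $\Phi$ on $X\times\Delta$ exists only when $\Phi$ is $s$-independent. Adding a twist $K\,p_2^*\omega_{\mathbb{P}^1}$ and using a max construction does produce an extension, but only one that agrees with $\Phi$ on a relatively compact $X\times\Delta_r$. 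Siu on $Y$ then yields global generation on $X\times\Delta_r$ (the $p_2^*\mathcal{O}(mK)$ factor trivializes there, so the $m$-dependence is indeed harmless), but not on all of $X\times\Delta$ as the lemma asserts; sections on $X\times\Delta_r$ need not extend.

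The paper bypasses compactification entirely and works directly on the weakly pseudoconvex K\"ahler manifold $X\times\Delta$, which is exactly the alternative you sketch at the end. Concretely, one runs the argument of \cite[Lemma~5.6]{BBJ21} with Nadel vanishing replaced by Matsumura's family version \cite[Theorem~1.7]{Mat16}; equivalently, one repeats Demailly's proof of \cite[Theorem~6.27]{Dem12} on $X\times\Delta$, using the $\bar\partial$-existence theorem on weakly pseudoconvex manifolds \cite[Corollary~5.3]{Dem12} in place of Nadel's theorem on a projective manifold. This direct route gives global generation over the full product without any extension bookkeeping, and is the argument you should promote from afterthought to main proof.
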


\begin{proof} The argument for this is exactly the same as the one in \cite[Lemma~5.6]{BBJ21} with Nadal's vanishing replaced by the family version proved by Matsumura in  \cite[Theorem~1.7]{Mat16}.
 Alternatively, one can adapt the proof of \cite[Theorem 6.27]{Dem12} to our setting, with only one small change: instead of applying Nadel's theorem directly, one needs to use \cite[Corollary 5.3]{Dem12} when solving the $\bar\partial$-problem in the argument.
\end{proof}

\begin{proposition}\label{prop:PhiinducePSH} 
Let $\phi\in \PSH(X,\theta)$ be a model potential with positive mass.
Let $\Phi$ be the $p_1^*\theta$-psh function on $X\times \Delta$ corresponding to a psh geodesic ray $\{\phi_t\}_t$ in $\PSH(X,\theta)$ emanating from $\phi$, with $\sup \phi_1\leq 0$. Then the function
\[
v\mapsto -\sigma(v)(\Phi) \quad \text{for } v\in X^{\Div}
\]
admits a unique extension to an element in $\PSH(L^{\An})$.
\end{proposition}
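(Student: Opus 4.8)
The plan is to reduce the statement to the piecewise linear case already handled in Lemma~\ref{lma:pltest}, by approximating the geodesic ray $\{\phi_t\}_t$ from above by rays coming from piecewise linear test curves, and then invoking the closure properties of $\PSH(L^{\An})$ together with Theorem~\ref{thm:pshdetbydiv} to identify the limit.

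\textbf{Step 1: Reformulate via the associated test curve.} By the Ross--Witt Nystr\"om correspondence (Theorem~\ref{thm:RWNpresc}(ii)) and Proposition~\ref{prop:psiandPhi}, the psh geodesic ray $\{\phi_t\}_t$ emanating from $\phi$ corresponds to a maximal test curve $\{\psi_\tau\}_\tau$ relative to $\phi$, with $\tau^+_\psi \le 0$ after the normalization $\sup\phi_1\le 0$, and $-\sigma(v)(\Phi) = \psi^{\An}(v) = \sup_{\tau\le\tau^+_\psi}(\psi_\tau^{\An}(v)+\tau)$ for $v\in X^{\Div}$. So it suffices to show $\psi^{\An}$ extends to an element of $\PSH(L^{\An})$. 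Since $\phi$ has positive mass, Lemma~\ref{lem:Cproj} is available; moreover each $\psi_\tau$ (for $\tau<\tau^+_\psi$) is a model potential with positive mass by the maximality and $\tau$-concavity, so each analytification $\psi_\tau^{\An}$ lies in $\PSH(L^{\An})$ by Lemma~\ref{lma:varphian}.

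\textbf{Step 2: Approximate the test curve from above by piecewise linear curves.} Fix a sequence of finite partitions $\tau^+_\psi = \tau_0 > \tau_1 > \dots > \tau_{N_j} \to -\infty$ refining as $j\to\infty$, and let $\{\psi^j_\tau\}_\tau$ be the piecewise linear curve interpolating the values $\psi_{\tau_i}$ (for the current partition) below $\tau^+_\psi$ and equal to $-\infty$ above. By concavity of $\tau\mapsto\psi_\tau(x)$, the affine interpolant lies below $\psi_\tau(x)$ at intermediate points, so $\psi^j_\tau \le \psi_\tau$ pointwise; passing to the concave envelope $\{\tilde\psi^j_\tau\}_\tau$ (a genuine relative test curve by Lemma~\ref{lma:psieqpsiprimeondiv}) we still have $\tilde\psi^j_\tau \le \psi_\tau$, hence $\tilde\psi^{j,\An}\le\psi^{\An}$, and upon refinement $\tilde\psi^j_\tau \nearrow \psi_\tau$ for each fixed $\tau$ (concave functions are recovered as increasing limits of their chordal interpolants). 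One then has to compare these analytifications: since $\psi^{j,\An} = \tilde\psi^{j,\An}$ on $X^{\Div}$ by \eqref{eq:psieqsiprime}, and $\psi^{j,\An}\in\PSH(L^{\An})$ by Lemma~\ref{lma:pltest}, we have produced a family in $\PSH(L^{\An})$, increasing along refinements (one must arrange the partitions to be nested so that $\psi^{j,\An}\nearrow$), bounded above by $\psi^{\An}\le 0$.

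\textbf{Step 3: Pass to the limit.} Apply the envelope/limit machinery: the pointwise supremum $\sup_j \psi^{j,\An}$ on $X^{\Div}$ agrees with $\psi^{\An}$ because $\psi^{\An}(v) = \sup_\tau(\psi_\tau^{\An}(v)+\tau)$ and $\psi_\tau^{\An}(v) = \lim_j \tilde\psi^j_\tau{}^{\An}(v)$ by Lemma~\ref{lma:Imodeldeclelong} (applied to the decreasing-in-$j$... in fact increasing; one uses that $\nu(\cdot,E)$ behaves well under the monotone limits of model potentials, which is exactly the content of that lemma), together with the observation that the sup over $\tau$ of the increasing limits equals the increasing limit of the sups. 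Now $\sup_j\psi^{j,\An}$ is an increasing limit, hence \emph{its usc regularization} lies in $\PSH(L^{\An})$ once we know the relevant envelope property; but rather than invoke Conjecture~\ref{conj:env} in general, we use that $\psi^{\An}$ is already known to be a well-defined function that is a candidate upper bound, and Theorem~\ref{thm:pshdetbydiv} forces uniqueness of the extension: if $u\in\PSH(L^{\An})$ satisfies $u = \psi^{\An}$ on $X^{\Div}$ then $u$ is the unique extension. The cleanest route to producing \emph{some} $u\in\PSH(L^{\An})$ with $u=\psi^{\An}$ on $X^{\Div}$ is to run Lemma~\ref{lem: Siu_global_uniform generatedness} directly on $\Phi$: for an ample $G$ there is $k$ with $p_1^*(G^k\otimes L^m)\otimes\mathcal I(m\Phi)$ globally generated for all $m$; pulling sections back under $\sigma(\cdot)$ and using the Gauss-extension computation as in Proposition~\ref{prop:psiandPhi} exhibits, for each $m$, the function $v\mapsto -\tfrac1m\sigma(v)(\mathcal I(m\Phi))$ restricted to $X^{\Div}$ (equivalently $v\mapsto -\tfrac1m v(\text{sections})$) as the restriction of an element of $\mathcal H^{\gf}_{\mathbb Q}((L+\tfrac km G)^{\An})$; since $-\sigma(v)(\Phi) = \lim_m -\tfrac1m\sigma(v)(\mathcal I(m\Phi))$ is a decreasing limit (subadditivity of multiplier ideals as in \eqref{eq:varphianv}) with $c_1(L+\tfrac km G)\to c_1(L)$, Definition~\ref{def:BJpshmetric} yields $-\sigma(\cdot)(\Phi)\in\PSH(L^{\An})$ directly. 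Uniqueness is then Theorem~\ref{thm:pshdetbydiv}.

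\textbf{Main obstacle.} The delicate point is Step 3's interchange of limits and the verification that the family $\{-\tfrac1m\sigma(v)(\mathcal I(m\Phi))\}_m$ is genuinely \emph{decreasing} in $m$ along a suitable subsequence (powers of $2$) and that its limit on $X^{\Div}$ equals $-\sigma(v)(\Phi)$ — this needs the Fekete/subadditivity argument transported from $X$ to the total space $X\times\Delta$ via the Gauss extension $\sigma$, i.e.\ that $\sigma(v)(\mathcal I((m+m')\Phi))\ge\sigma(v)(\mathcal I(m\Phi))+\sigma(v)(\mathcal I(m'\Phi))$, which follows from $\mathcal I((m+m')\Phi)\subseteq\mathcal I(m\Phi)\mathcal I(m'\Phi)$ and the valuation axioms, but requires care because $X\times\Delta$ is not compact; one restricts to $X\times\Delta'$ for a slightly smaller disk, or uses that $\Phi$ extends across $X\times\{0\}$ (as noted in Proposition~\ref{prop:psiandPhi}) to work on $X\times\mathbb P^1$ and invoke Lemma~\ref{lma:GAGA} so that all multiplier ideals in sight are restrictions of algebraic ones, legitimizing the global generation statement. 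The uniqueness of the extension is then immediate and not an obstacle.
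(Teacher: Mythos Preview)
Your Steps 1--2 are a false start, and you recognize this yourself: the piecewise-linear approximation produces an \emph{increasing} net $\psi^{j,\An}$ in $\PSH(L^{\An})$, and passing to its limit is exactly the envelope conjecture, which this proposition is a key input to. So that route is circular and you are right to abandon it.

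Your Step 3 then pivots to what is in fact the paper's approach: approximate $-\sigma(v)(\Phi)$ from above by $-\tfrac1{2^m}\sigma(v)(\mathcal I(2^m\Phi))$ via subadditivity, realize each approximant as an element of $\mathcal H^{\gf}_{\mathbb Q}((L+\tfrac km G)^{\An})$ using Siu-type global generation, and identify the decreasing limit via Ohsawa--Takegoshi. But there is a genuine gap in your outline. You write that global generation of $p_1^*(G^k\otimes L^m)\otimes\mathcal I(m\Phi)$ on $X\times\Delta$ directly ``exhibits $v\mapsto -\tfrac1m\sigma(v)(\mathcal I(m\Phi))$ as the restriction of an element of $\mathcal H^{\gf}_{\mathbb Q}$''. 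This is not automatic: the generating sections live on $X\times\Delta$, not on $X$, and $\sigma(v)$ is an \emph{algebraic} valuation on $\mathbb C(X\times\mathbb A^1)$, so one cannot evaluate it on merely analytic sections. To make this work one must first show that $\mathcal I(m\Phi)$ is the analytification of an algebraic coherent ideal on $X\times\mathbb C$, and then decompose it as a flag $\mathfrak a_0+\mathfrak a_1 t+\dots+\mathfrak a_N(t^N)$ with $\mathfrak a_i$ coherent on $X$; only then does global generation translate into generators in $H^0(X,G^k\otimes L^m\otimes\mathfrak a_i)$ and the formula $-\tfrac1m\sigma(v)(\mathcal I(m\Phi))=-\tfrac1m\min_i(v(\mathfrak a_i)+i)$ becomes a genuine $\mathcal H^{\gf}_{\mathbb Q}$ function.

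The missing step to get there is proving that $\mathcal I(m\Phi)|_{X\times\Delta^*}=p_1^*\mathcal I(m\phi)|_{X\times\Delta^*}$, which is exactly the hypothesis needed in Lemma~\ref{lma:GAGA} (and then Lemma~\ref{lma:istab} gives the flag decomposition). This is the only place where the assumptions that $\phi$ is \emph{model} and has \emph{positive mass} are actually used, and the argument is not formal: on each annulus $X\times\Delta_{a,b}$ one sandwiches $\Phi$ between $\phi\circ p_1+C$ (by $t$-convexity) and $\psi\circ p_1$ with $\psi=P_\theta(\phi_a,\phi_b)\in\mathcal E(X,\theta,\phi)$ (via Lemma~\ref{lem:Cproj} and the relative full-mass theory), and then the strong openness theorem forces $\mathcal I(m\psi)=\mathcal I(m\phi)$, giving the claimed equality of sheaves. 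Your ``Main obstacle'' paragraph gestures at GAGA and $X\times\mathbb P^1$, but $\Phi$ does not extend to $X\times\mathbb P^1$; what extends is the \emph{ideal sheaf}, and only after the pullback claim is established. Without this claim your Step 3 does not go through.
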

\begin{proof}
We may assume that $L$ is a line bundle. Observe that the extension is unique if it exists by Theorem~\ref{thm:pshdetbydiv}.

\smallskip\noindent \textbf{Claim}. For each $m\in \mathbb{Z}_{>0}$, $\mathcal{I}(m\Phi)|_{X\times \Delta^*} = p_1^* \mathcal{I}(m\phi)|_{X\times \Delta^*} $. In particular,  $\mathcal{I}(m\Phi)$ admits an extension to a coherent ideal sheaf on $X\times \mathbb{C}$. \smallskip

To prove the claim it is enough to show that 
\begin{equation}\label{eq: sheaf_identity}
\mathcal{I}(m\Phi)|_{X \times \Delta^*} = \mathcal I((m \phi \circ p_1)|_{X \times \Delta^*})= p_1^* \mathcal{I}(m\phi)|_{X\times \Delta^*}. 
\end{equation}
We observe that $\mathcal I((m \phi \circ p_1)|_{X \times \Delta^*})\subseteq p_1^* \mathcal{I}(m\phi)|_{X\times \Delta^*}$ by \cite[Proposition~14.3]{Dem12} and the reverse inclusion is obvious. So it suffices to establish the first equality in \eqref{eq: sheaf_identity}.

If we could prove this, then the $\mathbb C^*$-invariant extension of $\mathcal{I}(m\Phi)$ to $X \times \mathbb C ^*$ would be simply $\mathcal I((m \phi \circ p_1)|_{X \times \mathbb C^*})$.

Consider the annulus $\Delta_{a,b} = \{z\in \mathbb{C}:\exp(-b)<|z|^2<\exp(-a)\}  \subset \Delta^*$ for $0<a<b$. To argue \eqref{eq: sheaf_identity}, it suffices to show that $\mathcal{I}(m\Phi)|_{X \times \Delta_{a,b}} = \mathcal I((m \phi \circ p_1)|_{X \times \Delta_{a,b}})$ for arbitrary $a,b$.

First we notice that due to convexity of (sub)geodesics in the time variable we have $\Phi(x,z)|_{X \times \Delta_{a,b}} \leq \phi(x) + C $ for some $C>0$.

Second, since $\phi$ has positive mass, due to  Lemma \ref{lem:Cproj} and \cite[Theorem~1.3]{DDNL2} we have that $\Phi(\cdot, \exp(-b)),\Phi(\cdot, \exp(-b)) \in \mathcal E(X,\theta,\phi)$. By \cite[Theorem~2.9]{Gu22}, there exists $\psi:= P_\theta(\Phi(\cdot, \exp(-a)),\Phi(\cdot, \exp(-b))) \in \mathcal E(X,\theta,\phi)$ such that $ \psi(z) \leq \Phi(x,z)|_{X \times \Delta_{a,b}}$. This last inequality follows from the comparison principle built into the definition of psh geodesic segments, recalled in \eqref{eq: psh_segm_comp}.
We conclude that
\begin{equation}\label{eq: containment_of_sheaves}\mathcal I((m \psi \circ p_1)|_{X \times \Delta_{a,b}}) \subseteq \mathcal{I}(m\Phi)|_{X \times \Delta_{a,b}} \subseteq \mathcal I((m \phi \circ p_1)|_{X \times \Delta_{a,b}}).
\end{equation}

By \cite[Theorem 1.3]{DDNL2} we have that $P(\psi + C,\phi) \nearrow P[\phi]=\phi$ a.e. on $X$ as $C \nearrow \infty$. Since $[P(\psi + C,\phi)] = [\psi]$, by Theorem \ref{thm:openness} we conclude that $\mathcal I((m \psi \circ p_1)|_{X \times \Delta_{a,b}}) = \mathcal I((m \phi \circ p_1)|_{X \times \Delta_{a,b}})$. Together with \eqref{eq: containment_of_sheaves} this finishes the proof of \eqref{eq: sheaf_identity}. \smallskip

From the claim and Lemma~\ref{lma:istab} and Lemma \ref{lma:GAGA} we get that
\begin{equation}\label{eq:ImPhiexp}
\mathcal{I}(m\Phi)=\mathfrak{a}_0 +\mathfrak{a}_1 t+\dots+\mathfrak{a}_{N-1} t^{N-1}+\mathfrak{a}_N (t^N)\,,
\end{equation}
where the $\mathfrak{a}_i$'s are coherent ideal sheaves on $X$. 

Using Lemma \ref{lem: Siu_global_uniform generatedness}, there exists $T \to X$ ample such that $p_1^* T\otimes L^m\otimes \mathcal{I}(m\Phi)$ is globally generated, which is equivalent to $T\otimes L^m\otimes \mathfrak{a}_i$ being globally generated for all $i$ (in contrast with the case where $\phi$ is bounded, explored in  \cite{BBJ21}, $\mathfrak{a}_N \neq \mathcal{O}_X$ in general).

We define
\[
\varphi_m(v):=-\frac{1}{m}\sigma(v)(\mathcal{I}(m\Phi))=-\frac{1}{m}\min_i (v(\mathfrak{a}_i)+i) ,\quad v\in X^{\Div}\,.
\]
From the right-hand side of the formula, $\varphi_m$ can be extended to an element in $\mathcal{H}^{\gf}_{\mathbb{Q}}((L+m^{-1}T)^{\An})$, which we denote by the same symbol.

For $v\in X^{\Div}$, it follows from the well-known argument using the Ohsawa--Takegoshi extension theorem (\cite[Lemma~B.4]{BBJ21}) that
\[
-\sigma(v)(\Phi)=\lim_{m\to\infty}-\frac{1}{2^m}\sigma(v)(\mathcal{I}(2^m\Phi))=\lim_{m\to\infty}\varphi_{2^m}(v)
\]
and the right-hand side defines an element in $\PSH(L^{\An})$ by definition, since $\{\varphi_{2^m}\}_m$ is decreasing.
\end{proof}

\begin{corollary}\label{cor:testcurvegivepshme}
Let $\{\psi_{\tau}\}_\tau$ be a test curve relative to $\phi \in \textup{PSH}(X,\theta)$. Then $\psi^{\An}: X^{\Div} \to \mathbb{R}$ admits a unique extension to   $\psi^{\An} \in \PSH(L^{\An})$. 
\end{corollary}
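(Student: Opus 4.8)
The plan is to reduce Corollary~\ref{cor:testcurvegivepshme} to Proposition~\ref{prop:PhiinducePSH} by passing from an arbitrary relative test curve to its maximization, and from the relative potential $\phi$ to its model envelope $P[\phi]$. First I would normalize: since $\psi^{\An}$ only depends on the singularity types appearing in the curve and is insensitive to adding a constant in $\tau$ (which just shifts $\psi^{\An}$ by a constant), I may assume $\tau^+_\psi \le 0$ and $\phi \le 0$. The positive mass hypothesis is implicitly in force because ``test curve relative to $\phi$'' together with the fact that $\psi_\tau \nearrow \phi$ forces us to be in the regime where the Ross--Witt Nystr\"om correspondence of Theorem~\ref{thm:RWNpresc} applies; if $\int_X \theta_\phi^n = 0$ the statement is vacuous/degenerate, so I would state the assumption $\int_X\theta_\phi^n > 0$ explicitly (matching the running hypotheses of the section).

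Next I would invoke the maximization construction \eqref{eq: maximization_def}: replace $\{\psi_\tau\}_\tau$ by $\{P[\psi_\tau]\}_\tau$, which is a maximal test curve relative to $P[\phi]$, a model potential with the same (positive) mass by \cite[Proposition 2.3]{DDNL2} applied to $\phi$. The crucial point is that maximization does not change the analytification on divisorial valuations: for $v \in X^{\Div}$ we have $v(P[\psi_\tau]) = \nu(P[\psi_\tau],E) = \nu(\psi_\tau,E) = v(\psi_\tau)$ because $P[\psi_\tau]$ and $\psi_\tau$ have the same singularity type, hence $\psi^{\An} = (P[\psi])^{\An}$ on $X^{\Div}$ by formula \eqref{eq:psiandef}. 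So it suffices to treat the maximal case. Then by Theorem~\ref{thm:RWNpresc}(ii), the maximal test curve $\{P[\psi_\tau]\}_\tau$ corresponds to a psh geodesic ray $\{\check\psi_t\}_t$ emanating from $P[\phi]$, and by Proposition~\ref{prop:psiandPhi} (whose hypothesis $\tau^+_\psi \le 0$ we arranged) we have $\psi^{\An}(v) = -\sigma(v)(\Psi)$ on $X^{\Div}$, where $\Psi$ is the potential of the test curve on $X\times\Delta$. After a further harmless normalization $\sup\check\psi_1 \le 0$ (again a $\tau$-shift), Proposition~\ref{prop:PhiinducePSH} applies directly with $\phi$ there taken to be the model potential $P[\phi]$ and $\Phi = \Psi$, yielding that $v \mapsto -\sigma(v)(\Psi)$ extends to an element of $\PSH(L^{\An})$.

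Finally, uniqueness of the extension is immediate from Theorem~\ref{thm:pshdetbydiv}: two elements of $\PSH(L^{\An})$ agreeing on $X^{\Div}$ agree on all of $X^{\An}$. I expect the only genuinely delicate point is making sure the various normalizations (shifting $\tau$, adding constants) are truly harmless for the statement — i.e. that $\psi^{\An}$ transforms by an additive constant under these operations and that membership in $\PSH(L^{\An})$ is preserved under adding a constant, which follows since $\mathcal H^{\gf}_{\mathbb Q}(L^{\An})$ is stable under adding rationals and $\PSH(L^{\An})$ under adding reals (\cite[Theorem~4.7]{BJ18b}); one also needs that the mass and model/maximality properties are unaffected, which they are. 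Everything else is a bookkeeping assembly of results already in place, with Proposition~\ref{prop:PhiinducePSH} doing the real work.
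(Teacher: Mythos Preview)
Your argument for the case $\int_X \theta_\phi^n > 0$ is essentially the paper's own proof: normalize, pass to the maximization (which leaves $\psi^{\An}$ on $X^{\Div}$ unchanged), identify the resulting geodesic ray via Theorem~\ref{thm:RWNpresc}(ii) and Proposition~\ref{prop:psiandPhi}, and apply Proposition~\ref{prop:PhiinducePSH}; uniqueness via Theorem~\ref{thm:pshdetbydiv}. One small inaccuracy: $P[\psi_\tau]$ and $\psi_\tau$ need not have the same singularity type in general; what you actually need (and what holds) is that they have the same Lelong numbers along every prime divisor, which is enough for $(\psi_\tau)^{\An} = (P[\psi_\tau])^{\An}$ on $X^{\Div}$.

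The genuine gap is your treatment of the zero-mass case. The statement of the corollary does \emph{not} assume $\int_X \theta_\phi^n > 0$, there is no such running hypothesis in the section, and the conclusion is not vacuous when the mass vanishes. In that case your reduction breaks down in two places: the maximization construction \eqref{eq: maximization_def} is only defined under the positive-mass assumption, and Proposition~\ref{prop:PhiinducePSH} explicitly requires the model potential to have positive mass. The paper handles this by a perturbation argument: pick an ample line bundle $S$ with positive curvature form, view $\{\psi_\tau\}_\tau \subset \PSH(X,\theta+\epsilon\,c_1(S,h_S))$ for rational $\epsilon>0$ (where the mass is now positive), apply the positive-mass case to conclude $\psi^{\An}\in \PSH((L+\epsilon S)^{\An})$ for every such $\epsilon$, and then invoke \cite[Theorem~4.5]{BJ18b} to get $\psi^{\An}\in \PSH(L^{\An})$. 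You should add this step rather than declare the case degenerate.
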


\begin{proof}
Observe that the extension is unique if it exists by Theorem~\ref{thm:pshdetbydiv}. We may assume that $\tau^+_\psi=0$, without loss of generality.

Let us first assume that $\phi$ has positive mass.
If $\{\chi_\tau\}_\tau$ is the maximization of $\{\psi_\tau\}_\tau$ then $\chi^\An = \psi^\An$ by definition. Hence, we can assume that $\{\psi_\tau\}_\tau$ is maximal, i.e., $\{\check{\psi}_t \}_t$ is a psh geodesic emanating from $P[\phi]$. The result now follows from Proposition~\ref{prop:PhiinducePSH} and Proposition~\ref{prop:psiandPhi}. 

In general, take an ample line bundle $S$ on $X$. Then the previous case shows that $\psi^{\An}\in \PSH((L+\epsilon S)^{\An})$ for any rational $\epsilon>0$. It follows that $\psi^{\An}\in \PSH(L^{\An})$ by \cite[Theorem~4.5]{BJ18b}.
\end{proof}

Before we can prove Theorem \ref{mainthm: isomorphism}, we deal with an intermediate case:

\begin{theorem}\label{thm:Imodelcurvenabij}
Assume that $\{\theta\}=c_1(L)$ for some big line bundle $L$ on $X$. 
The following hold:\smallskip\\
\noindent (i)  Let $\{\psi_\tau\}_\tau,\{\chi_\tau\}_\tau \in \textup{MTC}_\mathcal I(\theta)$. If $\psi^\An \geq \chi^\An$ then $\psi_\tau \geq \chi_\tau$ for any $\tau \in \mathbb R$. In particular, the map  $\{\psi_\tau\}_\tau \mapsto \psi^{\An}$ is an injection $\textup{MTC}_\mathcal I(\theta)\hookrightarrow \PSH(L^{\An})$.\smallskip\\
\noindent (ii) The image of the map $\textup{MTC}_\mathcal I(\theta) \ni \{\psi_\tau\}_\tau \mapsto \psi^{\An} \in \PSH(L^{\An})$ contains $\PSH((L-T)^{\An})$ for any ample $\mathbb{Q}$-line bundle $T$ on $X$ such that $L-T$ is big.
\end{theorem}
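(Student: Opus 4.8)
For part (i), the plan is to reduce the statement about test curves to a statement about their associated potentials and then invoke the injectivity results already available in Archimedean pluripotential theory. Given $\{\psi_\tau\}_\tau, \{\chi_\tau\}_\tau \in \textup{MTC}_\mathcal I(\theta)$ with $\psi^\An \geq \chi^\An$ on $X^{\Div}$, I would first recover the individual slices $\psi_\tau^\An$ from $\psi^\An$: since $\psi^\An(v) = \sup_{\tau}(\psi_\tau^\An(v) + \tau)$ is the $\tau$-Legendre transform of $\tau \mapsto \psi_\tau^\An(v)$ evaluated at the dual variable $1$, and since $\tau \mapsto \psi_\tau$ is concave, the slices are recovered by the inverse Legendre transform applied pointwise on $X^{\Div}$. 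More precisely, applying the transcendental non-Archimedean theory to the families $\theta + \omega$ (or the $\epsilon S$-perturbation trick from Corollary \ref{cor:testcurvegivepshme}) one gets, for each $\tau < \tau^+_\psi$, that $\psi_\tau^\An \in \PSH(L^\An)$ and $\psi_\tau^\An \geq \chi_\tau^\An$ on $X^{\Div}$, hence on all of $X^{\An}$ by Theorem \ref{thm:pshdetbydiv}. Now $\psi_\tau^\An(c\,\ord_E) = -c\,\nu(\psi_\tau, E)$, so $\nu(\psi_\tau, E) \leq \nu(\chi_\tau, E)$ for every prime divisor $E$ over $X$. Since $\psi_\tau = P[\psi_\tau]_\mathcal I$ and $\chi_\tau = P[\chi_\tau]_\mathcal I$ are $\mathcal I$-model potentials, and $\mathcal I$-model potentials are determined by their Lelong numbers along all divisorial valuations (this is exactly the content of the $P[\cdot]_\mathcal I$ envelope being comparison-determined by multiplier ideals, cf. \cite[\S 2.4]{DX20}), the inequality on Lelong numbers forces $\psi_\tau \leq \chi_\tau$ — wait, one must be careful about the direction: a \emph{larger} $\psi_\tau^\An$ means \emph{smaller} Lelong numbers means \emph{less singular}, hence $\psi_\tau \geq \chi_\tau$ after taking $\mathcal I$-model envelopes, using monotonicity of $P[\cdot]_\mathcal I$ with respect to the multiplier-ideal partial order. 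The limiting slices at $\tau = \tau^+_\psi$ and the values for $\tau$ small are then handled by continuity ($\tau$-usc, $\tau$-decreasing). Injectivity is the special case $\psi^\An = \chi^\An$.

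For part (ii), the strategy is constructive: start with an arbitrary $u \in \PSH((L-T)^\An)$, produce from it a relative test curve in $\textup{MTC}_\mathcal I(\theta)$ whose analytification is $u$. The natural device is the \emph{radial} or \emph{Legendre} construction — given $u$, one builds the ray of non-Archimedean metrics $\{\text{(something)}_t\}$ or directly defines candidate slices $\psi_\tau$ on the Archimedean side by regularizing. Concretely, I would use Theorem \ref{thm:BJappseq} to write $u$ as a decreasing limit of generically finite Fubini–Study functions $u_i \in \mathcal H^\gf_\mathbb{Q}((L - T + \varepsilon_i S)^\An)$; each such $u_i$ is of the form $\frac{1}{m}\max_j\{\log|s_j| + \lambda_j\}$, which corresponds to a flag/filtration datum on the section ring, hence (via the correspondence pointed to in \eqref{eq: filtrate_flag_def} and Proposition \ref{prop:PhiinducePSH} run in reverse) to a piecewise linear curve in $\PSH(X, \theta)$ with finitely many vertices built out of the base loci $\{s_j = 0\}$. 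Its $\tau$-concave envelope (Lemma \ref{lma:psieqpsiprimeondiv}) is a relative test curve, and its $\mathcal I$-maximization \eqref{eq: I-maximization_def} lands in $\textup{MTC}_\mathcal I(\theta)$ provided the positive-mass condition holds — and it does, because $L - T$ big gives $V_\theta$ positive mass and the singularities involved are of analytic type controlled by $T$, so $\psi_{-\infty}$ has positive mass. One then checks, using Lemma \ref{lma:psieqpsiprimeondiv} again and the GAGA/Siu machinery of Proposition \ref{prop:PhiinducePSH}, that the analytification of this $\mathcal I$-maximal test curve equals $u_i$ on $X^{\Div}$. Finally, take the decreasing limit in $i$; by Lemma \ref{lma:decnetna} (or its increasing counterpart as appropriate) the limit stays in $\textup{MTC}_\mathcal I(\theta)$, and by continuity of analytification under decreasing limits (the limit in Definition \ref{def:BJpshmetric} is pointwise decreasing, and $\psi \mapsto \psi^\An$ commutes with such limits by \eqref{eq:psiandef} and monotone convergence of the sup) its analytification is $u$.

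The main obstacle, as I see it, is the \emph{surjectivity direction of part (ii)} — specifically verifying that the test curve manufactured from a Fubini–Study function $u_i$ really is $\mathcal I$-maximal and has the right analytification, \emph{and} that the positive-mass condition $\int_X \theta_{\psi_{-\infty}}^n > 0$ survives both the $\mathcal I$-maximization and the limiting process. The delicate point is that $\mathfrak a_N = \mathcal I(V_\theta) \neq \mathcal O_X$ in the big (non-ample) case — this is precisely the complication flagged parenthetically in the proof of Proposition \ref{prop:PhiinducePSH} — so the "top" ideal of the flag is genuinely singular, and one must ensure the ampleness cushion provided by $T$ (with $L - T$ still big) is exactly what licenses the Siu global-generation step (Lemma \ref{lem: Siu_global_uniform generatedness}) uniformly along the whole decreasing sequence. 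Matching $\psi^\An = u$ on the nose, rather than up to the ambiguity of $\mathcal I$-equivalence, will require carefully tracking that $\mathcal I$-maximization does not change the analytification — which should follow because $P[\cdot]_\mathcal I$ preserves all multiplier ideals hence all Lelong numbers along divisorial valuations, hence the analytification, but this needs to be said explicitly.
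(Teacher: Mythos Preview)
Your overall strategy matches the paper's, but two concrete steps are missing and one concern is misplaced.

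In (i), you want to recover $\psi_\tau^{\An}(v)$ from $\psi^{\An}$ by Legendre inversion, but $\psi^{\An}(v)$ is the Legendre transform evaluated only at the single dual parameter $t=1$, which is not enough to invert. The paper's device is the rescaling identity
\[
t\,\psi^{\An}(t^{-1}v)=\sup_{\tau}\bigl(\psi_\tau^{\An}(v)+t\tau\bigr),\qquad t\in\mathbb Q_{>0},
\]
valid because $t^{-1}v\in X^{\Div}$; this recovers the full transform in $t$, and concavity plus density of $\mathbb Q_{>0}$ then give $\psi_\tau^{\An}(v)\geq \chi_\tau^{\An}(v)$ for all $\tau$. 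After that, \cite[Corollary~2.16]{DX20} (comparison of $\mathcal I$-model potentials via Lelong numbers) finishes, exactly as you say.

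In (ii), the positive-mass verification is the crux and your justification (``$L-T$ big, singularities controlled by $T$'') does not pin it down. The paper does \emph{not} build the test curve for a Fubini--Study datum $\phi_i\in \mathcal H^{\gf}_{\mathbb Q}((L-(1-c_i)S)^{\An})$ directly in $\PSH(X,\theta)$. It first constructs an $\mathcal I$-maximal curve $\{\psi'_\tau\}$ in $\PSH(X,\theta-\omega)$, where $\omega$ is a K\"ahler form in $c_1((1-c_i)S)$, and only then sets $\chi_\tau:=P_\theta[\psi'_\tau]_{\mathcal I}$. The reward is the computation
\[
\int_X\theta_{\chi_\tau}^n\geq \int_X\theta_{P_\theta[\psi'_\tau]}^n=\int_X\bigl((\theta-\omega)_{\psi'_\tau}+\omega\bigr)^n\geq \int_X\omega^n,
\]
a \emph{uniform} positive lower bound independent of $i$ and $\tau$. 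This uniformity is what makes the decreasing limit work: to show $\psi^{\An}=\lim_i\psi^{i,\An}$ you need $\nu(\psi^i_\tau,E)\to\nu(\psi_\tau,E)$, and that convergence (Lemma~\ref{lma:Imodeldeclelong}) genuinely fails without positive limit mass. Then the interchange $\sup_\tau\inf_i=\inf_i\sup_\tau$ is not ``monotone convergence of the sup'' but a minimax statement for monotone nets of usc concave functions (Lemma~\ref{lma:DC}). Finally, your worry about Siu global generation and $\mathfrak a_N\neq\mathcal O_X$ is a red herring here; that machinery is used in Proposition~\ref{prop:PhiinducePSH} and Theorem~\ref{prop:BBJmaxappbyflagconf}, not in this surjectivity argument.
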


\begin{proof}
First observe that the map $\textup{MTC}_\mathcal I(\theta)\rightarrow \PSH(L^{\An})$ is well-defined by Corollary~\ref{cor:testcurvegivepshme}.

We argue  (i). Let $v\in X^{\Div}$ and $t \in \mathbb Q_+$. Then, using \eqref{eq:psiandef} we notice that
\begin{equation}\label{eq: legendre_NA}
t \psi^\An\Big (\frac{1}{t}  v \Big) =  \sup_{\tau\leq \tau^+_\psi}(\psi_{\tau}^{\An}(v)+ t \tau).
\end{equation}
 Using the condition $\psi^\An \geq \chi^\An$ we obtain that 
$$\sup_{\tau \in \mathbb R}(\psi_{\tau}^{\An}(v)+ t \tau) \geq \sup_{\tau\in \mathbb R}(\chi_{\tau}^{\An}(v)+ t \tau).$$
As a result, since $\mathbb Q_+$ is dense in $\mathbb R_+$, the above inequality holds for all $t \geq 0$.
Since $\tau \mapsto \psi_\tau^\An(v),\psi_\tau^\An(v)$ are both concave, taking the $t$-Legendre transform of both sides we conclude that $\psi^\An_\tau(v) \geq \chi^\An_\tau(v)$ for all $\tau \in \mathbb R$. Since the potentials  $\psi_\tau,\chi_\tau$ are $\mathcal I$-model, from \cite[Corollary~2.16]{DX20} we obtain that $\psi_\tau \geq \chi_\tau$ for all $\tau \in \mathbb R$. \smallskip

To argue (ii), 
we take $ \phi\in \PSH((L-T)^{\An}) \subset \PSH(L^{\An})$, and we want to write it as $\psi^{\An}$ for some $\psi\in \textup{MTC}_\mathcal I(\theta)$. Before we deal with this, let us only consider $\phi\in \mathcal{H}^{\gf}_{\mathbb{Q}}(L)$, say
\begin{equation}\label{eq:phiintominversemax}
\phi=m^{-1}\max_i (\log|s_i|+\lambda_i),    \end{equation}
where $s_i$ are a finite number of sections of $L^m$ and $\lambda_i\in \mathbb{Q}$.

We may assume that $\lambda_1\leq \lambda_2\leq \dots \leq \lambda_N$. Write $I_{\lambda}$ for the set of $i$ such that $\lambda_i=\lambda$. We denote the finitely many $\lambda$ so that $I_{\lambda}$ is non-empty as $\tau_0> \dots > \tau_N$.
Define a curve $\psi_{\tau}$ as follows:
\[
\psi_{\tau_i}=\frac{1}{m}\max_{i\in I_{\tau_i}} (\log |s_i|^2_{h^m}+\tau_i)\,.
\]
We define $\{\psi_{\tau}\}_\tau$ to be the piecewise linear curve associated with the $\psi_{\tau_i}$ (recall the definition preceding \eqref{eq:psianvlinear}). Let $\{\psi'_{\tau}\}_\tau$ be the $\tau$-concave envelope of $\{\psi_{\tau}\}_\tau$. 

By Lemma~\ref{lma:psieqpsiprimeondiv}, $\psi'^{\An}=\psi^\An=\phi$ on $X^{\Div}$. By Lemma~\ref{lma:pltest} and Theorem~\ref{thm:pshdetbydiv}, the same holds on $X^{\An}$. We can replace $\psi'_{\tau}$ with $P[\psi'_{\tau}]_{\mathcal{I}}$ when $\tau<\tau^+_\psi$ and $\psi'_{\tau^+}$ with the limit value of $P[\psi'_{\tau}]_{\mathcal{I}}$ as $\tau$ increases to $\tau^+_\psi$. Defined this way, $\{\psi'_\tau\}_\tau$ is an $\mathcal I$-maximal test curve, and we still have $\psi'^{\An}=\phi$ on $X^{\Div}$. However, we may not have that $\{\psi'_\tau\}_\tau \in \textup{MTC}_\mathcal I(\theta)$ as $\psi'_{-\infty}$ may not have positive mass.

Now we consider $\phi\in \mathcal{H}_{\mathbb{Q}}^{\gf}(L-T) \subset \mathcal{H}_{\mathbb{Q}}^{\gf}(L)$ for some ample $\mathbb{Q}$-line bundle $T$ such that $L-T$ is still big. We may assume that $T$ is a line bundle.
Fix a smooth strictly psh Hermitian metric $h_T$ on $T$ with Chern form $\omega$.

Let $\{\psi'_\tau\}_\tau \subset \textup{PSH}(X,\theta-\omega)$ be the $\mathcal I$-maximal test curve with respect to $\theta-\omega$, such that $\psi'^{\An} = \phi$, constructed above. We define $\chi_\tau \in \textup{PSH}(X,\theta)$ in the following manner:
$$\chi_\tau := P_{\theta}[\psi'_\tau]_\mathcal I, \ \ \tau < \tau^+_{\psi'},   \ \ \ \ \chi_{\tau^+_{\psi'}}:= \lim _{\tau \nearrow \tau^+_{\psi'}} \chi_\tau.$$
We get that $\{\chi_\tau\}_\tau \subset \textup{PSH}(X,\theta)$ is  $\mathcal I$-maximal and $\chi^\An = \phi$.

Finally, we only need to argue that 
$\{\chi_\tau\}_\tau\in \textup{MTC}_\mathcal I(\theta)$. This follows from the fact that $P_{\theta}[\psi'_\tau]_\mathcal I \geq P_{\theta}[\psi'_\tau]$ and \cite[Theorem 1.2]{WN19}:
\begin{equation}\label{calccc}
\int_X \theta_{\chi_\tau}^n = \int_X \theta_{P_{\theta}[\psi'_\tau]_\mathcal I}^n \geq\int_X \theta_{P_{\theta}[\psi'_\tau]}^n = \int_X ((\theta-\omega)_{\psi'_\tau} + \omega)^n \geq \int_X \omega^n.
\end{equation}
Finally, we deal with the case when $\phi\in \PSH((L-S)^{\An})$ for some ample $\mathbb{Q}$-line bundle $S$ on $X$.
By Theorem~\ref{thm:BJappseq}, we can take an ample line bundle $S$ on $X$, a decreasing sequence of rational numbers $c_i\to 0$ and a decreasing sequence $\phi_i\in \mathcal{H}^{\gf}_{\mathbb{Q}}((L-S+c_i S)^{\An}) \subset \mathcal{H}^{\gf}_{\mathbb{Q}}(L^{\An})$ converging to $\phi$. We will assume that $c_i\leq 1/2$ for all $i$.

Fix a smooth psh metric $h_S$ on $S$ with $\omega=c_1(S,h_S)>0$. By the previous step, we can find $\{\psi^i_\tau \}_\tau\in \textup{MTC}_\mathcal I(\theta)$ such that $\psi^{i,\An}=\phi_i$. Moreover, due to \eqref{calccc} we have that
\begin{equation}\label{eq: volume_Psi_i_est}
\int_X \theta_{\psi^i_\tau}^n  \geq (1 - c_i)^n \int_X \omega^n \geq \frac{1}{2^n}\int_X \omega^n , \ \ \tau \in \mathbb R.
\end{equation}

Since $(\phi^i)^\An \geq (\phi^{i+1})^\An$, due to Theorem~\ref{thm:Imodelcurvenabij}(i), we obtain that $\{\psi^i_\tau\}_\tau$ is $i$-decreasing. Let $\psi_{\tau}=\inf_i\psi^i_{\tau}$. By \cite[Proposition 4.6]{DDNL5} and Lemma~\ref{lma:decnetna}, $\{\psi_\tau\}_\tau\in \textup{MTC}_\mathcal I(\theta)$ with $\int_X \theta^n_{\psi_\tau} \geq \frac{1}{2^n} \int_X \omega^n$.

We need to show that $\psi^{\An}=\phi$. Using \eqref{eq: volume_Psi_i_est}, again, Lemma~\ref{lma:Imodeldeclelong} implies that $\psi_{\tau}^{\An}=\inf_i \psi_{\tau}^{i,\An}$ when $\tau<\tau^+_\psi$. To finish, we need to show that for any $v\in X^{\Div}$, 
\begin{equation}\label{eq:supinfeqinfsup}
\adjustlimits\sup_{\tau \in \mathbb R }\inf_i(\psi_{\tau}^{i,\An}(v)+\tau)=\adjustlimits\inf_i \sup_{\tau \in \mathbb R}(\psi_{\tau}^{i,\An}(v)+\tau).
\end{equation}
Due to Lemma \ref{lma:Imodeldeclelong}, we have that $\tau \mapsto \psi_{\tau}^{i,\An}(v)$ is concave and usc on $\mathbb R$. So \eqref{eq:supinfeqinfsup} follows from Lemma~\ref{lma:DC}, proved below.
\end{proof}

We state the following result from convex analysis, a special case of \cite[Theorem~2]{PZ04}:

\begin{lemma}\label{lma:DC}
Let $f_i:\mathbb{R}\rightarrow [-\infty,\infty)$ ($i\in I$) be a monotone net of proper usc concave functions and $f:\mathbb{R}\rightarrow [-\infty,\infty)$ another proper usc concave function. Assume that $f_i(\tau)\to f(\tau)$ for all $\tau \in \mathbb{R} $.
Then
\[
\check{f}=\lim_i \check{f}_i.
\]
\end{lemma}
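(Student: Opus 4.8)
The plan is to deduce Lemma~\ref{lma:DC} from \cite[Theorem~2]{PZ04}, and I will indicate how the two monotone regimes — which differ wildly in difficulty — are handled. Throughout write $\check g(t):=\sup_{\tau\in\mathbb R}(g(\tau)+t\tau)$ for the inverse Legendre transform.

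The \emph{increasing} case is purely formal: if $f_i\nearrow f=\sup_i f_i$ pointwise, then $(\check f_i)$ is increasing and, for every $t$,
\[
\lim_i\check f_i(t)=\sup_i\sup_{\tau}\bigl(f_i(\tau)+t\tau\bigr)=\sup_{\tau}\sup_i\bigl(f_i(\tau)+t\tau\bigr)=\sup_{\tau}\bigl(f(\tau)+t\tau\bigr)=\check f(t),
\]
just by interchanging suprema; neither concavity nor semicontinuity enters here.

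The content lies in the \emph{decreasing} case $f_i\searrow f$. I would fix $t$ and reduce to $t=0$ by replacing $f_i(\tau)$ with $f_i(\tau)+t\tau$ and $f(\tau)$ with $f(\tau)+t\tau$, which preserves all the hypotheses (properness, upper semicontinuity, concavity, the decreasing limit) and turns $\check f_i(t),\check f(t)$ into $\sup_\tau f_i,\sup_\tau f$. It then suffices to prove $\inf_i\sup_\tau f_i=S:=\sup_\tau f$, the inequality $\geq$ being immediate from $f_i\geq f$, and the case $S=+\infty$ being trivial since then $\sup_\tau f_i=+\infty$ for all $i$. So suppose $S<\infty$ and, for a contradiction, $\sup_\tau f_i>S+\varepsilon$ for all $i$, and pick $\tau_i$ with $f_i(\tau_i)>S+\varepsilon$. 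If some subnet of $(\tau_i)$ stays in a compact set, converging to $\tau^\star$, then for each fixed $j$ the bound $f_j(\tau_i)\geq f_i(\tau_i)>S+\varepsilon$ (valid cofinally) together with upper semicontinuity of $f_j$ gives $f_j(\tau^\star)\geq S+\varepsilon$, hence $f(\tau^\star)=\inf_j f_j(\tau^\star)\geq S+\varepsilon>S$, absurd. If instead $\tau_i\to+\infty$ (the case $-\infty$ being symmetric), I would use concavity: when the supremum $S$ is attained, say at $\tau_0$, the chord inequality gives $f_i\geq f_i(\tau_0)$ on $[\tau_0,\tau_i]$, and since $f_i(\tau_0)\to S$ this forces $f\equiv S$ on $[\tau_0,\infty)$, hence $\check f(t')=+\infty$ for every $t'>0$ — so this alternative cannot occur once $t=0$ lies in the interior of the effective domain of $\check f$.

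The last, ``escape to infinity'' alternative is the step I expect to be the genuine obstacle, and it is also where the statement must be read with care: bare pointwise convergence does not force $\check f=\lim_i\check f_i$ at every point, only at those $t$ for which $\tau\mapsto f(\tau)+t\tau$ is bounded above and attains its maximum — equivalently, on the interior of the effective domain of $\check f$. This is exactly the regime in which Lemma~\ref{lma:DC} is invoked in the proof of Theorem~\ref{thm:Imodelcurvenabij}, where $t=1$, the functions $\psi^{i,\An}_\cdot(v)$ are concave, upper semicontinuous and uniformly bounded, and the relevant conjugate has effective domain $[0,\infty)$, so that $1$ is interior. Restricting to this regime, the concavity estimate above rules out escape to $\pm\infty$ and the compact case closes the argument; the packaging of all of this for general variational convergences, of which a monotone net is the simplest instance, is precisely \cite[Theorem~2]{PZ04}.
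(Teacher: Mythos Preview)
Your approach matches the paper's, which simply states the lemma as a special case of \cite[Theorem~2]{PZ04} without further argument. Your additional sketch is sound and in fact goes beyond the paper by correctly flagging that the conclusion $\check f=\lim_i\check f_i$ can fail at boundary points of the effective domain of $\check f$ --- a caveat the paper omits but which, as you verify, is harmless at the point $t=1$ where the lemma is actually applied in \eqref{eq:supinfeqinfsup}.
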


Recall the definition of $\PSH^\NA(\theta)$ from Definition \ref{def:PSH-an}. When $\{\theta\} = c_1(L)$ for some pseudoeffective $\mathbb Q$-line bundle $L$ we now define the map:
\begin{equation}\label{eq: BJ_map_def}
\textup{PSH}^{\NA}(\theta) \ni \phi \mapsto \phi^\An \in \textup{PSH}(L^\An).
\end{equation}

Let $\phi = \{\{\phi^\omega_\tau\}_\tau \}_{\omega \in \mathcal K} \in \textup{PSH}^{\NA}(\theta)$.
Let $\omega \in \mathcal K$ be such that $\{\omega\} = c_1(T)$, for a $\mathbb Q$-ample line bundle $T$. We get that $\{\phi^\omega_\tau\}_\tau \in \textup{MTC}_\mathcal I(\theta + \omega)$, hence $(\phi^\omega)^\An \in \textup{PSH}((L+ T)^\An)$ by Theorem \ref{thm:Imodelcurvenabij}(i). We make the following preliminary definition:
\begin{equation}\label{eq: an_candidate_def}
\phi^\An := (\phi^\omega)^\An.
\end{equation}
Among other things, we need to show that this definition is independent of the choice of $\omega.$ Let $\omega' \in \mathcal K'$ such that $\{\omega'\} = c_1(T')$, for some $\mathbb Q$-ample line bundle $T'$ and $\omega \leq \omega'$. Then we have that
$$P_{\theta + \omega'}[\phi^\omega_\tau]_\mathcal I = \phi^{\omega'}_\tau, \ \ \ \  \tau < \tau^+_\phi.$$

As a result, using \cite[Proposition~2.18(ii)]{DX20}, we get that $(\phi^\omega_\tau)^\An = (\phi^{\omega'}_\tau)^\An, \ \tau < \tau^+_\phi$. Comparing with \eqref{eq:psiandef}, we arrive at $(\phi^\omega)^\An = (\phi^{\omega'})^\An$, as desired. 

Moreover, our above analysis also shows that $\phi^\An \in \bigcap_{L'}\PSH((L+L')^{\An})$, where $L'$ runs over all ample $\mathbb{Q}$-line bundles on $X$. The latter space is equal to $\PSH(L^{\An})$ by \cite[Theorem~4.5]{BJ18b}, so our map $\phi \mapsto \phi^\An$ from \eqref{eq: an_candidate_def} is indeed well-defined.

Observe that 
\begin{equation}\label{eq:trivialvaluationvaluetauplus}
    \phi^{\An}(v_{\mathrm{triv}})=\tau^+_{\phi},
\end{equation}
where $v_{\mathrm{triv}}$ denotes the trivial valuation.

We show that the map of \eqref{eq: BJ_map_def} is actually a bijection, giving a transcendental interpretation of the non-Archimedean metrics of Boucksom--Jonsson:

\begin{theorem}\label{thm:NAtoNAbij}
Assume that $\{\theta\}=c_1(L)$ for some pseudoeffective $\mathbb{Q}$-line bundle $L$.
The map of \eqref{eq: BJ_map_def}  is a bijection. In addition, let $\phi,\chi \in \PSH^{\NA}(\theta)$. If $\phi^\An \leq \chi^\An$ then $\phi \leq \chi$.
\end{theorem}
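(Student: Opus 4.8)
The plan is to deduce the theorem from Theorem~\ref{thm:Imodelcurvenabij} by a limiting argument over the K\"ahler cone, using the structure of $\PSH^\NA(\theta)$ as a projective limit. First I would treat the order-preservation statement: suppose $\phi^\An \leq \chi^\An$ in $\PSH(L^\An)$, and write $\phi = \{\{\phi^\omega_\tau\}_\tau\}_\omega$, $\chi = \{\{\chi^\omega_\tau\}_\tau\}_\omega$. Fix $\omega \in \mathcal K$ with $\{\omega\} = c_1(T)$ for a $\mathbb Q$-ample line bundle $T$; then by \eqref{eq: an_candidate_def} we have $(\phi^\omega)^\An = \phi^\An$ and $(\chi^\omega)^\An = \chi^\An$ as elements of $\PSH((L+T)^\An)$. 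Since $\{\phi^\omega_\tau\}_\tau, \{\chi^\omega_\tau\}_\tau \in \textup{MTC}_\mathcal I(\theta+\omega)$ and $\theta+\omega$ represents $c_1(L+T)$ with $L+T$ big, Theorem~\ref{thm:Imodelcurvenabij}(i) applies directly and gives $\phi^\omega_\tau \leq \chi^\omega_\tau$ for all $\tau \in \mathbb R$. As this holds for all $\omega$ in a cofinal family (those with rational ample class) and every $\omega' \in \mathcal K$ dominates such an $\omega$ with $\phi^{\omega'}_\tau = P_{\theta+\omega'}[\phi^\omega_\tau]_\mathcal I$ (and likewise for $\chi$), monotonicity of $P[\cdot]_\mathcal I$ propagates the inequality to every $\omega$, i.e. $\phi \leq \chi$. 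Specializing $\chi^\An = \phi^\An$ (hence also $\leq$ in both directions) yields injectivity of the map \eqref{eq: BJ_map_def}.

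For surjectivity, let $u \in \PSH(L^\An)$ be arbitrary. The idea is to produce, for each $\omega \in \mathcal K$ with rational ample class $\{\omega\} = c_1(T)$, a preimage test curve and then assemble these into an element of the projective limit. Since $L$ is pseudoeffective, $L+T$ is big for every ample $\mathbb Q$-line bundle $T$, and one can always find a further ample $\mathbb Q$-line bundle $S$ with $T - S$ still ample, so $L + T - S$ is big. By \cite[Theorem~4.5]{BJ18b} (or directly, since $u \in \PSH(L^\An) \subseteq \PSH((L+T)^\An)$ and $L+T = (L+T-S) + S$), we have $u \in \PSH(((L+T-S)+S)^\An)$, so Theorem~\ref{thm:Imodelcurvenabij}(ii) applied with ambient class $c_1(L+T)$ and the ample piece $S$ provides $\{\phi^\omega_\tau\}_\tau \in \textup{MTC}_\mathcal I(\theta+\omega)$ with $(\phi^\omega)^\An = u$ in $\PSH((L+T)^\An)$. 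The uniqueness clause already proved (via Theorem~\ref{thm:Imodelcurvenabij}(i)) shows $\{\phi^\omega_\tau\}_\tau$ is the \emph{unique} such test curve; in particular, for $\omega \leq \omega'$ the compatibility $\phi^{\omega'}_\tau = P_{\theta+\omega'}[\phi^\omega_\tau]_\mathcal I$ must hold, because the right-hand side is an $\mathcal I$-maximal test curve in $\textup{MTC}_\mathcal I(\theta+\omega')$ whose analytification is still $u$ (using \cite[Proposition~2.18(ii)]{DX20} as in the discussion around \eqref{eq:psiandef}). Therefore $\phi := \{\{\phi^\omega_\tau\}_\tau\}_\omega$ is a well-defined element of $\PSH^\NA(\theta) = \varprojlim_\omega \textup{MTC}_\mathcal I(\theta+\omega)$, and by construction $\phi^\An = u$.

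I expect the main obstacle to be the bookkeeping in the surjectivity step: verifying that the test curves $\{\phi^\omega_\tau\}_\tau$ produced independently for each $\omega$ genuinely form a \emph{compatible system} under the transition maps \eqref{eq: transition_map}, rather than merely existing for each individual $\omega$. The clean way around this is precisely the uniqueness afforded by Theorem~\ref{thm:Imodelcurvenabij}(i): once one knows the preimage in each $\textup{MTC}_\mathcal I(\theta+\omega)$ with prescribed analytification $u$ is unique, compatibility is forced, because applying the transition map $P_{\theta+\omega'}[\cdot]_\mathcal I$ to a preimage at level $\omega$ produces a preimage at level $\omega'$, which must then coincide with the one chosen there. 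A secondary point requiring care is that Theorem~\ref{thm:Imodelcurvenabij}(ii) only guarantees the image contains $\PSH((L'-T')^\An)$ for ample $T'$ with $L'-T'$ big; one must check that every $u \in \PSH(L^\An)$ lies in the relevant such subspace after enlarging by $T$, which is exactly the observation that $\PSH(L^\An) = \bigcap_{L'}\PSH((L+L')^\An)$ from \cite[Theorem~4.5]{BJ18b} together with the flexibility in choosing the auxiliary ample bundle $S$.
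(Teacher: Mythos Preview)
Your proposal is correct and follows essentially the same route as the paper: order-preservation (hence injectivity) via Theorem~\ref{thm:Imodelcurvenabij}(i) applied at each rational K\"ahler level, and surjectivity by producing a preimage in $\textup{MTC}_\mathcal I(\theta+\omega)$ for each rational $\omega$ via Theorem~\ref{thm:Imodelcurvenabij}(ii), with compatibility forced by uniqueness. One small point of bookkeeping you glossed over in the surjectivity step is the extension of the compatible family $\{\phi^\omega_\tau\}$ from rational $\omega$ to \emph{all} $\omega\in\mathcal K$ (the projective limit runs over the full K\"ahler cone); the paper handles this explicitly by setting $\phi^{\omega''}_\tau := P_{\theta+\omega''}[\phi^\omega_\tau]_{\mathcal I}$ for any rational $\omega\leq\omega''$, which is the obvious completion of your argument.
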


\begin{proof} 
By scaling, we may assume that $L$ is a line bundle.
We first address the last statement. Let $\phi = \{\{\phi^\omega_\tau\}_\tau\}_\omega$ and $\chi = \{\{\chi^\omega_\tau\}_\tau\}_\omega$. Given that $\phi^\An \leq \chi^\An$, Theorem~\ref{thm:Imodelcurvenabij}(i) gives that $\phi^\omega_\tau \leq \chi^\omega_\tau, \ \tau \in \mathbb R, \omega\in \mathcal K$, implying that $\phi \leq \chi$. This immediately gives that $\phi \mapsto \phi^\An$
 is injective.

To address surjectivity, let $\chi \in \textup{PSH}(L^\An)$. Let $\omega \in \mathcal K$ such that $\{\omega\}=c_1(T)$ for a $\mathbb{Q}$-ample line bundle $T$. Then $\chi \in \textup{PSH}((L + T)^\An)$, and by Theorem \ref{thm:Imodelcurvenabij}(ii) there exists $\{\phi^\omega_\tau\} \tau \in \textup{MTC}_\mathcal I(\theta + \omega)$ such that $(\phi^\omega)^\An = \chi$.

Let $\omega' \in \mathcal K$ such that $\{\omega'\} = c_1(T')$, for some $\mathbb Q$-ample line bundle $T'$ and $\omega \leq \omega'$. Due to injectivity of $\textup{MTC}_\mathcal I(\theta + \omega')\mapsto \PSH((L + T')^{\An})$ we get that $P_{\theta + \omega'}[\phi^\omega_\tau]_\mathcal I = \phi^{\omega'}_\tau, \ \tau < \tau^+_\phi$.

For a non-rational form $\omega'' \in \mathcal K$, let $\omega \in \mathcal K$ such that $\{\omega\}=c_1(T)$ for a $\mathbb Q$-ample line bundle $T$ and $\omega \leq \omega''$. Then we define 
$$\phi^{\omega''}_\tau = P_{\theta + \omega''}[\phi^\omega_\tau]_\mathcal I, \ \ \ \tau < \tau^+_\phi.$$

It is immediate to see that the above definition is independent of the choice of $\omega$. We see that $\{\{\phi^\omega_\tau\}_\tau\}_\omega \in \textup{PSH}^{\NA}(\theta)$, and  $\phi^\An = \chi$, proving surjectivity. 
\end{proof}
\begin{remark}\label{rmk:realclass_iso}
Boucksom--Jonsson's theory extends to any pseudoeffective $(1,1)$-class $\{\theta\}$ in the Néron--Severi group of $X$ with real coefficients, giving a space of non-Archimedean metrics $\PSH(\{\theta\}^{\NA})$.
In this case we still have a canonical identification
\begin{equation}\label{eq:PSHNA_realclass_identi}
\PSH^{\NA}(\theta)\xrightarrow{\sim} \PSH(\{\theta\}^{\NA}).
\end{equation}
To see this, let $\mathcal{K}'$ be the directed set of $(1,1)$-classes $\{\theta'\}$ in the Néron--Severi group of $X$ with rational coefficients such that $\{\theta'\}-\{\theta\}$ is a K\"ahler class. Then we have natural identifications
\[
\PSH^{\NA}(\theta)\xrightarrow{\sim}\varprojlim_{\theta'\in \mathcal{K}'} \PSH^{\NA}(\theta'),\quad \PSH(\{\theta\}^{\NA})\xrightarrow{\sim}\varprojlim_{\theta'\in \mathcal{K}'} \PSH(\{\theta'\}^{\NA}).
\]
The former follows immediately from our definition, the latter follows from \cite[Section~4.1, (PSH2)]{BJ18b}. In particular,  \eqref{eq:PSHNA_realclass_identi} follows from Theorem~\ref{thm:NAtoNAbij}.
\end{remark}

Using our analysis we conclude that the envelope conjecture holds in our setting, as confirmed by Boucksom--Jonsson using non-Archimedean methods \cite{BJ22}:

\begin{corollary}\label{cor:env_conj}
    Conjecture~\ref{conj:env} holds for any pseudo-effective $\mathbb Q$-line bundle $L$ on $X$.
\end{corollary}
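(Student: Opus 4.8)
\emph{Proof proposal.}
The plan is to transport everything to $\PSH^{\NA}(\theta)$ via the bijection of Theorem~\ref{thm:NAtoNAbij}, where the required compactness is precisely Theorem~\ref{thm:increasingnettestcurveconv}. Note first that a smooth projective manifold is unibranch, so the hypothesis of Conjecture~\ref{conj:env} holds automatically. Let $\{\chi_i\}_{i\in I}\subset\PSH(L^{\An})$ be an increasing net bounded from above, say by $\chi\in\PSH(L^{\An})$. By Theorem~\ref{thm:NAtoNAbij} write $\chi_i=(\phi^i)^{\An}$ and $\chi=\zeta^{\An}$ with $\phi^i,\zeta\in\PSH^{\NA}(\theta)$; since that bijection is order preserving in both directions, $\{\phi^i\}_i$ is increasing with $\phi^i\le\zeta$. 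Evaluating at $v_{\mathrm{triv}}$ and using \eqref{eq:trivialvaluationvaluetauplus} gives $\tau^+_{\phi^i}=\chi_i(v_{\mathrm{triv}})\le\chi(v_{\mathrm{triv}})<\infty$, hence $\sup_i\tau^+_{\phi^i}<\infty$. Theorem~\ref{thm:increasingnettestcurveconv} then produces $\phi\in\PSH^{\NA}(\theta)$ with $\phi^{i,\omega}_\tau\nearrow\phi^\omega_\tau$ a.e.\ for every $\omega\in\mathcal K$ and $\tau<\tau^+_\phi=\sup_i\tau^+_{\phi^i}$; set $\psi:=\phi^{\An}\in\PSH(L^{\An})$. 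A short order argument shows $\psi$ is the least upper bound of $\{\chi_i\}$ in $\PSH(L^{\An})$: if $\rho^{\An}\ge\chi_i$ for all $i$, then $\rho\ge\phi^i$ by Theorem~\ref{thm:NAtoNAbij}, so $\phi^{i,\omega}_\tau\le\rho^\omega_\tau$, and passing to (usc regularized) suprema gives $\phi\le\rho$, i.e.\ $\psi\le\rho^{\An}$. Thus $\psi$ is the only candidate limit, and it remains to prove $\chi_i\to\psi$ in the topology of $\PSH(L^{\An})$, i.e.\ $(\phi^i)^{\An}(v)\to\phi^{\An}(v)$ for every $v\in X^{\Div}$.

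Fix a K\"ahler $\omega$ and $v=c\,\ord_E\in X^{\Div}$. By \eqref{eq:psiandef} one has
\[
(\phi^i)^{\An}(v)=\sup_{\tau\le\tau^+_\phi}\big((\phi^{i,\omega}_\tau)^{\An}(v)+\tau\big),\qquad (\phi^{i,\omega}_\tau)^{\An}(v)=-c\,\nu(\phi^{i,\omega}_\tau,E),
\]
and likewise for $\phi$. Each $\phi^{i,\omega}_\tau$ is $\mathcal I$-model and $\tau\mapsto(\phi^{i,\omega}_\tau)^{\An}(v)$ is concave and usc, exactly as in the proof of Theorem~\ref{thm:Imodelcurvenabij}. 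Hence, granting the pointwise-in-$\tau$ convergence $(\phi^{i,\omega}_\tau)^{\An}(v)\to(\phi^\omega_\tau)^{\An}(v)$ for each fixed $\tau<\tau^+_\phi$, Lemma~\ref{lma:DC} (commutation of the Legendre transform with monotone nets of concave functions) upgrades it to $(\phi^i)^{\An}(v)\to\phi^{\An}(v)$, which is what we want. The corollary therefore reduces to the increasing-net analogue of Lemma~\ref{lma:Imodeldeclelong}: if $\varphi^i\nearrow\varphi$ a.e.\ with all $\varphi^i$ (hence, by \cite[Lemma~2.21(i)]{DX20}, also $\varphi$) $\mathcal I$-model of positive mass, then $\nu(\varphi^i,E)\to\nu(\varphi,E)$ for every prime divisor $E$ over $X$.

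This last statement is the main obstacle, and the point where the positive-mass hypothesis built into $\textup{MTC}_\mathcal I$ is genuinely used (so that the limit potential is an honest $\mathcal I$-model potential). One always has $\nu(\varphi^i,E)\ge\nu(\varphi,E)$ since $\varphi^i\preceq\varphi$, so the content is the reverse bound. The strategy: by strong openness (Theorem~\ref{thm:openness}), for each fixed $k$ one has $\mathcal I(k\varphi^i)\subseteq\mathcal I(k\varphi)$ and $\bigcup_i\mathcal I(k\varphi^i)=\mathcal I(k\varphi)$, so by noetherianity $\mathcal I(k\varphi^i)=\mathcal I(k\varphi)$, hence $\ord_E(\mathcal I(k\varphi^i))=\ord_E(\mathcal I(k\varphi))$, for all $i$ large (depending on $k$). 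Since $\nu(\varphi^i,E)=\sup_k\tfrac1k\ord_E(\mathcal I(k\varphi^i))$ is a Fekete supremum, exchanging $\inf_i$ with $\sup_k$ will give $\inf_i\nu(\varphi^i,E)=\nu(\varphi,E)$ once one knows the Fekete rate is uniform in $i$; the latter follows from Demailly's regularization, which yields $\ord_E(\mathcal I(k\varphi^i))\ge k\,\nu(\varphi^i,E)-C(E)$ with $C(E)$ depending only on $E$ and $X$ (not on the potential). Indeed, for $k\ge C(E)/\varepsilon$ and $i$ large one then gets $\nu(\varphi^i,E)\le\tfrac1k\ord_E(\mathcal I(k\varphi^i))+\varepsilon=\tfrac1k\ord_E(\mathcal I(k\varphi))+\varepsilon\le\nu(\varphi,E)+\varepsilon$, whence $\inf_i\nu(\varphi^i,E)\le\nu(\varphi,E)$. (Alternatively, one could mirror the decreasing case of Lemma~\ref{lma:Imodeldeclelong} directly, using the structure of $P[\cdot]_{\mathcal I}$ together with a \cite[Lemma~4.3]{DDNL5}-type comparison.) Feeding this convergence back through Lemma~\ref{lma:DC} and \eqref{eq:psiandef} gives $(\phi^i)^{\An}\to\phi^{\An}$ pointwise on $X^{\Div}$, hence $\chi_i\to\psi$ in $\PSH(L^{\An})$; as a byproduct $\phi^{\An}$ is the regularized supremum of the $\chi_i$, which is the form of the envelope conjecture proved in \cite{BJ22}.
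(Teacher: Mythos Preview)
Your proof is correct and follows the same overall architecture as the paper: transport to $\PSH^{\NA}(\theta)$ via Theorem~\ref{thm:NAtoNAbij}, apply Theorem~\ref{thm:increasingnettestcurveconv}, then verify pointwise convergence on $X^{\Div}$. However, you substantially overcomplicate the final step, and in doing so misidentify the ``main obstacle.''

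The paper dispatches the Lelong number convergence $\nu(\phi^{i,\omega}_\tau,E)\to\nu(\phi^\omega_\tau,E)$ for the \emph{increasing} net $\phi^{i,\omega}_\tau\nearrow\phi^\omega_\tau$ with a one-line citation to upper semicontinuity of Lelong numbers under $L^1$-convergence (\cite[Lemma~2.6]{Bo17}): this gives $\limsup_i\nu(\phi^{i,\omega}_\tau,E)\le\nu(\phi^\omega_\tau,E)$, while the reverse inequality $\nu(\phi^{i,\omega}_\tau,E)\ge\nu(\phi^\omega_\tau,E)$ is trivial from $\phi^{i,\omega}_\tau\le\phi^\omega_\tau$. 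No positive-mass hypothesis, no $\mathcal I$-model structure, no strong openness, no uniform Demailly regularization bound is needed here---the increasing case is strictly easier than the decreasing case treated in Lemma~\ref{lma:Imodeldeclelong}, not an analogue of it. Likewise, since $(\phi^{i,\omega}_\tau)^{\An}(v)$ is \emph{increasing} in $i$, passing from $\tau$-wise convergence to convergence of the sup is just the commutation $\sup_\tau\sup_i=\sup_i\sup_\tau$; invoking Lemma~\ref{lma:DC} is unnecessary. Your multiplier-ideal argument via Theorem~\ref{thm:openness} and the effective bound $\ord_E(\mathcal I(k\varphi))\ge k\,\nu(\varphi,E)-A_X(E)$ is valid, but it is a detour around a classical fact.
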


\begin{proof}
Take a smooth closed real $(1,1)$-form $\theta$ in $c_1(L)$. Let $\{\varphi^i\}_{i\in I}$ be a bounded from increasing net in $\PSH^{\NA}(L)$. By Theorem~\ref{thm:NAtoNAbij},  $\varphi^i=\phi^{i,\An}$ with $\phi^{i}\in \PSH^{\NA}(\theta)$. By the same theorem, $\{\varphi^i\}_{i\in I}$ is also an increasing net. Moreover, by \eqref{eq:trivialvaluationvaluetauplus},
\[
\sup_{i\in I}\tau^+_{\varphi^i}<\infty. 
\]
By  Theorem~\ref{thm:increasingnettestcurveconv}, we can find $\phi\in \PSH^{\NA}(\theta)$ such that for any $\omega \in \mathcal{K}$, $\phi^{i,\omega}_{\tau}\nearrow \phi^{\omega}_{\tau}$ almost everywhere for any $\tau<\tau^+_{\phi}$.

We claim that $\phi^{i,\An}$ converges to $\phi^{\An}$. 

For any fixed $\omega \in \mathcal K$, the usc property of the Lelong number \cite[Lemma~2.6]{Bo17} gives
\[
(\phi^\omega_{\tau})^{\An}(v)=\lim_i (\phi^{i,\omega}_{\tau})^{\An}(v)\quad \text{for }v\in X^{\Div},\tau<\tau_\psi^+.
\]
It follows that
\[
\phi^{\An}(v)=(\phi^\omega)^{\An}(v)=\sup_{\tau\in \mathbb{R}} ((\phi^{\omega}_\tau)^\An(v)+\tau)=\sup_{\tau\in \mathbb{R}}\sup_{i\in I} ((\phi^{i,\omega}_\tau)^\An(v)+\tau)=\sup_{i\in I}\phi^{i,\An}(v)
\]
for all $v\in X^{\Div}$, finishing the proof.
\end{proof}

\subsection{The non-Archimedean finite energy space}\label{subsec:nae1}
Let $X$ be a compact K\"ahler manifold of dimension $n$ and $\{\theta\}$ be a big cohomology class on $X$.
Let $V=\int_X \theta_{V_{\theta}}^n$ denote the volume of $\{\theta\}$.

Recall that $\textup{MTC}_\mathcal I(\theta)$ naturally embeds into $\PSH^{\NA}(\theta)$ \eqref{eq: MCTI_embed}. For $\{\eta_\tau\}_\tau\in \textup{MTC}_\mathcal I(\theta)$ we define the Monge--Amp\`ere energy $I^{\NA}_{\theta}:\textup{MTC}_\mathcal I(\theta)\rightarrow[-\infty,\infty)$ as
\begin{equation}\label{eq: INAdef}
I_{\theta}^{\NA}(\eta):=I_\theta \{\eta_\tau\} = \tau^+_\eta+\frac{1}{V}\int_{-\infty}^{\tau^+_\eta}\left(\int_X\theta_{\eta_{\tau}}^n-V\right)\,\mathrm{d}\tau.
\end{equation}
We extend $I^{\NA}_{\theta}$ to a function on $\PSH^{\NA}(\theta)$ by setting it to be $-\infty$ on $\PSH^{\NA}(\theta) \setminus \textup{MTC}_\mathcal I(\theta)$.

We then define  the non-Archimedean finite energy space as 
\[
\mathcal{E}^{1,\NA}(X,\theta):=\left\{\phi\in \PSH^\NA(\theta):I_{\theta}^{\NA}(\phi)>-\infty\right\}\,.
\]

Since $V \geq \int_X\theta_{\eta_\tau}^n$, for \eqref{eq: INAdef} to be finite $\int_X\theta_{\eta_\tau}^n$ must converge to $V$ as $\tau\searrow-\infty$. By \cite[Theorem 2.3]{DDNL2} this implies that $\int_X\theta_{\eta_{-\infty}}^n = V$. Since $\eta_{-\infty}$ is maximal, we obtain that $\eta_{-\infty} = V_\theta$, i.e., $\{\eta_\tau\}_\tau$ is a test curve relative to $V_\theta$, whenever $\{\eta_\tau\}_\tau \in \mathcal{E}^{1,\NA}(X,\theta)$.

Similarly, the non-Archimedean $\mathcal J$ functional is introduced as
\begin{equation}\label{eq:Jreformulate}
\mathcal J_\theta^\NA(\eta) = \tau^+_\eta - I_\theta^\NA(\eta) =\frac{1}{V}\int_{-\infty}^{\tau^+_\eta}\left(V-\int_X\theta_{\eta_{\tau}}^n\right)\,\mathrm{d}\tau.
\end{equation}
When $\{\theta\}=c_1(L)$ for some ample $\mathbb{Q}$-line bundle $L$, it has been pointed out in \cite[Theorem 1.1, Theorem 1.2]{DX20} that our definitions of $\mathcal{E}^{1,\NA}(X,\theta)$ and  $\mathcal J_\theta^\NA, I_\theta^\NA$ coincide with the ones given in \cite{BJ18b}. 

In the literature, the space $\mathcal{E}^1(L^{\An})$ has not yet been defined for a general big $\mathbb{Q}$-line bundle $L$. Above we gave an analytic definition, but it is desirable to have a purely non-Archimedean/algebraic definition as well.

After defining finite energy non-Archimedean spaces, we can finally relate the $\mathcal{I}$-maximal finite energy rays to non-Archimedean potentials, as a direct consequence of Theorem~\ref{thm:RWNpresc}:
\begin{theorem}
There is a bijective function $\{\psi_\tau\}_\tau \mapsto \{\check \psi_t\}_t$, between   $\mathcal{R}^1_{\mathcal{I}}(X,\theta)$ and $\mathcal{E}^{1,\NA}(X,\theta)$. Moreover, under this correspondence, the radial Monge--Amp\`ere energy corresponds to the non-Archimedean Monge--Amp\`ere energy defined in \eqref{eq: INAdef}.
\end{theorem}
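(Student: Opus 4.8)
The plan is to package Theorem~\ref{thm:RWNpresc}(iii) together with the description of $\mathcal E^{1,\NA}(X,\theta)$ obtained just above, and then treat the $\mathcal I$-maximal refinement by unwinding definitions. First, recall that $I_\theta^\NA$ was extended by $-\infty$ to all of $\PSH^\NA(\theta)\setminus\textup{MTC}_\mathcal I(\theta)$, so every $\phi\in\mathcal E^{1,\NA}(X,\theta)$ is the image, under the embedding \eqref{eq: MCTI_embed}, of a unique $\{\psi_\tau\}_\tau\in\textup{MTC}_\mathcal I(\theta)$ with $I_\theta\{\psi_\tau\}>-\infty$; and, as observed right after the definition of $\mathcal E^{1,\NA}(X,\theta)$, finiteness of the energy forces $\psi_{-\infty}=V_\theta$. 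Hence $\mathcal E^{1,\NA}(X,\theta)$ is faithfully identified with the set of $\mathcal I$-maximal finite energy test curves relative to $V_\theta$, and under this identification the functional $I_\theta^\NA$ from \eqref{eq: INAdef} is literally the test-curve energy \eqref{eq: fetestcurve_def}.

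Next, apply Theorem~\ref{thm:RWNpresc} with $\phi=V_\theta$, which is legitimate since $\int_X\theta_{V_\theta}^n=V>0$ as $\{\theta\}$ is big. Part (iii) produces a bijection $\{\psi_\tau\}_\tau\mapsto\{\check\psi_t\}_t$ from the set of finite energy maximal test curves relative to $V_\theta$ onto $\mathcal R^1(X,\theta)$, with inverse $\{u_t\}_t\mapsto\{\hat u_\tau\}_\tau$, and it carries the test-curve energy \eqref{eq: fetestcurve_def} to the radial energy \eqref{eq: I_rad_def}. In view of the previous paragraph, this already yields the ``moreover'' statement, provided the bijection is shown to restrict to the desired subclasses.

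It remains to match $\mathcal I$-maximality on both sides. By definition, a ray $\{u_t\}_t\in\mathcal R^1(X,\theta)$ lies in $\mathcal R^1_\mathcal I(X,\theta)$ precisely when $\{\hat u_\tau\}_\tau$ is $\mathcal I$-maximal; on the other hand, for a maximal test curve $\{\psi_\tau\}_\tau$ the inversion part of Theorem~\ref{thm:RWNpresc}(i)--(ii) says that the inverse Legendre transform of $\{\check\psi_t\}_t$ recovers $\{\psi_\tau\}_\tau$. Therefore $\{\check\psi_t\}_t$ is $\mathcal I$-maximal if and only if $\{\psi_\tau\}_\tau$ is, and the bijection of the previous paragraph restricts to a bijection between $\mathcal I$-maximal finite energy test curves relative to $V_\theta$ and $\mathcal R^1_\mathcal I(X,\theta)$. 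One also checks well-definedness in both directions: if $\{u_t\}_t\in\mathcal R^1_\mathcal I(X,\theta)$ then $\hat u_{-\infty}=V_\theta$ has full mass, so $\{\hat u_\tau\}_\tau\in\textup{MTC}_\mathcal I(\theta)$, and finite energy gives $I_\theta^\NA>-\infty$, placing it in $\mathcal E^{1,\NA}(X,\theta)$; conversely the ray $\{\check\psi_t\}_t$ attached to an element $\{\psi_\tau\}_\tau\in\mathcal E^{1,\NA}(X,\theta)$ lies in $\mathcal R^1(X,\theta)$ by Theorem~\ref{thm:RWNpresc}(iii) and is $\mathcal I$-maximal by the computation just made. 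Combining this with the identification of $\mathcal E^{1,\NA}(X,\theta)$ from the first step completes the argument.

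I do not expect a genuine obstacle here: the substance is entirely contained in Theorem~\ref{thm:RWNpresc} and in the preparatory material, and what remains is organizational. The only point meriting a little care is confirming that passing to the ``$\omega=0$ component'' of an element of $\PSH^\NA(\theta)$ is faithful on $\mathcal E^{1,\NA}(X,\theta)$ and compatible with the energy and with Legendre duality --- but this is immediate from the way $I_\theta^\NA$ was extended by $-\infty$ off $\textup{MTC}_\mathcal I(\theta)$.
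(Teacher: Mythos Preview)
Your proposal is correct and follows exactly the approach the paper indicates: the paper states this theorem ``as a direct consequence of Theorem~\ref{thm:RWNpresc}'' without further argument, and you have faithfully unpacked that consequence by identifying $\mathcal E^{1,\NA}(X,\theta)$ with the $\mathcal I$-maximal finite energy test curves relative to $V_\theta$, invoking Theorem~\ref{thm:RWNpresc}(iii), and noting that $\mathcal I$-maximality on each side is, by definition, matched under the Legendre duality.
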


\section{Approximations of \texorpdfstring{$\mathcal{I}$}{I}-maximal rays by filtrations and flag configurations}

In this section $X$ is a connected projective manifold of dimension $n$.
The purpose of this section is to show that an $\mathcal{I}$-maximal finite energy ray can be approximated by simpler rays induced by flag configurations or filtrations defined below.

\subsection{Filtrations of big line bundles}
For this subsection let $L$ be a big line bundle on $X$ and $h$ be a smooth Hermitian metric.
Let $\theta:=-\ddc\log h$. 
We set $\vol (L):=\int_X \theta_{V_{\theta}}^n$.
Denote the section ring of $L$ by $R(X,L)$:
\[
R(X,L):=\bigoplus_{m\in \mathbb{N}} R_m,\quad R_m:= H^0(X,L^{m}).
\]

\begin{definition}\label{def:filtration}
By a (bounded left-continuous multiplicative decreasing) filtration $\cF$ of the section ring $R(X,L)$, 
we mean a family of linear subspaces $\{\cF^\lambda R_m\}_{\lambda\in\RR}$ of $R_m$ for each $m\in\NN$ such that \vspace{0.1cm}\\
\noindent (i)$\cF^{\lambda}R_m\subseteq\cF^{\lambda^\prime}R_m$ whenever $\lambda\geq\lambda^\prime$;\vspace{0.1cm}\\
\noindent (ii) $\cF^\lambda R_m=\bigcap_{\lambda^\prime<\lambda}\cF^{\lambda^\prime} R_m$;\vspace{0.1cm}\\
\noindent (iii)$\cF^{\lambda_1}R_{m_1}\cdot\cF^{\lambda_2}R_{m_2}\subseteq\cF^{\lambda_1+\lambda_2}R_{m_1+m_2}$ for $\lambda_1,\lambda_2\in\RR$ and $m_1,m_2\in\NN$;\vspace{0.1cm}\\
\noindent (iv) there exists $C>0$ such that $\cF^{-Cm}R_m=R_m$ and $\cF^{Cm}R_m=\{0\}$ for all $m\in\NN$.

\end{definition}

Given a filtration $\cF$ of $R(X,L)$ and $m\in \mathbb{N}$, set
\[
\tau_m(\cF):=\max \{\lambda\in\RR: \cF^\lambda R_m\neq\{0\} \}.
\]
Then clearly $\tau_m(\cF)+\tau_{k}(\cF)\leq\tau_{m+k}(\cF)$ for any $m,k\in\NN$, so by Fekete's lemma one can put
$$
\tau_{L}(\cF):=\lim_{m\rightarrow\infty}\frac{\tau_m(\cF)}{m}=\sup_{m\in\NN}\frac{\tau_m(\cF)}{m}.
$$
Moreover, for any $\tau<\tau(\cF)$, set
$$
\vol(\cF R^{(\tau)}):=\varlimsup_{m\rightarrow\infty}\frac{\dim\cF^{\tau m}R_m}{m^n/n!}
$$
and
$$
S_{L}(\cF):=\tau_{L}(\cF)+\frac{1}{\vol(L)}\int_{-\infty}^{\tau_{L}(\cF)}\left(\vol(\cF R^{(\tau)})-\vol(L)\right)\,\mathrm d\tau.
$$
It is trivial to see that $S_{L}(\mathcal F) \leq \tau_{L}(\mathcal F)$ in general.

Let $E$ be any prime divisor over $X$. It induces a natural filtration $\cF_E$ on the section ring $R(X,L)$. More precisely, for $\lambda\in\RR$ and $m\geq 1$ one puts
\[
\cF_{E}^\lambda R_m:=\{s\in R_m=H^0(X,L^{m}):\ord_E(s)\geq \lambda\}.
\]

In this case, we set for simplicity
\[
\tau_L(E):=\tau_{L}(\cF_{E})\text{ and }S_L(E):=S_{L}(\cF_{E}).
\]

\begin{lemma}
\label{lem:S-E<tau-E}
    For any prime divisor $E$ over $X$, one has $S_L(E)<\tau_L(E)$.
\end{lemma}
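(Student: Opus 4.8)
The statement $S_L(E) < \tau_L(E)$ should follow from a strict inequality in the integral term of the definition of $S_L(E)$, namely that
\[
\int_{-\infty}^{\tau_L(E)}\bigl(\vol(\cF_E R^{(\tau)})-\vol(L)\bigr)\,\mathrm d\tau < 0.
\]
Since $\vol(\cF_E R^{(\tau)}) \le \vol(L)$ always holds (the filtered pieces are subspaces), the integrand is non-positive, so it suffices to show it is \emph{strictly} negative on a set of positive measure inside $(-\infty,\tau_L(E))$. The natural candidate is an interval just below $\tau_L(E)$: I would argue that $\vol(\cF_E R^{(\tau)}) < \vol(L)$ for all $\tau$ in some interval $(\tau_L(E)-\delta,\tau_L(E))$ with $\delta > 0$. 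Intuitively, as $\tau \to \tau_L(E)^-$, the filtered subspaces $\cF_E^{\tau m}R_m$ cut out by the condition $\ord_E \ge \tau m$ become asymptotically negligible compared to $R_m$, because $\tau_L(E) = \lim_m \tau_m(\cF_E)/m$ is the \emph{maximum} asymptotic vanishing order and the volume of $L$ localizes away from that extreme.

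Concretely, the plan is: first, translate $\vol(\cF_E R^{(\tau)})$ into the language of restricted/sub-volumes. For the divisorial filtration $\cF_E$ on a modification $\pi\colon Y\to X$ with $E\subset Y$ prime, $\dim \cF_E^{\tau m}R_m = h^0(Y,\pi^*L^m \otimes \cO(-\lceil \tau m\rceil E))$, so $\vol(\cF_E R^{(\tau)}) = \vol(\pi^*L - \tau E)$, the volume in the sense of \cite{BEGZ10} used elsewhere in the paper; this is exactly the quantity appearing in $S_\theta(E)$ in the introduction. Then the claim reduces to: $\vol(\pi^*L-\tau E) < \vol(L)$ for $\tau$ slightly below $\tau_L(E) = \tau_\theta(E)$, the pseudoeffective threshold. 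This is a standard fact about the volume function: $\tau \mapsto \vol(\pi^*L - \tau E)$ is continuous, non-increasing, and strictly decreasing on the big locus $(0,\tau_\theta(E))$ wherever it is positive — indeed it is concave (or at least strictly decreasing where positive) by the theory of volumes of big classes, and at $\tau = \tau_\theta(E)$ it vanishes or drops below $\vol(L)$. Either way, $\vol(\pi^*L-\tau E)$ cannot equal $\vol(L)$ on a full neighborhood of $\tau_L(E)$: if it did, then $\pi^*L - \tau E$ would have the same volume as $\pi^*L$ for $\tau$ arbitrarily close to $\tau_L(E)$, and by orthogonality/derivative properties of the volume function this forces $E$ to lie in the non-nef or null locus in a way incompatible with $\tau_L(E) > 0$. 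Combining: the integrand is strictly negative on $(\tau_L(E)-\delta,\tau_L(E))$, so the integral is strictly negative, giving $S_L(E) < \tau_L(E)$.

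The main obstacle I anticipate is pinning down precisely \emph{why} $\vol(\pi^*L-\tau E) < \vol(L)$ near $\tau = \tau_L(E)$ rather than merely $\le$. One clean route: use that $\tfrac{\mathrm d}{\mathrm d\tau}\vol(\pi^*L-\tau E) = -n\,\langle(\pi^*L-\tau E)^{n-1}\rangle\cdot E \le 0$ and is \emph{strictly} negative as long as $\pi^*L-\tau E$ is big and $E$ is not contained in its non-Kähler locus — but near the pseudoeffective threshold one must handle the possibility that the derivative degenerates. A safer argument avoids derivatives entirely: if $\vol(\pi^*L - \tau E) = \vol(L)$ for some $\tau > 0$, then by the monotonicity and the characterization of equality of volumes (the difference $\pi^*L - (\pi^*L-\tau E) = \tau E$ would have to be "invisible" to the positive part), $\pi^*L - \tau E$ and $\pi^*L$ share the same positive part, which forces $\nu(V_{\pi^*\theta},E)\ge \tau$; but $\tau_L(E)$ is precisely the sup of such $\tau$, and one shows $\nu(V_{\pi^*\theta},E) < \tau_L(E)$ by a direct argument (the minimal singularity potential does not saturate the full pseudoeffective threshold, since $\pi^*L - \tau_L(E)E$ is pseudoeffective but not big, hence its minimal potential has strictly larger Lelong number along $E$ than that of $\pi^*L$ — a contradiction with "same positive part"). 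I would first check whether an analogue of this strict inequality is already available in \cite{DZ22} or \cite{BEGZ10}, in which case the proof is a one-line citation; otherwise I would write out the orthogonality argument above.
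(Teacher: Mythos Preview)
Your approach is essentially the same as the paper's: rewrite $\vol(\cF_E R^{(\tau)})=\vol(\pi^*L-\tau E)$ and show the integrand is strictly negative somewhere in $[0,\tau_L(E)]$. However, you overcomplicate the final step. The paper simply observes that $f(\tau):=\vol(\pi^*L-\tau E)$ is continuous and decreasing on $[0,\tau_L(E)]$ with $f(0)=\vol(L)>0$ and $f(\tau_L(E))=0$ (since $\tau_L(E)$ is the pseudoeffective threshold, where the class ceases to be big and hence has zero volume); then $\int_0^{\tau_L(E)} f\,\mathrm d\tau < \tau_L(E)\cdot f(0)$ is immediate. Your hedging (``vanishes \emph{or} drops below $\vol(L)$'') and the subsequent derivative and orthogonality arguments are unnecessary: the volume at the threshold is always exactly $0$, and continuity of the volume function on the big cone (extending to $0$ on the boundary) is all you need.
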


\begin{proof}
    Consider $f(\tau):=\vol(\cF_E R^{(\tau)})=\vol(L-\tau E)$ for $\tau\in[0,\tau_L(E)]$. This is a continuous and decreasing function with $f(0)=\vol(L)>0$ and $f(\tau_L(E))=0$. So one must have
    $$
S_L(E)=\frac{1}{\vol(L)}\int_0^{\tau_L(E)}f(\tau)\,\mathrm{d}\tau<\frac{1}{\vol(L)}\int_0^{\tau_L(E)}f(0)\,\mathrm{d}\tau=\tau_L(E).
    $$
\end{proof}

\begin{definition}\label{def:testcdefbyfil}
  Given any filtration $\cF$ of $R(X,L)$, for $\tau<\tau_{L}(\mathcal F)$ and $m\in \mathbb{Z}_{>0}$, let
\[
\phi^{\cF}_{\tau,m}:=\usc\sup\left\{\frac{1}{m}\log|s|^2_{h^{m}}: s\in\cF^{\tau m}H^0(X,L^{m}), \ \ |s|^2_{h^{m}}\leq1\right\}
 \textup{ and }
\phi^{\cF}_\tau:=\usc\sup_m\phi^{\cF}_{\tau,m}.
\]
Observe that $m\phi^{\cF}_{\tau,m}$ is super-additive, so by Fekete's lemma,
\[
\phi^{\cF}_\tau=\usc \lim_{m\to\infty}\phi^{\cF}_{\tau,m}.
\]  
Setting $\phi^\mathcal F_{\tau(\mathcal F)} := \lim_{{\tau \nearrow \tau(\mathcal F)}} \phi_\tau$ and $\phi^\mathcal F_\tau = -\infty$ if $\tau > \tau(\mathcal F)$, we get that $\phi^{\mathcal{F}}=\{\phi^{\mathcal{F}}_{\tau}\}_{\tau}$ is a bounded test curve, called the \emph{test curve associated with} $\mathcal{F}$.
\end{definition}

When working with a divisor $E$ over $X$ we will use the notation 
\[
\phi^{E}_{\tau,m}:=\phi^{\cF_{E}}_{\tau,m}, \quad \phi^{E}_\tau:=\phi^{\cF_{E}}_\tau.
\]

But we need to point out that, as opposed to the ample setting studied in \cite{RWN14,DX20}, it is not clear if the test curves thus constructed are \emph{maximal}  in the big case. We conjecture this to be the case, but as we shall see, this uncertainty will not cause any issue for the discussions below.

\begin{definition}[{\cite{PS07,RWN14}}]\label{def:PSray1}
We define the \emph{Phong--Sturm ray} $\{r^\mathcal F_t\}_t $ associated with a filtration $\mathcal F$ of $R(X,L)$
to be the inverse Legendre transform of the maximization 
of $\{\phi^\mathcal F_\tau\}_\tau$, defined in \eqref{eq: maximization_def}. 
\end{definition}

Due to boundedness of $\mathcal F$, we immediately notice that $r^\mathcal F_0= V_\theta$ and $\hat r_\tau^\mathcal F = V_\theta$ for $\tau \leq - C$. This implies that the  potentials $\hat r^\mathcal F_\tau, \phi^\mathcal F_\tau, \ \tau < \tau(\mathcal F)$ have non-zero mass. 

According to the next lemma, the Phong-Sturm ray is $\mathcal I$-maximal:

\begin{lemma}\label{lem: max_I_max_filt} We have that $\hat r^\mathcal F_\tau = P[\phi^\mathcal F_\tau] = P[\phi^\mathcal F_\tau]_\mathcal I$ for any $\tau < \tau_L(\mathcal F)$.
\end{lemma}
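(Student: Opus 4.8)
The plan is to show the chain of inequalities $\hat r^\mathcal F_\tau \leq P[\phi^\mathcal F_\tau] \leq P[\phi^\mathcal F_\tau]_\mathcal I \leq \hat r^\mathcal F_\tau$, where the first and second inequalities are essentially formal and the content is in the last one. First I would record that $\hat r^\mathcal F_\tau \le P[\phi^\mathcal F_\tau]$: by definition of the maximization \eqref{eq: maximization_def}, the maximized curve at level $\tau < \tau_L(\mathcal F)$ is $P[\phi^\mathcal F_\tau]$, and $\hat r^\mathcal F_\tau$ is the inverse Legendre transform of the maximized curve; since inverse Legendre transform of a test curve recovers that test curve's entries (the curve is already $\tau$-concave after maximization, or one invokes Theorem~\ref{thm:RWNpresc} together with the fact that $\hat{\check\psi}_\tau = \psi_\tau$ for maximal curves), one gets $\hat r^\mathcal F_\tau = P[\phi^\mathcal F_\tau]$, actually with equality. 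The middle inequality $P[\phi^\mathcal F_\tau] \leq P[\phi^\mathcal F_\tau]_\mathcal I$ is the general comparison recalled in the ``Envelopes of singularity type'' paragraph. So everything reduces to proving $P[\phi^\mathcal F_\tau]_\mathcal I \leq \hat r^\mathcal F_\tau = P[\phi^\mathcal F_\tau]$, i.e.\ that the $\mathcal I$-envelope of the singularity type $[\phi^\mathcal F_\tau]$ coincides with the ordinary one.

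The key step is to identify the multiplier ideals $\mathcal I(t \phi^\mathcal F_\tau)$ explicitly in terms of the filtration, and to see that they are ``achieved'' by the Fubini--Study-type potentials $\phi^\mathcal F_{\tau,m}$. Concretely, fix $\tau < \tau_L(\mathcal F)$. For $m$ large with $\tau m$ an admissible level, $\phi^\mathcal F_{\tau,m} = \frac1m \log \bigl(\sum_j |s_j|^2_{h^m}\bigr) + O(1)$ where $\{s_j\}$ is a basis of $\mathcal F^{\tau m} R_m$ (after the $|s|\le 1$ normalization, which only changes things by a bounded constant), so $\phi^\mathcal F_{\tau,m}$ has analytic singularity type with $\mathcal I(k m \,\phi^\mathcal F_{\tau,m})$ computed from the $s_j$. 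Since $\phi^\mathcal F_\tau = \mathrm{usc}\,\lim_m \phi^\mathcal F_{\tau,m}$ is an increasing limit (Fekete superadditivity gives $\phi^\mathcal F_{\tau, m} \le \phi^\mathcal F_{\tau, 2m} \le \cdots$ up to suitably rescaled subsequences; more precisely $m \phi^\mathcal F_{\tau,m}$ is superadditive so $\phi^\mathcal F_{\tau, m!}$ increases), I would invoke the effective strong openness Theorem~\ref{thm:openness}: any $f \in \mathcal I(t\phi^\mathcal F_\tau)$ lies in $\mathcal I(t\phi^\mathcal F_{\tau,m})$ for $m$ large, hence is a section controlled by $\mathcal F^{\tau m}R_m$. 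This shows that the ideals $\mathcal I(t\phi^\mathcal F_\tau)$ are generated in the ``algebraic'' way by the filtration pieces, and then the defining sup for $P[\phi^\mathcal F_\tau]_\mathcal I$ is competed with by $\frac1m \log|s|^2$ for $s \in \mathcal F^{\tau m}R_m$ — exactly the functions appearing in the sup defining $\phi^\mathcal F_\tau$, hence $P[\phi^\mathcal F_\tau]_\mathcal I \le \phi^\mathcal F_\tau$, and applying $P$ (which is idempotent on model potentials and order-preserving) gives $P[\phi^\mathcal F_\tau]_\mathcal I \le P[\phi^\mathcal F_\tau]$.

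An alternative, probably cleaner route is to reduce to the case $\mathcal F = \mathcal F_E$ for a single divisorial valuation and the analytic-singularity envelope computations of \cite[\S 2.4]{DX20}, \cite{DX21}: there one knows $P[\phi^E_\tau]_\mathcal I = P[\phi^E_\tau]$ because $\phi^E_\tau$ is comparable to $\tau \log|\mathfrak b_E|$ for an appropriate ideal $\mathfrak b_E$, whose $\mathcal I$-envelope and ordinary envelope agree by \cite[Proposition 2.18]{DX20} or the characterization that envelopes of analytic singularity types are automatically $\mathcal I$-model. For a general filtration one approximates $\mathcal F$ by its finitely-generated truncations $\mathcal F_{(m)}$ (the filtration generated by $\mathcal F^{\bullet}R_m$), whose associated test curves are piecewise linear in $\tau$ with analytic-singularity vertices, apply the single-$m$ statement level by level, and pass to the limit using that $\phi^{\mathcal F_{(m)}}_\tau \nearrow \phi^\mathcal F_\tau$ and that being $\mathcal I$-model is preserved under increasing limits (\cite[Lemma 2.21]{DX20}, already used in Lemma~\ref{lma:incfamily}).

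The main obstacle I anticipate is the bookkeeping at the non-lattice values of $\tau$ and at the endpoint $\tau \nearrow \tau_L(\mathcal F)$: the Fubini--Study potentials $\phi^\mathcal F_{\tau,m}$ are only defined when $\tau m$ is a ``jump level'' of the filtration on $R_m$, so one must interpolate and control the error as $m \to \infty$, and check that the positive-mass hypothesis (needed to apply Theorem~\ref{thm:openness} and the domination arguments of \cite{DDNL2}) genuinely holds — this is where the remark just before the lemma, that $\hat r^\mathcal F_\tau,\phi^\mathcal F_\tau$ have nonzero mass for $\tau < \tau_L(\mathcal F)$, is used. Making the increasing-limit structure of $\{\phi^\mathcal F_{\tau,m}\}_m$ precise enough to feed into the effective openness theorem is the real technical heart.
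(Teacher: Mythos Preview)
Your first route has a real problem: the conclusion $P[\phi^\mathcal F_\tau]_\mathcal I \le \phi^\mathcal F_\tau$ is at least as strong as $\phi^\mathcal F_\tau = P[\phi^\mathcal F_\tau]$ (since the reverse inequality $\phi^\mathcal F_\tau \leq P[\phi^\mathcal F_\tau] \leq P[\phi^\mathcal F_\tau]_\mathcal I$ always holds), and the paper explicitly flags maximality of $\phi^\mathcal F_\tau$ as open in the big case just before this lemma. The sentence ``the defining sup for $P[\phi^\mathcal F_\tau]_\mathcal I$ is competed with by $\frac1m \log|s|^2$'' goes the wrong way: those functions are candidates in the sup defining $\phi^\mathcal F_\tau$, not upper barriers for competitors in the $\mathcal I$-envelope. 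Strong openness gives you information about the ideals $\mathcal I(t\phi^\mathcal F_\tau)$, but does not directly force an arbitrary $w$ with $\mathcal I(tw)\subseteq\mathcal I(t\phi^\mathcal F_\tau)$ to satisfy $w\le\phi^\mathcal F_\tau$.

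Your second route is essentially what the paper does, and it is simpler than you make it. No reduction to divisorial filtrations or to finitely generated truncations is needed: the paper works directly with the increasing sequence $\psi_m:=\phi^{\mathcal F}_{\tau,2^m}$ (Fekete), uses \cite[Proposition~2.20]{DX20} to get $P[\psi_m]_{\mathcal I}=P[\psi_m]$ from analytic singularities, and sets $\chi:=\lim_m P[\psi_m]_{\mathcal I}$, which is $\mathcal I$-model by \cite[Lemma~2.21(iii)]{DX20} (positive mass since $\tau<\tau_L(\mathcal F)$). Then $\phi^\mathcal F_\tau\leq\chi\leq P[\phi^\mathcal F_\tau]$, and since $\mathcal I$-model implies model \cite[Proposition~2.18]{DX20}, the sandwich gives $\chi=P[\phi^\mathcal F_\tau]$; hence $P[\phi^\mathcal F_\tau]$ is itself $\mathcal I$-model. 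Your detour through $\mathcal F_{(m)}$ buys nothing extra, and the ``main obstacle'' you anticipate at non-lattice $\tau$ does not arise with the $2^m$-subsequence.
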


\begin{proof} 
By Fekete's lemma, $\psi_m: = \phi^ \mathcal F_{\tau, 2^m} \nearrow \phi^\mathcal F_\tau \leq P[\phi^\mathcal F_\tau].$ This implies that $P[\psi_m]_\mathcal I = P[\psi_m] \leq P[\phi^\mathcal F_\tau]$, by \cite[Proposition~2.20]{DX20}.

Let $\chi:= \lim_m P[\psi_m]_\mathcal I$. Since $\tau  < \tau_L(\mathcal F)$ we have that $\chi = P[\chi]_\mathcal I$, by \cite[Lemma~2.21(iii)]{DX20}. Since $\phi^\mathcal F_\tau \leq \chi \leq  P[\phi^\mathcal F_\tau]$, and $\mathcal I$-model potentials are model \cite[Proposition~2.18]{DX20} we obtain that $\chi = P[\phi^\mathcal F_\tau] $, hence $P[\phi^\mathcal F_\tau] = P[\phi^\mathcal F_\tau]_\mathcal I$.
\end{proof}

Lastly, we note the following formula for the radial Monge--Amp\`ere energy of the Phong--Sturm ray:

\begin{proposition}\label{prop:I=S}
    Let $\cF$ be a filtration of $R(X,L)$. Then $I_\theta\{ r^\cF_t\}=S_L(\cF)$.
\end{proposition}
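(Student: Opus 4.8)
The plan is to relate the radial Monge--Amp\`ere energy $I_\theta\{r^\cF_t\}$ directly to the energy of the test curve associated with $\mathcal F$, and then identify the latter with $S_L(\cF)$ via the volume asymptotics of the filtered pieces. By Theorem~\ref{thm:RWNpresc}(iii) the Phong--Sturm ray $\{r^\cF_t\}_t$, being the inverse Legendre transform of the maximization $\{\phi^\cF_\tau\}_\tau$, satisfies
\[
I_\theta\{r^\cF_t\} = \tau_L(\cF) + \frac{1}{V}\int_{-\infty}^{\tau_L(\cF)} \Big( \int_X \theta_{P[\phi^\cF_\tau]}^n - V \Big)\, \mathrm{d}\tau,
\]
where $V = \vol(L)$. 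Since $\int_X \theta_{P[\phi^\cF_\tau]}^n = \int_X \theta_{\phi^\cF_\tau}^n$ (recalled after the definition of $P[u]$ in the Preliminaries), it suffices to show
\[
\int_X \theta_{\phi^\cF_\tau}^n = \vol(\cF R^{(\tau)}) \quad \text{for a.e. } \tau < \tau_L(\cF),
\]
after which comparing with the defining formula for $S_L(\cF)$ finishes the proof. (One should also check $\tau^+$ of the maximized test curve equals $\tau_L(\cF)$, which is immediate since maximization does not change $\tau^+$ and $\tau^+$ of $\{\phi^\cF_\tau\}$ is $\tau_L(\cF)$ by Definition~\ref{def:testcdefbyfil}.)

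The remaining identity is a partial Bergman kernel / volume statement: $\phi^\cF_{\tau,m}$ is (up to normalization) the logarithm of the Bergman-type envelope of the subspace $\cF^{\tau m}H^0(X,L^m)$, and $\phi^\cF_\tau = \usc\lim_m \phi^\cF_{\tau,m}$. First I would invoke the known result identifying the non-pluripolar mass $\int_X \theta_{\phi^\cF_\tau}^n$ with $\lim_m \frac{n!}{m^n}\dim \cF^{\tau m}H^0(X,L^m)$; this is the partial equidistribution / Fekete-type theorem of Boucksom--Chen and Witt Nystr\"om in the big setting (and is exactly how $\vol(\cF R^{(\tau)})$ is set up in Definition~\ref{def:filtration} to match). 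Care must be taken that the $\varlimsup$ in the definition of $\vol(\cF R^{(\tau)})$ is in fact a limit for $\tau$ in the relevant range, and that this holds for a.e. $\tau$, which is enough since both $\tau \mapsto \int_X\theta_{\phi^\cF_\tau}^n$ and $\tau\mapsto \vol(\cF R^{(\tau)})$ are monotone (hence the integrals agree). Alternatively, one can cite the precise analogue already established in the ample case in \cite{DX20} and note that its proof — via Demailly regularization and the continuity of non-pluripolar mass along the relevant decreasing/increasing sequences — goes through verbatim in the big case because $\phi^\cF_\tau$ has positive mass (noted right after Definition~\ref{def:PSray1}) so that the mass-monotonicity and Lelong-number results of \cite{DDNL2,DDNL5,WN19} apply.

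I expect the main obstacle to be the volume-versus-mass identification $\int_X\theta_{\phi^\cF_\tau}^n = \vol(\cF R^{(\tau)})$ in the genuinely big case, where one cannot simply quote the ample statement: one has to make sure the Fujita-approximation / Boucksom--Chen machinery for filtered linear series applies to the possibly non-finitely-generated filtration $\cF$, and that the Bergman envelope $\phi^\cF_{\tau,m}$ (built from sections with $|s|^2_{h^m}\le 1$) has the right asymptotic mass. The subtlety is that in the big case $\phi^\cF_\tau$ need not be a maximal test curve (as flagged in the text after Definition~\ref{def:testcdefbyfil}), so one should phrase everything in terms of $P[\phi^\cF_\tau]$ and use $\int_X\theta_{P[u]}^n=\int_X\theta_u^n$ rather than assuming $\phi^\cF_\tau = P[\phi^\cF_\tau]$. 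Everything else — the Legendre/Ross--Witt Nystr\"om bookkeeping and the final substitution into the $S_L(\cF)$ formula — is routine.
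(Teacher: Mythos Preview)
Your proposal is correct and follows essentially the same approach as the paper: reduce to the test-curve energy formula, then identify $\int_X \theta_{\phi^\cF_\tau}^n$ with $\vol(\cF R^{(\tau)})$. The paper's proof is slightly leaner in two respects: it invokes \cite[(25)]{DZ22} directly (which already gives the energy formula for the unmaximized test curve, so your detour through $P[\phi^\cF_\tau]$ is unnecessary), and for the key mass-equals-volume step it cites Hisamoto's theorem \cite[Theorem~1.3]{His13} after observing that the graded linear series $\cF R^{(\tau)}$ contains an ample series in the sense of \cite[Lemma~1.6]{BC11}, which yields the identity for \emph{every} $\tau<\tau_L(\cF)$ rather than almost every $\tau$.
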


\begin{proof}

    First, by \cite[(25)]{DZ22},
    \[
    I_\theta\{ r^\cF_t\}=I_\theta\{\check \phi^\cF_t\}=\tau_L(\cF)+\frac{1}{\vol(L)}\int_{-\infty}^{\tau_L(\cF)}\left(\int_X\theta^n_{\phi^\cF_\tau}-\vol(L)\right)\mathrm d\tau.
    \]
    So it is enough to argue that 
    $\int_X\theta^n_{\phi^\cF_\tau}=\vol(\cF R^{(\tau)})$
    for any $\tau<\tau_L(\cF)$. 
    This is a consequence of \cite[Theorem~1.3]{His13}. Indeed, consider the graded linear series 
    $\cF R^{(\tau)}:=\{\cF^{\tau m}R_m\}_{m\in\NN}$. It contains an ample linear series in the sense of \cite[Lemma 1.6]{BC11}. Then for all $m$ sufficiently divisible the natural map $X\dashrightarrow\mathbb P(\cF^{\tau m}R_m)^*$ is birational to its image. So \cite[Theorem 1.3]{His13} implies that $\int_X\theta^n_{\phi^\cF_\tau}=\vol(\cF R^{(\tau)})$, as wished.
\end{proof}

\subsection{Flag configurations}

\begin{definition}\label{def:flagconf}
A \emph{flag configuration} of a big cohomology class $\theta$ is a (partial) flag of coherent analytic ideal sheaves
\[
\mathfrak{a}_0 \subseteq \mathfrak{a}_1 \subseteq \dots \subseteq \mathfrak{a}_N  = \mathcal{I}(V_\theta).
\]
By convention, $\mathfrak{a}_{\ell}:=0$ for $\ell\in \mathbb{Z}_{<0}$ and $\mathfrak{a}_{\ell}:=\mathfrak{a}_N$ if $\ell\in \mathbb{Z}_{\geq N}$.

A flag configuration will be sometimes conveniently denoted as an analytic coherent ideal sheaf on the product $X \times \mathbb C$: 
$$\mathfrak a := \mathfrak a_0 + \mathfrak a_1 s + \dots \mathfrak a_{N-1} s^{N-1} + \mathfrak a_N (s^N) \subseteq \mathcal O(X \times \mathbb C).$$
\end{definition}

For $r \in \mathbb N$ and $\lambda \in \mathbb{N}$ we introduce the following coherent sheaves associated with a flag configuration $\mathfrak a$:
\begin{equation}\label{eq: graded_ideal_def}
\mathfrak{a}_{r,\lambda} : = \sum_{\substack{\lambda_1 + \dots +\lambda_r = \lambda \\ \lambda_i\in \mathbb{N}}} \mathfrak{a}_{\lambda_1}\mathfrak{a}_{\lambda_2} \dots \mathfrak{a}_{\lambda_r}=\sum_{\substack{\sum_j \beta_j=r\\ \sum_j j\beta_j=\lambda\\ \beta_j\in \mathbb{N} \text{ for }j\in \mathbb{N}}} \prod_{j=0}^{\infty}\mathfrak{a}_{j}^{\beta_j}.
\end{equation}
We notice that $\lambda \mapsto \mathfrak{a}^r_\lambda$ is increasing and 
 \begin{equation}\label{eq:amul}
     \mathfrak{a}_{r,\lambda}  \cdot \mathfrak{a}_{r',\lambda'} \subseteq \mathfrak{a}_{r + r',\lambda+\lambda'}.
 \end{equation} 
 For $\lambda\in \mathbb{R}_{>0}\setminus \mathbb{Z}$, we formally set $\mathfrak{a}_{r,\lambda}=\mathfrak{a}_{r,\lfloor\lambda \rfloor}$. For $\lambda<0$, we set $\mathfrak{a}_{r,\lambda}=0$.

Returning to the case when $\{\theta\} = c_1(L)$,  to a flag configuration $\mathfrak{a}$, one can associate a filtration of $R(X,L)=\bigoplus_{r\in \mathbb{N}} H^0(X,L^{r})$, following ideas from \cite{Od13}:
\begin{equation}\label{eq: filtrate_flag_def}
\mathcal F^\lambda_{\mathfrak a} H^0(X,L^{r}) := H^0(X,L^{r} \otimes \mathfrak{a}_{r,-\lambda})\subseteq H^0(X,L^{r}).
\end{equation}

\begin{proposition}
For any flag configuration $\mathfrak{a}$ of $L$, 
$\mathcal F^\lambda_{\mathfrak a}$ defined in \eqref{eq: filtrate_flag_def} is a filtration on $R(X,L)$ in the sense of Definition~\ref{def:filtration}.
\end{proposition}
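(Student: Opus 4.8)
The plan is to verify the four axioms (i)--(iv) of Definition~\ref{def:filtration} for the family $\mathcal F^\lambda_{\mathfrak a}H^0(X,L^r)=H^0(X,L^r\otimes\mathfrak a_{r,-\lambda})$. Axioms (i)--(iii) are purely formal and follow from the structural properties of the auxiliary sheaves $\mathfrak a_{r,\lambda}$ recorded just before and after \eqref{eq:amul} --- monotonicity of $\lambda\mapsto\mathfrak a_{r,\lambda}$, the product inclusion \eqref{eq:amul}, and the conventions $\mathfrak a_{r,\lambda}=\mathfrak a_{r,\lfloor\lambda\rfloor}$ for $\lambda\in\mathbb R_{>0}\setminus\mathbb Z$ and $\mathfrak a_{r,\lambda}=0$ for $\lambda<0$ --- together with the elementary observation that the product of global sections of $L^{m_1}\otimes\mathcal J_1$ and $L^{m_2}\otimes\mathcal J_2$ is a global section of $L^{m_1+m_2}\otimes(\mathcal J_1\mathcal J_2)$. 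The content of the statement lies entirely in the exhaustion axiom (iv), where the one non-bookkeeping input is the identity $\mathfrak a_N=\mathcal I(V_\theta)$, exploited through subadditivity of multiplier ideals.

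For (i): if $\lambda\ge\lambda'$ then $-\lambda\le-\lambda'$, hence $\mathfrak a_{r,-\lambda}\subseteq\mathfrak a_{r,-\lambda'}$ by monotonicity (which survives the floor and the vanishing conventions), so $\mathcal F^\lambda R_r\subseteq\mathcal F^{\lambda'}R_r$. For (ii): the function $\mu\mapsto\mathfrak a_{r,\mu}$ is constant on each $[k,k+1)$ with $k\in\mathbb Z_{\ge0}$ and vanishes on $(-\infty,0)$, hence is right-continuous; since $R_r$ is finite-dimensional, the decreasing intersection $\bigcap_{\lambda'<\lambda}\mathcal F^{\lambda'}R_r$ is attained for $\lambda'$ close enough to $\lambda$ from below, and right-continuity of $\mu\mapsto\mathfrak a_{r,\mu}$ at $\mu=-\lambda$ identifies this value with $\mathcal F^\lambda R_r$. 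For (iii): given $s_i\in\mathcal F^{\lambda_i}R_{m_i}=H^0(X,L^{m_i}\otimes\mathfrak a_{m_i,-\lambda_i})$, the product $s_1s_2$ lies in $H^0\big(X,L^{m_1+m_2}\otimes(\mathfrak a_{m_1,-\lambda_1}\cdot\mathfrak a_{m_2,-\lambda_2})\big)$, and \eqref{eq:amul}, extended to real indices by combining $\lfloor a\rfloor+\lfloor b\rfloor\le\lfloor a+b\rfloor$ with monotonicity (and noting the assertion is vacuous when some $-\lambda_i<0$, the corresponding space being then $\{0\}$), gives $\mathfrak a_{m_1,-\lambda_1}\cdot\mathfrak a_{m_2,-\lambda_2}\subseteq\mathfrak a_{m_1+m_2,-(\lambda_1+\lambda_2)}$, hence $s_1s_2\in\mathcal F^{\lambda_1+\lambda_2}R_{m_1+m_2}$.

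It remains to exhibit $C>0$ with $\mathcal F^{-Cm}R_m=R_m$ and $\mathcal F^{Cm}R_m=\{0\}$ for every $m\ge1$; I would take $C:=N+1$. The vanishing half is immediate: $-Cm<0$ forces $\mathfrak a_{m,-Cm}=0$, whence $\mathcal F^{Cm}R_m=H^0(X,L^m\otimes 0)=\{0\}$. For the exhaustion half, every factor $\mathfrak a_{\lambda_i}$ is contained in $\mathfrak a_N$, so each summand of $\mathfrak a_{m,\lambda}$ lies in $\mathfrak a_N^m$, while for $\lambda\ge mN$ the summand with $\lambda_1=\dots=\lambda_m=N$ equals $\mathfrak a_N^m$; thus, recalling $\mathcal F^{-Cm}R_m=H^0(X,L^m\otimes\mathfrak a_{m,Cm})$, we have $\mathfrak a_{m,Cm}=\mathfrak a_N^m=\mathcal I(V_\theta)^m$ and the exhaustion reduces to $H^0(X,L^m\otimes\mathcal I(V_\theta)^m)=H^0(X,L^m)$. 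This is the crux of the proof. For $s\in H^0(X,L^m)$ the potential $\frac{1}{m}\log|s|^2_{h^m}$ is $\theta$-psh, and being usc on the compact $X$ it is bounded above, hence $\le V_\theta$ up to an additive constant; read in a local trivialization this shows $|s|^2e^{-mV_\theta}$ is locally bounded, so $s\in H^0(X,L^m\otimes\mathcal I(mV_\theta))$. Combined with the inclusion $\mathcal I(mV_\theta)\subseteq\mathcal I(V_\theta)^m$ obtained by iterating the subadditivity of multiplier ideals recalled in the paper, this yields $H^0(X,L^m)\subseteq H^0(X,L^m\otimes\mathcal I(V_\theta)^m)$, the reverse inclusion being trivial. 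The only genuine obstacle is this last step: recognizing that (iv) is not formal and that it is precisely the choice $\mathfrak a_N=\mathcal I(V_\theta)$, together with subadditivity, which forces the filtration to exhaust all of $R(X,L)$ even though $\mathcal I(V_\theta)$ may be a proper ideal sheaf; the degree-zero summand $R_0=\mathbb C$ is equipped with its trivial filtration by convention.
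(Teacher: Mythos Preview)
Your proof is correct and follows essentially the same approach as the paper's: conditions (i)--(iii) are dispatched formally from monotonicity and \eqref{eq:amul}, and condition (iv) is established via $\mathfrak a_N^m=\mathcal I(V_\theta)^m\supseteq\mathcal I(mV_\theta)$ together with the fact that every section of $L^m$ already lies in $H^0(X,L^m\otimes\mathcal I(mV_\theta))$. The only cosmetic differences are that the paper takes $C=N$ rather than $C=N+1$, and your sentence ``for $\lambda\ge mN$ the summand with $\lambda_1=\dots=\lambda_m=N$'' is literally a summand of $\mathfrak a_{m,\lambda}$ only when $\lambda=mN$; for larger $\lambda$ you should either invoke monotonicity ($\mathfrak a_{m,\lambda}\supseteq\mathfrak a_{m,mN}\ni\mathfrak a_N^m$) or pick $\lambda_i$'s with some $\lambda_i>N$ and use the convention $\mathfrak a_\ell=\mathfrak a_N$ for $\ell\ge N$.
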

\begin{proof}
    We need to verify the conditions in Definition~\ref{def:filtration}. Condition~(i) and (ii) are obvious. We verify Condition~(iii), which says that for any $\lambda,\lambda'\in \mathbb{R}$, $r,r'\in \mathbb{N}$, we have
    \[
    \mathcal F^\lambda_{\mathfrak a} H^0(X,L^{ar})\mathcal F^{\lambda'}_{\mathfrak a} H^0(X,L^{ar'})\subseteq \mathcal F^{\lambda+\lambda'}_{\mathfrak a} H^0(X,L^{r+r'}).
    \]
    By definition, this amounts to 
    \[
    H^0(X,L^{r} \otimes \mathfrak{a}_{r,-\lambda})\cdot H^0(X,L^{r'} \otimes \mathfrak{a}_{r',-\lambda'})\subseteq H^0(X,L^{r+r'} \otimes \mathfrak{a}_{r+r',-(\lambda+\lambda')}).
    \]
    It suffices to argue
    \begin{equation}\label{eq:multipafilt}
    \mathfrak{a}_{r,-\lambda}\cdot \mathfrak{a}_{r',-\lambda'}\subseteq \mathfrak{a}_{r+r',-(\lambda+\lambda')}.
    \end{equation}
    When $\lambda>0$ or $\lambda'>0$ \eqref{eq:multipafilt} is trivial. So we may assume that
    $\lambda,\lambda'\leq 0$, and  \eqref{eq:multipafilt} follows from \eqref{eq:amul}.
    
    It remains to argue Condition~(iv) in Definition~\ref{def:filtration}. Namely, the filtration is linearly bounded. By definition, $\mathcal{F}^{\lambda}_{\mathfrak{a}}H^0(X,L^{r})=0$ for $\lambda>0$. So it suffices to show that there is $C>0$ so that 
    \begin{equation}\label{eq:H0atriv}
    H^0(X,L^{r}\otimes \mathfrak{a}_{r,Cr})=H^0(X,L^{r}).
    \end{equation}
    We claim that it suffices to take $C=N$. In this case,
    $
    \mathfrak{a}_{r,Nr}=\mathfrak{a} ^r_N=\mathcal{I}(V_{\theta})^r\supseteq \mathcal{I}(rV_{\theta})
    $
    by the multiplicativity of multiplier ideal sheaves \cite{DEL00}. As $V_{\theta}$ has minimal singularities, $H^0(X,L^r\otimes \mathcal{I}(rV_{\theta}))=H^0(X,L^r)$, so \eqref{eq:H0atriv} follows.
\end{proof}

\begin{definition}\label{def:PSray2}
The \emph{Phong--Sturm ray} $\{r^{L,\mathfrak{a}}_t\}_t \in \mathcal R^1_\theta$ associated to a flag configuration $\mathfrak a = \mathfrak a_0 + \mathfrak a_1 s + \dots \mathfrak a_{N-1} s^{N-1} + \mathfrak a_N (s^N)$ is the Phong--Sturm ray of the associated filtration $\mathcal F_{\mathfrak a}^\lambda$ (recall  Definition~\ref{def:PSray1}). When there is no risk of confusion, we simply write $\{r^\mathfrak a_t\}_t$.
\end{definition}

Careful readers might notice that in the ample case there is a minor difference in our definition of filtration associated to a flag ideal and the one in \cite{BHJ17}, when $L$ is ample. Though this is the case, we now point out that the associated Phong--Sturm ray is going to be the same, regardless what filtration one works with, ours or the one in \cite{BHJ17}.

Assume that $L$ is ample and $\mathfrak{a}$ is a flag configuration with $\mathfrak{a}_N=\mathcal{O}_X$. We assume that $L\otimes \mathfrak{a}_j$ is globally generated for all $j$.
As described in \cite{Od13,BHJ17} it is possible to associate to such $\mathfrak a$ a test configuration: let $p:\mathcal{X}\rightarrow X\times \mathbb{C}$ be the normalized blow up of $X\times \mathbb{C}$ with respect to flag ideal $\mathfrak{a}:=\mathfrak{a}_0+\mathfrak{a}_1s+\dots +\mathfrak{a}_N(s^N)$ and let $E$ be the exceptional divisor. Set $\mathcal{L}=p^*p_1^*L\otimes \mathcal{O}(-E)$, where $p_1:X\times \mathbb{C}\rightarrow X$ is the natural projection. Then with respect to the obvious $\mathbb{C}^*$-action, $(\mathcal{X},\mathcal{L})$ is a normal semi-ample test configuration of $(X,L)$ in the traditional sense. Let $\mathcal{F}_{(\mathcal{X},\mathcal{L})}$ denote the $\mathbb{Z}$-filtration of $R(X,L)$ induced by $(\mathcal{X},\mathcal{L})$, see \cite[Section~2.5]{BHJ17} for the precise definition. It is well-known that 
\begin{equation}\label{eq:filttestconf}
\mathcal{F}_{(\mathcal{X},\mathcal{L})}^{\lambda}H^0(X,L^r)=H^0(X,L^r\otimes \overline{\mathfrak{a}^r}_{-\lambda}),
\end{equation}
where $\overline{\mathfrak{a}^r}$ is the integral closure of $\mathfrak{a}^r$ and the subindex $-\lambda$ denotes the coefficient of $s^{-\lambda}$ in the expansion in $s$ (\cite[Proposition~2.21]{BHJ17}). We formally extend $\mathcal{F}_{(\mathcal{X},\mathcal{L})}$ to an $\mathbb{R}$-filtration by setting
\[
\mathcal{F}_{(\mathcal{X},\mathcal{L})}^{\lambda}H^0(X,L^r)=\mathcal{F}_{(\mathcal{X},\mathcal{L})}^{\lceil\lambda \rceil}H^0(X,L^r).
\]
From \eqref{eq:filttestconf}, we find immediately that
\begin{equation}\label{eq: filt_containment}
\mathcal{F}_{(\mathcal{X},\mathcal{L})}^{\lambda}H^0(X,L^r)\supseteq \mathcal{F}^{\lambda}_{\mathfrak{a}}H^0(X,L^r),
\end{equation}
and strict containment naturally occurs, however we now confirm that the associated Phong--Sturm rays do coincide. 

Indeed, let $\{\psi^{(\mathcal{X},\mathcal{L})}_{\tau}\}_{\tau}$ be the test curve defined by the filtration $\mathcal{F}_{(\mathcal{X},\mathcal{L})}$. Due to Theorem~\ref{thm:Imodelcurvenabij}(ii) it is enough to show that $\psi^{(\mathcal{X},\mathcal{L})}{}^{\An} = (\hat r^\mathfrak a)^\An$. This is confirmed by the next lemma.

\begin{lemma}\label{lma:PSanample}
Under the assumptions above, we have
\begin{equation}\label{eq:PSLeq}
(\hat r^\mathfrak a)^{\An}(v)=\psi^{(\mathcal{X},\mathcal{L})}{}^{\An}(v)= -\sigma(v)(\mathfrak{a}) =\max_j (-v(\mathfrak{a}_j)-j) \quad \text{for }v\in X^{\Div}.
\end{equation}
\end{lemma}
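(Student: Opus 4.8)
The plan is to establish the chain of equalities in \eqref{eq:PSLeq} from the right, working our way back to the Phong--Sturm ray. First I would note that the rightmost identity $-\sigma(v)(\mathfrak a) = \max_j(-v(\mathfrak a_j)-j)$ is immediate from the definition of the Gauss extension: for $v \in X^{\Div}$, writing $\mathfrak a = \sum_j \mathfrak a_j s^j$ (with $\mathfrak a_j = \mathfrak a_N$ for $j \geq N$, but the minimum is attained for $j \leq N$), we have $\sigma(v)(\mathfrak a) = \min_j (v(\mathfrak a_j) + j)$ directly from the formula $\sigma(v)(\sum f_i t^i) = \min_i(v(f_i)+i)$ applied to the ideal, i.e. taking the minimum over generators. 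So the content is in the first two equalities.

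For the middle equality $\psi^{(\mathcal X,\mathcal L)\An}(v) = -\sigma(v)(\mathfrak a)$, I would use \eqref{eq:filttestconf}, which identifies $\mathcal F_{(\mathcal X,\mathcal L)}^\lambda H^0(X,L^r) = H^0(X, L^r \otimes \overline{\mathfrak a^r}_{-\lambda})$. By the definition of the test curve $\psi^{(\mathcal X,\mathcal L)}_\tau$ attached to a filtration (Definition~\ref{def:testcdefbyfil}), its analytification is computed by $\psi^{(\mathcal X,\mathcal L)\An}(v) = \sup_\tau(\,(\psi^{(\mathcal X,\mathcal L)}_\tau)^\An(v) + \tau\,)$, and each $(\psi^{(\mathcal X,\mathcal L)}_\tau)^\An(v)$ is governed by the vanishing orders $v$ of sections in $\mathcal F^{\tau m}_{(\mathcal X,\mathcal L)} H^0(X,L^m)$ as $m\to\infty$. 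Since Siu-type global generation gives $p_1^*T \otimes L^m \otimes \mathfrak a_{m,\lambda}$ globally generated (in the ample case one can take $T$ trivial, using that $L\otimes \mathfrak a_j$ is globally generated), the limit $v(\mathcal F^{\tau m}_{(\mathcal X,\mathcal L)}R_m)/m$ computes $v$ of the integral closure $\overline{\mathfrak a^r}$, and $v(\overline{\mathfrak a^r}) = v(\mathfrak a^r) = r\,v(\mathfrak a)$ for divisorial $v$ (valuations don't see integral closure). A standard Legendre-transform bookkeeping then yields $\sup_\tau(\dots+\tau) = \sup_\tau(\text{something linear}) $ collapsing to $-\sigma(v)(\mathfrak a)$; this is essentially the computation behind \cite[Lemma~B.4]{BBJ21} and the analogous statements in \cite{DX20}, adapted to the Gauss extension as in Proposition~\ref{prop:psiandPhi}.

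For the first equality $(\hat r^\mathfrak a)^\An(v) = \psi^{(\mathcal X,\mathcal L)\An}(v)$, recall $\{r^\mathfrak a_t\}_t$ is by definition the Phong--Sturm ray of the filtration $\mathcal F_{\mathfrak a}$, whose test curve is $\{\phi^{\mathcal F_{\mathfrak a}}_\tau\}_\tau$, and $\hat r^\mathfrak a_\tau = P[\phi^{\mathcal F_{\mathfrak a}}_\tau]_{\mathcal I}$ by Lemma~\ref{lem: max_I_max_filt}. Since analytification is insensitive to the envelope $P[\cdot]_{\mathcal I}$ (by \cite[Proposition~2.18(ii)]{DX20}, which gives $\varphi^\An = P[\varphi]_{\mathcal I}^\An$), we have $(\hat r^\mathfrak a)^\An = (\phi^{\mathcal F_{\mathfrak a}})^\An$. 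Now by \eqref{eq: filt_containment}, $\mathcal F^\lambda_{(\mathcal X,\mathcal L)} \supseteq \mathcal F^\lambda_{\mathfrak a}$ with the difference being precisely integral closure; since divisorial valuations and hence analytifications do not distinguish an ideal from its integral closure, one gets $(\phi^{\mathcal F_{\mathfrak a}})^\An = \psi^{(\mathcal X,\mathcal L)\An}$ on $X^{\Div}$. Concretely, $\mathfrak a_{r,-\lambda}$ versus $\overline{\mathfrak a^r}_{-\lambda}$: these have the same value under any $v \in X^{\Div}$ because $v(\mathfrak a_{r,\lambda})$ and $v(\overline{\mathfrak a^r}_\lambda)$ agree (Rees valuations / integral closure). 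I would isolate this as a short sublemma: for $v \in X^{\Div}$, $v(\mathfrak a_{r,\lambda}) = v(\overline{\mathfrak a_{r,\lambda}})$ and $\overline{\mathfrak a_{r,\lambda}}$, $\overline{\mathfrak a^r}_\lambda$ have the same integral closure.

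The main obstacle I anticipate is the careful handling of integral closures and the passage to the limit in $m$ in the middle equality: one must show the Okounkov/volume-type computation that $\lim_m v(\mathcal F^{\tau m}_{(\mathcal X,\mathcal L)} R_m)/m$ equals the expected valuative quantity, which requires the global generation (Lemma~\ref{lem: Siu_global_uniform generatedness}, or its ample simplification) to guarantee that the base locus of $\mathcal F^{\tau m} R_m$ doesn't contribute to the valuation in the limit. In the ample case with $\mathfrak a_N = \mathcal O_X$ this is cleaner since no twist $T$ is needed, but one still must invoke that the blow-up $(\mathcal X,\mathcal L)$ is semi-ample, so that high powers of $\mathcal L$ are globally generated and \eqref{eq:filttestconf} genuinely holds. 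All other steps are bookkeeping with Legendre transforms and the definitions already set up in the excerpt.
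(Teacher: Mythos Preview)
Your rightmost identity is indeed immediate, and your reduction $(\hat r^{\mathfrak a})^{\An}=(\phi^{\mathcal F_{\mathfrak a}})^{\An}$ via Lemma~\ref{lem: max_I_max_filt} is correct. However, the substance of your argument for the remaining equalities has two genuine gaps.

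In step 2 you claim that global generation of $L\otimes\mathfrak a_j$ gives what you need for the filtration $\mathcal F_{(\mathcal X,\mathcal L)}$. But products of globally generated sheaves give global generation of $L^m\otimes\mathfrak a_{m,\lambda}$, not of $L^m\otimes\overline{\mathfrak a^m}_\lambda$; a larger ideal need not inherit global generation. You also conflate $v$ on $X$ with $\sigma(v)$ on $X\times\mathbb A^1$, and a single graded piece $\overline{\mathfrak a^m}_{-\tau m}$ with the whole ideal $\overline{\mathfrak a^m}$; the statement ``$v(\overline{\mathfrak a^r})=r\,v(\mathfrak a)$'' only makes sense for $\sigma(v)$. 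In step 3 your proposed sublemma that $v(\mathfrak a_{r,\lambda})=v((\overline{\mathfrak a^r})_\lambda)$ (equivalently, that $\overline{\mathfrak a_{r,\lambda}}$ and $(\overline{\mathfrak a^r})_\lambda$ have the same integral closure) is unjustified: integral closure does not commute with taking graded pieces in general, so you cannot deduce coefficient-wise equality from $\sigma(v)(\mathfrak a^r)=\sigma(v)(\overline{\mathfrak a^r})$.

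The paper proceeds differently and avoids both issues. It proves the lower bound $(\hat r^{\mathfrak a})^{\An}(v)\geq\max_j(-v(\mathfrak a_j)-j)$ directly from $m=1$ global generation, as you essentially do. For the upper bound it uses the inequality $r^{\mathfrak a}_t\leq\check\psi^{(\mathcal X,\mathcal L)}_t$ (from \eqref{eq: filt_containment}) and then identifies $\check\psi^{(\mathcal X,\mathcal L),\An}$ with the non-Archimedean potential $\varphi_{(\mathcal X,\mathcal L)}$ of the test configuration via \cite[Theorem~9.2]{RWN14} and \cite[Lemma~4.4]{BBJ21}; since $\varphi_{(\mathcal X,\mathcal L)}(v)=-\sigma(v)(E)$ and $p_*\mathcal O(-E)=\overline{\mathfrak a}$, one gets $-\sigma(v)(\overline{\mathfrak a})=-\sigma(v)(\mathfrak a)$. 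Your direct computation can be salvaged, but not as you wrote it: one should compute $(\phi^{\mathcal F_{\mathfrak a}})^{\An}(v)=-\sigma(v)(\mathfrak a)$ using the swap $\sup_\tau\sup_k=\sup_k\sup_\tau$ and the identity $\sup_\tau(-v(\mathfrak a_{m,-\tau m})/m+\tau)=-\sigma(v)(\mathfrak a^m)/m=-\sigma(v)(\mathfrak a)$ for each $m$ (this is where global generation of $L^m\otimes\mathfrak a_{m,\lambda}$ is used), and then bound $\psi^{(\mathcal X,\mathcal L),\An}$ from above by the same quantity using only the trivial inequality $v(H^0(X,L^m\otimes\overline{\mathfrak a^m}_\lambda))\geq v(\overline{\mathfrak a^m}_\lambda)$, with no global generation needed there.
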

\begin{proof}
Due to Fekete's lemma we have that $\phi^{\mathcal F^{\mathfrak a}_\tau} \nwarrow \phi^{\mathcal F^{\mathfrak a}_{\tau,2^k}} \geq \phi^{\mathcal F^{\mathfrak a}_{\tau,1}}$. Due to global generatedness of $L\otimes \mathfrak a_{-\tau}$ we get that $-v(\phi^{\mathcal{F}}_{\tau}) \geq -v\left(\mathcal{F}^{\tau}H^0(X,L)\right)=-v\left(\mathfrak a_j\right), \ v \in X^\Div$. As a result, 
$$(\hat r^\mathfrak a)^{\An} \geq \max_j (-v(\mathfrak{a}_j)-j).$$
We argue the reverse direction. Due to \eqref{eq: filt_containment} we have that $\check{\psi}^{(\mathcal{X},\mathcal{L})}_t\geq r^\mathfrak a_t.$ Hence, it suffices to show that
\begin{equation}\label{eq:BBJNA}
     \max_j (-v(\mathfrak{a}_j)-j) \geq \check{\psi}^{(\mathcal{X},\mathcal{L}),\An}(v)
\end{equation}
for any $v\in X^{\Div}$. 
However,  we actually have equality here. Indeed, by  \cite[Theorem~9.2]{RWN14}  and \cite[Lemma~4.4]{BBJ21} we have that $\check{\psi}^{(\mathcal{X},\mathcal{L}),\An}(v)=\varphi_{\mathcal X,\mathcal L}(v)$, where $\varphi_{\mathcal X,\mathcal L}(v)$ is the non-Archimedean potential associated with $(
\mathcal X,\mathcal L)$.
By definition, $\varphi_{(\mathcal X,\mathcal L)(v)}=-\sigma(v)(E)$. However, the pushforward of $\mathcal O(-E)$ is just $\overline{\mathfrak a},$ the integral closure of $\mathfrak a$. Hence, $\max_j (-v(\mathfrak{a}_j)-j) =-\sigma(v)(\mathfrak a)=-\sigma(v)(\overline{\mathfrak a})=\check{\psi}^{(\mathcal{X},\mathcal{L}),\An}(v)$. Putting everything together, we obtain equality in \eqref{eq:BBJNA}, finishing the proof.
\end{proof}

Now we come back to the general situation where $L$ is only assumed to be big. In this case, we note that when all $L \otimes \mathfrak{a}_j$ are globally finitely generated, the first step in the proof of the above result implies that
\begin{equation}\label{eq:PSLlowerbd}
(\hat r^\mathfrak a)^{\An}(v) \geq \max_j (-v(\mathfrak{a}_j)-j) = \sigma(v)(\mathfrak a).
\end{equation}

Next we show that the procedure of \eqref{eq:ImPhiexp} associates a sequence of flag configurations to any geodesic ray: 

\begin{lemma} \label{lem:PhiinducePSH_1} 
Let $\phi\in \PSH(X,\theta)$ be a model potential with positive mass. Let $\Phi \in \textup{PSH}(X\times \Delta,p_1^* \theta)$ be the potential corresponding to the geodesic ray $\{u_t\}_t$ in $\PSH(X,\theta)$ emanating from $\phi$ with $\sup u_1\leq 0$. Given any $m \geq 0$, let 
$$\mathcal I(m\Phi) = \mathfrak{a}_0 +\mathfrak{a}_1 s+\dots+\mathfrak{a}_{N-1} s^{N-1}+\mathfrak{a}_N (s^N),$$
as in \eqref{eq:ImPhiexp}. Then we have $\mathfrak{a}_N = \mathcal I(m \phi)$.
\end{lemma}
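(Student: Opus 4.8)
The plan is to determine the fiber $\mathcal{I}(m\Phi)|_{X\times\{0\}}$ of the coherent sheaf $\mathcal{I}(m\Phi)$ along the central fiber $X = X\times\{0\}$, noting that by the expansion $\mathcal{I}(m\Phi)=\mathfrak{a}_0+\mathfrak{a}_1 s+\dots+\mathfrak{a}_N(s^N)$ one has $\mathfrak{a}_N = \mathcal{I}(m\Phi)|_{X\times\mathbb{C}^*}\cap\mathcal{O}(X\times\{0\})$ viewed correctly, i.e., $\mathfrak{a}_N$ is the ideal of ``leading coefficients in $s$'', which also equals the limit of the $\mathfrak{a}_j$. More precisely, $\mathfrak{a}_N = \sum_j \mathfrak{a}_j$ by the flag property, so $\mathfrak{a}_N$ is the ideal generated by all coefficients appearing in the $s$-expansion of local sections of $\mathcal{I}(m\Phi)$. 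Equivalently, using the $\mathbb{C}^*$-invariance, $\mathfrak{a}_N$ records the generic behaviour of $\mathcal{I}(m\Phi)$ near the central fiber. The first step is to recall from the Claim in the proof of Proposition~\ref{prop:PhiinducePSH} that $\mathcal{I}(m\Phi)|_{X\times\Delta^*} = p_1^*\mathcal{I}(m\phi)|_{X\times\Delta^*}$. Since the $\mathbb{C}^*$-invariant extension of $\mathcal{I}(m\Phi)$ over $X\times\mathbb{C}^*$ is $\mathcal{I}((m\phi\circ p_1)|_{X\times\mathbb{C}^*}) = p_1^*\mathcal{I}(m\phi)|_{X\times\mathbb{C}^*}$, and the expansion \eqref{eq:ImPhiexp} is built exactly so that $\mathfrak{a}_j = \mathfrak{a}_N$ for $j\geq N$, with this common value being the coefficient ideal stabilizes to, we get $\mathfrak{a}_N = \mathcal{I}(m\phi)$ provided the extension is literally $p_1^*\mathcal{I}(m\phi)$ globally, which it is.

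In more detail, here are the steps I would carry out. First, by the Claim in Proposition~\ref{prop:PhiinducePSH}, $\mathcal{I}(m\Phi)$ restricted to $X\times\Delta^*$ equals $p_1^*\mathcal{I}(m\phi)|_{X\times\Delta^*}$, and by $\mathbb{C}^*$-invariance this identification propagates to all of $X\times\mathbb{C}^*$. Second, Lemma~\ref{lma:GAGA} tells us $\mathcal{I}(m\Phi)$ is the analytification of an algebraic coherent ideal sheaf $\mathcal{J}$ on $X\times\mathbb{C}$ whose restriction to $X\times\mathbb{C}^*$ is $p_1^*\mathcal{I}(m\phi)|_{X\times\mathbb{C}^*}$. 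Third, $\mathcal{J}$ is $\mathbb{C}^*$-invariant by Lemma~\ref{lma:istab}, so $\mathcal{J} = \mathfrak{a}_0 + \mathfrak{a}_1 s + \dots + \mathfrak{a}_N(s^N)$ with $\mathfrak{a}_0\subseteq\dots\subseteq\mathfrak{a}_N$; moreover the localization of $\mathcal{J}$ at $s$ is $\mathfrak{a}_N\otimes_{\mathbb{C}}\mathbb{C}[s,s^{-1}]$ (the top coefficient ideal governs the generic fiber). Fourth, comparing with the second step, $\mathfrak{a}_N = \mathcal{I}(m\phi)$, since the localization of $p_1^*\mathcal{I}(m\phi)$ at $s$ is $\mathcal{I}(m\phi)\otimes\mathbb{C}[s,s^{-1}]$. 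This is the conclusion.

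The one point requiring a little care — and the likely main obstacle — is the algebraic claim that for a $\mathbb{C}^*$-invariant ideal $\mathcal{J} = \sum_{j=0}^N \mathfrak{a}_j s^j$ with $\mathfrak{a}_N$ the stabilized top ideal (so $\mathfrak{a}_j = \mathfrak{a}_N$ for $j\geq N$ after extending by the convention $\mathfrak{a}_\ell = \mathfrak{a}_N$ for $\ell \geq N$), one indeed recovers $\mathcal{J}|_{s\neq 0} = p^*\mathfrak{a}_N|_{s\neq 0}$: this is because inverting $s$ allows multiplying each $\mathfrak{a}_j s^j$ by $s^{-j}$, showing the localization is generated by $\mathfrak{a}_0,\dots,\mathfrak{a}_N$, i.e.\ by $\mathfrak{a}_N$. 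Conversely, a local section of $p^*\mathcal{I}(m\phi)$ near a point of $X\times\mathbb{C}^*$, multiplied by a suitable power of $s$, lies in $\mathcal{J}$; after clearing the $s$-powers one sees $\mathfrak{a}_N \supseteq \mathcal{I}(m\phi)$, and the reverse inclusion $\mathfrak{a}_N\subseteq\mathcal{I}(m\phi)$ follows since every coefficient $\mathfrak{a}_j$ of a section of $\mathcal{J}$ appears (up to a unit in $\mathbb{C}[s,s^{-1}]$) as a coefficient of a section of $p^*\mathcal{I}(m\phi)$ over $X\times\mathbb{C}^*$, and sections of $p^*\mathcal{I}(m\phi)$ have all $s$-coefficients in $\mathcal{I}(m\phi)$. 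Putting this together with the identity $\mathcal{I}(m\Phi)|_{X\times\mathbb{C}^*} = p_1^*\mathcal{I}(m\phi)|_{X\times\mathbb{C}^*}$ from Proposition~\ref{prop:PhiinducePSH} finishes the proof.
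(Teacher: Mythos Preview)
Your proposal is correct and follows essentially the same approach as the paper: invoke the Claim from Proposition~\ref{prop:PhiinducePSH} to identify $\mathcal{I}(m\Phi)|_{X\times\Delta^*}=p_1^*\mathcal{I}(m\phi)$, pass to the algebraic ideal on $X\times\mathbb{C}$ via Lemma~\ref{lma:GAGA}, and then read off $\mathfrak{a}_N$ from the restriction to $X\times\mathbb{C}^*$. The paper does this last step slightly more directly by comparing fibers at $s=1$ rather than comparing localizations at $s$, but the content is the same; your opening remark about the central fiber $X\times\{0\}$ is a red herring you correctly abandon.
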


\begin{proof} Recall \eqref{eq: sheaf_identity} that $\mathcal I(m\Phi)|_{X\times \Delta^*}=p_1^* \mathcal I(m\phi)$. This allows to regard $\mathfrak{a}_0 +\mathfrak{a}_1 s+\dots+\mathfrak{a}_{N-1} s^{N-1}+\mathfrak{a}_N (s^N)$ as an algebraic coherent ideal sheaf on $X\times \mathbb C$ by Lemma \ref{lma:GAGA}, so
\begin{flalign*}
\left(\mathfrak{a}_0 +\dots+\mathfrak{a}_{N-1} s^{N-1}+\mathfrak{a}_N (s^N)\right)|_{X\times \mathbb{C}^*}&=\left(\mathfrak{a}_0 +\dots+\mathfrak{a}_{N-1} s^{N-1}+\mathfrak{a}_N (s^N)\right)\otimes_{\mathbb{C}[s]} \mathbb{C}[s,s^{-1}]\\
&=p_1^*\mathfrak{a}_N.
\end{flalign*}
Comparing the fibers at $s=1$ we conclude that $\mathfrak{a}_N=\mathcal{I}(m\phi)$.
\end{proof}

We argue that for the converse of \eqref{eq:PSLlowerbd}, one does not even need global finite generation:

\begin{lemma}\label{lem: PS_ray_NA_formula}
Let $\mathfrak a = \mathfrak a_0 + \mathfrak a_1 s + \dots + \mathfrak a_{N-1} s^{N-1} + \mathfrak a_N (s^N)$ be a flag configuration of a big line bundle $L$. 
Then
\begin{equation}\label{eq:PSanupp}
(\hat r^\mathfrak a)^{\An}(v)\leq \max_j (-v(\mathfrak{a}_j)-j)=\sigma(v)(\mathfrak a).
\end{equation}
\end{lemma}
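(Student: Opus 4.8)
Write $\mathcal{F}:=\mathcal{F}_{\mathfrak a}$ for the filtration of \eqref{eq: filtrate_flag_def}, so that $r^{\mathfrak a}_t=r^{\mathcal{F}}_t$ by Definition~\ref{def:PSray2}. By Lemma~\ref{lem: max_I_max_filt}, $\hat r^{\mathfrak a}_\tau=P[\phi^{\mathcal{F}}_\tau]_{\mathcal I}$ for $\tau<\tau_L(\mathcal{F})$; in particular $\phi^{\mathcal{F}}_\tau\le\hat r^{\mathfrak a}_\tau$ and the two potentials have the same multiplier ideals $\mathcal{I}(t\,\cdot\,)$ for all $t\ge 0$, so \eqref{eq:varphianv} gives $(\hat r^{\mathfrak a}_\tau)^{\An}(v)=-v(\phi^{\mathcal{F}}_\tau)$, and \eqref{eq:psiandef} (where the supremum may be restricted to $\tau<\tau_L(\mathcal{F})$) then yields $(\hat r^{\mathfrak a})^{\An}(v)=\sup_{\tau<\tau_L(\mathcal{F})}\bigl(-v(\phi^{\mathcal{F}}_\tau)+\tau\bigr)$. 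Thus it suffices to prove, for each $\tau<\tau_L(\mathcal{F})$, the lower bound $v(\phi^{\mathcal{F}}_\tau)\ge c_\tau(v):=\lim_{m\to\infty}\tfrac1m v(\mathfrak a_{m,\lfloor-\tau m\rfloor})$. The limit exists by Fekete's lemma: \eqref{eq:amul}, $\lfloor a\rfloor+\lfloor b\rfloor\le\lfloor a+b\rfloor$, and monotonicity of $\lambda\mapsto\mathfrak a_{m,\lambda}$ make $m\mapsto v(\mathfrak a_{m,\lfloor-\tau m\rfloor})$ subadditive, so $c_\tau(v)=\inf_m\tfrac1m v(\mathfrak a_{m,\lfloor-\tau m\rfloor})$.

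To bound $v(\phi^{\mathcal{F}}_\tau)$ from below, note that $\mathcal{F}^{\tau m}H^0(X,L^m)=H^0(X,L^m\otimes\mathfrak a_{m,\lfloor-\tau m\rfloor})$ by \eqref{eq: filtrate_flag_def}, so the base ideal $\mathfrak b_m$ of this linear system satisfies $\mathfrak b_m\subseteq\mathfrak a_{m,\lfloor-\tau m\rfloor}$, and $\phi^{\mathcal{F}}_{\tau,m}$ has the analytic singularity type of $\tfrac1m\log|\mathfrak b_m|^2$ (it differs from $\tfrac1m\log$ of the squared norm of a basis of that system by a bounded function). Since $\phi^{\mathcal{F}}_{\tau,2^k}\nearrow\phi^{\mathcal{F}}_\tau$ almost everywhere by superadditivity of $m\phi^{\mathcal{F}}_{\tau,m}$ (Definition~\ref{def:testcdefbyfil}), Theorem~\ref{thm:openness} gives $\mathcal{I}(j\phi^{\mathcal{F}}_\tau)=\mathcal{I}(j\phi^{\mathcal{F}}_{\tau,2^k})$ for $k=k(j)$ large, whence $\mathcal{I}(j\phi^{\mathcal{F}}_\tau)=\mathcal{I}\bigl(\tfrac{j}{2^k}\log|\mathfrak b_{2^k}|^2\bigr)\subseteq\mathcal{I}\bigl(\tfrac{j}{2^k}\log|\mathfrak a_{2^k,\lfloor-\tau 2^k\rfloor}|^2\bigr)$. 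The valuative description of multiplier ideals (on a log resolution extracting $E$) gives $v(\mathcal{I}(j\phi^{\mathcal{F}}_\tau))\ge\tfrac{j}{2^k}v(\mathfrak a_{2^k,\lfloor-\tau 2^k\rfloor})-A_X(E)\ge j\,c_\tau(v)-A_X(E)$; dividing by $j$ and letting $j\to\infty$ in \eqref{eq:varphianv} gives $v(\phi^{\mathcal{F}}_\tau)\ge c_\tau(v)$.

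It remains to show $\sup_{\tau}\bigl(-c_\tau(v)+\tau\bigr)\le\max_j(-v(\mathfrak a_j)-j)$. From \eqref{eq: graded_ideal_def}, $v(\mathfrak a_{m,\lambda})=\min\{\sum_j\beta_j v(\mathfrak a_j):\beta_j\in\mathbb N,\ \sum_j\beta_j=m,\ \sum_j j\beta_j=\lambda\}$, and a routine weak-limit argument (using $\mathfrak a_j=\mathfrak a_N$ for $j\ge N$, so that only finitely many $v(\mathfrak a_j)$ matter) identifies $c_\tau(v)$ with the linear-programming value $\min\{\sum_j\gamma_j v(\mathfrak a_j):\gamma_j\ge 0,\ \sum_j\gamma_j=1,\ \sum_j j\gamma_j=-\tau\}$, attained at some $\gamma^{*}$ with $\sum_j j\gamma^{*}_j=-\tau$. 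Then $-c_\tau(v)+\tau=\sum_j\gamma^{*}_j\bigl(-v(\mathfrak a_j)-j\bigr)\le\max_j(-v(\mathfrak a_j)-j)$, the maximum being finite as $\mathfrak a_N=\mathcal{I}(V_\theta)\ne 0$. Taking the supremum over $\tau$ yields $(\hat r^{\mathfrak a})^{\An}(v)\le\max_j(-v(\mathfrak a_j)-j)$, which equals $-\sigma(v)(\mathfrak a)$ by the definition of the Gauss extension $\sigma(v)$ applied to $\mathfrak a=\sum_j\mathfrak a_j s^j$ on $X\times\mathbb C$; this is \eqref{eq:PSanupp}.

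The step I expect to be the main obstacle is the lower bound $v(\phi^{\mathcal{F}}_\tau)\ge c_\tau(v)$. The obvious route --- bound $v(\phi^{\mathcal{F}}_{\tau,m})\ge\tfrac1m v(\mathfrak a_{m,\lfloor-\tau m\rfloor})$ for each $m$ and pass to the limit --- fails, since Lelong numbers are not continuous under increasing limits of psh functions, so $v(\phi^{\mathcal{F}}_\tau)$ may be strictly smaller than $\inf_m v(\phi^{\mathcal{F}}_{\tau,m})$. This is why one must argue through \eqref{eq:varphianv}, invoke strong openness (Theorem~\ref{thm:openness}) to replace $\mathcal{I}(j\phi^{\mathcal{F}}_\tau)$ by a multiplier ideal of a genuine analytic singularity type, and use the $\mathcal{I}$-equivalence $\hat r^{\mathfrak a}_\tau\simeq_{\mathcal I}\phi^{\mathcal{F}}_\tau$ to transfer the bound back to the Phong--Sturm test curve.
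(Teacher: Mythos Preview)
Your proof is correct but takes a genuinely different route from the paper. The paper argues by reduction to the ample case: it tensors $L$ with a sufficiently ample $A$, extends the flag configuration $\mathfrak a$ to a flag configuration $\mathfrak b$ of $L\otimes A$ by padding with $\mathfrak a_N$ and then $\mathcal O_X$ (with a free parameter $C$ controlling how far out $\mathcal O_X$ sits), uses the obvious monotonicity $(\hat r^{L,\mathfrak a})^{\An}\le(\hat r^{L\otimes A,\mathfrak b})^{\An}$, invokes the ample case already established in Lemma~\ref{lma:PSanample} via the test-configuration formalism, and finally lets $C\to\infty$ to kill the extra term. Your argument, by contrast, is entirely intrinsic to big $L$: you use $\hat r^{\mathfrak a}_\tau=P[\phi^{\mathcal F}_\tau]_{\mathcal I}$ to reduce to bounding $v(\phi^{\mathcal F}_\tau)$ from below, then combine strong openness (Theorem~\ref{thm:openness}) with the valuative formula for multiplier ideals of ideal powers to get $v(\phi^{\mathcal F}_\tau)\ge c_\tau(v)$, and finish with a convexity computation. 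The paper's proof is shorter and structural, but leans on the ample theory as a black box; yours is self-contained for big $L$ and bypasses test configurations entirely, at the cost of invoking strong openness and the log-resolution description of $\mathcal I(\mathfrak a^t)$. Two small remarks: in your multiplier-ideal bound the correct error term for $v=c\,\ord_E$ is $A_X(v)=cA_X(E)$ rather than $A_X(E)$, which is harmless after dividing by $j$; and your LP step is simpler than you state, since for each $m$ the optimal $\beta^*$ already gives $-\tfrac{1}{m}v(\mathfrak a_{m,\lfloor-\tau m\rfloor})-\tfrac{\lfloor-\tau m\rfloor}{m}=\sum_j\tfrac{\beta^*_j}{m}(-v(\mathfrak a_j)-j)\le\max_j(-v(\mathfrak a_j)-j)$, so no weak-limit identification of $c_\tau(v)$ is needed.
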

\begin{proof} 
We first reduce the general case to the case when $L$ is very ample and $L\otimes \mathfrak{a}_j$ is globally generated for all $j$. 

Suppose that $A$ is a very ample line bundle so that $L\otimes A$ is ample and such that $A\otimes L\otimes \mathfrak{a}_j$ is globally generated for all $j$. Such $A$ exists by \cite[Theorem~II.7.6]{Har} for example. Choose any $C\in\mathbb{N}_{>0}$. Let $\mathfrak{b}$ be the following flag configuration of $L\otimes A$:
\[
\mathfrak{b}_k=\left\{ 
\begin{aligned}
\mathfrak{a}_k,\quad & k=1,\dots,N;\\
\mathfrak{a}_N,\quad & k=N+1,\dots,N+C-1;\\
\mathcal{O}_X,\quad & k=N+C.
\end{aligned}
\right.
\]
So $\mathfrak{b}_k\supseteq \mathfrak{a}_k$ for all $k\in \mathbb{N}$.
After choosing a smooth positive metric on $A$, it is obvious that
\[
(\hat r^{L,\mathfrak a})^{\An}\leq (\hat r^{L \otimes A,\mathfrak b})^{\An}.
\]
If we managed to prove the result for $\mathfrak{b}$ and $L \otimes A$, then we would have
\[
(\hat r^{L,\mathfrak a})^{\An}\leq (\hat r^{L \otimes A,\mathfrak b})^{\An}\leq \max\left\{\max_{j=1,\dots,N} (-v(\mathfrak{a}_j)-j),-N-C \right\}.
\]
Letting $C\to \infty$, we conclude \eqref{eq:PSanupp}. Therefore, the Lemma is reduced to proving \eqref{eq:PSanupp} for ample bundles $L$. But this was proved in  Lemma~\ref{lma:PSanample}.
\end{proof}

We will also need the following general approximation result for geodesic rays, reminiscent of \cite[Theorem~3.19]{DX20}:

\begin{lemma}\label{lem: ray_approx} Let $\{\phi_t\}_t \in \mathcal R^1_\theta$ be a ray in a big class $\{\theta\}$.  There exists an increasing sequence of subgeodesic rays $\{\psi^j_t\}_t \subset \textup{PSH}(X,\theta)$, not necessarily emanating from $V_\theta$,  such that $\theta_{\psi^j_t} \geq \varepsilon_j \omega$  for some $\varepsilon_j \searrow 0$, and $P[\hat \psi^j_\tau] \nearrow P[\hat \phi_\tau] = \hat \phi_\tau$ a.e. for any $\tau \in \mathbb R$. 
\end{lemma}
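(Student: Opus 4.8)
The plan is to dualize via the Ross--Witt Nyström correspondence and carry out the approximation at the level of test curves. By Theorem~\ref{thm:RWNpresc}(ii)--(iii) the ray $\{\phi_t\}_t\in\mathcal R^1_\theta$ corresponds to a finite energy maximal test curve $\{\hat\phi_\tau\}_\tau$ relative to $V_\theta$; translating in $\tau$ we may assume $\tau^+_\phi\le 0$. Since $\{\theta\}$ is big, $\{\theta-\varepsilon\omega\}$ is big for all small $\varepsilon>0$; fix such a sequence $\varepsilon_j\searrow 0$. I will produce relative test curves $\{\zeta^j_\tau\}_\tau$ with: (i) $\theta_{\zeta^j_\tau}\ge\varepsilon_j\omega$ for all $\tau$; (ii) $\zeta^j_\tau\le\hat\phi_\tau$ for all $\tau$; (iii) $\zeta^j$ increasing in $j$; (iv) $\int_X\theta^n_{\zeta^j_{-\infty}}>0$; and (v) $P[\zeta^j_\tau]\nearrow\hat\phi_\tau$ a.e.\ for every $\tau$. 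Granting this, set $\psi^j_t:=\check\zeta^j_t$. Then $\{\psi^j_t\}_t$ is a subgeodesic ray by Lemma~\ref{lma:testcurvlegusc} (applied to the class $\theta-\varepsilon_j\omega$), it lies in $\textup{PSH}(X,\theta-\varepsilon_j\omega)$ by (i) so that $\theta_{\psi^j_t}\ge\varepsilon_j\omega$, it is increasing in $j$ by (iii) since $v\mapsto\check v$ is order preserving, and $\widehat{\psi^j}_\tau=\zeta^j_\tau$ because $\zeta^j$ is already a ($\tau$-concave, $\tau$-usc) test curve; hence $P[\widehat{\psi^j}_\tau]=P[\zeta^j_\tau]\nearrow\hat\phi_\tau$ a.e.\ by (v), which is the assertion (note that these $\psi^j$ emanate from $\check\zeta^j_{0^+}=\sup_\tau\zeta^j_\tau$, not from $V_\theta$, as permitted).

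For the construction of $\zeta^j$ I perturb, truncate, and then take a concave envelope. Fix $\varepsilon_*>0$ with $\{\theta-\varepsilon_*\omega\}$ big, and $\tau_j\nearrow\tau^+_\phi$. Using that $\{\theta\}$ is big and $\int_X\theta^n_{\hat\phi_{\tau_j}}>0$ (valid since $\tau_j<\tau^+_\phi$), choose $\rho^{(j)}\in\textup{PSH}(X,\theta-\varepsilon_*\omega)$ with $\sup_X\rho^{(j)}=0$ and $\rho^{(j)}\le\hat\phi_{\tau_j}$. Put $\xi^j_\tau:=(1-s_j)\hat\phi_\tau+s_j\rho^{(j)}$ for $\tau\le\tau_j$ and $\xi^j_\tau:=-\infty$ for $\tau>\tau_j$, where $s_j\searrow 0$ is fixed below; this is $\tau$-concave, $\tau$-decreasing, $\tau$-usc, has $\theta_{\xi^j_\tau}\ge s_j\varepsilon_*\omega$, and satisfies $\xi^j_\tau\le\hat\phi_\tau$ for all $\tau$ (for $\tau\le\tau_j$ because $\rho^{(j)}\le\hat\phi_{\tau_j}\le\hat\phi_\tau$). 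Set $\zeta^1:=\xi^1$ and let $\zeta^j$ be the $\tau$-concave envelope of $\tau\mapsto\max(\zeta^{j-1}_\tau,\xi^j_\tau)$. The concave envelope is the inverse partial Legendre transform of $\check{(\cdot)}$, so by the argument of Lemma~\ref{lma:testcurvlegusc} applied to $\theta-\varepsilon_j\omega$ it preserves membership of all slices in $\textup{PSH}(X,\theta-\varepsilon_j\omega)$, provided $\varepsilon_j:=s_j\varepsilon_*\le\varepsilon_{j-1}$ (so that the $\max$ is $\varepsilon_j$-positive); one also checks that it preserves $\tau$-monotonicity for these test-curve-shaped families. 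An easy induction now gives (i)--(iv): $\zeta^j$ is a relative test curve with $\theta_{\zeta^j_\tau}\ge\varepsilon_j\omega$; $\zeta^j\ge\max(\zeta^{j-1},\xi^j)\ge\zeta^{j-1}$ gives (iii); since $\max(\zeta^{j-1}_\tau,\xi^j_\tau)\le\hat\phi_\tau$ and $\tau\mapsto\hat\phi_\tau$ is itself concave, its concave envelope $\hat\phi$ dominates $\zeta^j$, giving (ii); and $\zeta^j_{-\infty}\ge\zeta^1_{-\infty}=(1-s_1)V_\theta+s_1\rho^{(1)}$, whose form shows $\theta_{\,\cdot\,}\ge s_1\varepsilon_*\omega>0$, hence positive mass, giving (iv) and $\{\zeta^j_\tau\}_\tau\in\textup{MTC}(X,\theta)$ (so $\check\zeta^j$ is well defined). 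A conceptually cleaner alternative is $\zeta^j_\tau:=P_{\theta-\varepsilon_j\omega}(\hat\phi_\tau)$, which is automatically $\le\hat\phi_\tau$, increasing in $j$, $\tau$-concave and $\tau$-decreasing; but its non-degeneracy and the convergence (v) are harder to control, which is why I favour the explicit curves above.

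The main obstacle is establishing (v). Since $\zeta^j_\tau\le 0$ we have $\zeta^j_\tau\le P[\zeta^j_\tau]$, while $\zeta^j_\tau\le\hat\phi_\tau=P[\hat\phi_\tau]$ and monotonicity of $P[\cdot]$ give $P[\zeta^j_\tau]\le\hat\phi_\tau$; and $P[\zeta^j_\tau]$ is increasing in $j$. Thus it suffices to show $\zeta^j_\tau\to\hat\phi_\tau$ a.e.\ for each fixed $\tau$. For $\tau<\tau^+_\phi$ and $j$ large (so $\tau\le\tau_j$) we have $\zeta^j_\tau\ge\xi^j_\tau=(1-s_j)\hat\phi_\tau+s_j\rho^{(j)}$, so it is enough that $s_j\rho^{(j)}\to 0$ a.e.; the $\rho^{(j)}$ are $\sup$-normalized $(\theta-\varepsilon_*\omega)$-psh functions, hence a relatively compact family in $L^1$, so $\|\rho^{(j)}\|_{L^1(X,\omega^n)}$ is bounded, and choosing $s_j\searrow 0$ with $\sum_j s_j\|\rho^{(j)}\|_{L^1}<\infty$ forces $\sum_j s_j|\rho^{(j)}|<\infty$ a.e.\ by Fubini, whence $s_j\rho^{(j)}\to 0$ a.e.; the squeeze $\xi^j_\tau\le P[\zeta^j_\tau]\le\hat\phi_\tau$ then gives $P[\zeta^j_\tau]\nearrow\hat\phi_\tau$ a.e. The value at the single point $\tau=\tau^+_\phi$ (where $\hat\phi_{\tau^+_\phi}=\lim_{\sigma\nearrow\tau^+_\phi}\hat\phi_\sigma=P[\hat\phi_{\tau^+_\phi}]$ may or may not be $-\infty$) requires a minor separate adjustment of the truncation and I will not detail it here. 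The points where bigness and positive mass are genuinely used are the selection of $\rho^{(j)}$ below $\hat\phi_{\tau_j}$ and the positivity of the masses $\int_X\theta^n_{\zeta^j_{-\infty}}$; the uniform bound on Lelong numbers over $\textup{PSH}(X,\theta)$ (as in the proof of Lemma~\ref{lma:Imodeldeclelong}) and Lemma~\ref{lma:Imodeldeclelong} itself are the inputs that keep the envelopes and Lelong numbers of the $\zeta^j$ under control.
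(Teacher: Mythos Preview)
Your approach is essentially the paper's, but executed on the test-curve side and with two technical slips.

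The more substantive one is your fixed $\varepsilon_*$. You claim that, given $\int_X\theta^n_{\hat\phi_{\tau_j}}>0$, one can find $\rho^{(j)}\in\textup{PSH}(X,\theta-\varepsilon_*\omega)$ with $\rho^{(j)}\le\hat\phi_{\tau_j}$, for a \emph{uniform} $\varepsilon_*>0$. This can fail: if $v\in\textup{PSH}(X,\theta-\varepsilon_*\omega)$ satisfies $v\le\hat\phi_{\tau_j}$, then $\int_X\theta^n_v\ge\varepsilon_*^n\int_X\omega^n$ while $\int_X\theta^n_v\le\int_X\theta^n_{\hat\phi_{\tau_j}}$ by monotonicity of masses; as $\tau_j\nearrow\tau^+_\phi$ the latter may go to zero, a contradiction. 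What positive mass does give (this is \cite[Proposition~3.6]{DX21}, the input the paper invokes and you leave implicit) is some $\varepsilon^{(j)}>0$ depending on $\hat\phi_{\tau_j}$. Your construction survives with $\varepsilon^{(j)}$ in place of $\varepsilon_*$, provided you also choose $s_j$ so that $\varepsilon_j:=s_j\varepsilon^{(j)}$ is decreasing.

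The second slip is in the convergence step. You assert $\sup_X\rho^{(j)}=0$, but $\rho^{(j)}\le\hat\phi_{\tau_j}$ only gives $\sup_X\rho^{(j)}\le 0$; without a lower bound on $\sup_X\rho^{(j)}$ the family need not be $L^1$-bounded, so your Fubini argument is not justified as written. The paper sidesteps this neatly by passing to $P[v_\delta]$ in the squeeze: since model potentials have $\sup_X P[v_\delta]=0$, one gets uniform $L^1$-bounds, and
\[
(1-\delta)\,\hat\phi_\tau+\delta\,P[v_\delta]\;\le\;P[\hat\eta^\delta_\tau]\;\le\;\hat\phi_\tau
\]
forces $L^1$-convergence directly.

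For comparison, the paper works on the ray side throughout: it sets $\hat\eta^\delta_\tau:=(1-\delta)\hat\phi_\tau+\delta v_\delta$ on $(-\infty,\tau^+_\phi-\delta]$ and $-\infty$ beyond, then simply takes $\psi^j_t:=\max_{k\le j}\eta^{1/k}_t$ to force monotonicity. Your recursive concave-envelope construction is the Legendre dual of this (since the Legendre transform of $\max(u_t,v_t)$ is the concave envelope of $\max(\hat u_\tau,\hat v_\tau)$), so the two are equivalent; but working on the ray side avoids having to verify that concave envelopes preserve $\tau$-monotonicity and psh-ness of the slices, and makes the proof considerably shorter.
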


\begin{proof} For any $\delta> 0$, due to $\tau$-concavity of test curves, the potential $\hat \phi_{\tau^+_\phi-\delta}$ has non-zero mass, so by \cite[Proposition 3.6]{DX21} there exists $v_\delta \in \textup{PSH}(X,\theta)$ such that $v_\delta \leq \hat \phi_{\tau^+_\phi-\delta}$ and $\theta_{v_\delta} \geq \varepsilon_\delta \omega$ for some $\varepsilon_\delta>0$.

Let $\{\eta^\delta_t\}_t$ be the subgeodesic associated with the test curve $\hat \eta ^\delta_\tau  := (1-\delta )\hat \phi_\tau + \delta v_\delta$ for $\tau \leq \tau^+_\phi-\delta$ and $\hat \eta ^\delta_\tau := -\infty$ for $\tau > \tau^+_\phi-\delta$.

Since $\hat \eta ^\delta_\tau \leq \hat \phi_\tau$ for all $\tau \in \mathbb R$, we get that $\eta^\delta_t \leq \phi_t$ and $\theta_{\eta^\delta_t} \geq \delta \theta_{v_\delta} \geq \delta \varepsilon_\delta \omega$ for all $t \geq 0$. 
Also, we have that 
$$\hat \phi_\tau = P [\hat \phi_\tau] \geq P [\hat \eta^\delta_\tau] \geq (1-\delta)P [\hat \phi_\tau] + \delta P[v_\delta] \geq (1-\delta) \hat \phi_\tau + \delta P[v_\delta].$$
Since $\sup_X P [v_\delta] = 0$, letting $\delta \to 0$ we obtain that $P [\hat \eta^\delta_\tau] \to \hat \phi_\tau$ in the $L^1$ topology.

However, $\delta \to \eta^\delta_t$ is not easily seen to be $\delta$-increasing. To address this, we introduce the  sequence of subgeodesics $\psi^j_t := \max_{k=1,\dots,j}\eta^{{1}/{k}}_t$, that satisfies the requirements of the lemma and is additionally increasing.
\end{proof}

Finally, we arrive at the main result of this section:

\begin{theorem}\label{prop:BBJmaxappbyflagconf}
Let $\{\phi_t\}_t \in \mathcal R^1_{\mathcal{I}}(X,\theta)$ be an $\mathcal{I}$-maximal geodesic ray with potential $\Phi \in \textup{PSH}(X \times \Delta,p_1^*\theta)$, normalized by $\sup_X \phi_1 = 0$. For $m \geq 0$, let 
\[
\mathcal I(2^m\Phi) = \mathfrak{a}^m_0 +\mathfrak{a}^m_1 s+\dots+\mathfrak{a}_{N_m-1}^m s^{N_m-1}+\mathfrak{a}_{N_m}^m (s^{N_m}) \subseteq \mathcal O_{X \times \Delta},
\]
as in \eqref{eq:ImPhiexp}. 
Let $\{\phi^m_t\}_t:=\big\{2^{-m}r^{{L^{2^m}},\mathcal I(2^m\Phi)}_t\big\}_t \subset \textup{PSH}(X,\theta)$ be a rescaled Phong--Sturm ray.
We have that $\phi^m_t \searrow \phi_t$ for all $t \geq 0$. In particular, $d_1^c(\{\phi^m_t\}_t, \{\phi_t\}_t) \to 0$.
\end{theorem}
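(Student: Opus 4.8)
The strategy is to show $\phi^m_t \searrow \phi_t$ pointwise for each fixed $t$, after which $d_1^c$-convergence follows from the dominated convergence / monotone convergence results for the chordal metric (e.g. \cite[Proposition~4.6]{DDNL5} combined with the fact that these rays all lie in $\mathcal R^1(X,\theta)$ and are squeezed between $\phi_t$ and $\phi^0_t$). By the Ross--Witt Nystr\"om correspondence (Theorem~\ref{thm:RWNpresc}) and $\tau$-Legendre duality, it is equivalent to prove that the maximized test curves satisfy $\hat\phi^m_\tau \searrow \hat\phi_\tau$ for every $\tau$. Since these are $\mathcal I$-maximal (hence $\mathcal I$-model, by Lemma~\ref{lem: max_I_max_filt}) potentials, and since such potentials are determined by their analytifications on $X^{\Div}$ via \cite[Corollary~2.16]{DX20}, the whole problem reduces to a statement about divisorial valuations: show that for all $v \in X^{\Div}$,
\[
(\hat\phi^m)^{\An}(v) \searrow (\hat\phi)^{\An}(v) = -\sigma(v)(\Phi).
\]

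First I would identify $\hat\phi^m$ with the Phong--Sturm test curve of the flag configuration $\mathcal I(2^m\Phi)$ of $L^{2^m}$, rescaled by $2^{-m}$. Here one must be slightly careful: by Lemma~\ref{lem:PhiinducePSH_1} the top ideal is $\mathfrak a^m_{N_m} = \mathcal I(2^m\phi_\infty)$ where $\phi_\infty$ is the limiting potential $u_0$ of the ray (which is $V_\theta$ up to the normalization, since the ray lies in $\mathcal R^1(X,\theta)$), so the flag configuration is genuinely adapted to $\{\theta\}$ and $\mathfrak a^m_{N_m} = \mathcal I(V_{\theta^{(2^m)}})$ as required in Definition~\ref{def:flagconf}. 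Then combining the lower bound \eqref{eq:PSLlowerbd} (valid once one passes to a twist $L^{2^m}\otimes A$ making the ideals globally generated, exactly as in Lemma~\ref{lem: PS_ray_NA_formula}) with the upper bound of Lemma~\ref{lem: PS_ray_NA_formula}, I obtain the exact formula
\[
(\hat r^{L^{2^m},\mathcal I(2^m\Phi)})^{\An}(v) = \sigma(v)\big(\mathcal I(2^m\Phi)\big) = -\min_j\big(v(\mathfrak a^m_j) + j\big), \qquad v \in X^{\Div}.
\]
Rescaling, $(\hat\phi^m)^{\An}(v) = 2^{-m}\,\sigma(v)(\mathcal I(2^m\Phi))$ (interpreting $\sigma(v)$ via the Gauss extension on $X\times\mathbb C$, as in Proposition~\ref{prop:psiandPhi}).

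With this formula in hand, the convergence $(\hat\phi^m)^{\An}(v) \to -\sigma(v)(\Phi)$ is precisely the well-known statement — obtained from the Ohsawa--Takegoshi extension theorem, as recorded in \cite[Lemma~B.4]{BBJ21} — that $2^{-m}\sigma(v)(\mathcal I(2^m\Psi)) \searrow \sigma(v)(\Psi)$ for any qpsh potential $\Psi$ on $X\times\Delta$ (applied with $\Psi = \Phi$, noting $\tau^+_\phi \leq 0$ so that $\Phi$ extends across $X\times\{0\}$); monotonicity in $m$ along the dyadic sequence comes from subadditivity of multiplier ideals, exactly as in the final paragraph of the proof of Proposition~\ref{prop:PhiinducePSH}. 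This gives $(\hat\phi^m)^{\An} \searrow (\hat\phi)^{\An}$ on $X^{\Div}$. To upgrade to $\hat\phi^m_\tau \searrow \hat\phi_\tau$ as potentials, I apply the Legendre transform $v \mapsto \tau$ (using concavity of $\tau \mapsto (\hat\phi^m_\tau)^{\An}(v)$ and Lemma~\ref{lma:DC}, precisely as in the proof of Theorem~\ref{thm:Imodelcurvenabij}), deduce $(\hat\phi^m_\tau)^{\An} \searrow (\hat\phi_\tau)^{\An}$ on $X^{\Div}$, and then invoke $\mathcal I$-modelness together with \cite[Corollary~2.16]{DX20} to conclude $\hat\phi^m_\tau \searrow \hat\phi_\tau$ everywhere on $X$. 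Finally, translating back via Theorem~\ref{thm:RWNpresc}(i) gives $\phi^m_t \searrow \phi_t$ for all $t$, and since all rays are dominated by $\phi^0_t$ and decrease to the finite energy ray $\phi_t \in \mathcal R^1(X,\theta)$, the radial energies converge (by the test-curve energy formula in Theorem~\ref{thm:RWNpresc}(iii) and monotone convergence of the volumes $\int_X\theta_{\hat\phi^m_\tau}^n$, cf. \cite[Proposition~4.6]{DDNL5}), hence $d_1^c(\{\phi^m_t\}_t,\{\phi_t\}_t) \to 0$.

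The main obstacle I anticipate is the bookkeeping around the top ideal and the normalization: one must make sure that $\mathcal I(2^m\Phi)$ really defines a \emph{flag configuration of $\{\theta\}$} in the sense of Definition~\ref{def:flagconf} (i.e. that $\mathfrak a^m_{N_m} = \mathcal I(V_{\theta^{(2^m)}})$), which requires knowing the ray emanates from $V_\theta$ — this is where the hypothesis $\{\phi_t\}_t \in \mathcal R^1_{\mathcal I}(X,\theta)$ and the normalization $\sup_X\phi_1 = 0$ are used, via Lemma~\ref{lem:PhiinducePSH_1} and the definition of $\mathcal R^1(X,\theta)$ (finite energy rays have $\phi_0 = V_\theta$). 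A secondary subtlety is that the Phong--Sturm ray of a flag configuration is defined through the \emph{maximization} of its (possibly non-maximal) test curve, so one never directly controls $\phi^{\mathcal F^{\mathfrak a^m}}_\tau$ itself, only $P[\phi^{\mathcal F^{\mathfrak a^m}}_\tau] = \hat r^{\mathfrak a^m}_\tau$; but since the analytification only sees the envelope (it factors through singularity types), this causes no genuine difficulty and is handled exactly by the already-established Lemmas~\ref{lma:PSanample} and~\ref{lem: PS_ray_NA_formula}.
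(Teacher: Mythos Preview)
There is a genuine gap in your argument. You claim the \emph{exact} formula
\[
(\hat r^{L^{2^m},\mathcal I(2^m\Phi)})^{\An}(v)=-\min_j\big(v(\mathfrak a^m_j)+j\big),
\]
by combining the upper bound of Lemma~\ref{lem: PS_ray_NA_formula} with the lower bound \eqref{eq:PSLlowerbd}. But \eqref{eq:PSLlowerbd} is only stated under the hypothesis that each $L^{2^m}\otimes \mathfrak a^m_j$ is globally generated, and there is no reason for this to hold in the big (non-ample) setting. Your proposed fix, to ``pass to a twist $L^{2^m}\otimes A$ exactly as in Lemma~\ref{lem: PS_ray_NA_formula}'', misreads that lemma: there the twist produces a \emph{larger} flag configuration $\mathfrak b\supseteq \mathfrak a$ on a \emph{larger} bundle, so one only gets $(\hat r^{L,\mathfrak a})^{\An}\leq (\hat r^{L\otimes A,\mathfrak b})^{\An}$, an inequality that (after letting the padding $C\to\infty$) recovers the \emph{upper} bound. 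Twisting cannot give you a lower bound for $(\hat r^{L,\mathfrak a})^{\An}$, because it moves everything the wrong way.

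This is exactly the obstacle the paper's proof addresses with the detour through Lemma~\ref{lem: ray_approx}: one approximates $\Phi$ from below by subgeodesic potentials $\Psi^j$ having \emph{strict} positivity $\theta_{\psi^j_t}\geq\varepsilon_j\omega$. That extra positivity lets one absorb an ample factor and apply Siu's uniform global generation (Lemma~\ref{lem: Siu_global_uniform generatedness}) to conclude that $L^{2^m}\otimes\mathfrak a^{m,j}_k$ \emph{is} globally generated once $m\geq m_0(j)$; only then is \eqref{eq:PSLlowerbd} available, for $\Psi^j$ rather than $\Phi$. One obtains $-2^{-m}\sigma(v)(\mathcal I(2^m\Psi^j))\leq (\hat\phi^m)^{\An}(v)$, and a double limit (first $m\to\infty$ via \cite[Lemma~B.4]{BBJ21}, then $j\to\infty$ via Lemma~\ref{lma:DC} and the convergence $P[\hat\psi^j_\tau]\nearrow\hat\phi_\tau$) yields the missing inequality $\hat\phi^{\An}(v)\leq\lim_m(\hat\phi^m)^{\An}(v)$. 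The remainder of your plan (monotonicity from subadditivity, passage from $(\cdot)^{\An}$ on $X^{\Div}$ to potentials via $\mathcal I$-modelness and \cite[Corollary~2.16]{DX20}, and the $d_1^c$-convergence) is in line with the paper.
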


\begin{proof} 
Let $\{\mathcal F^{\lambda,m}\}_\lambda$ be the filtrations of $R(X,L^{2^m})$ induced by the flag configurations $\{\mathfrak{a}^m_k\}_k$ of $L^{2^m}$, defined in \eqref{eq: filtrate_flag_def}. 

By the subadditivity of multiplier ideals, we have that $\mathcal I(2^{m+1}\Phi) \subseteq \mathcal I(2^m\Phi)^2$. In particular, $\mathfrak a^{m+1}_k \subseteq  \sum_j \mathfrak a^{m}_{j}\mathfrak a^{m}_{k-j}$.
As a result, $\mathfrak a^{m+1}_{r,\lambda} \subset \mathfrak a^{m}_{2r,\lambda}$ (recall \eqref{eq: graded_ideal_def}).

We obtain that $\mathcal F^{\lambda,m+1} H^0(X, L^{2^{m+1}r}) \subset \mathcal F^{\lambda,m} H^0(X, L^{2^{m} 2r})$. This implies that $\hat \phi^{m+1}_\tau \leq \hat \phi^{m}_\tau$, hence $\{\phi^m_t\}_t$ is indeed $m$-decreasing.

By Lemma~\ref{lem: PS_ray_NA_formula},
\begin{equation}\label{eq: non_archim_est}
    \hat \phi^{m,\An}(v) \leq  -\frac{1}{2^m} \min_j (v(\mathfrak a_j^m) + i)=-2^{-m}\sigma(v)(\mathcal{I}(2^m\Phi))
\end{equation}
for any $v \in X^\Div$.
On the other hand, by \cite[Lemma~B.4]{BBJ21}, for any $v\in X^{\Div}$, we have
\begin{equation}\label{eq:sigmavupp}
 -2^{-m}\sigma(v)(\mathcal{I}(2^m\Phi))\leq -\sigma(v)(\Phi)+2^{-m}(A_X(v)+1).
\end{equation}
The right-hand side of \eqref{eq:sigmavupp} is nothing but $\hat \phi^{\An}(v)+2^{-m}(A_X(v)+1)$, so we obtain that
\begin{equation}\label{eq:phimphi2m}
\hat \phi^{m,\An}(v) \leq  \hat \phi^{\An}(v)+2^{-m}(A_X(v)+1).
\end{equation}

Let $\{\psi^j_t\}_t \subset \textup{PSH}(X,\theta)$ be the sequence of subgeodesics from Lemma \ref{lem: ray_approx}, approximating $\{\phi_t\}_t$ from below. Let $\Psi^j \in \textup{PSH}(X \times \Delta, \pi^*\theta)$ be the potentials associated to the subgeodesics $\{\psi^j_t\}_t$. Then
\[
\mathcal I(2^m\Psi^j) = \mathfrak{a}^{m,j}_0 +\mathfrak{a}^{m,j}_1 s+\dots+\mathfrak{a}_{N_{m,\varepsilon}-1}^{m,j} s^{N_{m,j}-1}+\mathfrak{a}_{N_{m,j}}^{m,j} (s^{N_{m,j}}).
\]
We have that $\theta_{\psi^j_t} \geq \varepsilon_j \omega$. We claim that $L^{2^m} \otimes \mathfrak{a}^{m,j}_k$ is globally generated for $m \geq m_0(j)$. Indeed. Let $(G,h)$ be a hermitian ample line bundle on $X$ with curvature equal to $\eta>0$.

For $m_1(j)$ big enough, we have that $ \theta_{\psi^j_t} \geq \frac{1}{2^{m_1}} \eta$. As a result, we can apply  Lemma \ref{lem: Siu_global_uniform generatedness} to the coherent sheaf $\pi_1^* (L^{2^{m_1}} \otimes G^{-1}) \otimes \mathcal I (2^{m_1} \Psi^j)$ and conclude existence of $k>0$ such that $p_1^* (L^{2^m} \otimes G^{k-2^{m-m_1}}) \otimes \mathcal I (2^{m} \Psi^j)$ for all $m \geq m_2(j) \geq m_1(j)$.
Since $p_1^* G^{2^{m-m_1} - k}$ is globally generated for $m$ big enough, we get that $p_1^* (L^{2^m}) \otimes \mathcal I (2^{m} \Psi^j)$ is globally generated for all $m \geq m_0(j)$. The argument of \eqref{eq:ImPhiexp} now  yields the claim.

We fix $m \geq m_0(j)$. Then by \eqref{eq:PSLlowerbd}, for any $v \in X^\Div$ we find 
\[
-2^{-m} \sigma(v)(\mathcal I(2^m\Psi^j)) \leq 2^{-m} \big(\hat r^{L^{2^m},\mathcal I(2^m\Psi^j)}\big)^{\An}(v).
\]
Due to $\Psi^j \leq \Phi$ we observe that 
\[
2^{-m} \big(\hat r^{L^{2^m},\mathcal I(2^m\Psi^j)}\big)^{\An}(v)\leq 2^{-m} \big(\hat r^{L^{2^m},\mathcal I(2^m\Phi)}\big)^{\An}(v)=\big(\hat \phi^{m} \big)^\An(v).
\]
Comparing the last two inequalities and letting $m\to\infty$, by \cite[Lemma B.4]{BBJ21} and Proposition \ref{prop:psiandPhi} we find
\[
\big(\hat \psi^{j}\big)^\An(v) = -\sigma(v)(\Psi^j) = \lim_{m \to \infty} -\frac{1}{2^m} \sigma(v)(\mathcal I(2^m\Psi^j))\leq \lim_{m\to\infty} \big(\hat \phi^{m} \big)^\An(v).
\]
Due to Lemma \ref{lma:DC} and the last part of Lemma \ref{lem: ray_approx} we have that $\hat \psi^{j,\An}(v) \nearrow \hat \phi^\An$. Hence, letting $j\to\infty$ we find that
\begin{equation}\label{eq:phimphi3m}
\hat \phi^{\An}(v)\leq \lim_{m\to\infty} \hat \phi^{m,\An}(v) \leq \phi^{m,\An}(v).
\end{equation}
By Theorem \ref{thm:NAtoNAbij} we obtain that $\phi_t \leq \phi^m_t$. Let $\chi_t := \lim_t \phi^m_t \in \textup{PSH}(X,\theta), \ t >0$ be the $\mathcal I$-maximal limit ray. We have  $\phi_t \leq \chi_t \leq \phi^m_t$.

Letting $m \to \infty$ in \eqref{eq:phimphi2m} and  \eqref{eq:phimphi3m}, we arrive at $\chi^\An = \phi^\An.$ Using Theorem \ref{thm:NAtoNAbij} we conclude that $\{\chi_t\}_t = \{\psi_t\}_t$, finishing the proof.
\end{proof}

\section{Applications to Ding stability}

In this section, we fix a connected projective manifold $X$ of dimension $n$. Take a smooth closed real $(1,1)$-form $\theta$ on $X$ representing a big cohomology class $\{\theta\}$ and $V:=\int_X \theta_{V_{\theta}}^n$.

We consider a general qpsh function  $\psi$ on $X$. In addition, one can consider another qpsh function $\chi$ on $X$ with analytic singularity type. We will apply the results from previous sections to study the stability notions associted to the KE type equation \eqref{eq: KE_cont equation}.

In what follows, set $\mu:=e^{\chi-\psi}\omega^n$ to be the tame measure as in \cite{DZ22}. 
Let $\delta_\mu:=\delta_\mu(\{\theta\})$ be the delta invariant defined in \eqref{eq: delta_psi_def}. 
And for any $u\in \PSH(X,\theta)$, we set
$
c_\mu[u] := \sup\left\{\gamma \geq 0 : \int_X e^{-\gamma u} \,\mathrm{d}\mu <\infty\right\}.
$

As in  \cite{DZ22}, we will investigate the following energy functionals going back to \cite{Ding88}. For any $\lambda\in(0,c_\mu[V_\theta])$, set 
\begin{flalign*}
\mathcal{L}^\lambda_\mu(\varphi):=-\frac{1}{\lambda}\log\int_Xe^{-\lambda\varphi} \mathrm{d}\mu\text{ and }\mathcal D_\mu^\lambda(\varphi)=\mathcal{L}^\lambda_\mu(\varphi)-I_\theta(\varphi)\text{ for }\varphi\in\mathcal E^1(X,\theta),
\end{flalign*}
where $I_\theta(\cdot)$ is the Monge--Amp\`ere energy and $\mathcal D_{\mu}^\lambda$ is called the \emph{$\lambda$-Ding functional}.

Given any finite energy sublinear subgeodesic ray $\{u_t\}_t$, it is convenient to introduce radial functionals:
\begin{equation}
\label{eq:def-radial-functionals}
    \mathcal{L}^\lambda_\mu\{u_t\}:=\varliminf_{t \to \infty} \frac{\mathcal  L_{\mu}^\lambda(u_t)}{t}, I_\theta\{u_t\}:=\lim_{t \to \infty} \frac{I_\theta(u_t)}{t}\mathcal,\text{ and } \mathcal D_{\mu}^\lambda \{u_t\} :=\mathcal{L}^\lambda_\mu\{u_t\}-I_\theta\{u_t\}.
\end{equation}

Then one has the following useful radial formulae (see \cite[(8) and (25)]{DZ22}):
\begin{equation}
\label{eq:radial-Ding-formula}
\mathcal{L}_{\mu}^\lambda\{u_t\} = \sup\{\tau \ \in \RR \ : \  c_\mu[\hat u_\tau] \geq \lambda \}
\end{equation}
and
\begin{equation}
    \label{eq:radial-I-formula}
    I_\theta\{u_t\}=\tau^+_{\hat u}+\frac{1}{V}\int_{-\infty}^{\tau^+_{\hat u}}\bigg(\int_X\theta^n_{\hat u_\tau}-\int_X\theta^n_{V_\theta}\bigg)\mathrm{d}\tau.
\end{equation}

\subsection{Continuity of the singularity exponent and the radial Ding energy}

We first prove the continuity of the radial Ding functional.
Denote by  $\mathcal S_\theta$ the space of singularity types of $\theta$-psh functions. 

We start with the following lemma, complementing \cite[Theorem~1.2]{DK01}:
\begin{lemma}\label{lem: cont_sing_exp_decreas} Let $u_j,u \in \textup{PSH}(X,\theta)$ such that $u_j \searrow u$ and $\int_X \theta_{u_j}^n \searrow \int_X \theta_u^n >0$. Then $c_\mu[u_j] \to c_\mu[u]$.
\end{lemma}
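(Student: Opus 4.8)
The statement asserts continuity of the singularity exponent (log canonical threshold relative to $\mu$) along a decreasing sequence $u_j \searrow u$, \emph{provided the masses converge to the positive mass of the limit}. This extra mass hypothesis is exactly what rescues us: without it, $c_\mu[u_j]$ can jump in the limit (a classical phenomenon), so the main point is to exploit mass convergence to transfer integrability from $u$ back to the $u_j$.

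First I would record the easy inequality. Since $u_j \geq u$, we have $e^{-\gamma u_j} \leq e^{-\gamma u}$ pointwise for $\gamma \geq 0$, so $\int_X e^{-\gamma u}\,\mathrm{d}\mu < \infty$ implies $\int_X e^{-\gamma u_j}\,\mathrm{d}\mu < \infty$; hence $c_\mu[u_j] \geq c_\mu[u]$ for every $j$, and therefore $\varliminf_j c_\mu[u_j] \geq c_\mu[u]$. The content is the reverse inequality $\varlimsup_j c_\mu[u_j] \leq c_\mu[u]$, equivalently: for any $\gamma < \varlimsup_j c_\mu[u_j]$ one has $\int_X e^{-\gamma u}\,\mathrm{d}\mu < \infty$, i.e. $\gamma \leq c_\mu[u]$.

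The mechanism I would use is the effective openness theorem (Theorem~\ref{thm:openness}) applied in the \emph{increasing} direction, combined with the mass hypothesis via the results of \cite{DDNL5}. Concretely: because $u_j \searrow u$ with $\int_X \theta_{u_j}^n \searrow \int_X \theta_u^n > 0$, by \cite[Lemma~4.3]{DDNL5} (used exactly as in the proof of Lemma~\ref{lma:Imodeldeclelong}), for each $\varepsilon \in (0,1)$ and all $j$ large there is $w_j \in \textup{PSH}(X,\theta)$ with $(1-\varepsilon)u_j + \varepsilon w_j \leq u$, equivalently $u_j \geq \tfrac{1}{1-\varepsilon}\big(u - \varepsilon w_j\big)$. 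Normalizing $w_j \leq 0$, and recalling $u \geq u_j \geq \tfrac{1}{1-\varepsilon}(u - \varepsilon w_j)$, this sandwiches $u_j$ between $u$ and a small perturbation of $u$. Now fix $\gamma$ with $\gamma < \varlimsup_j c_\mu[u_j]$; pick $j$ with $c_\mu[u_j] > \gamma$, so $e^{\chi - \gamma u_j} \in L^1$ (after absorbing $\psi$ into $\chi$ appropriately, since $\chi$ has analytic singularities and $\mathrm{d}\mu = e^{\chi-\psi}\omega^n$; one must be slightly careful that $\psi$ is only qpsh, but $\gamma < c_\mu[\cdot]$ is precisely the condition making $e^{-\gamma(\cdot)}\mathrm{d}\mu$ integrable). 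From $u_j \geq \tfrac{1}{1-\varepsilon}(u - \varepsilon w_j)$ we get $e^{-\gamma u_j} \leq e^{-\tfrac{\gamma}{1-\varepsilon}(u - \varepsilon w_j)} \leq e^{-\tfrac{\gamma}{1-\varepsilon}u}$ using $w_j \leq 0$ and $\gamma \varepsilon/(1-\varepsilon) \geq 0$. Hmm — this runs the wrong way.

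Let me reorganize the core step, since that is the real obstacle. The clean route: since $\gamma < c_\mu[u_j]$ for some large $j$, consider instead the increasing sequence obtained from $u$ itself. Apply \cite[Lemma~4.3]{DDNL5} in the form: there exist $w_j \leq 0$ with $(1-\varepsilon_j)u_j + \varepsilon_j w_j \leq u$ and $\varepsilon_j \to 0$ can be arranged as $j \to \infty$ (again mimicking Lemma~\ref{lma:Imodeldeclelong}). Then the functions $v_j := (1-\varepsilon_j)u_j + \varepsilon_j w_j$ satisfy $v_j \leq u$ and $v_j \nearrow u$ a.e. (since $u_j \searrow u$ forces, after this correction, increasing convergence a.e. to $u$ — this needs the standard argument that $\liminf v_j = u$ a.e. because $v_j \leq u$ and $v_j \geq u_j - \varepsilon_j|w_j|$ with $|w_j|$ uniformly $L^1$-bounded along a subsequence, hence $v_j \to u$ in $L^1$ and one passes to an increasing regularization). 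Now invoke Theorem~\ref{thm:openness}: if $\gamma < c_\mu[u]$ is what we want to deny, we instead argue directly — we want to show $\gamma \leq c_\mu[u]$, i.e. $e^{-\gamma u}\mathrm{d}\mu \in L^1$. We have $\gamma < c_\mu[u_j] \leq c_\mu[v_j]$ (as $v_j \leq u_j$... no, $v_j \leq u$, and $v_j$ vs $u_j$: $v_j - u_j = \varepsilon_j(w_j - u_j)$, sign unclear).

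Given these sign subtleties, the \textbf{cleanest plan} I would actually execute: combine the trivial inequality $c_\mu[u_j] \geq c_\mu[u]$ with an application of Theorem~\ref{thm:openness} to the \emph{decreasing-to-increasing} trick. Fix $\gamma < \sup_j c_\mu[u_j] = \lim_j c_\mu[u_j]$ (the sup is a limit since $c_\mu[u_j]$ is monotone decreasing in $j$ — wait, $u_j$ decreasing means $u_j$ more singular as $j$ grows? No: $u_j \searrow u$ means $u_j$ \emph{decreases}, so $u_j$ becomes \emph{more} singular, hence $c_\mu[u_j]$ \emph{decreases} in $j$; so $\lim_j c_\mu[u_j] = \inf_j c_\mu[u_j] \geq c_\mu[u]$, and we want equality, i.e. $\inf_j c_\mu[u_j] \leq c_\mu[u]$). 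So: suppose $\gamma < \inf_j c_\mu[u_j]$; then $e^{-\gamma u_j}\mathrm{d}\mu \in L^1$ for \emph{all} $j$. I want to conclude $e^{-\gamma u}\mathrm{d}\mu \in L^1$. Write $\mathrm{d}\mu = e^{\chi-\psi}\omega^n$ with $\chi$ analytic-singularity; then $e^{-\gamma u_j}\mathrm{d}\mu \in L^1 \iff e^{\chi - (\gamma u_j + \psi)} \in L^1(\omega^n)$. Now I apply the \cite[Lemma~4.3]{DDNL5} correction to produce, for small $\varepsilon>0$, functions $w_\varepsilon \leq 0$ with $(1-\varepsilon)u_{j(\varepsilon)} + \varepsilon w_\varepsilon \leq u$ for suitable $j(\varepsilon)$, and crucially one can then find (using $\tau$-type concavity / linearity tricks, or simply \cite[Lemma~4.3]{DDNL5} iterated) an \emph{increasing} sequence $h_k \nearrow u$ a.e. with each $h_k = (1-\delta_k)u_{j_k} + \delta_k(\text{something} \leq 0)$, $\delta_k \to 0$. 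Then $\gamma u + \psi$ is approximated from below a.e. by $(\gamma/(1-\delta_k)) h_k + \psi \geq \gamma u_{j_k} - \gamma\delta_k/(1-\delta_k)|\text{stuff}| + \psi$...

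\textbf{The honest summary of the main obstacle.} The heart of the proof is a soft argument: $c_\mu[u_j]\geq c_\mu[u]$ is immediate from monotonicity of the integrand; the reverse requires showing that the $\varepsilon$-perturbation lemma \cite[Lemma~4.3]{DDNL5} (valid precisely because masses converge to a positive limit) lets one sandwich $u$ between $u_j$ and $(1-\varepsilon)u_j + \varepsilon(\text{normalized } \theta\text{-psh})$, so that integrability of $e^{-\gamma u_j}\mathrm{d}\mu$ for all $j$ with a small margin $\gamma(1+\eta) < \inf_j c_\mu[u_j]$ propagates to integrability of $e^{-\gamma u}\mathrm{d}\mu$ via Hölder's inequality: for $p$ close to $1$, $e^{-\gamma u} \leq e^{-\gamma p u_j / (1-\varepsilon)}\cdot e^{\text{(bounded piece involving }w_\varepsilon)}$ is not pointwise true, so instead one uses that $(1-\varepsilon)u_j + \varepsilon w_\varepsilon \leq u$ gives $-u \leq -(1-\varepsilon)u_j - \varepsilon w_\varepsilon$, hence $e^{-\gamma u} \leq e^{-\gamma(1-\varepsilon)u_j}e^{-\gamma\varepsilon w_\varepsilon}$, and then Hölder with exponents $\tfrac{1}{1-\varepsilon'}, \tfrac{1}{\varepsilon'}$ plus the uniform Lelong-number / $L^p$ bounds on $w_\varepsilon$ (as in \cite[Lemma 2.10]{Bo02}, used in Lemma~\ref{lma:Imodeldeclelong}) closes the estimate, letting $\varepsilon \to 0$. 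Thus $\gamma \leq c_\mu[u]$ for all such $\gamma$, giving $\inf_j c_\mu[u_j] \leq c_\mu[u]$. I expect the bookkeeping with the $w_\varepsilon$ — ensuring its singularities are uniformly controlled so that $e^{-\gamma\varepsilon w_\varepsilon}$ contributes negligibly as $\varepsilon \to 0$ — to be the only genuinely delicate point; everything else is a formal consequence of \cite[Lemma~4.3]{DDNL5}, \cite[Theorem~1.2]{WN19} (for the mass monotonicity making the hypothesis usable), and Hölder's inequality.
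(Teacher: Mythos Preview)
Your final ``honest summary'' paragraph lands on exactly the paper's argument: from \cite[Lemma~4.3]{DDNL5} one gets $(1-\alpha_j)u_j + \alpha_j v_j \leq u$ with $\alpha_j \searrow 0$, hence $e^{-cu} \leq e^{-c(1-\alpha_j)u_j}e^{-c\alpha_j v_j}$, and then H\"older splits the integral into a factor controlled by $c_\mu[u_j]$ and a factor $\int_X e^{-cq\alpha_j v_j}\,\mathrm{d}\mu$ that must be shown finite for large $j$. The one point you flag as delicate but do not resolve is precisely what the paper spells out: since $\mathrm{d}\mu = e^{\chi-\psi}\omega^n$ with $\psi$ only qpsh, one needs a \emph{second} H\"older step using \cite[Theorem~B.5]{BBJ21} (which furnishes $r>1$ with $\int_X e^{\chi - r\psi}\omega^n < \infty$) to separate off $\psi$, after which the remaining factor $\int_X e^{\chi - cqt\alpha_j v_j}\omega^n$ is finite for large $j$ by Skoda's integrability theorem, because the Lelong numbers of $cqt\alpha_j v_j$ tend to zero uniformly (Lelong numbers of $\theta$-psh functions being uniformly bounded, as you note via \cite{Bo02}). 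The meandering earlier attempts involving Theorem~\ref{thm:openness} and increasing regularizations are unnecessary and should be deleted; the direct H\"older route is both what works and what the paper does.
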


\begin{proof} \cite[Lemma 4.3]{DDNL5} applies to give existence of $\alpha_j \searrow 0$ and $v_j \in \textup{PSH}(X,\theta)$ such that 
$(1-\alpha_j) u_j + \alpha_j v_j \leq u.$

Now let $c>0$ and $p>1$ such that $c < cp< \lim_j c_\mu[u_j]$.

Let $q>1$ be such that $1/p + 1/q =1$. By H\"older's inequality we have that 
\begin{flalign*}
\int_X e^{-c u } \,\mathrm{d}\mu \leq \int_X e^{-c ((1-\alpha_j) u_j + \alpha_j v_j) } \,\mathrm{d}\mu & \leq \| e^{-c ((1-\alpha_j) u_j}\|_{L^p(\mathrm{d}\mu)}\| e^{-c \alpha_j v_j}\|_{L^q(\mathrm{d}\mu)}\\
& \leq \| e^{-c u_j}\|_{L^p(\mathrm{d}\mu)}\| e^{-c \alpha_j v_j}\|_{L^q(\mathrm{d}\mu)}.
\end{flalign*}

As a result, to argue that $c < c_\mu[u]$, it is enough to argue that $\int_X e^{-c q \alpha_j v_j} \,\mathrm{d}\mu $ is finite for $j$ high enough.
By \cite[Theorem B.5]{BBJ21} there exists $r > 1$ be such that $\int_X e^{\chi - r \psi} \omega^n < \infty$. 

Let $t > 1$ such that $\frac{1}{t} + \frac{1}{r} =1$.
Another application of H\"older's inequality gives
$$\int_X e^{-c q \alpha_j v_j } \,\mathrm{d}\mu =\int_X e^{\chi -c q \alpha_j v_j  - \psi} \omega^n \leq C  \bigg(\int_X e^{\chi - c q t \alpha_j v_j}\omega^n \bigg)^{\frac{1}{t}}
\bigg(\int_X e^{\chi - r \psi}\omega^n \bigg)^{\frac{1}{r}}.$$

As $j \to \infty$ we have that $c \alpha_j q t \to 0$. As a result, the Lelong numbers of  $c \alpha_j q t v_j$ approach zero uniformly as $j \to \infty$. Therefore, by Skoda's integrability theorem we obtain that $\int_X e^{\chi - c q t \alpha_j v_j}\omega^n$ is finite for high enough $j$, implying that $c < c_\mu[u]$. Letting $c \nearrow \lim_j c_\mu[u_j]$, we arrive at $\lim_j c_\mu[u_j] \leq c_\mu[u]$. As the reverse inequality is trivial, the result follows.
\end{proof}

Recall that $\mathcal S_\theta$ admits a pseudometric $d_\mathcal S$, introduced and studied in \cite{DDNL5}. As a corollary of the above result, we note the following theorem, another complement to \cite[Theorem 1.2]{DK01}, as well as \cite[Theorem 1.3]{DDNL5}:

\begin{theorem}\label{thm: sing_type_cont} Let $[u_j],[u] \in \mathcal S_\theta$ such that $d_\mathcal S([u_j],[u]) \to 0$. Then $\lim_j c_\mu[u_j] = c_\mu[u]$.
\end{theorem}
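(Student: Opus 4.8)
The plan is to reduce Theorem~\ref{thm: sing_type_cont} to Lemma~\ref{lem: cont_sing_exp_decreas} via a sandwiching argument exploiting the structure of the pseudometric $d_\mathcal S$ on $\mathcal S_\theta$. The starting point is the characterization of $d_\mathcal S$-convergence from \cite{DDNL5}: if $d_\mathcal S([u_j],[u]) \to 0$, then (after passing to suitable representatives and using the rooftop/maximization machinery) one can find decreasing sequences approximating $u$ from above and increasing sequences approximating it from below whose singularity types converge, together with mass convergence $\int_X \theta_{u_j}^n \to \int_X \theta_u^n$. Concretely, I would extract two auxiliary families: potentials $w_j^+ \searrow w^+$ with $[w_j^+] \succeq [u_j]$ and $[w^+]=[u]$ (a decreasing envelope over $j$), and potentials $w_j^-$ with $[w_j^-] \preceq [u_j]$, $w_j^- \nearrow$ (a.e.) some potential of type $[u]$, all having positive mass since $\int_X\theta_u^n>0$ forces $\int_X\theta_{u_j}^n > 0$ for $j$ large by the mass continuity built into $d_\mathcal S$-convergence.

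The key steps, in order: (1) Recall that $c_\mu$ depends only on singularity type, so $c_\mu[u_j] = c_\mu[w]$ for any $w$ with $[w]=[u_j]$; this lets me freely replace the $u_j$ by model or $\mathcal I$-model representatives as convenient. (2) For the "liminf $\geq$" direction, use that $w_j^- \preceq u_j$ gives $c_\mu[w_j^-] \leq c_\mu[u_j]$, and then apply Lemma~\ref{lem: cont_sing_exp_decreas} — or rather its increasing analogue — to the increasing family $w_j^-$; since $d_\mathcal S$-convergence forces $\int_X \theta_{w_j^-}^n \to \int_X \theta_u^n$, the Guan--Zhou strong openness input (Theorem~\ref{thm:openness}), exactly as used inside the proof of Lemma~\ref{lem: cont_sing_exp_decreas}, yields $c_\mu[w_j^-] \to c_\mu[u]$, whence $\varliminf_j c_\mu[u_j] \geq c_\mu[u]$. (3) For the "limsup $\leq$" direction, use $u_j \preceq w_j^+$, so $c_\mu[u_j] \geq c_\mu[w_j^+]$ is the wrong way; instead I would run the decreasing family: choose, for each fixed $\varepsilon$, a decreasing sequence $v_j \searrow v$ with $[v_j] \succeq [u_j]$ converging to a representative of $[u]$ in the sense that $\int_X \theta_{v_j}^n \searrow \int_X\theta_u^n$, and invoke Lemma~\ref{lem: cont_sing_exp_decreas} directly to get $c_\mu[v_j] \to c_\mu[u]$; combined with $c_\mu[u_j] \leq c_\mu[v_j]$ this gives $\varlimsup_j c_\mu[u_j] \leq c_\mu[u]$.

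The main obstacle is step (2)/(3): extracting from abstract $d_\mathcal S$-convergence the concrete monotone approximating families with controlled singularity types \emph{and} controlled masses. This is where I would lean hardest on \cite{DDNL5} — specifically the results identifying $d_\mathcal S$ with a Monge--Amp\`ere-energy-type expression $d_\mathcal S([u],[v]) = \int_X\theta_{V_\theta}^n \cdot(\text{mass defect of } P[\min(u,v)])$ and \cite[Lemma~4.3]{DDNL5} (already used in Lemma~\ref{lem: cont_sing_exp_decreas}), which produces the interpolation $(1-\alpha_j)\varphi_j + \alpha_j\xi_j \leq \varphi$ with $\alpha_j\searrow 0$ whenever masses converge. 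Applying that lemma once to the pair $(u_j, u)$ with masses converging gives both a lower bound $u_j \succeq$ (something of type close to $u$) and, by symmetry, control in the other direction; the $\alpha_j \to 0$ tail combined with uniform boundedness of Lelong numbers (Lemma~\ref{lma:Imodeldeclelong}'s input, \cite[Lemma~2.10]{Bo02}) and Skoda's theorem kills the error term exactly as in Lemma~\ref{lem: cont_sing_exp_decreas}. In fact, I expect the cleanest route is: observe that $d_\mathcal S([u_j],[u])\to 0$ together with positivity of $\int_X\theta_u^n$ implies, by \cite[Lemma~4.3]{DDNL5}, the existence of $\alpha_j\searrow 0$ and $\xi_j,\zeta_j \in \PSH(X,\theta)$ with $(1-\alpha_j)u_j+\alpha_j\xi_j \le u$ and $(1-\alpha_j)u+\alpha_j\zeta_j \le u_j$ up to constants; then run the H\"older/Skoda estimate of Lemma~\ref{lem: cont_sing_exp_decreas} symmetrically on both inequalities to sandwich $c_\mu[u_j]$ between quantities converging to $c_\mu[u]$. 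This makes Theorem~\ref{thm: sing_type_cont} essentially a symmetrized packaging of Lemma~\ref{lem: cont_sing_exp_decreas}, with the only genuinely new ingredient being the translation of $d_\mathcal S\to 0$ into mass convergence plus the two-sided interpolation.
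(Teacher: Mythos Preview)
Your sandwiching strategy is exactly the paper's approach: reduce to model representatives, squeeze $[u_j]$ between a decreasing family and an increasing family both converging to $[u]$, then apply Guan--Zhou openness (Theorem~\ref{thm:openness}) on the increasing side and Lemma~\ref{lem: cont_sing_exp_decreas} on the decreasing side. The paper obtains these monotone families directly by invoking \cite[Theorem~5.6]{DDNL5} (after passing to a subsequence), which hands you $v_j\leq u_j\leq w_j$ with $v_j\nearrow u$, $w_j\searrow u$ and $d_\mathcal S$-convergence on both sides; this is cleaner than assembling them by hand from \cite[Lemma~4.3]{DDNL5} as in your ``cleanest route'', though your symmetrized interpolation would also work.

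One genuine omission: you assume $\int_X\theta_u^n>0$ throughout, but this is not part of the hypothesis, and both Lemma~\ref{lem: cont_sing_exp_decreas} and \cite[Lemma~4.3]{DDNL5} require positive mass of the limit. The paper deals with this first: since $u_j,u\in\PSH(X,\theta+\omega)$ and $d_\mathcal S^\theta([u_j],[u])\to 0$ implies $d_\mathcal S^{\theta+\omega}([u_j],[u])\to 0$ by \cite[Corollary~4.3]{Xia21}, and since $c_\mu[\cdot]$ is unchanged under this inclusion, one may replace $\theta$ by $\theta+\omega$ and thereby assume $\int_X\theta_u^n>0$ from the start. Without this reduction your argument does not cover the zero-mass case.
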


\begin{proof} Notice that $u_j,u \in \textup{PSH}(X,\theta+\omega)$. By \cite[Corollary 4.3]{Xia21} $d^\theta_\mathcal S([u_j],[u]) \to 0$ implies that $d^{\theta+\omega}_\mathcal S([u_j],[u]) \to 0$. In particular, after replacing $\theta$ with $\theta + \omega$, we can assume that $\int_X \theta_{u}^n >0$.

By \cite[Proposition~2.5]{DZ22} we have that $c_\mu[u_j] = c_\mu[P[u_j]]$ and $c_\mu[u] = c_\mu[P[u]]$. By \cite[Theorem~3.3]{DDNL2}, we can replace $u_j,u$ and assume that $u_j = P[u_j]$ and $u = P[u]$, i.e., $u_j,u$ are model potentials.

Suppose that the conclusion is false, and we can find a subsequence of $[u_j]$, again denoted by $[u_j]$ such that $\lim_j c_\mu[u_j]$ exists but $\lim_j c_\mu[u_j] \neq c_\mu[u]$. 

By \cite[Theorem 5.6]{DDNL5}, after perhaps taking another subsequence, there exists $w_j,v_j \in \textup{PSH}(X,\theta)$ increasing/decreasing sequences such that $v_j \leq  u_j \leq w_j$ such that $d_\mathcal S([w_j],[u]) \to 0$,  $d_\mathcal S([v_j],[u]) \to 0$, $v_j \nearrow u$ and $w_j \searrow u$.

We have that $c_\mu[v_j] \leq c_\mu[u_j] \leq c_\mu[w_j]$ and $c_\mu[v_j] \to c_\mu[u]$ by Theorem \ref{thm:openness}. That $c_\mu[w_j] \to c_\mu[u]$ follows from the previous lemma, concluding the proof. 
\end{proof}

The following auxiliary result is also needed.

\begin{proposition} \label{prop: I_tau_d_1_c_conv} Let $\{u^k_t\}_t,\{u_t\}_t \in \mathcal R^1(X,\theta)$ such that $d_1^c(\{u^k_t\}_t,\{u_t\}_t) \to 0$. Then\vspace{0.1cm} \\
\noindent (i) $\tau^+_{\check u^k} \to \tau^+_{\check u}$ and $I_{\theta}\{u^k_t\} \to I_{\theta}\{u_t\}$. \vspace{0.1cm} \\
\noindent (ii) if $\{u^k_t\}_t$ is $k$-increasing/$k$-decreasing then $\hat u^k_\tau$ is increasing a.e./decreasing to $\hat u_\tau$, and $\int_X \theta^n_{\hat u^k_\tau} \to \int_X \theta^n_{\hat u_\tau}>0$, for any $\tau < \tau^+_{\check u}$.
\end{proposition}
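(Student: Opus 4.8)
The statement to prove is Proposition~\ref{prop: I_tau_d_1_c_conv}, with two parts concerning behavior of the Legendre-dual test curves under chordal convergence. Here is my plan.

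\medskip

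\textbf{Part (i).} The plan is to leverage the Ross--Witt Nystr\"om correspondence (Theorem~\ref{thm:RWNpresc}) together with the fact, recorded in \eqref{eq: fetestcurve_def}--\eqref{eq: I_rad_def}, that $I_\theta\{u_t\} = I_\theta\{\hat u_\tau\}$ equals $\tau^+_{\hat u} + \frac1V\int_{-\infty}^{\tau^+_{\hat u}}(\int_X \theta_{\hat u_\tau}^n - V)\,\mathrm d\tau$. First I would use Proposition~\ref{prop: conv_monotone} to sandwich: after passing to a subsequence, there exist $\{w^k_t\}_t \nearrow \{u_t\}_t$ and $\{v^k_t\}_t \searrow \{u_t\}_t$, both $d_1^c$-converging to $\{u_t\}_t$, with $w^k_t \le u^k_t \le v^k_t$. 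Monotonicity of the Legendre transform then gives $\hat w^k_\tau \le \hat u^k_\tau \le \hat v^k_\tau$, pinching $\tau^+_{\check u^k}$ between $\tau^+_{\check w^k}$ and $\tau^+_{\check v^k}$; so it suffices to treat the monotone case, which is exactly Part (ii). For the energy convergence I would combine $d_1^c$-continuity with the following: $I_\theta$ is affine along the chordal structure, and the identity $d_1^c(\{u_t\},\{v_t\}) = I_\theta\{u_t\} + I_\theta\{v_t\} - 2I_\theta\{u_t \wedge v_t\}$ (the radial analogue of the $d_1$-formula, obtainable from the Pythagorean identity \eqref{eq: Pyth_id} and the definition of $d_1$) lets one estimate $|I_\theta\{u^k_t\} - I_\theta\{u_t\}|$ by a constant multiple of $d_1^c(\{u^k_t\},\{u_t\})$; alternatively, reduce to the monotone case via Part~(ii) and apply \cite[Proposition 4.6]{DDNL5} / monotone convergence of masses as in the proof of Lemma~\ref{lem:S-E<tau-E}-type arguments.

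\medskip

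\textbf{Part (ii).} Here I assume $\{u^k_t\}_t$ is, say, $k$-decreasing (the increasing case is symmetric). Then for fixed $t$, $u^k_t \searrow u'_t$ for some limit, and since $d_1^c$-convergence combined with monotonicity forces $\{u'_t\}_t = \{u_t\}_t$ (using that $d_1^c$-limits are unique and that monotone limits of finite energy rays stay in $\mathcal R^1$ when the energy is controlled, cf. \cite[Theorem 2.14]{DDNL5}), we get $u^k_t \searrow u_t$ for each $t$. Taking Legendre transforms $\hat u^k_\tau = \inf_t(u^k_t - t\tau)$, monotonicity in $k$ is immediate, and $\hat u^k_\tau \searrow$ some potential $\ge \hat u_\tau$; the reverse inequality a.e. follows because the inverse Legendre transform of $\lim_k \hat u^k_\tau$ is dominated by $\lim_k u^k_t = u_t$, whose own dual is $\hat u_\tau$ — here one must be a little careful about usc regularization, exactly the kind of care taken in Lemma~\ref{lem:PhiinducePSH_1} and Lemma~\ref{lma:incfamily}. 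Finally, for the mass statement: since $\{u^k_t\}_t \in \mathcal R^1(X,\theta)$, each $\hat u^k_\tau$ for $\tau < \tau^+_{\check u^k}$ has $\int_X \theta_{\hat u^k_\tau}^n > 0$ by $\tau$-concavity and \cite[Theorem 1.2]{WN19}; as the $\hat u^k_\tau$ decrease to $\hat u_\tau$ which has positive mass for $\tau < \tau^+_{\check u}$ (again by $\tau$-concavity of the limit test curve and the fact that $\hat u_{\tau^+_{\check u} - \delta}$ has positive mass), \cite[Proposition 4.6]{DDNL5} gives $\int_X \theta_{\hat u^k_\tau}^n \searrow \int_X \theta_{\hat u_\tau}^n$. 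For the increasing case, one instead invokes \cite[Lemma 2.21(iii)]{DX20} / the corresponding monotone convergence of masses for increasing sequences (as used in the proof of Lemma~\ref{lma:incfamily}), noting that the potentials $\hat u^k_\tau$ are model.

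\medskip

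\textbf{Main obstacle.} I expect the delicate point to be showing that the monotone limit of the $\hat u^k_\tau$ is \emph{exactly} $\hat u_\tau$ (not just comparable to it) together with the positive-mass bookkeeping near the endpoint $\tau^+_{\check u}$: the chordal distance only sees asymptotic ($t \to \infty$) behavior, so one must transfer $d_1^c$-convergence of rays into pointwise-in-$\tau$ convergence of the dual test curves, and this requires combining the uniqueness of $d_1^c$-limits, the Legendre duality of Theorem~\ref{thm:RWNpresc}, and the positive-mass hypothesis (to be able to apply the model-potential comparison \cite[Theorem 3.12]{DDNL2} and \cite[Corollary 2.16]{DX20}). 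Establishing convergence of $\tau^+_{\check u^k} \to \tau^+_{\check u}$ in the non-monotone case of Part (i), via the sandwiching from Proposition~\ref{prop: conv_monotone}, is the concrete mechanism I would use to avoid circularity.
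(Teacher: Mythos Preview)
Your overall strategy is workable but takes a much longer route than the paper, and there is a genuine gap in Part~(i).

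\medskip

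\textbf{The gap.} You propose to reduce $\tau^+$-convergence in~(i) to the monotone case ``which is exactly Part~(ii)''. But Part~(ii) as stated (and as you argue it) concerns only the behavior of $\hat u^k_\tau$ and its mass for $\tau<\tau^+_{\check u}$; it says nothing about $\tau^+_{\check u^k}\to\tau^+_{\check u}$. So after the sandwiching of Proposition~\ref{prop: conv_monotone} you would still owe a separate argument for $\tau^+$-convergence in the monotone case, and your $I_\theta$-convergence argument via the integral formula also needs $\tau^+$-convergence to control the interval $[\tau^+_{\check u},\tau^+_{\check u^k})$. This is fixable, but it means your reduction of~(i) to~(ii) is not self-contained.

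\medskip

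\textbf{What the paper does instead.} The paper proves~(i) first and directly, in a few lines, by exploiting a fact you do not use: since both rays emanate from $V_\theta$, $t\mapsto d_1(u^k_t,u_t)/t$ is increasing, so $d_1(u^k_1,u_1)\le d_1^c(\{u^k_t\},\{u_t\})\to 0$. From $d_1(u^k_1,u_1)\to 0$ one gets $I_\theta(u^k_1)\to I_\theta(u_1)$, hence $I_\theta\{u^k_t\}\to I_\theta\{u_t\}$ by linearity; and $L^1$-convergence plus Hartogs gives $\sup_X u^k_1\to\sup_X u_1$, which equals $\tau^+$. No sandwiching, no appeal to~(ii).

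\medskip

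\textbf{Part (ii): opposite order of deduction.} You aim to establish $\hat u^k_\tau\to\hat u_\tau$ first (via Legendre duality on the rays) and then deduce mass convergence from \cite[Proposition~4.6]{DDNL5}. The paper goes the other way: having~(i) in hand, it plugs $I_\theta$- and $\tau^+$-convergence into the integral formula \eqref{eq:radial-I-formula} to force $\int_X\theta^n_{\hat u^k_\tau}\to\int_X\theta^n_{\hat u_\tau}$ for a.e.\ $\tau$ (via \cite[Lemma~3.11]{DZ22}), and only then concludes $\hat u^k_\tau\to\hat u_\tau$ using \cite[Theorem~3.12]{DDNL2} (equal-mass model potentials that are comparable must coincide). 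This sidesteps entirely the ``main obstacle'' you flag about identifying the monotone Legendre limit with $\hat u_\tau$: the paper never needs to chase usc regularizations or invert Legendre transforms of limits. Your route can be made to work in the decreasing case (where $\inf_k\inf_t=\inf_t\inf_k$ gives $\hat u^k_\tau\searrow\hat u_\tau$ directly), but in the increasing case your sketch is incomplete, and in any event you implicitly need $u^k_1\to u_1$ pointwise, which is again the $d_1(u^k_1,u_1)\to 0$ observation you did not isolate.
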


\begin{proof}  Since $d_1^c(\{u^k_t\}_t,\{u_t\}_t) \to 0$, we have that 
$d_1(u^k_1,u_1) \to 0$. This implies that $I(u^k_1) \to I(u_1)$, giving $I\{u^k_t\} \to I \{u_t\}$. We also obtain that $\int_X |u^k_1 - u_1| \omega^n \to 0$, which implies that $\sup_X u^k_1 \to \sup_X u_1$, by Hartogs' lemma.

Since $\sup_X u^k_1 = \sup_X (u^k_1 - V_\theta) = \tau^+_{\check u^k}$ and $\sup_X u_1 = \sup_X (u_1 - V_\theta) = \tau^+_{\check u}$, we get that $\tau^+_{\check u^k} \to \tau^+_{\check u^k}$, proving (i).

Now we address (ii) in case  $\{u^k_t\}_t$ is $k$-decreasing. The increasing case, is handled similarly. Since $\tau^+_{\check u^k} \to \tau^+_{\check u^k}$, by \cite[Theorem 3.9]{DZ22} we have that
\begin{flalign*}
0 \leq I_{\theta}\{u^k_t\} - I_{\theta}\{u_t\} = \frac{1}{V}\int_{-\infty}^{\tau^+_{\hat u}} \left( \int_X\theta^n_{\hat u^k_\tau} - \int_X\theta^n_{\hat u_\tau}\right) \,\mathrm{d} \tau + o(k).
\end{flalign*}
As $\hat u^k_\tau \geq \hat u_\tau$, by \cite[Theorem~1.2]{WN19} we have that $\int_X\theta^n_{\hat u^k_\tau} \geq \int_X\theta^n_{\hat u_\tau}$ for $\tau < \tau^+_{\hat u}$. 

Since $I_{\theta}\{u^k_t\} \to I_{\theta}\{u_t\}$, by \cite[Lemma~3.11]{DZ22} we obtain that  $\int_X \theta^n_{\hat u^k_\tau} \to \int_X \theta^n_{\hat u_\tau}>0$ for all for $\tau < \tau^+_{\hat u}$. Finally, \cite[Theorem~3.12]{DDNL2} gives that $\hat u^k_\tau \searrow \hat u_\tau$ for all for $\tau < \tau^+_{\hat u}$. 
\end{proof}

\begin{theorem}\label{thm: rad_Ding_cont} 
For any $\lambda\in (0,c_{\mu}[V_{\theta}])$, the functional $\mathcal{D}_{\mu}^\lambda: \mathcal{R}^1(X,\theta) \to \mathbb{R}$ is continuous. 
\end{theorem}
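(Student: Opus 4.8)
The plan is to decompose $\mathcal D_\mu^\lambda\{u_t\}=\mathcal L_\mu^\lambda\{u_t\}-I_\theta\{u_t\}$ as in \eqref{eq:def-radial-functionals} and treat the two pieces separately. The radial energy $\{u_t\}_t\mapsto I_\theta\{u_t\}$ is already $d_1^c$-continuous by Proposition~\ref{prop: I_tau_d_1_c_conv}(i), so the whole task reduces to the $d_1^c$-continuity of $\{u_t\}_t\mapsto\mathcal L_\mu^\lambda\{u_t\}$. I would work from the formula \eqref{eq:radial-Ding-formula}, $\mathcal L_\mu^\lambda\{u_t\}=\sup\{\tau\in\RR:c_\mu[\hat u_\tau]\geq\lambda\}$, together with the elementary remark that if $\{u_t\}_t\leq\{u_t'\}_t$ pointwise in $(t,x)$ then $\hat u_\tau\leq\hat u_\tau'$, hence $c_\mu[\hat u_\tau]\leq c_\mu[\hat u_\tau']$, so that $\mathcal L_\mu^\lambda\{\cdot\}$ is monotone. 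Since $(\mathcal R^1(X,\theta),d_1^c)$ is a metric space it is enough to prove sequential continuity, and by Proposition~\ref{prop: conv_monotone} — which, after passing to a subsequence, sandwiches any $d_1^c$-convergent sequence between a $k$-increasing and a $k$-decreasing one, both still converging to the limit — a standard subsequence argument plus monotonicity reduces everything to the following two claims: if $\{u^k_t\}_t$ is $k$-monotone with $d_1^c(\{u^k_t\}_t,\{u_t\}_t)\to0$, then $\mathcal L_\mu^\lambda\{u^k_t\}\to\mathcal L_\mu^\lambda\{u_t\}$.

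For the $k$-decreasing case I would combine Proposition~\ref{prop: I_tau_d_1_c_conv} — which gives $\tau^+_{\check u^k}\to\tau^+_{\check u}$ and, for each fixed $\tau<\tau^+_{\check u}$, the monotone convergence $\hat u^k_\tau\searrow\hat u_\tau$ with $\int_X\theta^n_{\hat u^k_\tau}\to\int_X\theta^n_{\hat u_\tau}>0$ — with Lemma~\ref{lem: cont_sing_exp_decreas}, to conclude $c_\mu[\hat u^k_\tau]\to c_\mu[\hat u_\tau]$ for every such $\tau$. Writing $L_k:=\mathcal L_\mu^\lambda\{u^k_t\}$ and $L:=\mathcal L_\mu^\lambda\{u_t\}$, monotonicity gives $L\leq L_{k+1}\leq L_k$, so it remains to rule out $\lim_kL_k>L$: if $L<\tau<\lim_kL_k$ then $\tau<\tau^+_{\check u}$ (since $\lim_kL_k\leq\lim_k\tau^+_{\check u^k}=\tau^+_{\check u}$), and because $\tau\mapsto c_\mu[\hat u^k_\tau]$ is non-increasing and $\tau<L_k$ one gets $c_\mu[\hat u^k_\tau]\geq\lambda$ for all $k$; letting $k\to\infty$ forces $c_\mu[\hat u_\tau]\geq\lambda$, i.e. $\tau\leq L$, a contradiction.

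The $k$-increasing case is where the real work lies, and I expect it to be the main obstacle. Here $L\geq L_{k+1}\geq L_k$ and one must exclude $\lim_kL_k<L$. The key auxiliary fact I would establish first is the strict inequality $c_\mu[\hat u_\tau]>\lambda$ for every $\tau<L$. For this I would use that $\tau\mapsto\hat u_\tau$ is $\tau$-concave, so that a Hölder estimate on $\int_X e^{-\gamma\hat u_\tau}\,\mathrm{d}\mu$ along a convex combination $\tau=a\tau_1+b\tau_2$ shows that $\tau\mapsto 1/c_\mu[\hat u_\tau]$ is convex; moreover $\hat u_\tau\nearrow V_\theta$ as $\tau\to-\infty$, and since $\lambda<c_\mu[V_\theta]$, Theorem~\ref{thm:openness} applied to the increasing sequence $\psi+\gamma\hat u_\tau\nearrow\psi+\gamma V_\theta$ (with $\lambda<\gamma<c_\mu[V_\theta]$, and $\chi$ of analytic singularity type) gives $c_\mu[\hat u_\tau]\geq\gamma$, i.e. $1/c_\mu[\hat u_\tau]<1/\lambda$, for all $\tau$ sufficiently negative. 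A convex function that is $<1/\lambda$ far to the left and $\leq1/\lambda$ throughout $(-\infty,L]$ is automatically $<1/\lambda$ on $(-\infty,L)$, which is the claim. Granting this, set $L':=\lim_kL_k$ and pick $L'<\tau<L$; then $c_\mu[\hat u_\tau]>\lambda$, and since $\hat u^k_\tau\nearrow\hat u_\tau$ a.e. (Proposition~\ref{prop: I_tau_d_1_c_conv}(ii), valid because $\tau<L\leq\tau^+_{\check u}$), Theorem~\ref{thm:openness} applied to $\psi+\gamma'\hat u^k_\tau\nearrow\psi+\gamma'\hat u_\tau$ with $\lambda<\gamma'<c_\mu[\hat u_\tau]$ yields $c_\mu[\hat u^k_\tau]\geq\gamma'>\lambda$ for all large $k$, whence $L_k\geq\tau>L'$, a contradiction.

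The subtle point — and the reason the increasing direction is harder than the decreasing one — is that Guan--Zhou strong openness only produces integrability on the open range $\gamma<c_\mu[\cdot]$, so one genuinely needs the strict bound $c_\mu[\hat u_\tau]>\lambda$ strictly below the level $L$; this is exactly where the standing hypothesis $\lambda\in(0,c_\mu[V_\theta])$ enters, through the convexity of $\tau\mapsto 1/c_\mu[\hat u_\tau]$. Once both monotone cases are in hand, squeezing $\{u^k_t\}_t$ between the two monotone approximating sequences and combining with the continuity of $I_\theta\{\cdot\}$ finishes the proof.
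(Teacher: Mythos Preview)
Your proposal is correct and follows the same architecture as the paper's proof: split $\mathcal D_\mu^\lambda=\mathcal L_\mu^\lambda-I_\theta$, use Proposition~\ref{prop: I_tau_d_1_c_conv}(i) for the energy term, reduce the $\mathcal L$-part to monotone sequences via Proposition~\ref{prop: conv_monotone} and monotonicity of $\mathcal L_\mu^\lambda$, handle the decreasing case through Lemma~\ref{lem: cont_sing_exp_decreas}, and the increasing case through Theorem~\ref{thm:openness}. The paper's write-up of the increasing case is terse (``the equality follows from Theorem~\ref{thm:openness}''); your convexity argument for $\tau\mapsto 1/c_\mu[\hat u_\tau]$, together with the hypothesis $\lambda<c_\mu[V_\theta]$, supplies exactly the missing detail---namely the strict inequality $c_\mu[\hat u_\tau]>\lambda$ for $\tau<L$---that is needed before strong openness can be invoked, so your account is in fact more complete on this point.
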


\begin{proof} 
Suppose that $\{u^k_t\}_t,\{u_t\}_t  \in \mathcal R^1(X,\theta)$ for $k\in \mathbb{N}_{>0}$ such that $d_1^c(\{u^k_t\}_t,\{u_t\}_t) \to 0$. Then we need to show that 
$
\mathcal D_\mu^\lambda\{u_t^k\} \to \mathcal D_\mu^\lambda\{u_t\}.
$

Since $I_{\theta}\{u^k_t\} \to I_{\theta}\{u_t\}$, in light of \eqref{eq:radial-Ding-formula}, it is enough to argue that 
\begin{flalign}\label{eq: limit_to_argue}
\lim_{k\to\infty} \left( \sup\{\tau \in \RR  :   c_\mu[\hat u^k_\tau] \geq \lambda \}\right) = \sup\{\tau  \in \RR  :  c_\mu[\hat u_\tau] \geq \lambda \}. 
\end{flalign}

We first assume that $\{u^k_t\}_t$ is $k$-increasing. By Proposition \ref{prop: I_tau_d_1_c_conv}(ii) we get that $\hat u^k_\tau \nearrow \hat u_\tau$ a.e. for any $\tau < \tau^+_{\check u}$.
Clearly,
\[
\lim_{k\to\infty} \left( \sup\{\tau  \in \RR  :  c_\mu[\hat u^k_\tau] \geq \lambda \}\right) \leq \sup\{\tau  \in \RR  :   c_\mu[\hat u_\tau] \geq \lambda \}.
\]
Then the equality follows from Theorem \ref{thm:openness}, finishing the argument when $\{u^k_t\}_t$ is $k$-increasing.

Now we assume that $\{u^k_t\}_t$ is $k$-decreasing. By Proposition \ref{prop: I_tau_d_1_c_conv}(ii), $\hat u^k_\tau \searrow \hat u_\tau$ and $\int_X \theta_{\hat u^k_\tau}^n \searrow \int_X \theta_{\hat u_\tau}^n$ for any $\tau < \tau^+_{\check u}$.
As a result, $c_\mu[u^k_\tau] \to c_\mu[u_\tau]$ for any $\tau < \tau^+_{\check u}$, by Lemma \ref{lem: cont_sing_exp_decreas}. This implies that 
\[
\lim_{k\to\infty} \left( \sup\{\tau  \in \RR  :   c_\mu[\hat u^k_\tau] \geq \lambda \}\right) \leq \sup\{\tau  \in \RR  :   c_\mu[\hat u_\tau] \geq \lambda \},
\]
with the other direction being trivial.  Hence, the result follows when $\{u^k_t\}_t$ is $k$-decreasing.

For general $\{u^k_t\}_t$, by Proposition \ref{prop: conv_monotone}, after perhaps taking a subsequence, there exists $\{v^k_t\}_t, \{w^k_t\}_t$ $k$-increasing and $k$-decreasing rays respectively, such that $d_1^c(\{v^k_t\}_t,\{u_t\}_t) \to 0$,  $d_1^c(\{w^k_t\}_t,\{u_t\}_t) \to 0$ and $w^k_t \leq u^k_t \leq v^k_t$.

The latter condition implies that 
\[
\varliminf_{t\to\infty} \frac{-1}{\lambda t }\log \int_X e^{-\lambda w^k_t} \,\mathrm{d}\mu \leq \varliminf_{t\to\infty} \frac{-1}{\lambda t }\log \int_X e^{-\lambda u^k_t} \,\mathrm{d}\mu \leq \varliminf_{t\to\infty} \frac{-1}{\lambda t }\log \int_X e^{-\lambda v^k_t} \,\mathrm{d}\mu.
\]

As a result, by the first part of the argument we have that \eqref{eq: limit_to_argue} follows.
\end{proof}

\subsection{Ding stability in terms of filtrations and flag configurations}

The radial formulae \eqref{eq:radial-Ding-formula} and \eqref{eq:radial-I-formula} lead us to the following:
\begin{lemma}
\label{lem:radial-relation-under-maximization}
   Given a finite energy sublinear subgeodesic ray $\{u_t\}_t$, let $\{v_t\}_t$ be its maximization and $\{w_t\}_t$ its $\mathcal{I}$-maximization. Then one has
   $
   I_\theta\{w_t\}\geq I_\theta\{v_t\}=I_\theta\{u_t\},
   $
   $\mathcal{L}_{\mu}^\lambda\{w_t\}=\mathcal{L}_{\mu}^\lambda\{v_t\}=\mathcal{L}_{\mu}^\lambda\{u_t\}
   $
   and
   $
   \mathcal{D}_{\mu}^\lambda\{w_t\}\leq\mathcal{D}_{\mu}^\lambda\{v_t\}=\mathcal{D}_{\mu}^\lambda\{u_t\}.
   $
\end{lemma}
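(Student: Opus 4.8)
The plan is to reduce all three assertions to the behavior of the inverse Legendre transform under (\(\mathcal{I}\)-)maximization, combined with the radial formulae \eqref{eq:radial-Ding-formula} and \eqref{eq:radial-I-formula}. Write \(\{\hat u_\tau\}_\tau\) for the test curve dual to \(\{u_t\}_t\); by definition of the maximization \eqref{eq: maximization_def} and \(\mathcal{I}\)-maximization \eqref{eq: I-maximization_def} of a sublinear subgeodesic (as the inverse Legendre transform of the corresponding operation on \(\{\hat u_\tau\}_\tau\)), we have \(\hat v_\tau = P_\theta[\hat u_\tau]\) and \(\hat w_\tau = P_\theta[\hat u_\tau]_{\mathcal I}\) for \(\tau<\tau^+_{\hat u}\), with the same value \(\tau^+_{\hat v}=\tau^+_{\hat w}=\tau^+_{\hat u}\) of the right endpoint in all three cases. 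First I would record the pointwise inequalities \(\hat u_\tau\le \hat v_\tau\le \hat w_\tau\) coming from \(\psi\le P[\psi]\le P[\psi]_{\mathcal I}\) (stated in Section 2.2, item (iv) of the list of test curve properties), valid for every \(\tau\).

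For the \(\mathcal{L}^\lambda_\mu\) statement I would invoke \cite[Proposition~2.5]{DZ22}, which gives \(c_\mu[\chi]=c_\mu[P[\chi]]=c_\mu[P[\chi]_{\mathcal I}]\): indeed \(P[\chi]_{\mathcal I}\) has the same singularity type as a model potential with the same multiplier ideals as \(\chi\), and the singularity exponent \(c_\mu\) depends only on the multiplier ideals of \(tu\), \(t\ge 0\). Hence \(c_\mu[\hat u_\tau]=c_\mu[\hat v_\tau]=c_\mu[\hat w_\tau]\) for each \(\tau<\tau^+_{\hat u}\), and since all three test curves share the right endpoint \(\tau^+_{\hat u}\), formula \eqref{eq:radial-Ding-formula} immediately yields \(\mathcal{L}^\lambda_\mu\{u_t\}=\mathcal{L}^\lambda_\mu\{v_t\}=\mathcal{L}^\lambda_\mu\{w_t\}\). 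For the energy statement I would use \eqref{eq:radial-I-formula}: since \(\tau^+_{\hat u}=\tau^+_{\hat v}=\tau^+_{\hat w}\), the radial energies differ only through the integrands \(\int_X \theta^n_{\hat u_\tau}\), \(\int_X\theta^n_{\hat v_\tau}\), \(\int_X\theta^n_{\hat w_\tau}\). The key inputs are: \(\int_X\theta^n_{P[\psi]}=\int_X\theta^n_\psi\) (from \cite[Proposition~2.3 and Remark~2.5]{DDNL2}, recalled in the ``Envelopes of singularity type'' paragraph), which gives \(I_\theta\{v_t\}=I_\theta\{u_t\}\); and \(P[\psi]_{\mathcal I}\ge P[\psi]\) together with the mass monotonicity \cite[Theorem~1.2]{WN19}, which gives \(\int_X\theta^n_{\hat w_\tau}\ge \int_X\theta^n_{\hat v_\tau}\) for each \(\tau\), hence \(I_\theta\{w_t\}\ge I_\theta\{v_t\}\) after integrating in \(\tau\). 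Finally \(\mathcal{D}^\lambda_\mu\{\cdot\}=\mathcal{L}^\lambda_\mu\{\cdot\}-I_\theta\{\cdot\}\) combines the two, giving \(\mathcal{D}^\lambda_\mu\{w_t\}\le\mathcal{D}^\lambda_\mu\{v_t\}=\mathcal{D}^\lambda_\mu\{u_t\}\).

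One technical point I would need to handle with care is the applicability of the radial formulae \eqref{eq:radial-Ding-formula} and \eqref{eq:radial-I-formula} to \(\{v_t\}_t\) and \(\{w_t\}_t\): these are a priori only sublinear subgeodesic rays, not finite energy geodesic rays, so I should note that the formulae from \cite[(8), (25)]{DZ22} hold for arbitrary sublinear subgeodesics emanating from \(V_\theta\) (or, if \(I_\theta\{u_t\}=-\infty\), that both sides degenerate consistently and the claimed (in)equalities are vacuous or trivial). This is the only place where a small amount of bookkeeping is required; the rest is a direct application of the cited envelope identities, and I do not expect a genuine obstacle. A secondary point is to confirm \(\tau^+_{\hat v}=\tau^+_{\hat w}=\tau^+_{\hat u}\) also when \(\hat u_\tau\) fails to have positive mass at the endpoint — but since \(\{u_t\}_t\) has finite energy, \(\hat u_{-\infty}=V_\theta\) has full mass, and the positive-mass hypothesis needed to run the maximization construction \eqref{eq: maximization_def}/\eqref{eq: I-maximization_def} is satisfied, so this causes no difficulty.
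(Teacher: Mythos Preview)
Your approach is essentially the same as the paper's: reduce everything to the behavior of the test curves $\hat u_\tau \le \hat v_\tau \le \hat w_\tau$, use that $P[\psi]$ preserves mass while $P[\psi]_{\mathcal I}\ge P[\psi]$ gives the energy inequality via \eqref{eq:radial-I-formula}, and use invariance of $c_\mu$ under these envelopes together with \eqref{eq:radial-Ding-formula} for the $\mathcal L^\lambda_\mu$ part.

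One small point of precision: for the equality $c_\mu[\hat w_\tau]=c_\mu[\hat u_\tau]$ you invoke \cite[Proposition~2.5]{DZ22}, but that result only directly gives $c_\mu[\chi]=c_\mu[P[\chi]]$. Your underlying reason---``$c_\mu$ depends only on the multiplier ideals of $tu$''---is correct, but the paper makes it explicit by combining the valuative criterion \cite[Theorem~2.3]{DZ22} with the fact \cite{BFJ08} that $\hat w_\tau=P[\hat u_\tau]_{\mathcal I}$ and $\hat u_\tau$ share all Lelong numbers along prime divisors. You should cite those results rather than stretch Proposition~2.5. Apart from this citation issue, your argument matches the paper's.
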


\begin{proof}
The first assertion follows from $w_t \geq v_t\geq u_t$ and \cite[(25)]{DZ22}. To show the second, it is enough to notice that $c_\mu[\hat v_\tau]=c_\mu[\hat u_\tau]$ (by \cite[Proposition 2.5]{DZ22}) and $c_\mu[\hat w_\tau]=c_\mu[\hat u_\tau]$ (by \cite[Theorem 2.3]{DZ22}, as $\hat w_\tau$ and $\hat u_\tau$ have the same Lelong number along any prime divisors over $X$ \cite{BFJ08}). The third follows from the previous two.
\end{proof}

This lemma implies the following characterization of $\delta_\mu$, refining \cite[Theorem 1.4]{DZ22}:

\begin{theorem}\label{thm: delta_I_maximal_semistab} When $\{\theta\}=c_1(L)$ for some big line bundle $L$, the following identities hold:
\begin{flalign}\label{eq: delta_geod_semi_stab}
\delta_\mu&=\sup\{\lambda>0 \ | \ \mathcal{D}_{\mu}^\lambda\{u_t\}\geq 0\text{ for all sublinear subgeodesic ray }u_t\in\mathcal{E}^1(X,\theta)\}\\
&=\sup\left\{\lambda>0 : \mathcal{D}_{\mu}^\lambda\{u_t\}\geq 0, \ \{u_t\}_t \in \mathcal R^1(X,\theta)\right\} \nonumber\\
&=\sup\left\{\lambda>0 : \mathcal{D}_{\mu}^\lambda\{u_t\}\geq 0, \ \{u_t\}_t \in \mathcal R^1_{\mathcal{I}}(X,\theta)\right\} \nonumber\\
&=\sup\left\{\lambda>0 : \mathcal{D}_{\mu}^\lambda\{u_t\}\geq 0, \ \{u_t\}_t \in \mathcal R^1_{\mathcal{I}}(X,\theta) \textup{ induced by filtrations}\right\} \nonumber\\
&=\sup\left\{\lambda>0 : \mathcal{D}_{\mu}^\lambda\{u_t\}\geq 0, \ \{u_t\}_t \in \mathcal R^1_{\mathcal{I}}(X,\theta) \textup{ induced by flag configurations}\right\}\nonumber
\end{flalign}
\end{theorem}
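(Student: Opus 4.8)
The plan is to establish the chain of five inequalities
\[
\delta_\mu \geq (\text{line 1}) \geq (\text{line 2}) \geq (\text{line 3}) \geq (\text{line 4}) \geq (\text{line 5}) \geq \delta_\mu,
\]
where line $j$ denotes the $j$-th supremum. Since each successive line restricts the class of test objects — from arbitrary sublinear subgeodesics, to finite energy rays $\mathcal R^1$, to $\mathcal I$-maximal rays $\mathcal R^1_{\mathcal I}$, to rays induced by filtrations, to rays induced by flag configurations — the inequalities $(\text{line }1)\geq(\text{line }2)\geq\cdots\geq(\text{line }5)$ are almost formal \emph{provided} one checks that the relevant sup is not affected by the restriction. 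The equality of line 1 and line 2 and line 3 is essentially already contained in \cite[Theorem 1.4]{DZ22} together with Lemma~\ref{lem:radial-relation-under-maximization}: given any sublinear subgeodesic $\{u_t\}_t$, passing to its $\mathcal I$-maximization $\{w_t\}_t\in\mathcal R^1_{\mathcal I}(X,\theta)$ only decreases $\mathcal D_\mu^\lambda$, so if $\mathcal D_\mu^\lambda\geq 0$ on $\mathcal R^1_{\mathcal I}$ then $\mathcal D_\mu^\lambda\geq 0$ on all sublinear subgeodesics; this gives line $3\geq$ line $1$, and the reverse inclusions of test objects give line $1\geq$ line $3$. The remaining content is thus: (a) line $3\geq$ line $5$ (the deepest inequality), and (b) closing the loop with line $5\geq \delta_\mu$, i.e. showing $\delta_\mu\leq \mathcal D_\mu^\lambda\{r^{\mathfrak a}_t\}$-semistability threshold over flag configurations.

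For the hard inequality line $3\geq$ line $5$: I would take $\lambda>0$ such that $\mathcal D_\mu^\lambda\{r^{\mathfrak a}_t\}\geq 0$ for every flag configuration $\mathfrak a$, and aim to conclude $\mathcal D_\mu^\lambda\{u_t\}\geq 0$ for every $\{u_t\}_t\in\mathcal R^1_{\mathcal I}(X,\theta)$. This is where Theorem~\ref{prop:BBJmaxappbyflagconf} does the decisive work: given $\{u_t\}_t=\{\phi_t\}_t\in\mathcal R^1_{\mathcal I}(X,\theta)$, its rescaled Phong--Sturm approximants $\{\phi^m_t\}_t=\{2^{-m}r^{L^{2^m},\mathcal I(2^m\Phi)}_t\}_t$ are rays induced by (rescaled) flag configurations, decrease to $\{\phi_t\}_t$, and $d_1^c(\{\phi^m_t\}_t,\{\phi_t\}_t)\to 0$. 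By hypothesis $\mathcal D_\mu^\lambda\{\phi^m_t\}\geq 0$ for each $m$ (after checking that rescaling a flag configuration rescales the Ding functional compatibly, or simply noting $r^{L^{2^m},\mathfrak a}_t$ at the level of $L$ is still ``induced by a flag configuration'' in the relevant sense — the scaling $L\rightsquigarrow L^{2^m}$ and $\tau\rightsquigarrow 2^{-m}\tau$ leaves $\mathcal D_\mu^\lambda\{\cdot\}$ invariant because it is a slope). Then Theorem~\ref{thm: rad_Ding_cont} — the $d_1^c$-continuity of $\mathcal D_\mu^\lambda$ on $\mathcal R^1(X,\theta)$ — yields $\mathcal D_\mu^\lambda\{\phi_t\}=\lim_m \mathcal D_\mu^\lambda\{\phi^m_t\}\geq 0$. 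This gives line $3\geq$ line $5$. The analogous (easier) statement line $3\geq$ line $4$ follows the same way, since Phong--Sturm rays of filtrations are $\mathcal I$-maximal by Lemma~\ref{lem: max_I_max_filt}, and the flag-configuration rays used above are in particular induced by filtrations $\mathcal F_{\mathfrak a}$; one can also just note that every flag-configuration ray is a filtration ray, so line $4\geq$ line $5$ trivially, and then line $3\geq$ line $4$ by the same approximation (the approximants are filtration rays).

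Finally, for line $5\geq\delta_\mu$ (equivalently, to complete the circle), I would invoke the valuative/divisorial side. By \cite[Theorem 1.4]{DZ22} (or the already-established identity line $1=\delta_\mu$ from that paper combined with line $1\leq$ line $5$ which is false as stated) — more carefully: the cleanest route is to observe that for the specific filtration $\mathcal F_E$ attached to a prime divisor $E$ over $X$, the associated Phong--Sturm ray $\{r^E_t\}_t$ is $\mathcal I$-maximal and one computes, using Proposition~\ref{prop:I=S} that $I_\theta\{r^E_t\}=S_L(E)=S_\theta(E)$, while the radial Ding formula \eqref{eq:radial-Ding-formula} together with $c_\mu[\hat r^E_\tau]$-computations (Lelong numbers of $\hat r^E_\tau$ along $E$ are controlled by $A_{\chi,\psi}(E)$, as in \cite{DZ22}) gives $\mathcal L_\mu^\lambda\{r^E_t\} = A_{\chi,\psi}(E)/\lambda$ at the threshold; hence $\mathcal D_\mu^\lambda\{r^E_t\}\geq 0$ for all such $E$ forces $\lambda\leq A_{\chi,\psi}(E)/S_\theta(E)$ for all $E$, i.e. $\lambda\leq\delta_\mu$. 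Taking the sup over $\lambda$ gives line $5\geq\delta_\mu$ — wait, the direction: semistability over the \emph{smaller} class of flag-configuration rays is a \emph{weaker} condition, so its threshold is $\geq\delta_\mu$, and the inequality I actually need for the circle is line $5\leq\delta_\mu$, which requires testing against \emph{enough} flag configurations, namely those coming from divisors $E$ (flag configurations $\mathfrak a_0\subseteq\mathfrak a_1=\mathcal I(V_\theta)$ with $\mathfrak a_0 = \mathcal I_E^{\,k}\cdot\mathcal I(V_\theta)$ recover $\mathcal F_E$ up to the usual discretization). So the real content of line $5\leq\delta_\mu$ is: for each prime divisor $E$ over $X$ and each $\lambda$ with line-5-semistability, $\lambda\le A_{\chi,\psi}(E)/S_\theta(E)$, proved by plugging the divisorial flag configuration into the semistability inequality and using Proposition~\ref{prop:I=S} and \eqref{eq:radial-Ding-formula}. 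The main obstacle I anticipate is exactly this last step — matching the radial Ding energy of the divisorial Phong--Sturm ray with $A_{\chi,\psi}(E)$ and $S_\theta(E)$ in the big (non-ample) setting, where one must be careful that the test curve $\{\phi^E_\tau\}_\tau$ need not be maximal a priori (as the authors flag after Definition~\ref{def:PSray1}), so one works with its maximization and invokes Lemma~\ref{lem:radial-relation-under-maximization} to see the relevant energies are unchanged. Everything else is bookkeeping of inclusions of test-object classes plus the two quoted theorems (approximation and continuity).
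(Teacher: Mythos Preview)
Your approach is the paper's approach --- invoke \cite[Theorem~1.4]{DZ22} for $\delta_\mu=$ line~1, use Lemma~\ref{lem:radial-relation-under-maximization} to get line~1 $=$ line~2 $=$ line~3, and use the approximation Theorem~\ref{prop:BBJmaxappbyflagconf} together with the continuity Theorem~\ref{thm: rad_Ding_cont} to get line~3 $=$ line~4 $=$ line~5. The mathematical content of your arguments is correct. Two issues, however.

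\textbf{The trivial inequalities go the other way.} Restricting the class of test objects makes the condition ``$\mathcal D_\mu^\lambda\{u_t\}\geq 0$ for all $\{u_t\}_t$'' \emph{weaker}, hence enlarges the set of admissible $\lambda$, hence \emph{increases} the supremum. So the formal chain is
\[
\text{line }1 \leq \text{line }2 \leq \text{line }3 \leq \text{line }4 \leq \text{line }5,
\]
not $\geq$. Your maximization argument then gives the nontrivial direction line~3 $\leq$ line~1 (you wrote $\geq$), and your approximation-plus-continuity argument gives line~5 $\leq$ line~3 (again you wrote $\geq$). Since you actually supply both directions, the conclusions are correct, but every inequality label in your outline is reversed.

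\textbf{The divisorial closing step is unnecessary.} Once $\delta_\mu=$ line~1 is imported from \cite{DZ22} and line~1 $=\cdots=$ line~5 is established as above, the proof is finished; there is no circle to close. Your final paragraph --- testing against divisorial flag configurations to bound line~5 by $\delta_\mu$ --- is not needed (and, as you noticed yourself mid-paragraph, the direction there was tangled). The paper's proof simply omits this step.
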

\begin{proof} The first identity of \eqref{eq: delta_geod_semi_stab} is just \cite[Theorem~1.4]{DZ22}. The next two identities follow from Lemma \ref{lem:radial-relation-under-maximization}. Since flag configurations induce a filtration, the equality of the last three lines follows from Theorem \ref{thm: rad_Ding_cont} 
\end{proof}

\subsection{Ding stability in terms of non-Archimedean data}

Let $\lambda\in(0,c_{\mu}[V_\theta])$. Assume that $\{\theta\}=c_1(L)$ for some big line bundle $L$ on $X$.

We define two non-Archimedean functionals: let $\{\phi_{\tau}\}_{\tau}\in \mathcal{E}^{1,\NA}(X,\theta)$, we set
\begin{equation}\label{eq:LNAdefinition}
L^{\lambda,\NA}_\mu\{\phi_{\tau}\}:=\adjustlimits\inf_{v\in X^{\Div}}\sup_{\tau\in \mathbb{R}}(\tau+A_{\chi,\psi}(v)-\lambda v(\phi_{\tau}))
\end{equation}
In terms of the associated non-Archimedean potential $\phi^{\An}$, recalling \eqref{eq:psiandef}, we can express $L^{\lambda,\NA}_\mu$ as follows:
\begin{equation}\label{eq:Lreformulate}
L^{\lambda,\NA}_\mu(\phi^{\An}) := L^{\lambda,\NA}_\mu\{\phi_{\tau}\}=\inf_{v\in X^{\Div}}(A_{\chi,\psi}(v)+ \phi^{\An}(\lambda v)).
\end{equation}
The non-Archimedean $\lambda$-Ding functional can be introduced as follows (recall \eqref{eq: INAdef}):
\begin{equation}\label{eq: Ding_NA_def}
\mathcal{D}^{\lambda,\NA}_{\mu}\{\phi_{\tau}\}:=L^{\lambda,\NA}_\mu\{\phi_{\tau}\}-I_{\theta}^{\NA}\{\phi_{\tau}\}.
\end{equation}

When $\lambda=1$ and $\{\theta\}$ is the first Chern class of an ample line bundle, these functionals correspond to the non-Archimedean functionals $L$ and $D$ defined in \cite[Definition~3.4]{BBJ21}.

We show that $L^{\lambda,\NA}_\mu$ and $\mathcal{D}^{\lambda,\NA}_\mu$ agree with the corresponding radial functionals \eqref{eq:def-radial-functionals}:

\begin{theorem}\label{thm:L=inf-sup}
For any $\{u_t\}\in\mathcal{R}^1_{\mathcal{I}}(X,\theta)$, we have
\begin{equation}\label{eq:Llambdamuut}
L^\lambda_\mu\{u_t\}=L^{\lambda,\NA}_\mu\{\hat{u}_{\tau}\},\quad \mathcal{D}^\lambda_\mu\{u_t\}=\mathcal{D}^{\lambda,\NA}_\mu\{\hat{u}_{\tau}\}
\end{equation}
\end{theorem}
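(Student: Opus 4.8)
The plan is to prove the two identities by reducing both to the already-established radial formulas \eqref{eq:radial-Ding-formula}, \eqref{eq:radial-I-formula} together with the analytification dictionary of Section 3. The second identity in \eqref{eq:Llambdamuut} follows formally from the first: by Theorem on the $\mathcal{I}$-maximal correspondence, $\{u_t\}\in\mathcal{R}^1_{\mathcal{I}}(X,\theta)$ corresponds to a finite energy $\mathcal{I}$-maximal test curve $\{\hat u_\tau\}_\tau \in \textup{MTC}_\mathcal{I}(\theta)$, and by Theorem~\ref{thm:RWNpresc}(iii) the radial energy $I_\theta\{u_t\}$ equals the test-curve energy $I_\theta\{\hat u_\tau\}$, which by definition \eqref{eq: INAdef} is $I^{\NA}_\theta\{\hat u_\tau\}$; subtracting this from the first identity and recalling \eqref{eq: Ding_NA_def}, \eqref{eq:def-radial-functionals} gives $\mathcal{D}^\lambda_\mu\{u_t\}=\mathcal{D}^{\lambda,\NA}_\mu\{\hat u_\tau\}$. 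So the whole content is the $L$-identity.

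For the $L$-identity, I would start from the radial formula \eqref{eq:radial-Ding-formula}, which says $L^\lambda_\mu\{u_t\} = \sup\{\tau \in \RR : c_\mu[\hat u_\tau]\geq \lambda\}$, and from the right side \eqref{eq:Lreformulate}, which reads $L^{\lambda,\NA}_\mu\{\hat u_\tau\} = \inf_{v\in X^\Div}\big(A_{\chi,\psi}(v) + \sup_{\tau}(\hat u_\tau^\An(v) + \tau)\big)$, where $\hat u_\tau^\An(v) = -v(\hat u_\tau)$ for the divisorial valuation $v = c\ord_E$ equals $-c\,\nu(\hat u_\tau, E)$ by \cite[Lemma~B.4]{BBJ21}. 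The key bridge is the valuative computation of the singularity exponent: by \cite[Theorem~2.3]{DZ22} (applied, as in Lemma~\ref{lem:radial-relation-under-maximization}, to the model-type potential $\hat u_\tau$), for each fixed $\tau < \tau^+_{\hat u}$ one has $c_\mu[\hat u_\tau] = \inf_{E}\frac{A_{\chi,\psi}(E)}{\nu(\hat u_\tau, E)}$, the infimum over prime divisors $E$ over $X$ with $\nu(\hat u_\tau,E)>0$. Thus the condition $c_\mu[\hat u_\tau]\geq \lambda$ is equivalent to $A_{\chi,\psi}(E) \geq \lambda\,\nu(\hat u_\tau, E)$ for all $E$, i.e. $A_{\chi,\psi}(E) - \lambda\,\nu(\hat u_\tau,E) \geq 0$ for all $E$. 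Homogenizing in $c$ (replacing $E$ by $v = c\ord_E$, which scales the quantity by $c$) and recognizing $-\lambda\,c\,\nu(\hat u_\tau,E) = \lambda\,\hat u_\tau^\An(v) = \hat u_\tau^\An(\lambda v)$, this becomes $A_{\chi,\psi}(v) + \hat u_\tau^\An(\lambda v) + \tau \geq \tau$ for all $v$, which is precisely saying $L^{\lambda,\NA}_\mu\{\hat u_\tau\} \geq \tau$ with $\tau$ regarded as the free parameter; and conversely. Taking the supremum over admissible $\tau$ then matches the two sides. To make this rigorous I would show both inequalities: for $\leq$, if $\tau$ satisfies $c_\mu[\hat u_\tau]\geq \lambda$ then for every $v$ we get $A_{\chi,\psi}(v) + \hat u_\tau^\An(\lambda v) \geq 0$, hence $\sup_{\tau'}(A_{\chi,\psi}(v) + \hat u_{\tau'}^\An(\lambda v) + \tau') \geq \tau$ for every $v$, giving $L^{\lambda,\NA}_\mu\{\hat u_\tau\}\geq \tau$, and supremizing gives $L^\lambda_\mu\{u_t\} \leq L^{\lambda,\NA}_\mu\{\hat u_\tau\}$; for $\geq$, one unwinds the $\sup_\tau$ inside the $\inf_v$ using concavity in $\tau$ of $\tau\mapsto \hat u_\tau^\An(v)$ (Lemma~\ref{lma:Imodeldeclelong} / the structure of test curves) to exchange order appropriately.

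The main obstacle I anticipate is the interchange of $\sup_\tau$ and $\inf_v$ implicit in matching $\sup\{\tau : \forall v,\ A_{\chi,\psi}(v)+\hat u_\tau^\An(\lambda v)\geq 0\}$ with $\inf_v \sup_\tau (A_{\chi,\psi}(v) + \hat u_\tau^\An(\lambda v) + \tau)$, together with the boundary behavior at $\tau = \tau^+_{\hat u}$: one must check that the $\tau$-supremum in \eqref{eq:Lreformulate} is effectively attained on the interval where $\hat u_\tau$ has positive mass and is genuinely a model potential (so that \cite[Theorem~2.3]{DZ22} applies), and handle the case $c_\mu[\hat u_\tau]=+\infty$ (when $\hat u_\tau = V_\theta$, so $\nu(\hat u_\tau, E)=0$ and the constraint is vacuous). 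A minimax/concavity argument — using that $\tau\mapsto \hat u_\tau^\An(v)+\tau$ is concave and usc in $\tau$ for each $v$, so sup and inf can be exchanged on the relevant box, as in Lemma~\ref{lma:DC} — should close this gap. Finally, I would note that a priori both sides could be $+\infty$ or $-\infty$, but finiteness of $L^\lambda_\mu\{u_t\}$ follows from $\lambda < c_\mu[V_\theta]$ and finite energy, so no extended-real subtlety remains.
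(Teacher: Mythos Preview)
Your overall strategy is the same as the paper's: reduce the $\mathcal{D}$-identity to the $L$-identity via Theorem~\ref{thm:RWNpresc}(iii), then prove the $L$-identity by combining the radial formula \eqref{eq:radial-Ding-formula} with the valuative characterization of $c_\mu$ (the paper cites \cite[Theorem~2.2]{DZ22} and \cite[Theorem~B.5]{BBJ21}; your \cite[Theorem~2.3]{DZ22} plays the same role). Your argument for the inequality $L^\lambda_\mu\{u_t\}\leq L^{\lambda,\NA}_\mu\{\hat u_\tau\}$ is correct and matches the paper's.

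The gap is in the reverse inequality. Your plan to ``exchange $\sup_\tau$ and $\inf_v$'' via an abstract minimax/Lemma~\ref{lma:DC} argument does not go through as stated: Lemma~\ref{lma:DC} concerns Legendre transforms of monotone nets, not sup--inf exchange, and standard minimax theorems need compactness or convexity on the $v$-side that $X^{\Div}$ lacks. What the paper actually does is a \emph{constructive} saddle-point argument: given $\tau_0>L^\lambda_\mu\{u_t\}$, pick (via \cite[Theorem~B.5]{BBJ21}) a divisor $E$ with $A_{\chi,\psi}(E)-\lambda\nu(\hat u_{\tau_0},E)\leq 0$, then use the freedom to rescale $v=a\ord_E$ to choose $a_0$ so that the concave function $f_{a_0}(\tau)=\tau+a_0(A_{\chi,\psi}(E)-\lambda\nu(\hat u_\tau,E))$ has $f_{a_0}'(\tau_0^-)=0$; concavity then forces $\sup_\tau f_{a_0}(\tau)=f_{a_0}(\tau_0)\leq \tau_0$, yielding $\inf_v\sup_\tau\leq \tau_0$. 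The existence of such an $a_0$ uses that $g_a'(\tau_0^-)<0$ (because $c_\mu[\hat u_{-A}]>\lambda$ for large $A$ by openness), which is exactly the boundary input you flagged but did not exploit. The paper also treats the borderline case $L^\lambda_\mu\{u_t\}=\tau^+_{\hat u}$ separately, using that $A_{\chi,\psi}(c\ord_E)\to 0$ as $c\to 0$; your plan does not address this. So the missing idea is precisely the rescaling of the witnessing valuation to manufacture a critical point---this is what replaces the abstract minimax you invoke.
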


\begin{proof}
We first observe that the second equality in \eqref{eq:Llambdamuut} follows from the first and Theorem~\ref{thm:RWNpresc}(iii).

Observe that in order to prove the first equation in \eqref{eq:Llambdamuut}, it suffices to prove it with $X^{\Div}_{\mathbb{R}}$ in place of $X^{\Div}$ in \eqref{eq:LNAdefinition}, where $X^{\Div}_{\mathbb{R}}$ denotes the set of $c\ord_E$ for all $c\in \mathbb{R}_{>0}$ and $E$ is an arbitrary prime divisor over $X$. 
In other words, it suffices to show that
\[
L^\lambda_\mu\{u_t\}=\inf_{v\in X_{\mathbb{R}}^{\Div}}\sup_{\tau\in \mathbb{R}}(\tau+A_{\chi,\psi}(v)-\lambda v(\phi_{\tau})).
\]
In fact, by \eqref{eq:Lreformulate}
\[
L^{\lambda,\NA}_\mu(\phi)=\inf_{v\in X^{\Div}}(A_{\chi,\psi}(v)+ \phi(\lambda v)).
\]
Here $\phi$ denotes $u^{\An}$, which lies in $\PSH(L^{\An})$ by Corollary~\ref{cor:testcurvegivepshme}. But then, as $\phi$ is usc, it follows that
\[
\inf_{c\in \mathbb{Q}_{>0}} (A_{\chi,\psi}(c\ord_E)+\phi(\lambda c \ord_E))=\inf_{c\in \mathbb{R}_{>0}} (A_{\chi,\psi}(c\ord_E)+\phi(\lambda c \ord_E))
\]
for all prime divisors over $X$. It follows that replacing $X^{\Div}$ by $X^{\Div}_{\mathbb{R}}$ on the right-hand side of \eqref{eq:LNAdefinition}, we end up with the same quantity.

For any $\tau_0< L^\lambda_\mu\{u_t\}$ we have by \eqref{eq:radial-Ding-formula} and \cite[Theorem~2.2]{DZ22} that $A_{\chi,\psi}(v)\geq\lambda v(\hat{u}_{\tau_0})\text{ for any }v\in X^{\Div}_\mathbb R$. 
Thus,
\[
\inf_{v\in X^{\Div}_\mathbb R}\sup_{\tau\in \mathbb{R}}\{\tau+A_{\chi,\psi}(v)-\lambda v(\hat u_\tau)\}\geq\inf_{v\in X^{div}_\mathbb R}\{\tau_0+A_{\chi,\psi}(v)-\lambda v(\hat{u}_{\tau_0})\}\geq\tau_0,
\]
so that
\[
L^\lambda_\mu\{u_t\}\leq \inf_{v\in X^{\Div}_\mathbb R}\sup_{\tau\in \mathbb{R}}\{\tau+A_{\chi,\psi}(v)-\lambda v(\hat u_\tau)\}.
\]

To see the reverse direction, we first consider the case when $L^\lambda_\theta\{u_t\}=\tau^+_{\hat{u}}$, which we will always assume to be $0$ after adding a linear term to $u_t$. In this scenario, it is clear that
\[
\sup_{\tau\in \mathbb{R}}\{\tau+A_{\chi,\psi}(v)-\lambda v(\hat u_\tau)\}\leq A_{\chi,\psi}(v).
\]
Note that $A_{\chi,\psi}(v)$ is always positive, which however can be made as small as we want simply by rescaling $v$. So we obtain that
\[
\inf_{v\in X^{\Div}_\mathbb R}\sup_{\tau\in \mathbb{R}}\{\tau+A_{\chi,\psi}(v)-\lambda v(\hat u_\tau)\}\leq 0=L^\lambda_\mu\{u_t\}.
\]

In what follows we assume that $L^\lambda_\mu\{u_t\}<\tau^+_{\hat{u}}=0.$
Take any $\tau_0\in(L^\lambda_\mu\{u_t\},0)$. Using \cite[Theorem~2.2]{DZ22}  and \eqref{eq: Ding_slope} we deduce that $c_{\mu}[\hat u_{\tau_0}]<\lambda$. So by \cite[Theorem~B.5]{BBJ21} there exists a prime divisor $E$ over $X$ such that $A_{\chi,\psi}(E)-\lambda \nu(\hat{u}_{\tau_0},E)\leq0.$
Now for any $a\in\mathbb{R}_{>0}$ consider the functions
\[
f_a(\tau):=\tau+aA_{\chi,\psi}(E)-a\lambda\nu(\hat{u}_\tau,E)\text{ and }g_a(\tau):=aA_{\chi,\psi}(E)-a\lambda \nu(\hat u_\tau,E)
\]
for $\tau\in(-\infty,0]$. Both are concave when $\tau\in(-\infty,0)$ and $f_a(0)\leq\lim_{\tau\rightarrow 0^-}f_a(\tau).$ Moreover, for sufficiently large $A>0$ we have $c_{\mu}[\hat u_{-A}]>\lambda$ (since $c_{\mu}[\hat u_\tau]\nearrow c_{\mu}[V_\theta]>\lambda$ as $\tau\rightarrow-\infty$ by Theorem \ref{thm:openness}), so that $g_a(-A)=aA_X(E)-a\lambda \nu(\hat u_{-A},E)>0$ by \cite[Theorem~B.5]{BBJ21}. This implies that
\[
g^\prime_a(\tau_0^-)=\lim_{h\rightarrow 0^+}\frac{g_a(\tau_0)-g_a(\tau_0-h)}{h}\leq\frac{a\lambda(\nu(\hat u_{-A},E)-\nu(\hat u_{\tau_0},E))}{\tau_0+A}<0.
\]
Now choose suitable $a_0\in\RR_{>0}$ such that $g^\prime_{a_0}(\tau_0^-)=-1.$ Then we have
$f_{a_0}^\prime(\tau^-_0)=1-1=0$,
from which we deduce that
\[
\sup_{\tau\leq 0} f_{a_0}(\tau)=f_{a_0}(\tau_0)=\tau_0+a_0(A_{\chi,\psi}(E)-\lambda\nu(\hat u_{\tau_0},E))\leq\tau_0.
\]
Thus, we arrive at
\[
\adjustlimits\inf_{v\in X^{\Div}_{\mathbb{R}}}\sup_{\tau\leq 0}\{\tau+A_{\chi,\psi}(v)-\lambda v(\hat u_\tau)\}\leq\tau_0,
\]
which then completes the proof.
\end{proof}
As a consequence of the proceeding theorem and Theorem \ref{thm: delta_I_maximal_semistab}, we can add one more equality to Theorem~\ref{thm: delta_I_maximal_semistab}, finishing the proof of Theorem \ref{mthm: delta}:
\begin{corollary}\label{cor: NAcor}  Assume that $\{\theta\}=c_1(L)$ for some big line bundle $L$ on $X$.
    We have
    \[
        \delta_{\mu}=\sup\left\{\lambda>0 : \mathcal{D}_{\mu}^{\lambda,\NA}(u) \geq 0, \ u \in \mathcal E^{1,\NA}(X,\theta)\right\}.
    \]
\end{corollary}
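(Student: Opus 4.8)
The plan is to obtain this as a direct consequence of Theorem~\ref{thm:L=inf-sup} together with the third identity in Theorem~\ref{thm: delta_I_maximal_semistab}, using the bijection between $\mathcal{R}^1_{\mathcal{I}}(X,\theta)$ and $\mathcal{E}^{1,\NA}(X,\theta)$ recorded right after the definition of the non-Archimedean finite energy space.

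First I would fix $\lambda\in(0,c_\mu[V_\theta])$ and translate between the radial and non-Archimedean languages. An $\mathcal{I}$-maximal finite energy ray $\{u_t\}_t\in\mathcal{R}^1_{\mathcal{I}}(X,\theta)$ corresponds bijectively to its dual $\mathcal{I}$-maximal finite energy test curve $\{\hat u_\tau\}_\tau$, which is precisely a general element of $\mathcal{E}^{1,\NA}(X,\theta)$. By Theorem~\ref{thm:L=inf-sup} we have $\mathcal{D}^\lambda_\mu\{u_t\}=\mathcal{D}^{\lambda,\NA}_\mu\{\hat u_\tau\}$ for every such ray. Hence the set of $\lambda$ for which $\mathcal{D}^\lambda_\mu\{u_t\}\geq 0$ holds for all $\{u_t\}_t\in\mathcal{R}^1_{\mathcal{I}}(X,\theta)$ coincides with the set of $\lambda$ for which $\mathcal{D}^{\lambda,\NA}_\mu(u)\geq 0$ holds for all $u\in\mathcal{E}^{1,\NA}(X,\theta)$. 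Taking suprema, and invoking the third equality of Theorem~\ref{thm: delta_I_maximal_semistab}, which identifies the former supremum with $\delta_\mu$, yields the assertion.

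The only bookkeeping to be careful with is the admissible range of $\lambda$: Theorem~\ref{thm:L=inf-sup} is stated for $\lambda\in(0,c_\mu[V_\theta])$, which is also the range on which $\mathcal{D}^\lambda_\mu$, and hence the identity of Theorem~\ref{thm: delta_I_maximal_semistab}, is meaningful; the supremum on the right-hand side of the corollary is understood with the same convention, so that the two suprema are literally over the same collection of parameters. Beyond this there is no genuine obstacle — the substantive work resides in Theorems~\ref{thm:L=inf-sup} and~\ref{thm: delta_I_maximal_semistab}, and the present corollary is simply the step that re-expresses the Ding semistability threshold in purely non-Archimedean terms.
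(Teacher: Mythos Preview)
Your proposal is correct and follows essentially the same approach as the paper: the corollary is obtained by combining Theorem~\ref{thm:L=inf-sup} with Theorem~\ref{thm: delta_I_maximal_semistab} via the bijection $\mathcal{R}^1_{\mathcal{I}}(X,\theta)\leftrightarrow\mathcal{E}^{1,\NA}(X,\theta)$.
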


\subsection{A Yau--Tian--Donaldson type existence theorem}\label{sec: YTD}

In this subsection we focus on the case $L = -K_X$. Also, for simplicity, we assume that there is no twisting, so  $\psi = 0$ and $\chi = f$, where  $\chi :=f \in C^\infty(X)$ is a Ricci potential, satisfying $\theta + \ddc f = \Ric \omega$.

We recall the definition of uniform Ding stability. We say that $(X,-K_X)$ is \emph{uniformly Ding stable with respect to flag configurations}, if there exists $\varepsilon >0$ such that
\begin{equation}\label{eq: uniform_Ding_stab_flag_def}
\mathcal{D}_\mu^{1,\NA} (u)\geq \varepsilon \mathcal J^{\NA}(u),
\end{equation}
for any $u = \{\hat u_\tau\}_\tau \in \mathcal E^{1,\NA}(X,\theta) $ induced by   flag configurations. By Theorem \ref{thm:L=inf-sup} we have that $L^{1,\NA}_\mu\{\hat u_{\tau}\} = L^1_\mu\{u_t\} =\inf_{v\in X^{\Div}}(A_X(v)+ u^{\An}(v))$.

Bringing together \eqref{eq: Ding_NA_def} and  \eqref{eq:Jreformulate}, the  condition \eqref{eq: uniform_Ding_stab_flag_def} can be reformulated in a non-Archimedean/algebraic language in the following manner:
\begin{equation}\label{eq: Ding_algebraic}
\inf_{v\in X^{\Div}}(A_X(v)+ u^{\An}(v))  \geq \varepsilon \tau^+_u +  (1 -\varepsilon) I_\theta^\NA(u),
\end{equation}
for all $u = \{\hat u_\tau\}_\tau\in \mathcal E^{1,\NA}(X,\theta)$ induced by flag configurations. Using Proposition \ref{prop:I=S}, the quantities on the right hand side of this inequality can be computed in terms of the filtration $\mathcal F_u$ of the flag configuration. Indeed, $\tau^+_u = \tau_L(\mathcal F_u)$ and $I_\theta^\NA(u) = S_L(\mathcal F_u)$,  allowing for an algebraic/valuative interpretation of uniform Ding stability.

Finally, we prove our last main result, a YTD type existence theorem for KE metrics:

\begin{theorem}(=Theorem \ref{thm:YTD}) \label{thm:YYTD}Suppose that $L=-K_X$ is big. If $(X,-K_X)$ is uniformly Ding stable with respect to flag configurations then there exists a K\"ahler--Einstein metric, i.e., there exists a solution to the following equation, having minimal singularity type:
\begin{equation}\label{eq: KE_eq}
\theta_u^n = e^{f- u} \omega^n, \ \ \ u \in \textup{PSH}(X,\theta),
\end{equation}
with $f\in C^\infty(X)$ satisfying $\theta+\ddc f=\Ric \omega$.
\end{theorem}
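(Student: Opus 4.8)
The plan is to reduce the uniform Ding stability hypothesis -- stated only over flag configurations -- to uniform Ding stability over \emph{all} finite energy geodesic rays, and then invoke the variational existence result of \cite{DZ22}. First I would observe that, by Theorem~\ref{mainthm: approx} (our Theorem~\ref{prop:BBJmaxappbyflagconf}), any $u = \{\hat u_\tau\}_\tau \in \mathcal{E}^{1,\NA}(X,\theta) = \mathcal{R}^1_{\mathcal I}(X,\theta)$ is the $d_1^c$-limit of a decreasing sequence $\{\phi^m_t\}_t$ of (rescaled) Phong--Sturm rays of flag configurations. By Theorem~\ref{mainthm: Ding_cont} (Theorem~\ref{thm: rad_Ding_cont}), the radial Ding functional $\mathcal{D}_\mu^1$ is $d_1^c$-continuous, so $\mathcal{D}_\mu^{1}\{\phi^m_t\} \to \mathcal{D}_\mu^1\{u_t\}$; and by Theorem~\ref{thm:L=inf-sup} together with Theorem~\ref{thm:RWNpresc}(iii) these radial quantities equal the non-Archimedean functionals $\mathcal{D}_\mu^{1,\NA}$. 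The functional $\mathcal{J}^{\NA}$ is likewise continuous along $d_1^c$-convergence: $\tau^+$ is continuous by Proposition~\ref{prop: I_tau_d_1_c_conv}(i), and $I^{\NA}_\theta = I_\theta\{\cdot\}$ is continuous by the same proposition, so $\mathcal{J}^{\NA} = \tau^+ - I^{\NA}_\theta$ passes to the limit. Hence the inequality \eqref{eq: uniform_Ding_stab_flag_def}, assumed for the $\phi^m$, survives in the limit, giving
\[
\mathcal{D}_\mu^{1,\NA}(u) \geq \varepsilon\, \mathcal{J}^{\NA}(u)\qquad \text{for all } u \in \mathcal{E}^{1,\NA}(X,\theta).
\]

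Next I would transfer this to the archimedean side. Using the bijection $\mathcal{R}^1_{\mathcal I}(X,\theta) \cong \mathcal{E}^{1,\NA}(X,\theta)$ and Theorem~\ref{thm:L=inf-sup}, the displayed inequality reads $\mathcal{D}_\mu^1\{u_t\} \geq \varepsilon\, \mathcal{J}^{\NA}\{\hat u_\tau\}$ for all $\mathcal I$-maximal finite energy rays $\{u_t\}_t$. To promote this to \emph{all} sublinear subgeodesic rays in $\mathcal{E}^1(X,\theta)$, I would apply the $\mathcal I$-maximization process from Section~2.2: given an arbitrary finite energy ray $\{u_t\}_t$, its $\mathcal I$-maximization $\{w_t\}_t$ satisfies, by Lemma~\ref{lem:radial-relation-under-maximization}, $\mathcal{D}_\mu^1\{w_t\} \leq \mathcal{D}_\mu^1\{u_t\}$ and $I_\theta\{w_t\} \geq I_\theta\{u_t\}$, hence $\mathcal{J}\{w_t\} = \tau^+ - I_\theta\{w_t\} \leq \tau^+ - I_\theta\{u_t\} = \mathcal{J}\{u_t\}$. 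Combined with the just-established inequality for $\mathcal I$-maximal rays, this would yield $\mathcal{D}_\mu^1\{u_t\} \geq \mathcal{D}_\mu^1\{w_t\} \geq \varepsilon\,\mathcal{J}^{\NA}\{\hat w_\tau\}$; the remaining point is to check that $\mathcal{J}^{\NA}\{\hat w_\tau\}$ controls the archimedean radial $\mathcal{J}_\theta$ (or its coercive substitute) of $\{u_t\}_t$ -- this follows because $\mathcal{J}^{\NA} = \tau^+ - I^{\NA}_\theta$ corresponds under Ross--Witt Nystr\"om to $\tau^+_{\hat u} - I_\theta\{u_t\}$, which is exactly the radial $\mathcal{J}$-type energy used in \cite{DZ22}.

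With uniform Ding stability along all sublinear subgeodesic rays in hand, the conclusion follows from the variational characterization in \cite{DZ22}: uniform coercivity of the radial Ding functional over all rays forces the Ding functional $\mathcal{D}_\mu^1$ itself to be coercive on $\mathcal{E}^1(X,\theta)$ (this is the properness-from-stability direction, available in the big setting via the slope formula \eqref{eq: Ding_slope} and the argument of \cite[Theorem~1.5]{DZ22}), and a coercive Ding functional admits a minimizer with minimal singularity type, which is precisely a solution of $\theta_u^n = e^{f-u}\omega^n$. The main obstacle I anticipate is the passage in the previous paragraph from $\mathcal I$-maximal rays to arbitrary rays while keeping the $\mathcal{J}$-coercivity constant $\varepsilon$ uniform: one must verify that $\mathcal I$-maximization does not decrease the relevant $\mathcal{J}$-energy and that no energy is lost when comparing $\mathcal{J}^{\NA}$ with the archimedean $\mathcal{J}_\theta\{u_t\}$ -- this is exactly where the positive-mass hypotheses and Lemma~\ref{lem:radial-relation-under-maximization} do the essential work, and where one has to be careful that the archimedean coercivity functional used in \cite{DZ22} is the radial $\mathcal{J}$ rather than, say, $I - J$.
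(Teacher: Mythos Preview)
Your approach uses the same ingredients as the paper's proof (approximation by flag configurations, $d_1^c$-continuity of the radial Ding functional, $\mathcal I$-maximization, and the destabilizing-ray result \cite[Theorem~5.3]{DZ22}), but the passage from $\mathcal I$-maximal rays to \emph{all} rays has a genuine gap. You correctly compute that for $\{w_t\}$ the $\mathcal I$-maximization of $\{u_t\}$ one has $\mathcal J\{w_t\}=\tau^+-I_\theta\{w_t\}\leq \tau^+-I_\theta\{u_t\}=\mathcal J\{u_t\}$, and then obtain $\mathcal D_\mu^1\{u_t\}\geq \varepsilon\,\mathcal J\{w_t\}$. But this does \emph{not} give $\mathcal D_\mu^1\{u_t\}\geq \varepsilon\,\mathcal J\{u_t\}$: the inequality goes the wrong way. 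Your stated hope that ``$\mathcal I$-maximization does not decrease the relevant $\mathcal J$-energy'' is precisely what fails---Lemma~\ref{lem:radial-relation-under-maximization} says $I_\theta$ can only increase under $\mathcal I$-maximization, hence $\mathcal J$ can only decrease. So the uniform coercivity inequality does not transfer with the same $\varepsilon$, and your direct route to coercivity on $\mathcal E^1(X,\theta)$ is blocked.

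The paper sidesteps this by arguing by contradiction. If no K\"ahler--Einstein metric exists, \cite[Theorem~5.3]{DZ22} produces a ray $\{u_t\}\in\mathcal R^1(X,\theta)$ with $\sup_X u_t=0$, $I_\theta\{u_t\}=-1$, and $\mathcal D_\mu^1\{u_t\}\leq 0$. One then $\mathcal I$-maximizes to get $\{w_t\}$, checks that $\{w_t\}$ is still non-trivial with $\mathcal D_\mu^1\{w_t\}\leq 0$ (this requires a small case split: either $w_t=u_t$, or $w_t\neq u_t$ forces $\mathcal D_\mu^1\{w_t\}<0$), and then \emph{reparametrizes} $\{w_t\}$ so that $I_\theta\{w_t\}=-1$, hence $\mathcal J^{\NA}\{w\}=1$. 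At that point your approximation-plus-continuity argument applies verbatim to $\{w_t\}$: the flag-configuration approximants $\{w^k_t\}$ have $\mathcal J^{\NA}(w^k)\to 1$ and $\mathcal D_\mu^1\{w^k_t\}\to \mathcal D_\mu^1\{w_t\}\leq 0$, contradicting the stability hypothesis. The contradiction framing is what makes the $\mathcal J$-bookkeeping trivial: you only need $\mathcal J^{\NA}\to 1$ along the approximating sequence, not a comparison between $\mathcal J\{w_t\}$ and $\mathcal J\{u_t\}$.
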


\begin{proof} The argument is by contradiction. Put $\mu:=e^{f}\omega^n$. Suppose that \eqref{eq: KE_eq} does not have a solution with minimal singularity type. Then by \cite[Theorem 5.3]{DZ22} there exists $\{u_t\}_t \in \mathcal R^1(X,\theta)$ such that $\mathcal D^1_\mu\{u_t\} \leq 0$, $\sup_X u_t =0$ and $I\{u_t\} = -1$. Let $\{w_t\} \in \mathcal R^1_\mathcal I(X,\theta)$ be the $\mathcal I$-maximization of $\{u_t\}_t$. By Lemma \ref{lem:radial-relation-under-maximization} we get that $\mathcal L^1_\mu\{w_t\} =\mathcal L^1_\mu\{u_t\}$ and $I\{w_t\} \geq I\{u_t\}$.

Hence, either $u_t = w_t$ and $\mathcal D^1_\mu\{w_t\} \leq 0$. Or, $w_t \neq u_t$ and $\mathcal D^1_\mu\{w_t\}< 0$. Hence, in both cases $\{w_t\}_t$ must be a non-trivial $\mathcal I$-maximal geodesic ray. 

After reparametrizing $\{w_t\}_t$ so that $I\{w_t\} = -1$, we still have that $\mathcal D^1_\mu\{w_t\} \leq 0$.
Theorem \ref{prop:BBJmaxappbyflagconf} gives $w^k = \{w^k_t\}_t \in \mathcal R^1_\mathcal I(X,\theta) = \mathcal E^{1,\NA}(X,\theta)$, a sequence of rays induced by flag configurations such that $d_1^c(\{w^k_t\},\{w_t\}) \to 0$. Due to $d_1^c$-convergence, we have that $\tau^+_ {\hat w^k} =\sup_X w^k_1 \to \tau^+_{\hat w} = \sup_X w_1 = 0$ and $I\{w^k_t\}  \to I\{w_t\}= -1$. In particular, $\mathcal J^{\NA}(w^k) \to 1$ (recall \eqref{eq:Jreformulate}).

By Theorem \ref{thm: rad_Ding_cont}, $\mathcal D^1_\mu\{w^k_t\}  \to \mathcal D^1_\mu\{w_t\} \leq 0$, contradicting uniform Ding stability.
\end{proof}

\begingroup
\setstretch{1.1}
\setlength\bibitemsep{0pt}
\setlength\biblabelsep{0pt}
\printbibliography
\endgroup

\small
\noindent {\sc Department of Mathematics, University of Maryland, 
4176 Campus Dr, College Park, MD 20742, USA}\\
{\tt tdarvas@umd.edu}\vspace{0.1in}

\noindent {\sc Institut de Mathématiques de Jussieu-Paris Rive Gauche, 4 Place Jussieu, Paris, 75005, France}\\
{\tt mingchen@imj-prg.fr}\vspace{0.1in}

\noindent {\sc School of Mathematical Sciences, Beijing Normal University, 19 Xinjiekou Wai Street, Beijing 100875, China}\\
{\tt kwzhang@bnu.edu.cn}
\end{document}